\newtheorem{theorem}{Theorem}[section]
\newtheorem*{theorem*}{Theorem}
\newtheorem{proposition}[theorem]{Proposition}
\newtheorem{lemma}[theorem]{Lemma}
\newtheorem{corollary}[theorem]{Corollary}
\newtheorem{conjecture}[theorem]{Conjecture}
\theoremstyle{definition}
\theoremstyle{remark}
\newtheorem{remark}[theorem]{Remark}
\newtheorem*{remark*}{Remark}
\newcommand{\con}[1]{\mathbb{#1}}
\newcommand{\C}{\con{C}} 
\newcommand{\R}{\con{R}} 
\newcommand{\N}{\con{N}} 
\newcommand{\Dom}{\mathrm{Dom}}
\newcommand{\Rodzin}{\mathcal{R}}
\numberwithin{equation}{section}
\def\l@subsection{\@tocline{2}{0pt}{2.5pc}{5pc}{}}
\title[The $\overline\partial$-Robin Laplacian]
{The $\overline\partial$-Robin Laplacian}
\author[J. Duran]{Joaquim Duran}
\address{J. Duran
\newline
Centre de Recerca Matem\`atica, Edifici C, Campus Bellaterra, 08193 Bellaterra, Spain.}
\email{jduran@crm.cat}
\date{\today}
\subjclass[2010]{35P05, 47A10}
\keywords{Spectral theory, resolvent convergence, eigenvalues.}
\thanks{The author is supported by the Spanish grants PID2021-123903NB-I00 and RED2022-134784-T funded by MCIN/AEI/10.13039/501100011033, by ERDF ``A way of making Europe", and by the Catalan grant 2021-SGR-00087. This work is supported by the Spanish State Research Agency, through the Severo Ochoa and Mar\'ia de Maeztu Program for Centers and Units of Excellence in R\&D (CEX2020-001084-M), and more specifically by the grant CEX2020-001084-M-20-1. The author acknowledges CERCA Programme/Generalitat de Catalunya for institutional support.}
\begin{document}

\begin{abstract}
We study the family of operators $\{\Rodzin_a\}_{a\in [0,+\infty)}$ associated to the Robin-type problems in a bounded domain $\Omega\subset\R^2$
\begin{equation}
    \begin{cases}
        -\Delta u = f & \text{in } \Omega, \\
        2 \bar \nu \partial_{\bar z} u + au = 0 & \text{on } \partial\Omega,
    \end{cases}
\end{equation}
and their dependency on the boundary parameter $a$ as it moves along $[0,+\infty)$. In this regard, we study the convergence of such operators in a resolvent sense. We also describe the eigenvalues of such operators and show some of their properties, both for all fixed $a$ and as functions of the parameter $a$. As shall be seen in more detail in the paper~\cite{DuranMasSanzPerela2025}, the eigenvalues of these operators characterize the positive eigenvalues of quantum dot Dirac operators.
\end{abstract}

\maketitle
\vspace{-1.1cm}
\tableofcontents

\section{Introduction} \label{sec:Introduction}

In this work, we study the operator associated to the problem 
\begin{equation} \label{eq:BvPRodzinLaplacian}
    \begin{cases}
        -\Delta u = f & \text{in } \Omega, \\
        2 \bar \nu \partial_{\bar z} u + au = 0 & \text{on } \partial\Omega,
    \end{cases}
\end{equation}
where $\Omega$ is a bounded $\mathcal C^2$ domain in $\R^2$, $f\in L^2(\Omega,\C)$ is prescribed, $a\geq0$ is a parameter, and the boundary term $2 \bar \nu \partial_{\bar z}$ involves some complex structure that we describe next. We give special interest to the study of its eigenvalues, and their dependency on the boundary parameter $a$.

We denote the outward normal unit vector to $\partial \Omega$ as $\nu$, and the tangential unit vector as $\tau$, so that $\{\nu, \tau\}$ is positively oriented. In Cartesian coordinates, $\nu=(\nu_1, \nu_2)$ and $\tau = (-\nu_2, \nu_1)$. The normal and tangential derivatives on $\partial \Omega$ shall be denoted $\partial_\nu$ and $\partial_\tau$, respectively. Namely, $\partial_\nu = \nu_1 \partial_1 + \nu_2 \partial_2$ and $\partial_\tau = \nu_1\partial_2 - \nu_2\partial_1$, where $(\partial_1,\partial_2)=\nabla$ denotes the gradient in $\R^2$.

We identify $\R^2\equiv\C$. In particular, we will make the identification $\R^2\ni\nu = (\nu_1,\nu_2)\equiv \nu_1 + i\nu_2\in\C$ and, accordingly, we write $\bar \nu =\nu_1 - i\nu_2$. We also use the complex notation $\partial_z := \frac{1}{2} (\partial_1 -i \partial_2)$ and $\partial_{\bar z} := \frac{1}{2} (\partial_1 +i \partial_2)$. With these identifications, $2 \bar \nu \partial_{\bar z} = \partial_\nu + i \partial_\tau$ and $2 \nu \partial_z = \partial_\nu - i \partial_\tau$, and thus the boundary condition in \eqref{eq:BvPRodzinLaplacian} can be seen as an oblique boundary condition; see, for example, \cite{Behrndt2023,Benhellal2025,Heriban2025} where oblique transmission conditions are studied for Schr\"odinger operators in $\R^2$.

To motivate the problem \eqref{eq:BvPRodzinLaplacian}, let us explore its similarity with the standard problem
\begin{equation} \label{eq:BvPRobinLaplacian}
    \begin{cases}
        -\Delta u = f & \text{in } \Omega, \\
        \partial_\nu u + au = 0 & \text{on } \partial\Omega,
    \end{cases}
\end{equation}
whose associated operator is the widely studied Robin Laplacian \cite{Antunes2013,Behrndt2010,Belgacem2018,Bogli2022,Bossel1986,Henrot2017Chapter4,Daners2006,Daners2007,Freitas2021}, defined\footnote{The reader may look at \Cref{sec:Notation}, where we recall the basic notation used throughout the paper.} by
\begin{equation} \label{eq:RobinLaplacian}
    \begin{split}
        \Dom(-\Delta_a) & := \left\{u\in H^2(\Omega):\,  \partial_\nu u + au = 0 \text{ in } H^{1/2}(\partial \Omega) \right\}, \\
        -\Delta_a u & := -\Delta u \quad \text{for all } u \in \Dom(-\Delta_a).
    \end{split}
\end{equation}
The weak formulation of \eqref{eq:BvPRobinLaplacian} is obtained from the equality
\begin{equation} \label{eq:weakMotivation}
    \int_\Omega f \, \overline v = \int_\Omega -\Delta u \, \overline v \quad \text{for all } v\in \mathcal C^\infty(\overline\Omega),
\end{equation}
when one decomposes the Laplacian as $\Delta = \mathrm{div}\nabla$, integrates by parts, and applies the boundary condition in \eqref{eq:BvPRobinLaplacian}. That is,
\begin{equation}
    \int_\Omega f \, \overline v = \int_\Omega -\mathrm{div}(\nabla u) \, \overline v = \int_\Omega \nabla u \cdot \overline{\nabla v} - \int_{\partial\Omega} \partial_\nu u \, \overline v = \int_\Omega \nabla u \cdot \overline{\nabla v} +a \int_{\partial\Omega} u \, \overline v \quad \text{for all } v\in \mathcal C^\infty(\overline\Omega).
\end{equation}
If in \eqref{eq:weakMotivation} one decomposes, instead, the Laplacian as $\Delta = 4\partial_z\partial_{\bar z}$ and integrates by parts, one obtains 
\begin{equation}
    \int_\Omega f \, \overline v = -4\int_\Omega \partial_z \partial_{\bar z} u \, \overline v = 4 \int_\Omega \partial_{\bar z} u \, \overline{\partial_{\bar z} v} - \int_{\partial\Omega} 2\overline \nu \partial_{\bar z} u \, \overline v \quad \text{for all } v\in \mathcal C^\infty(\overline\Omega).
\end{equation}
In order for the boundary integral to look like the one of the weak formulation of the Robin Laplacian, one must impose the boundary condition $2 \bar \nu \partial_{\bar z} u + au = 0$. This way, one obtains
\begin{equation} \label{eq:DivergenceToRodzin}
    \int_\Omega f \, \overline v = 4 \int_\Omega \partial_{\bar z} u \, \overline{\partial_{\bar z} v} +a \int_{\partial\Omega} u \, \overline v \quad \text{for all } v\in \mathcal C^\infty(\overline\Omega).
\end{equation}

Despite the similarity of the boundary problems \eqref{eq:BvPRodzinLaplacian} and \eqref{eq:BvPRobinLaplacian}, as far as we know the former is hardly studied in the literature; see point $(i)$ below. In contrast, the Robin problem \eqref{eq:BvPRobinLaplacian} is widely understood; see~\cite{Bogli2022,Henrot2017Chapter4} and the open problems therein. For example, it is well known that, for every $a>0$, the Robin Laplacian $-\Delta_a$ is self-adjoint in $L^2(\Omega)$ and has a purely discrete spectrum given by the min-max levels 
\begin{equation} \label{eq:firstRobinEigenvalue}
    \mu_{k}^{\mathrm{Rob}}(a) = \underset{\substack{ F\subset H^1(\Omega, \R) \\ \mathrm{dim}(F)=k }}{\inf} \, \, \underset{u\in F\setminus\{0\}}{\sup} \, \dfrac{\int_\Omega |\nabla u|^2 + a \int_{\partial\Omega} |u|^2}{\int_\Omega |u|^2}.
\end{equation}
Notice that these min-max levels can be taken minimizing among real-valued subspaces $F\subset H^1(\Omega, \R)$ because the Robin Laplacian decomposes into real and imaginary parts ---actually, one can straightforwardly see this from \eqref{eq:BvPRobinLaplacian}. This is not the case for the problem \eqref{eq:BvPRodzinLaplacian}, and thus one can not get rid of the complex structure inherent to it. 

This exhibits that \eqref{eq:BvPRodzinLaplacian} and \eqref{eq:BvPRobinLaplacian} are different problems. Hence, it is not evident whether the properties that the eigenvalues of the Robin Laplacian satisfy are analogously satisfied in the setting of \eqref{eq:BvPRodzinLaplacian}. In this work, we address this study. Our interest on \eqref{eq:BvPRodzinLaplacian} is twofold: 

\begin{enumerate}[label=$(\roman*)$]
    \item Firstly, it is an elliptic, non-coercive, complex valued PDE, a framework which is not quite standard in the literature. We are only aware of the work by Antunes, Benguria, Lotoreichik, and Ourmi\`eres-Bonafos \cite{Antunes2021}, which strongly inspires this work, and the work of Behrndt, Holzmann, and Stenzel, to appear in \cite{Behrndt2025}, of which we were aware while completing the present work. On the one hand, in \cite{Antunes2021} a non-linear version of our problem arises for studying the two-dimensional massless Dirac operator with infinite mass boundary conditions; see \Cref{sec:RelationRodzinDirac} for more details. On the other hand, in \cite{Behrndt2025} the problem
    \begin{equation} 
        \begin{cases}
            -\Delta u = f & \text{in } \Omega, \\
            \partial_\nu u +i \alpha \partial_\tau u + \beta u = 0 & \text{on } \partial\Omega,
        \end{cases}
    \end{equation}
    is studied distinguishing the cases $\alpha>1$, $\alpha=1$, and $\alpha \in [0,1)$, where $\beta\in \R$ is fixed, as then different qualitative spectral properties hold for the associated operator. Notice that in this work, we fix $\alpha=1$ and move $\beta\equiv a\in[0,+\infty)$. 
    \item Secondly, as shown in the paper \cite{DuranMasSanzPerela2025}, the eigenvalues of the problem~\eqref{eq:BvPRodzinLaplacian} turn out to characterize the positive eigenvalues of quantum dot Dirac operators. Using the family $\{\Rodzin_a\}_{a>0}$, we provide positive results supporting the conjecture that, among all $\mathcal C^2$ bounded domains with prescribed area, the first positive eigenvalue of the Dirac operator with infinite mass boundary conditions is minimal for a disk ---which is a hot open problem in spectral geometry \cite[Problem~5.1]{Webpage2019}. See \Cref{sec:RelationRodzinDirac} for a brief explanation of this. Actually, this connection with the quantum dot Dirac operators is the reason that motivated us to study the problem \eqref{eq:BvPRodzinLaplacian}, and originated the present work.
\end{enumerate}

\subsection{Notation} \label{sec:Notation}

In this section, we recall some basic notation regarding the Hilbert spaces and associated norms to be used throughout the paper. 

In the sequel, $\Omega$ denotes a bounded domain in 
$\R^2$ with $\mathcal C^2$ boundary. We denote by $L^2(\Omega)$ the Hilbert space of  functions $\varphi:\Omega\to\C$ endowed with the scalar product and the associated norm
\begin{equation}
    \langle \varphi,\psi\rangle_{L^2(\Omega)}:=\int_{\Omega} \varphi \, \overline\psi \,dx \quad\text{and}\quad
    \|\varphi\|_{L^2(\Omega)}:=\langle \varphi,\varphi\rangle_{L^2(\Omega)}^{1/2}.
\end{equation}
For $k\in \N$, we denote by $H^k(\Omega)$ the Sobolev space of functions in $L^2(\Omega)$ with weak partial derivatives up to order $k$ in $L^2(\Omega)$, and $H_0^k(\Omega)$ denotes the closure with respect to the $H^k(\Omega)-$norm of the set $\mathcal C^\infty_c(\Omega)$ of smooth functions compactly supported in $\Omega$. For $s\in(0,1)$, we denote by $H^s(\Omega)$ the fractional Sobolev space of functions $\varphi\in L^2(\Omega)$ such that 
    $$
        \|\varphi\|_{H^s(\Omega)}:= \Big( \int_{\Omega} |\varphi|^2 \, dx + \int_{\Omega}\int_{\Omega}\frac{|\varphi(x)-\varphi(y)|^2}{|x-y|^{2+2s}} \,dy \,dx \Big)^{1/2} < +\infty,
    $$
and we denote the continuous dual of $H^s(\Omega)$ by $H^{-s}(\Omega)$.

Similarly, $L^2(\partial\Omega)$ denotes the Hilbert space of functions $\varphi:\partial\Omega\to\C$ endowed with the scalar product and the associated norm
\begin{equation}
    \langle \varphi,\psi\rangle_{L^2(\partial\Omega)}:=\int_{\partial\Omega} \varphi \, \overline\psi \,d\upsigma \quad\text{and}\quad
    \|\varphi\|_{L^2(\partial\Omega)}:=\langle \varphi,\varphi\rangle_{L^2(\partial\Omega)}^{1/2},
\end{equation}
where $\upsigma$ denotes the surface (or arc-length) measure on $\partial\Omega$. For $s\in(0,1)$, we denote by $H^s(\partial \Omega)$ the fractional Sobolev space of functions $\varphi\in L^2(\partial\Omega)$ such that 
    $$
        \|\varphi\|_{H^s(\partial \Omega)}:= \Big( \int_{\partial\Omega} |\varphi|^2\,d\upsigma + \int_{\partial\Omega}\int_{\partial\Omega}\frac{|\varphi(x)-\varphi(y)|^2}{|x-y|^{1+2s}} \,d\upsigma(y)\,d\upsigma(x) \Big)^{1/2} < +\infty.
    $$
For the sake of simplifying the notation, we shall omit the measures of integration when no confusion arises.
    
The continuous dual of $H^s(\partial \Omega)$ is denoted by $H^{-s}(\partial \Omega)$. The action of $\varphi \in H^{-s}(\partial \Omega)$ on $\psi \in H^s(\partial \Omega)$ is denoted by the pairing $\langle \varphi, \psi \rangle_{H^{-s}(\partial\Omega), H^s(\partial\Omega)}$, and the norm in $H^{-s}(\partial \Omega)$ is
    $$
        \|\varphi\|_{H^{-s}(\partial \Omega)}:= 
        { \sup_{\|\psi\|_{H^s(\partial \Omega)}\leq1}} 
        \langle \varphi, \psi \rangle_{H^{-s}(\partial\Omega), H^s(\partial\Omega)} .
    $$
Recall that, whenever $\varphi \in L^2(\partial\Omega) \subset H^{-s}(\partial\Omega)$ and $\psi \in H^s(\partial\Omega) \subset L^2(\partial\Omega)$, the pairing satisfies
\begin{equation} \label{eq:Brezis}
    \langle \varphi, \psi \rangle_{H^{-s}(\partial\Omega), H^s(\partial\Omega)} = \overline{ \langle \varphi, \psi \rangle}_{L^2(\partial\Omega)};
\end{equation}
see, for example, \cite[Remark 3 in Section 5.2, and Section 11.4]{Brezis2011}. The reason why there is a complex conjugate in \eqref{eq:Brezis} is that we defined $\langle\cdot,\cdot\rangle_{L^2(\partial\Omega)}$ to be linear with respect to the first entry.

We conclude this section introducing some notation for the spectrum and its decomposition. The spectrum of a self-adjoint operator $A$ will be denoted by $\sigma(A)$. The point spectrum of~$A$ ---namely, the subset of $\sigma(A)$ consisting of eigenvalues of~$A$--- will be denoted by $\sigma_{\mathrm{p}}(A)$. The discrete spectrum of~$A$ ---namely, the subset of $\sigma_{\mathrm{p}}(A)$ consisting of isolated points in $\sigma(A)$ with finite multiplicity--- will be denoted by $\sigma_{\mathrm{d}}(A)$. The essential spectrum of~$A$ ---namely, the complement of $\sigma_{\mathrm{d}}(A)$ in $\sigma(A)$--- will be denoted by $\sigma_{\mathrm{ess}}(A)$.

\subsection{Qualitative behavior when the domain is a disk} \label{sec:QualitativeDisk}

We present here a brief summary of the spectral study of the operator associated to the problem \eqref{eq:BvPRodzinLaplacian} in the case in which the underlying domain $\Omega$ is a disk $D_R\subset \R^2$ of radius $R>0$ centered at the origin; by the translation invariance of the problem \eqref{eq:BvPRodzinLaplacian}, the position of the center of the disk is irrelevant.

In this radially symmetric domain, we can use polar coordinates and separation of variables to obtain equations for the eigenvalues and eigenfunctions of \eqref{eq:BvPRodzinLaplacian}, that is, for the solutions $(\mu,u)$ of
\begin{equation} \label{eq:eigenProblemDisk}
    \begin{cases}
        -\Delta u = \mu u & \text{in } D_R, \\
        2 \bar \nu \partial_{\bar z} u + au = 0 & \text{on } \partial D_R.
    \end{cases}
\end{equation}
The analysis done for the disk provides some intuition on which kind of situations one can expect when studying the operator associated to the problem \eqref{eq:BvPRodzinLaplacian} in a general bounded $\mathcal C^2$ domain $\Omega\subset \R^2$, for which no explicit formulas are available. A more detailed analysis including the proofs of the facts stated in this section can be found in \Cref{sec:EigenBall}. 

Using polar coordinates, the eigenvalue problem \eqref{eq:eigenProblemDisk} can be reduced to a Bessel-type ODE. After imposing the boundary conditions, we obtain the eigenvalue equations
\begin{equation} \label{eq:EigenEquationsDisk}
    \text{either} \quad a = \sqrt \mu \frac{J_{j+1}(\sqrt \mu R)}{J_j(\sqrt \mu R)}, \quad \text{or} \quad a + \frac{2j}{R} = \sqrt \mu \frac{J_{j+1}(\sqrt \mu R)}{J_j(\sqrt \mu R)}, \quad \text{for } j\in \N\cup\{0\},
\end{equation}
where $J_j$ is the $j-$th Bessel function of the first kind. If we solve numerically \eqref{eq:EigenEquationsDisk}, we can obtain $\mu$ in terms of $a$. Representing this graphically we obtain \Cref{fig:EigenRodzinBall}.

\vspace{-0.3cm}
\begin{figure}[h]
    \centering
    \includegraphics[width=0.65\linewidth]{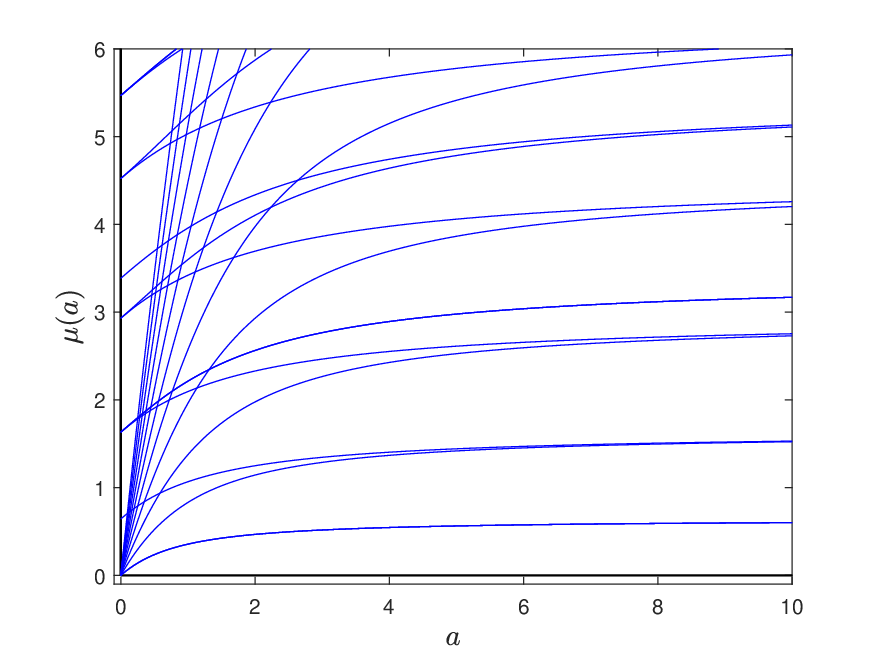}
    \vspace{-0.4cm}
    \caption{Some eigenvalue curves $a\mapsto \mu(a)$ on $D_R$ for $R=3$.}
    \label{fig:EigenRodzinBall}
\end{figure}
\vspace{-0.2cm}
As we see, the eigenvalues can be parametrized in terms of $a$, obtaining a family of increasing, bounded functions $a\mapsto \mu(a)$. These are the so-called \emph{eigenvalue curves}. Actually, in \Cref{prop:EigenDisk} we show that the right hand sides in \eqref{eq:EigenEquationsDisk} are functions of $\mu$ invertible in suitable intervals that are related with the zeros of the Bessel function appearing in the denominator, and the inverses are such eigenvalue curves.

In addition, the limit of any $\mu(a)$ as $a\to+\infty$ is always a positive zero of $J_j(\sqrt{\cdot} R)$ for some $j\in \N\cup\{0\}$. We recall that these are the eigenvalues of the Dirichlet Laplacian in $D_R$ ---the definition of this operator is recalled in \eqref{eq:OpDirichletLaplacian}. On the other hand, the limit of the functions $\mu(a)$ as $a\to 0^+$ is always either one such eigenvalue of the Dirichlet Laplacian in $D_R$, or zero. Actually, we see that several eigenvalue curves converge to $0$ as $a\to 0^+$.

In summary, all the eigenvalues of \eqref{eq:eigenProblemDisk} in $D_R$ can be represented as a set of monotone increasing curves parametrized by $a\in(0,+\infty)$, which may cross among them; see \Cref{fig:EigenRodzinBall}. For a given curve $a\mapsto \mu(a)$, we know which are the possible limits of $\mu(a)$ as $a\to 0^+$ and $a\to+\infty$: with the only exception of some eigenvalues that converge to $0$ as $a\to 0^+$, the limiting values of $\mu(a)$ are the eigenvalues of the Dirichlet Laplacian in $D_R$.

At least for the disk, these properties suggest the existence of a self-adjoint operator $\Rodzin_a$ associated to the problem \eqref{eq:BvPRodzinLaplacian}, and the existence of some limiting operators $\Rodzin_0$ and $\Rodzin_{+\infty}$ to which $\Rodzin_a$ converge as $a\to 0^+$ and $a\to +\infty$, respectively. 

In this paper we show that, for every bounded domain $\Omega\subset \R^2$ with $\mathcal C^2$ boundary, this is indeed the case: in \Cref{thm:IntroRodzinLaplacian} we describe the operator $\Rodzin_a$, in \Cref{thm:NRCDirichletLaplacian} we show that the limiting operator $\Rodzin_{+\infty}$ to which $\Rodzin_a$ converges as $a\to+\infty$ is the Dirichlet Laplacian, in \Cref{prop:SpectrumRodzin0} we describe the operator $\Rodzin_0$, and in \Cref{thm:SRCRodzin0} we show the convergence of~$\Rodzin_a$ to $\Rodzin_0$ as $a\to0^+$. We also show that the spectral properties that we have qualitatively seen from \Cref{fig:EigenRodzinBall} for the disk also hold for such domains. Actually, we characterize in \Cref{thm:PropertiesMukOmega} the ordered eigenvalues of $\Rodzin_a$ in terms of min-max levels of a functional linear in the boundary parameter $a$, and qualitatively describe their behaviour as $a$ moves in \Cref{thm:PropertiesMukOmegaGlobal}.

We conclude this section justifying the restriction $a\geq 0$. In the case in which the underlying domain $\Omega$ is a disk, analogous explicit computations as in \Cref{sec:EigenBall} can be performed when $a<0$, but then the plot of the eigenvalues that we obtain is the one of \Cref{fig:EigenRodzinBallNegative}.

\vspace{-0.3cm}
\begin{figure}[h]
    \centering
    \includegraphics[width=0.65\linewidth]{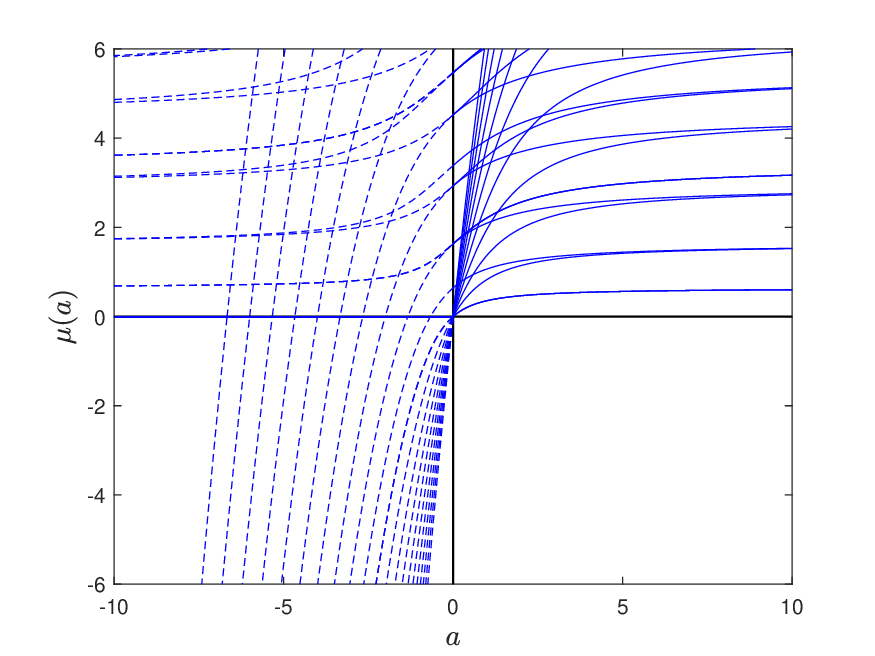}
    \vspace{-0.4cm}
    \caption{Some eigenvalue curves $a\mapsto \mu(a)$ on $D_R$ for $R=3$. The dashed curves correspond to $a<0$. The continuous curves are the ones of \Cref{fig:EigenRodzinBall}.}
    \label{fig:EigenRodzinBallNegative}
\end{figure}
\vspace{-0.2cm}
As we see, the qualitative behaviour is very different from the case in which $a>0$. Actually, \Cref{fig:EigenRodzinBallNegative} seems to suggest that the sequence of eigenvalues in the regime $a<0$ is not bounded from below. This is precisely the content of \Cref{rmk:Negativea}, and exhibits once more that the situation is very different from the standard Robin Laplacian. For this reason, we restrict to $a\in[0,+\infty)$.

\subsection{Main results} \label{sec:MainResults}

From now on, we shall assume that $\Omega$ is a $\mathcal C^2$ bounded domain in $\R^2$. In this section, we state our main results on the operator associated to the problem \eqref{eq:BvPRodzinLaplacian}, its spectral properties, and its behaviour as $a$ moves in $[0,+\infty)$.

The first main result is, indeed, the existence of a unique self-adjoint operator associated to the problem \eqref{eq:BvPRodzinLaplacian} ---for a precise description of such association, see \Cref{sec:AssociatedOperator}. The first part of the following theorem is proven in \Cref{sec:AssociatedOperator}, and the second part is proven in \Cref{sec:LocalSpectralProperties}.

\begin{theorem} \label{thm:IntroRodzinLaplacian}
    For every $a>0$, the operator
    \begin{equation} \label{eq:RodzinLaplacian}
        \begin{split}
            \Dom(\Rodzin_a) & := \left\{u\in H^1(\Omega): \, \partial_{\bar z} u \in H^1(\Omega), \, 2\bar \nu \partial_{\bar z}u + au = 0 \text{ in } H^{1/2}(\partial \Omega) \right\}, \\
            \Rodzin_a u & := -\Delta u \quad \text{for all } u \in \Dom(\Rodzin_a),
        \end{split}
    \end{equation}
    is the unique self-adjoint operator such that, for every $f\in L^2(\Omega)$, there exists a unique function in its domain solving \eqref{eq:BvPRodzinLaplacian} in the weak sense.

    Moreover, for every $\lambda \in \C\setminus\R$, the resolvent $(\Rodzin_a-\lambda)^{-1}$ is a bounded operator from $L^2(\Omega)$ to $H^1(\Omega)$. As a consequence, $(\Rodzin_a-\lambda)^{-1}$ is a compact operator from $L^2(\Omega)$ to $L^2(\Omega)$, and $\sigma(\Rodzin_a)$ is purely discrete.
\end{theorem}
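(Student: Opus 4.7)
Motivated by the weak formulation \eqref{eq:DivergenceToRodzin}, the plan is to construct $\Rodzin_a$ via Kato's first representation theorem applied to the sesquilinear form
\begin{equation*}
    \mathfrak{a}_a[u,v] := 4\int_\Omega \partial_{\bar z} u \, \overline{\partial_{\bar z} v} + a\int_{\partial\Omega} u\overline v .
\end{equation*}
A delicate point is the choice of form domain: taking naively $H^1(\Omega)$ fails to give a closed form. For instance on the unit disk, the sequence of holomorphic polynomials $u_k(z) = \sum_{n=1}^k z^n/n$ lies in $H^1$, is Cauchy in both $L^2$ and the $\mathfrak{a}_a$-norm (the volume part vanishes identically and the boundary part is controlled by $\sum 1/n^2$), yet its limit $-\log(1-z)$ belongs to the Hardy space but not to $H^1(D_1)$. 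I would therefore take $\Dom(\mathfrak{a}_a)$ to be the completion of $\mathcal{C}^\infty(\overline\Omega)$ in the graph norm $(\|u\|_{L^2}^2 + \|\partial_{\bar z}u\|_{L^2}^2 + \|u\|_{L^2(\partial\Omega)}^2)^{1/2}$; with this domain $\mathfrak{a}_a$ is densely defined, Hermitian, non-negative (both summands are $\geq 0$ since $a\geq 0$), and closed by construction.

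Kato's theorem then yields a unique non-negative self-adjoint operator $T$ with $\langle Tu, v\rangle_{L^2} = \mathfrak{a}_a[u,v]$ for every $u\in\Dom(T)$ and every $v$ in the form domain, and the task reduces to identifying $T=\Rodzin_a$. The inclusion $\Dom(\Rodzin_a)\subset\Dom(T)$ with $Tu=-\Delta u$ is immediate by integration by parts, using $\Delta=4\partial_z\partial_{\bar z}$ to produce $\mathfrak{a}_a[u,v] = \langle -\Delta u, v\rangle_{L^2} + \int_{\partial\Omega}(2\bar\nu\partial_{\bar z} u + au)\overline v$ and absorbing the boundary term by \eqref{eq:RodzinLaplacian}. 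The reverse inclusion is the regularity statement I expect to be the main obstacle: for $u\in\Dom(T)$, testing against $\mathcal{C}_c^\infty(\Omega)$ identifies $-\Delta u=Tu\in L^2(\Omega)$ distributionally and interior elliptic regularity gives $u\in H^2_{\mathrm{loc}}(\Omega)$, but near $\partial\Omega$ the boundary condition $\partial_\nu u+i\partial_\tau u+au=0$ is an oblique derivative at the critical parameter $\alpha=1$ in the sense of \cite{Behrndt2025}, where the Lopatinski--Shapiro condition degenerates and coercive elliptic estimates are unavailable. I would handle this by a Cauchy-type parametrix that splits $u$ into a holomorphic piece plus an $H^1$ remainder, and then exploit $a>0$ to transfer trace regularity back onto $u$, an approach consistent with the role of $a>0$ highlighted at the end of \Cref{sec:QualitativeDisk}. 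Once $\Rodzin_a=T$, uniqueness of the self-adjoint operator with the stated weak-solvability property follows from the uniqueness clause of Kato's theorem, and unique solvability for each $f\in L^2$ follows because $\ker\Rodzin_a=\{0\}$: $\mathfrak{a}_a[u,u]=0$ forces $\partial_{\bar z}u=0$ and $u|_{\partial\Omega}=0$, whence $u=0$ by unique continuation for holomorphic functions.

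For the resolvent bound, I fix $\lambda\in\C\setminus\R$ and $f\in L^2(\Omega)$, and set $u:=(\Rodzin_a-\lambda)^{-1}f$. Self-adjointness of $\Rodzin_a$ yields $\|u\|_{L^2}\leq|\im\lambda|^{-1}\|f\|_{L^2}$ from the imaginary part of $\langle(\Rodzin_a-\lambda)u,u\rangle=\langle f,u\rangle$, while the real part, combined with the non-negativity of $\mathfrak{a}_a$, bounds $\mathfrak{a}_a[u,u]\lesssim (|\re\lambda|+1)\|u\|_{L^2}^2+\|f\|_{L^2}^2$. Combining the a priori estimate $\|u\|_{H^1(\Omega)}\lesssim\|u\|_{L^2}+\|\Rodzin_a u\|_{L^2}$ coming from the regularity step with $\|\Rodzin_a u\|_{L^2}\leq\|f\|_{L^2}+|\lambda|\|u\|_{L^2}$ produces $\|u\|_{H^1}\leq C(\lambda)\|f\|_{L^2}$, giving boundedness of $(\Rodzin_a-\lambda)^{-1}\colon L^2(\Omega)\to H^1(\Omega)$. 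Composing with Rellich's compact embedding $H^1(\Omega)\hookrightarrow L^2(\Omega)$ on the bounded $\mathcal{C}^2$ domain $\Omega$ yields compactness of the resolvent on $L^2(\Omega)$, and the purely discrete spectrum of $\Rodzin_a$ follows from the standard spectral theorem for self-adjoint operators with compact resolvent.
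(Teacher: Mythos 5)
Your overall framework is the same as the paper's: the sesquilinear form $B_a$ on the space $E(\Omega)$ of \eqref{eq:AmbientHilbertSpace} (which coincides with your completion of $\mathcal C^\infty(\overline\Omega)$ in the graph norm, by the density statement in \Cref{lemma:AmbientHilbertSpace}), Kato's first representation theorem, identification of the associated operator with $\Rodzin_a$, and Rellich compactness for discreteness. Your counterexample showing that the form is not closed on $H^1(\Omega)$ is correct and well chosen. One small caveat: "closed by construction" for an abstract completion still requires checking that the completion injects into $L^2(\Omega)$ (i.e.\ that if $u_k\to0$ in $L^2(\Omega)$ while $\partial_{\bar z}u_k$ and the traces converge, the limits vanish); this is the routine Green-formula argument carried out in \Cref{lemma:AmbientHilbertSpace}.

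The genuine gap is exactly the step you yourself flag as the main obstacle and then only sketch: proving that every $u$ in the domain of the form-associated operator satisfies $u\in H^1(\Omega)$, $\partial_{\bar z}u\in H^1(\Omega)$, and the boundary condition in $H^{1/2}(\partial\Omega)$. From the weak formulation one only extracts $u,\ \partial_{\bar z}u,\ \Delta u\in L^2(\Omega)$ and $2\bar\nu\partial_{\bar z}u+au=0$ in $H^{-1/2}(\partial\Omega)$ (via \Cref{lemma:propertiesWirtinger} and the Green formulas). Your proposed "Cauchy-type parametrix splitting $u$ into a holomorphic piece plus an $H^1$ remainder" does not by itself close this: the Bergman/Hardy component a priori has only an $H^{-1/2}(\partial\Omega)$ trace, the boundary condition only returns $t_{\partial\Omega}u=-\tfrac2a\bar\nu\, t_{\partial\Omega}(\partial_{\bar z}u)\in H^{-1/2}(\partial\Omega)$, and — as you note — at the critical oblique parameter $\alpha=1$ there is no Lopatinski--Shapiro coercive estimate to bootstrap the half-derivative gain. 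So a genuine additional input is needed, and none is supplied. The paper supplies it through \Cref{lemma:AntunesRegularity}, proved by packaging $\varphi=(u,\tfrac{i}{\eta}\partial_{\bar z}u)^\intercal$ into a spinor satisfying infinite-mass-type boundary conditions and invoking the Dirac regularity theorem of \Cref{lemma:BenguriaRegularity} from \cite{Benguria2017Self} (the alternative, for $\mathcal C^\infty$ domains, being the pseudodifferential route of \cite{Antunes2021}). Without this, the identification $\Dom(T)=\Dom(\Rodzin_a)$ and hence the self-adjointness statement in the form claimed is unproven, and so is the $L^2(\Omega)\to H^1(\Omega)$ resolvent bound, since the a priori estimate you invoke as "coming from the regularity step" is not automatic: the paper proves it separately and quantitatively in \Cref{lemma:RegularityEstimatesRodzin} (with an explicit $1/a$ dependence, obtained by tangentially differentiating the boundary condition and a curvature identity); for fixed $a$ a closed-graph argument on $(\Dom(\Rodzin_a),\text{graph norm})\hookrightarrow H^1(\Omega)$ could substitute, but only after the regularity of the domain is established. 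Your remaining steps (kernel triviality via unique continuation, the $|\im\lambda|^{-1}$ bound, Rellich, discrete spectrum) are fine.
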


This operator is the counterpart of the one described in \cite[Propostition 31]{Antunes2021}, which has quadratic dependence on the parameter; see \Cref{sec:RelationRodzinDirac} for more details. Since the operator $\Rodzin_a$ will be the core of study of this work, we give it a name: the operator $\Rodzin_a$ defined in \eqref{eq:RodzinLaplacian} is called the \textbf{$\overline\partial$-Robin Laplacian}.

The following theorem gives a description of the eigenvalues $\{\mu_k(a)\}_{k\in\N}$ of $\Rodzin_a$ in terms of min-max levels. The space $E(\Omega)$ appearing throughout the theorem is the Hilbert space
\begin{equation} \label{eq:AmbientHilbertSpace}
    E(\Omega) := \left\{ u \in L^2(\Omega): \, \partial_{\bar z}u \in L^2(\Omega), \, u\in L^2(\partial\Omega) \right\},
\end{equation}
which we shall study in detail in \Cref{sec:HSandSFsetting}.

\begin{theorem} \label{thm:PropertiesMukOmega}
    For every $a>0$, the ordered eigenvalues of $\Rodzin_a$ (repeated according to their multiplicity) are
    \begin{equation} \label{eq:mukOmega}
        \mu_{k}(a) = \underset{\substack{ F\subset E(\Omega) \\ \mathrm{dim}(F)=k }}{\inf} \, \, \underset{u\in F\setminus\{0\}}{\sup} \, \dfrac{4\int_\Omega |\partial_{\bar z} u|^2 + a \int_{\partial\Omega} |u|^2}{\int_\Omega |u|^2}, \quad \text{where } k\in \N.
    \end{equation}
    In particular, $\mu_{k}(a)\in (0, \Lambda_{k})$, where $\Lambda_{k}$ is the $k-$th eigenvalue of the Dirichlet Laplacian. In addition, the min-max level \eqref{eq:mukOmega} is attained by a function $u_{k}(a)\in E(\Omega)$, which is an eigenfunction of $\Rodzin_a$ with eigenvalue $\mu_{k}(a)$. Moreover, $\{u_{k}(a)\}_{k\in \N}$ can be chosen to be pairwise orthogonal in $L^2(\Omega)$.
\end{theorem}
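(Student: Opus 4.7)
The plan is to identify $\Rodzin_a$ as the self-adjoint operator attached via Kato's first representation theorem to the symmetric sesquilinear form
\[
Q_a(u,v) := 4\int_\Omega \partial_{\bar z} u \, \overline{\partial_{\bar z} v} + a \int_{\partial\Omega} u \, \overline v, \qquad \Dom(Q_a) := E(\Omega),
\]
and then to read off \eqref{eq:mukOmega} from the Courant--Fischer min-max principle. This is exactly the form suggested by the weak formulation derived in \eqref{eq:DivergenceToRodzin}.

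The first step, building on the Hilbert-space structure and trace theory of $E(\Omega)$ developed in \Cref{sec:HSandSFsetting}, is to verify that $(Q_a, E(\Omega))$ is a densely defined, symmetric, closed, nonnegative sesquilinear form in $L^2(\Omega)$ whose Kato-associated operator coincides with $\Rodzin_a$ from \eqref{eq:RodzinLaplacian}. Closedness is inherited from the Hilbert structure of $E(\Omega)$ under its natural graph norm; the identification with $\Rodzin_a$ rests on the integration-by-parts identity $\langle \Rodzin_a u, v \rangle_{L^2(\Omega)} = Q_a(u,v)$ for $u \in \Dom(\Rodzin_a)$ and $v \in E(\Omega)$, exactly mirroring \eqref{eq:DivergenceToRodzin}, together with the uniqueness statement of \Cref{thm:IntroRodzinLaplacian}.

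Once the form--operator identification is in place, \eqref{eq:mukOmega} is the standard Courant--Fischer formula for the ordered eigenvalues of a self-adjoint operator with compact resolvent, applicable thanks to \Cref{thm:IntroRodzinLaplacian}. The attainment of the min-max by functions $u_k(a) \in E(\Omega)$ which can be chosen pairwise $L^2$-orthogonal follows from the spectral decomposition of the compact self-adjoint operator $(\Rodzin_a - \lambda)^{-1}$. For the bounds $\mu_k(a) \in (0, \Lambda_k)$: positivity is immediate, since $Q_a(u) = 0$ forces $\partial_{\bar z} u = 0$ in $\Omega$ and $u|_{\partial\Omega} = 0$ (because $a > 0$), i.e.\ $u$ is holomorphic with vanishing boundary trace, hence identically zero. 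For the upper bound, I would test the min-max with $F_k := \mathrm{span}(\phi_1, \ldots, \phi_k) \subset H_0^1(\Omega) \subset E(\Omega)$, where $\phi_1, \ldots, \phi_k$ are the first $k$ Dirichlet eigenfunctions; a Green-type identity gives
\[
4\int_\Omega |\partial_{\bar z} u|^2 = \int_\Omega |\nabla u|^2 + 2\int_\Omega \im(\partial_1 u \, \overline{\partial_2 u}),
\]
and the last integral reduces via Stokes' theorem to a boundary integral that vanishes on $H_0^1(\Omega)$, so $Q_a(u) = \int_\Omega |\nabla u|^2$ on $F_k$, whence $\mu_k(a) \leq \Lambda_k$. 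The strict inequality $\mu_k(a) < \Lambda_k$ is more delicate and would follow from a careful perturbation of $F_k$ by elements of $E(\Omega) \setminus H_0^1(\Omega)$, exploiting the fact that no eigenfunction of $\Rodzin_a$ (with $a > 0$) lies in $H_0^1(\Omega)$: if it did, the boundary condition together with $u|_{\partial\Omega} = 0$ would force $\partial_\nu u|_{\partial\Omega} = 0$, contradicting unique continuation for $-\Delta u = \mu u$ with zero Cauchy data.

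The main technical obstacle is the form--operator identification, particularly the reverse inclusion: any $u \in E(\Omega)$ for which $v \mapsto Q_a(u,v)$ extends to an $L^2(\Omega)$-continuous functional must actually lie in $\Dom(\Rodzin_a)$, i.e.\ enjoy $u \in H^1(\Omega)$, $\partial_{\bar z} u \in H^1(\Omega)$, and satisfy the oblique boundary condition in $H^{1/2}(\partial\Omega)$. The $\overline\partial$-structure makes the relevant elliptic regularity and trace estimates somewhat more delicate than in the classical Robin setting; this is where the detailed analysis of $E(\Omega)$ carried out in \Cref{sec:HSandSFsetting} plays a central role.
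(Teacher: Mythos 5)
Your route is essentially correct but genuinely different from the paper's. You identify $E(\Omega)$ as the form domain of $\Rodzin_a$ (this is \Cref{lemma:PropertiesSesquilinearForm} together with \Cref{prop:RodzinLaplacian}, so the ``main technical obstacle'' you single out is settled before the theorem is even stated) and then want to read off \eqref{eq:mukOmega} from Courant--Fischer. For that to give the infimum over subspaces of $E(\Omega)$ rather than of $\Dom(\Rodzin_a)$ --- exactly the enlargement the paper flags as \emph{not} following from the operator-domain min-max theorem it cites --- you must invoke the form-domain version of the min-max principle, or equivalently argue that $\Dom(\Rodzin_a)$ is a core of $B_a$ (Kato's first representation theorem) and that Rayleigh quotients over finite-dimensional subspaces pass to form-norm limits. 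Granted that, discreteness of the spectrum from \Cref{thm:IntroRodzinLaplacian} yields attainment by pairwise $L^2$-orthogonal eigenfunctions, your positivity argument is fine (a holomorphic $L^2$ function with zero $L^2$ boundary trace vanishes, cf.\ \Cref{rmk:CauchyPompeiu}), and your non-strict bound $\mu_k(a)\le\Lambda_k$ is the complex-valued version of \Cref{lemma:RelationGradientDz}; the paper instead tests with real-valued $H^1_0$ functions, where $4|\partial_{\bar z}u|^2=|\nabla u|^2$ pointwise. The paper's own proof avoids the abstract form min-max altogether: it runs a self-contained induction by the direct method (minimizing sequences, the compact embedding $E(\Omega)\hookrightarrow L^2(\Omega)$ of \Cref{lemma:AmbientHilbertSpace}, Euler--Lagrange equations, and the characterization \eqref{eq:CharacterizationOfRodzin}), which besides the formula delivers the restricted characterization over the orthogonal complement of the previous eigenfunctions and machinery reused later for the eigenvalue curves; your route is shorter for the formula itself but leans on the abstract result.

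The one genuine soft spot is the strict inequality $\mu_k(a)<\Lambda_k$. ``A careful perturbation of $F_k$ by elements of $E(\Omega)\setminus H^1_0(\Omega)$'' is not an argument, and it is unclear how you would implement it for $k\ge 2$. You do name the right mechanism --- no eigenfunction of $\Rodzin_a$ with $a>0$ can have vanishing boundary trace, since the boundary condition would then force zero Neumann data as well --- but the clean way to use it is by contradiction, as the paper does: if $\mu_k(a)=\Lambda_k$, test with the span $F$ of the first $k$ Dirichlet eigenfunctions, pick $u\in F\setminus\{0\}$ that is $L^2$-orthogonal to the first $k-1$ eigenfunctions of $\Rodzin_a$; then its Rayleigh quotient is squeezed to equal $\mu_k(a)$, so $u$ is an eigenfunction of $\Rodzin_a$ lying in $H^1_0(\Omega)$, hence has zero Dirichlet and Neumann data and must vanish by \cite[Lemma 1]{Gregoire1976}, a contradiction. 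With that replacement (and with the form-domain min-max made explicit) your proposal closes.
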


The reader can find in \Cref{sec:LocalSpectralProperties} a constructive proof of \Cref{thm:PropertiesMukOmega} based on the min-max~Theorem appearing in~\cite[Theorem 4.14]{Teschl2014}; see also \cite[Theorem 4.5.1]{Davies1995}. Such version of the min-max Theorem yields that the ordered eigenvalues of $\Rodzin_a$ are given by the min-max levels 
\begin{equation}
    \underset{\substack{ F\subset \mathrm{Dom}(\Rodzin_a) \\ \mathrm{dim}(F)=k }}{\inf} \, \, \underset{u\in F\setminus\{0\}}{\sup} \, \dfrac{\langle \Rodzin_a u, u \rangle_{L^2(\Omega)}}{\int_\Omega |u|^2} = \underset{\substack{ F\subset \mathrm{Dom}(\Rodzin_a) \\ \mathrm{dim}(F)=k }}{\inf} \, \, \underset{u\in F\setminus\{0\}}{\sup} \, \dfrac{4\int_\Omega |\partial_{\bar z} u|^2 + a \int_{\partial\Omega} |u|^2}{\int_\Omega |u|^2},
\end{equation}
where the infimum is taken among subspaces $F$ of $\mathrm{Dom}(\Rodzin_a)$. \Cref{thm:PropertiesMukOmega} evokes a stronger version of the min-max Theorem ---see \cite[Theorem 4.5.3]{Davies1995}---, because the infimum in \eqref{eq:mukOmega} is taken among subspaces $F$ of $E(\Omega) \supsetneq \mathrm{Dom}(\Rodzin_a)$, and $E(\Omega)$ is independent of $a$ ---unlike~$\mathrm{Dom}(\Rodzin_a)$. This will be convenient for compactness arguments. Alternatively, \Cref{thm:PropertiesMukOmega} can be obtained from \cite[Lemma 4.4.1, Corollary 4.4.3, and Theorem 4.5.3]{Davies1995} in combination with \Cref{prop:RodzinLaplacian} below. We thank the anonymous referee for bringing this fact to our attention.

We point out that the min-max level $\mu_k(a)$ of \Cref{thm:PropertiesMukOmega} looks similar to the min-max level $\mu_{k}^{\mathrm{Rob}}(a)$ in \eqref{eq:firstRobinEigenvalue} of the Robin Laplacian. Actually, changing in \eqref{eq:mukOmega} $2\partial_{\bar z} u$ by $\nabla u$ and $E(\Omega)$ by $H^1(\Omega,\R)$, we get \eqref{eq:firstRobinEigenvalue}.

\Cref{thm:PropertiesMukOmega} gives a description of the eigenvalues $\mu_{k}(a)$ of $\Rodzin_a$, for every fixed $a>0$. The following theorem exhibits the qualitative properties that the functions $a \mapsto \mu_{k}(a)$ satisfy; see \Cref{sec:GlobalSpectralProperties} for a proof.

\begin{theorem} \label{thm:PropertiesMukOmegaGlobal}
    For every $a>0$, let $\mu_k(a)$ be the $k-$th eigenvalue of $\Rodzin_a$ (repeated according to the multiplicity). Then, as a function of $a$, for every $k\in \N$ the following holds:
    \begin{enumerate}[label=$(\roman*)$]
        \item \label{item:ContIncBij} $a\mapsto \mu_{k}(a)$ is continuous, strictly increasing, and bijective from $(0,+\infty)$ to $(0,\Lambda_{k})$, where $\Lambda_{k}$ is the $k-$th eigenvalue of the Dirichlet Laplacian in $\Omega$.
        \item \label{item:Analytic} $a\mapsto \mu_{k}(a)$ is piecewise analytic in $(0,+\infty)$, and the set of points $X_k$ where the function is not analytic is finite in every compact subset of $(0,+\infty)$. In addition, there exists a function $u_{k}:(0,+\infty)\to E(\Omega)$ that is continuous in $(0,+\infty)\setminus X_k$ with respect to the $E(\Omega)-$norm and satisfies
        \begin{equation}
            \mu_{k}(a) = \dfrac{4\int_\Omega |\partial_{\bar z} u_{k}(a)|^2 + a \int_{\partial\Omega} |u_{k}(a)|^2}{\int_\Omega |u_{k}(a)|^2} \quad \text{for every } a\in(0,+\infty).
        \end{equation}
        \item \label{item:Derivative} If $a\in (0,+\infty)\setminus X_k$ is a point where $\mu_{k}(a)$ is analytic and $u_{k}(a)$ is as in $(ii)$, then
        \begin{equation} 
            \mu_{k}'(a) := \dfrac{d}{da} \mu_{k}(a) = \frac{\int_{\partial\Omega} |u_{k}(a)|^2}{\int_\Omega |u_{k}(a)|^2} > 0.
        \end{equation}
        \item \label{item:Concavity} For $k=1$, $a\mapsto \mu_{1}(a)$ is strictly concave in $(0,+\infty)$.
    \end{enumerate}
\end{theorem}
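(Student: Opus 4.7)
The plan is to combine the min-max characterization of \Cref{thm:PropertiesMukOmega}---given on the $a$-independent form space $E(\Omega)$---with Kato's analytic perturbation theory applied to the quadratic form
\begin{equation}
    \mathfrak r_a[u] := 4\int_\Omega|\partial_{\bar z}u|^2 + a\int_{\partial\Omega}|u|^2,\qquad u\in E(\Omega),
\end{equation}
whose affine dependence on $a$ with fixed form domain makes $\{\Rodzin_a\}_{a>0}$ a self-adjoint holomorphic family of type (B). I would first establish (ii) via Kato, deduce (iii) via Feynman--Hellmann, use (iii) together with the resolvent convergences \Cref{thm:NRCDirichletLaplacian} and \Cref{thm:SRCRodzin0} to obtain (i), and finally treat (iv) directly from the min-max formula.

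For (ii), Kato's theorem provides a global labeling of the eigenvalues of $\Rodzin_a$ by real-analytic functions on $(0,+\infty)$, each confined to some $(0,\Lambda_j)$, together with associated analytic normalized eigenfunctions in $E(\Omega)$. Two distinct real-analytic functions agree on at most a discrete set, so the set of collision points of these branches is locally finite in $(0,+\infty)$. Reordering the branches at each $a$ to match the min-max ordering $\mu_1(a)\le\mu_2(a)\le\cdots$ yields piecewise analytic functions $a\mapsto\mu_k(a)$ whose exceptional set $X_k$ is exactly the set of parameters where the analytic branch realizing $\mu_k$ switches. Continuity of $\mu_k$ across $X_k$ is automatic: since $R_a(u)$ is continuous (indeed affine) in $a$ for each fixed $u\in E(\Omega)$, the min-max levels $\mu_k(a)$ are continuous in $a$.

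For (iii), on an interval $I\subset(0,+\infty)\setminus X_k$ let $u_k:I\to E(\Omega)$ be the analytic eigenfunction chosen in (ii). Writing $B_a$ for the sesquilinear form associated to $\mathfrak r_a$ and differentiating the identity $B_a(u_k(a),u_k(a)) = \mu_k(a)\,\|u_k(a)\|_{L^2(\Omega)}^2$ in $a$, the cross terms involving $\partial_a u_k$ cancel thanks to the eigenvalue identity $B_a(\partial_a u_k,u_k) = \mu_k(a)\langle\partial_a u_k,u_k\rangle_{L^2(\Omega)}$, while the $a$-derivative of $B_a$ itself produces exactly $\|u_k(a)\|_{L^2(\partial\Omega)}^2$, yielding the Feynman--Hellmann formula. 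Strict positivity comes from a Cauchy-problem argument: if $u_k(a)|_{\partial\Omega}\equiv 0$, then $\partial_\tau u_k=0$ on $\partial\Omega$, and the boundary condition written as $\partial_\nu u_k + i\partial_\tau u_k = -au_k$ forces $\partial_\nu u_k=0$ as well, so unique continuation for $-\Delta u_k = \mu_k(a)u_k$ gives $u_k\equiv 0$, a contradiction. With $\mu_k'(a)>0$ on $(0,+\infty)\setminus X_k$ and $\mu_k$ continuous on $(0,+\infty)$, strict monotonicity in (i) follows by integration. The bound $\mu_k(a)<\Lambda_k$ is part of \Cref{thm:PropertiesMukOmega}; the limit $\mu_k(a)\to\Lambda_k$ as $a\to+\infty$ follows from the norm-resolvent convergence to the Dirichlet Laplacian \Cref{thm:NRCDirichletLaplacian}, while $\mu_k(a)\to 0$ as $a\to 0^+$ is obtained by testing the min-max against the $k$-dimensional subspace $\mathrm{span}\{1,z,\ldots,z^{k-1}\}\subset E(\Omega)$ of holomorphic polynomials, on which $\partial_{\bar z}\equiv 0$ and hence the Rayleigh quotient is bounded by $aC_k$ with $C_k$ independent of $a$.

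For (iv), the representation $\mu_1(a) = \inf_{u\in E(\Omega)\setminus\{0\}} R_a(u)$ with $R_a(u)$ affine in $a$ makes $\mu_1$ concave as an infimum of affine functions. For strict concavity, suppose $\mu_1(c) = t\mu_1(a) + (1-t)\mu_1(b)$ with $c = ta + (1-t)b$, $t\in(0,1)$ and $a\neq b$, and let $u_c$ be a first eigenfunction of $\Rodzin_c$. The inequalities $R_a(u_c)\ge\mu_1(a)$ and $R_b(u_c)\ge\mu_1(b)$ must both become equalities, so $u_c$ is simultaneously a first eigenfunction of $\Rodzin_a$ and $\Rodzin_b$; the pointwise equation $-\Delta u_c = \mu_1(a)u_c = \mu_1(b)u_c$ combined with $\mu_1(a)\neq\mu_1(b)$ from (i) forces $u_c\equiv 0$, a contradiction. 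The main obstacle I foresee is part (ii): one must carefully verify the hypotheses of Kato's type-(B) theorem for $\mathfrak r_a$ (closedness, sectoriality, and analyticity on the common form domain $E(\Omega)$) and then reconcile the global analytic labeling of eigenvalue branches with the min-max ordering so as to pin down $X_k$ and obtain the continuity of the chosen eigenfunction $u_k$ on $(0,+\infty)\setminus X_k$.
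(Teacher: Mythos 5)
Your overall architecture coincides with the paper's: Kato's type-(B) theory for the analytic branches and the locally finite exceptional set $X_k$, a Feynman--Hellmann identity for the derivative, vanishing Cauchy data plus unique continuation for the strict positivity of $\mu_k'$, the norm-resolvent convergence of \Cref{thm:NRCDirichletLaplacian} for the limit at $+\infty$, a subspace of holomorphic functions for the limit at $0^+$, and inf-of-affine-functions plus a rigidity argument for strict concavity of $\mu_1$. Your test space $\mathrm{span}\{1,z,\dots,z^{k-1}\}$ for $\mu_k(a)\to0$ is a pleasant simplification of the paper's use of \Cref{prop:MinMaxSplope}, and your treatment of (i) and (iv) is essentially identical to the paper's. (Your claim that continuity of $\mu_k$ follows merely from continuity of the Rayleigh quotient in $a$ is glossed---an infimum of continuous functions need not be continuous---but it is easily repaired, e.g.\ via the elementary bound $\mu_k(a)\le\mu_k(b)\le (b/a)\,\mu_k(a)$ for $0<a<b$, or, as in the paper, from the piecewise-analytic Kato parametrization itself.)

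The genuine gap is in how you extract (ii)--(iii) from Kato. \Cref{prop:AnalyticityKato} yields eigenvalue branches and eigenvectors $u^n(a)$ that are analytic only as $L^2(\Omega)$-valued functions; it does not provide ``analytic normalized eigenfunctions in $E(\Omega)$''. The continuity of $a\mapsto u_k(a)$ in the $E(\Omega)$-norm, which is part of the statement of (ii), is a separate nontrivial fact: the paper proves it in \Cref{lemma:continuityEigenRodzin}, exploiting the weak eigenvalue equation, the equivalence of $\|\cdot\|_a$ with $\|\cdot\|_{E(\Omega)}$ (via \Cref{lemma:PoincareTypeIneq}), and the continuity of $\mu_k$ and of $u_k$ in $L^2(\Omega)$. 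Consequently, your Feynman--Hellmann cancellation as written is not justified: to cancel the cross terms you need $\partial_a u_k(a)$ to exist as an element of the form domain $E(\Omega)$ (so that $B_a(\partial_a u_k,u_k)$ even makes sense), and $L^2$-analyticity does not give this. The paper avoids differentiating the eigenfunction altogether: it tests the weak equation for $u_k(a)$ against $u_k(b)$ and vice versa, subtracts, divides by $\langle u_k(a),u_k(b)\rangle_{L^2(\Omega)}\neq 0$ for $b$ close to $a$, and passes to the limit in the resulting difference quotient using only the $E(\Omega)$-continuity of \Cref{lemma:continuityEigenRodzin}. To close your argument you should either adopt this difference-quotient scheme after proving the $E(\Omega)$-continuity of the eigenfunction branch, or explicitly invoke Kato's first-order perturbation formula for the eigenvalues of type-(B) families; as it stands, both the derivative formula in (iii) and the $E(\Omega)$-continuity claimed in (ii) rest on an attribution to Kato's theorem of more than it delivers.
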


Some remarks are in order about properties that hold for the Robin Laplacian, which one could expect to hold as well for the $\overline\partial$-Robin Laplacian in view of their similarity, but that are not stated in \Cref{thm:PropertiesMukOmegaGlobal} and will not be addressed in the present work.

\begin{remark} \label{rmk:simplicity}
    In this work we do not address the question whether the first eigenvalue $\mu_{1}(a)$ is simple or not, which a priori does not seem an easy task as with the Robin Laplacian. However, when the underlying domain is a disk, we can ensure that the first eigenvalue is simple. Actually, from the explicit computations of \Cref{sec:EigenBall}, we can show that the first eigenvalue of the $\overline\partial$-Robin Laplacian $\Rodzin_a$ coincides with the first eigenvalue of the Robin Laplacian $-\Delta_a$, and that the associated eigenfunction for $\Rodzin_a$ is (up to a multiplicative constant) the same as the first eigenfunction for $-\Delta_a$; this is the content of \Cref{cor:MuDiskIsMuRobin} in \Cref{sec:EigenBall}.

    The basic reason why this holds true is that the first eigenfunction of the $\overline\partial$-Robin Laplacian in a disk $D$ is constant on the boundary. Denoting it by $u$, it therefore holds that $\partial_\tau u =0$ on~$\partial\Omega$. As a consequence, the boundary condition for $u$ writes 
    \begin{equation}
        0 = \partial_\nu u + i\partial_\tau u + au = \partial_\nu u + au \quad  \text{on } \partial \Omega,
    \end{equation}
    and hence coincides with the Robin boundary condition. In an arbitrary domain $\Omega$, the first eigenfunction of the Robin Laplacian $v$ may not satisfy $\partial_\tau v = 0$ on $\partial\Omega$, and hence it may not satisfy the boundary condition of the $\overline\partial$-Robin Laplacian,
    \begin{equation}
        \partial_\nu v + i\partial_\tau v + av = 0 \quad  \text{on } \partial \Omega.
    \end{equation}
    Actually, since $H^1(\Omega, \R)\subsetneq E(\Omega)$, it holds that the first eigenvalue of the $\overline\partial$-Robin Laplacian is smaller or equal than the first eigenvalue of the Robin Laplacian, and in general one should expect a strict inequality.
\end{remark}

\begin{remark} \label{rmk:concavity}
    Although, as asserted in \cite[Section 4.3.2]{Henrot2017Chapter4}, all the ordered eigenvalues of the Robin Laplacian are locally concave almost everywhere in $(0,+\infty)$ ---and the same seems to hold for the $\overline\partial$-Robin Laplacian in a disk in view of \Cref{fig:EigenRodzinBall}---, we have numerical evidence that this is not in general the case for the rest of eigenvalues $\mu_{k}(a)$, $k\geq 2$, of the $\overline\partial$-Robin Laplacian. Actually, solving numerically the eigenvalue equations derived analogously as in \Cref{sec:EigenBall} when the underlying domain $\Omega$ is an annulus, we obtain the plot shown in \Cref{fig:EigenRodzinAnnulus}, from where we see that some of the eigenvalue curves are convex in some regions.
\end{remark}
\vspace{-0.55cm}
\begin{figure}[H]
    \centering
    \includegraphics[width=0.6\linewidth]{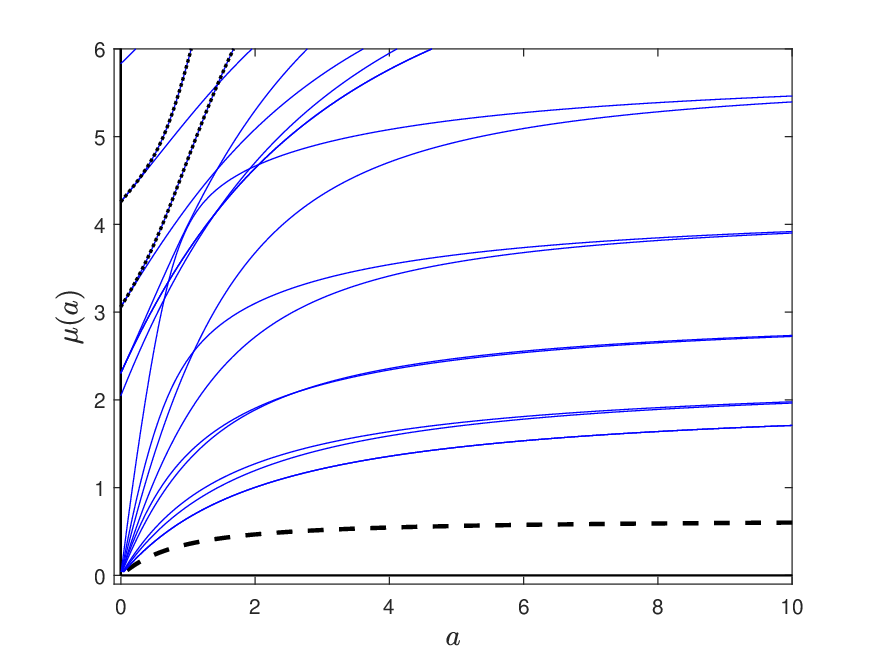}
    \vspace{-0.4cm}
    \caption{Some eigenvalue curves $a\mapsto \mu(a)$ on an annulus of inner radius $1$ and outer radius $\sqrt{10}$. The black pointed curves are not locally concave. The black dashed curve corresponds to the first eigenvalue curve on the disk of same area.}
    \label{fig:EigenRodzinAnnulus}
\end{figure}
\vspace{-0.35cm}
\begin{remark}
    In this work we do not address the question whether a Faber-Krahn type inequality for $\mu_{1}(a)$ holds for every $a>0$ or not. This should be more difficult than proving the Faber-Krahn inequality for the first eigenvalue of the Robin Laplacian, namely, the so-called \emph{Bossel-Daners-Kennedy inequality} \cite[Theorem 1.1]{Daners2007} ---first established in two dimensions by Bossel~\cite{Bossel1986}, and then in any dimension by Daners and Kennedy~\cite{Daners2006,Daners2007}. Indeed, if a Faber-Krahn type inequality for $\mu_{1}(a)$ was true for every $a>0$, by \Cref{cor:MuDiskIsMuRobin} and \Cref{rmk:simplicity} the Bossel-Daners-Kennedy inequality would follow from it. In \Cref{sec:RelationRodzinDirac}, we will briefly comment how in the paper \cite{DuranMasSanzPerela2025} we relate such a Faber-Krahn type inequality for $\mu_{1}(a)$ with the analogous question for the first positive eigenvalue of quantum dot Dirac operators. Further shape optimization problems for the eigenvalues of the $\overline\partial$-Robin Laplacian in the spirit of the ones conjectured in \cite{Antunes2024,Briet2022,Laugesen2019} would also be interesting to be addressed in the future.
\end{remark}

Our last main results concern the convergence of the operator $\Rodzin_a$ as the boundary parameter moves in $(0,+\infty)$. First, we address the limit as $a\to+\infty$. From the boundary condition $2\bar \nu \partial_{\bar z}u + au = 0$ in $H^{1/2}(\partial \Omega)$ one could heuristically expect that the trace of every $u\in \Dom(\Rodzin_a)$ tends to zero in $H^{1/2}(\partial\Omega)$ as $a\to+\infty$. In such case, the operator formally obtained taking this limit in the definition of $\Rodzin_a$ would be the Dirichlet Laplacian $-\Delta_D$, which we recall to be
\begin{equation} \label{eq:OpDirichletLaplacian}
    \begin{split}
        \Dom(-\Delta_D) & := H^2(\Omega)\cap H^1_0(\Omega), \\
        -\Delta_D u & := -\Delta u \quad \text{for all } u \in \Dom(-\Delta_D).
    \end{split}
\end{equation}
The following result, a proof of which shall be given in \Cref{sec:NRCDirichlet}, formalizes these heuristics.

\begin{theorem} \label{thm:NRCDirichletLaplacian}
    The $\overline\partial$-Robin Laplacian $\Rodzin_a$ converges to the Dirichlet Laplacian $-\Delta_D$ in the norm resolvent sense, as $a\to+\infty$. As a consequence, $\lim_{a\to+\infty} \sigma(\Rodzin_a) = \sigma(-\Delta_D)$, in the sense that
    \begin{enumerate}[label=$(\roman*)$]
        \item any $\Lambda \in \sigma(-\Delta_D)$ is the limit, as $a\to+\infty$, of some $\mu(a) \in \sigma(\Rodzin_a)$, and 
        \item if a sequence of $\mu(a) \in \sigma(\Rodzin_a)$ converges to some value $\Lambda$ as $a\to+\infty$, then $\Lambda \in \sigma(-\Delta_D)$.
    \end{enumerate}
    In particular, the $k-$th eigenvalue of $\Rodzin_a$ converges to the $k-$th eigenvalue of $-\Delta_D$, as $a\to +\infty$.
\end{theorem}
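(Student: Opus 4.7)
The plan is to establish norm resolvent convergence at the point $\lambda=-1$ via a direct energy estimate, and then deduce items $(i)$, $(ii)$, and the $k$-th eigenvalue convergence from standard consequences of norm resolvent convergence. Note that $\lambda=-1$ lies in the common resolvent set of $\Rodzin_a$ and $-\Delta_D$ since both operators have strictly positive spectrum (see \Cref{thm:PropertiesMukOmega}).

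For $f\in L^2(\Omega)$, set $u_a:=(\Rodzin_a+1)^{-1}f\in\Dom(\Rodzin_a)$ and $u:=(-\Delta_D+1)^{-1}f\in H^2(\Omega)\cap H^1_0(\Omega)$. Extending \eqref{eq:DivergenceToRodzin} to arbitrary test functions in the form domain $E(\Omega)$ (as provided by the construction of $\Rodzin_a$ announced in \Cref{thm:IntroRodzinLaplacian}), the weak formulation of $\Rodzin_a u_a+u_a=f$ reads
\begin{equation*}
4\int_\Omega \partial_{\bar z}u_a\,\overline{\partial_{\bar z}v}+a\int_{\partial\Omega}u_a\,\overline v+\int_\Omega u_a\,\overline v=\int_\Omega f\,\overline v,\qquad v\in E(\Omega).
\end{equation*}
Applied instead to the solution $u$ of $-\Delta u+u=f$ with Dirichlet data, the same integration by parts, together with $u=0$ (hence $\partial_\tau u=0$) on $\partial\Omega$, yields
\begin{equation*}
4\int_\Omega \partial_{\bar z}u\,\overline{\partial_{\bar z}v}-\int_{\partial\Omega}\partial_\nu u\,\overline v+\int_\Omega u\,\overline v=\int_\Omega f\,\overline v,\qquad v\in E(\Omega),
\end{equation*}
where I have used $2\overline\nu\partial_{\bar z}u=\partial_\nu u+i\partial_\tau u=\partial_\nu u$ on $\partial\Omega$. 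Subtracting, setting $w:=u_a-u\in E(\Omega)$ (and noting $u|_{\partial\Omega}=0$), testing with $v=w$, and taking real parts gives
\begin{equation*}
4\|\partial_{\bar z}w\|_{L^2(\Omega)}^2+a\|w\|_{L^2(\partial\Omega)}^2+\|w\|_{L^2(\Omega)}^2=-\re\int_{\partial\Omega}\partial_\nu u\,\overline w.
\end{equation*}

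Young's inequality bounds the right-hand side by $\frac{1}{2a}\|\partial_\nu u\|_{L^2(\partial\Omega)}^2+\frac{a}{2}\|w\|_{L^2(\partial\Omega)}^2$; the second term is absorbed into the left, leaving
\begin{equation*}
\|w\|_{L^2(\Omega)}^2\leq \tfrac{1}{2a}\|\partial_\nu u\|_{L^2(\partial\Omega)}^2.
\end{equation*}
Elliptic regularity for the Dirichlet Laplacian on a $\mathcal C^2$ domain and the trace theorem give $\|\partial_\nu u\|_{L^2(\partial\Omega)}\leq C\|u\|_{H^2(\Omega)}\leq C'\|f\|_{L^2(\Omega)}$, whence
\begin{equation*}
\bigl\|(\Rodzin_a+1)^{-1}-(-\Delta_D+1)^{-1}\bigr\|_{L^2(\Omega)\to L^2(\Omega)}\leq \frac{C''}{\sqrt{a}}\xrightarrow[a\to+\infty]{}0,
\end{equation*}
which is the claimed norm resolvent convergence (at one point, and hence at every point of the common resolvent set).

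Items $(i)$ and $(ii)$ then follow from the standard fact that norm resolvent convergence implies convergence of spectra in the Hausdorff sense on compact subsets. The convergence $\mu_k(a)\to\Lambda_k$ follows from these combined with the bound $\mu_k(a)<\Lambda_k$ from \Cref{thm:PropertiesMukOmega}, the monotonicity from \Cref{thm:PropertiesMukOmegaGlobal}, and a counting argument in small neighborhoods of each $\Lambda_k$. The main technical point of the proof is justifying the weak formulation for $\Rodzin_a$ on the full form domain $E(\Omega)$ rather than only on $\Dom(\Rodzin_a)$; this is ensured by the quadratic-form construction of $\Rodzin_a$ developed earlier in the paper. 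Once this is available, the remainder is a one-line Young's inequality plus the standard elliptic regularity estimate for $-\Delta_D$.
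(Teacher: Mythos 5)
Your argument is correct, but it follows a genuinely different route from the paper. You work at the real point $\lambda=-1$ and run an energy estimate: subtract the two weak formulations (the one for $u_a$ valid on all of $E(\Omega)$ by the form construction \eqref{eq:DeterminationOfRodzin}/\Cref{prop:RodzinLaplacian}, the one for the Dirichlet solution justified by the Green formulas \eqref{eq:GreenFormulas} together with \eqref{eq:Brezis}, since $\partial_{\bar z}u\in H^1(\Omega)$), test with $w=u_a-u$, and absorb the boundary term by Young's inequality; this gives $\|(\Rodzin_a+1)^{-1}-(-\Delta_D+1)^{-1}\|\leq C a^{-1/2}$, and the passage from one point to norm resolvent convergence proper is the standard resolvent-identity argument using the uniform bound $\|(\Rodzin_a-z)^{-1}\|\leq |\operatorname{Im}z|^{-1}$ (worth one explicit line, since your parenthetical ``hence at every point of the common resolvent set'' should really be ``at every non-real $z$''). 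The paper instead estimates the sesquilinear pairing $\langle W_{a,D}f,g\rangle$ at $\lambda=i$ with two independent data $f,g$: integration by parts plus both boundary conditions reduce it to $\tfrac4a\langle\partial_{\bar z}\varphi_a,\partial_{\bar z}\psi\rangle_{L^2(\partial\Omega)}$, which is bounded via the $H^{-1/2}$--$H^{1/2}$ duality, the trace estimate of \Cref{lemma:propertiesWirtinger}, the a priori resolvent bounds, and $H^2$ regularity of the Dirichlet problem. What each buys: the paper's duality argument yields the sharper rate $O(a^{-1})$ and is the template reused for \Cref{prop:NRCFiniteValues} and \Cref{thm:SRCRodzin0}, while your argument is more elementary (no negative-order trace spaces, only the form representation, Young, and $\|\partial_\nu u\|_{L^2(\partial\Omega)}\lesssim\|f\|_{L^2(\Omega)}$), at the cost of a worse rate, which is irrelevant for the qualitative statement. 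Two small cautions on the endgame: for items $(i)$--$(ii)$ and the eigenvalue convergence, cite the standard results explicitly (the paper uses Reed--Simon VIII.23--24 and Henrot's Theorem 2.3.1 applied to the resolvents) rather than leaving the counting argument implicit; and if you invoke monotonicity from \Cref{thm:PropertiesMukOmegaGlobal}, use only the strict monotonicity coming from the derivative formula (which is proved independently), not the bijectivity onto $(0,\Lambda_k)$, since that part of the statement is itself deduced from the present theorem and would be circular.
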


The proof of \Cref{thm:NRCDirichletLaplacian} can be adapted without difficulties to show that $\Rodzin_a$ converges to $\Rodzin_{a_0}$ in the norm resolvent sense as $a\to a_0$, for every $a_0>0$. This is done in \Cref{prop:NRCFiniteValues}.

Finally, we address the limit as $a\to 0^+$. From the boundary condition $2\bar \nu \partial_{\bar z}u + au = 0$ in $H^{1/2}(\partial \Omega)$ one could heuristically expect that $\partial_{\bar z}u$ tends to vanish in $H^{1/2}(\partial\Omega)$ for every $u\in \Dom(\Rodzin_a)$, as $a\to 0^+$. In such case, the operator formally obtained taking this limit in the definition of $\Rodzin_a$ would be
\begin{equation} \label{eq:Rodzin0}
    \begin{split}
        \Dom(\Rodzin_0) & := \left\{ u\in L^2(\Omega): \, \partial_{\bar z} u \in H^1_0(\Omega) \right\}, \\
        \Rodzin_0 u & := -\Delta u \quad \text{for all } u \in \Dom(\Rodzin_0).
    \end{split}
\end{equation}
\Cref{thm:SRCRodzin0} below will formalize these heuristics. We give the operator $\Rodzin_0$ a name: the operator $\Rodzin_0$ defined in \eqref{eq:Rodzin0} is called the \textbf{$\overline\partial$-Neumann Laplacian}. 

The next result, a proof of which can be found in \Cref{sec:NRCDirichlet}, gives some properties of its spectrum that we shall need afterward. The Bergman space $A^2(\Omega)$ appearing in the statement is studied in \Cref{sec:BergmanSpace}.

\begin{proposition} \label{prop:SpectrumRodzin0}
    The operator $\Rodzin_0$ defined in \eqref{eq:Rodzin0} is self-adjoint in $L^2(\Omega)$. Its spectrum consists of eigenvalues, hence $\sigma(\Rodzin_0) = \sigma_{\mathrm{p}}(\Rodzin_0)$. More specifically,  $\sigma(\Rodzin_0) = \{0\} \cup \sigma(-\Delta_D)$, and the following spectral decomposition holds:
    \begin{enumerate}[label=$(\roman*)$]
        \item The eigenvalue $0$ is of infinite multiplicity. Actually, $\sigma_{\mathrm{ess}}(\Rodzin_0) = \{0\}$ and the set of eigenfunctions of $\Rodzin_0$ of eigenvalue $0$ is the Bergman space $A^2(\Omega)$, that is,
        \begin{equation} \label{eq:BergmanSpace}
            \ker(\Rodzin_0) = A^2(\Omega) := \left\{u\in L^2(\Omega): \, \partial_{\bar z} u = 0 \text{ in } \Omega \right\}.
        \end{equation}
        \item Every eigenvalue of $\Rodzin_0$ different from $0$ is of finite multiplicity. Actually, $\sigma_{\mathrm{d}}(\Rodzin_0) = \sigma(-\Delta_D)$ and the multiplicity of every $\Lambda \in \sigma(\Rodzin_0)\setminus\{0\}$ as an eigenvalue of $\Rodzin_0$ coincides with its multiplicity as an eigenvalue of $-\Delta_D$. In addition, the set of eigenfunctions of $\Rodzin_0$ with eigenvalue $\Lambda\in \sigma(\Rodzin_0)\setminus\{0\}$ of multiplicity $m$ is
        \begin{equation}
            \mathrm{span} \{\partial_z v_j\}_{j=1}^m, 
        \end{equation}
        where $\{v_j\}_{j=1}^m$ are $m$ orthogonal eigenfunctions of $-\Delta_D$ with eigenvalue $\Lambda$.
    \end{enumerate}
\end{proposition}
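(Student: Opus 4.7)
The plan is to construct an explicit $L^2(\Omega)$-orthonormal basis of eigenfunctions of $\Rodzin_0$ and to read off self-adjointness together with the full spectral description from it. Every $h\in A^2(\Omega)$ clearly lies in $\Dom(\Rodzin_0)$ with $\Rodzin_0 h=0$, since $\partial_{\bar z}h=0\in H^1_0(\Omega)$. Now fix an $L^2$-orthonormal basis $\{v_k\}_{k\in\N}$ of real Dirichlet eigenfunctions, $-\Delta_D v_k=\Lambda_k v_k$, and set $u_k:=2\Lambda_k^{-1/2}\partial_z v_k$. Using $\partial_{\bar z}\partial_z=\tfrac14\Delta$ one computes $\partial_{\bar z}u_k=-\tfrac{\sqrt{\Lambda_k}}{2}v_k\in H^1_0(\Omega)$, so $u_k\in\Dom(\Rodzin_0)$ and $\Rodzin_0 u_k=-4\partial_z\partial_{\bar z}u_k=\Lambda_k u_k$. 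A short integration by parts, whose boundary term vanishes because $v_k\in H^1_0(\Omega)$, yields $\langle u_k,u_\ell\rangle_{L^2(\Omega)}=\delta_{k\ell}$ and $\langle u_k,h\rangle_{L^2(\Omega)}=0$ for every $h\in A^2(\Omega)$.

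The key step, and the principal obstacle, is completeness: $\{u_k\}_{k\in\N}\cup\{h_j\}_{j\in\N}$ is a Hilbert basis of $L^2(\Omega)$, where $\{h_j\}$ is any Hilbert basis of the infinite-dimensional Bergman space $A^2(\Omega)$. Suppose $f\in A^2(\Omega)^\perp$ satisfies $\langle f,u_k\rangle_{L^2(\Omega)}=0$ for every $k$; since the $v_k$ are real, this reads $\int_\Omega f\,\partial_{\bar z}v_k=0$. Because $\{v_k/\sqrt{\Lambda_k}\}$ is an orthonormal basis of $H^1_0(\Omega)$ for the Dirichlet inner product and the map $g\mapsto\int_\Omega f\,\partial_{\bar z}g$ is continuous on $H^1_0(\Omega)$, this extends by linearity and continuity to $\int_\Omega f\,\partial_{\bar z}g=0$ for every $g\in H^1_0(\Omega)$, in particular for every $g\in\mathcal C_c^\infty(\Omega)$. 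By definition of distributional derivative, $\partial_{\bar z}f=0$, hence $f\in A^2(\Omega)$; combined with $f\in A^2(\Omega)^\perp$ this forces $f=0$. This Weyl-lemma-type argument is what converts orthogonality to the $\partial_z v_k$ into holomorphicity and is the technical crux.

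Let $A$ be the operator diagonal in the basis above, defined by $Ah_j=0$ and $Au_k=\Lambda_k u_k$ on the natural domain $\{u=\sum_j b_jh_j+\sum_k a_ku_k:\sum_k\Lambda_k^2|a_k|^2<\infty\}$. By construction $A$ is self-adjoint, its point spectrum is $\{0\}\cup\{\Lambda_k:k\in\N\}$, the eigenvalue $0$ has infinite multiplicity while each $\Lambda_k$ is isolated of finite multiplicity equal to $\dim\ker(-\Delta_D-\Lambda_k)$; in particular, since $\Lambda_k\geq\Lambda_1>0$, one has $\sigma_{\mathrm{ess}}(A)=\{0\}$ and $\sigma_{\mathrm d}(A)=\sigma(-\Delta_D)$. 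It remains to identify $A=\Rodzin_0$. For $u=\sum_j b_jh_j+\sum_k a_ku_k$ the distributional derivative computes to $\partial_{\bar z}u=-\tfrac12\sum_k a_k\sqrt{\Lambda_k}\,v_k$, and since $\{v_k/\sqrt{\Lambda_k}\}$ is an orthonormal basis of $H^1_0(\Omega)$, this series belongs to $H^1_0(\Omega)$ exactly when $\sum_k\Lambda_k^2|a_k|^2<\infty$; the converse inclusion is obtained by reading off the Fourier coefficients of $\partial_{\bar z}u$ in the $v_k$-basis through integration by parts against $v_k\in H^1_0(\Omega)$. Thus $\Dom(A)=\Dom(\Rodzin_0)$ and the actions agree. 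The eigenspace descriptions in $(i)$ and $(ii)$ are then immediate from the basis: $\ker(\Rodzin_0)=\overline{\mathrm{span}\{h_j\}}=A^2(\Omega)$, and for $\Lambda\in\sigma(-\Delta_D)$ of multiplicity $m$ with eigenfunctions $v_{k_1},\dots,v_{k_m}$ the corresponding $\Rodzin_0$-eigenspace is $\mathrm{span}\{u_{k_i}\}=\mathrm{span}\{\partial_z v_{k_i}\}$, of dimension $m$ since $v\mapsto\partial_z v$ is injective on $H^1_0(\Omega)$ (any anti-holomorphic function in $H^1_0(\Omega)$ vanishes).
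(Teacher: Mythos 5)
Your argument is correct, but it is organized quite differently from the paper's proof, and the comparison is worth recording. The paper first proves self-adjointness abstractly (symmetry plus surjectivity of $\Rodzin_0\pm i$ via Lax--Milgram on $\Dom(\partial_{\mathrm h})$, upgraded by the regularity statement $\partial_{\bar z}u\in H^1_0(\Omega)$ from \Cref{lemma:propertiesWirtinger}), then identifies $\ker(\Rodzin_0)=A^2(\Omega)$ and matches the nonzero eigenvalues and multiplicities with those of $-\Delta_D$ by differentiating the eigenvalue equations, and only at the end proves completeness of the eigenfunctions to get $\sigma(\Rodzin_0)=\sigma_{\mathrm p}(\Rodzin_0)$. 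You instead build the orthonormal eigenbasis $\{h_j\}\cup\{2\Lambda_k^{-1/2}\partial_z v_k\}$ up front, prove completeness, and then identify $\Rodzin_0$ with the diagonal operator $A$; this yields self-adjointness, $\sigma_{\mathrm{ess}}=\{0\}$, $\sigma_{\mathrm d}=\sigma(-\Delta_D)$, and the eigenspace descriptions all at once, with no Lax--Milgram and no appeal to the regularity lemma. Your completeness step is essentially the dual formulation of the paper's (the paper expands a test function in the Dirichlet basis in $H^1$; you use continuity of $g\mapsto\int_\Omega f\,\partial_{\bar z}g$ on $H^1_0(\Omega)$ together with the fact that $\{v_k/\sqrt{\Lambda_k}\}$ is a Dirichlet-orthonormal basis), and both proofs use the same unique-continuation fact that an antiholomorphic function in $H^1_0(\Omega)$ vanishes. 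The price of your route is the domain identification $\Dom(A)=\Dom(\Rodzin_0)$: the phrase ``the distributional derivative computes to $\partial_{\bar z}u=-\tfrac12\sum_k a_k\sqrt{\Lambda_k}v_k$'' is only formal for a general $L^2$ expansion, but your follow-up — reading off the Fourier coefficients of $\partial_{\bar z}u\in H^1_0(\Omega)$ against $v_k$ via the Green formula \eqref{eq:GreenFormulas} (boundary term vanishing since $t_{\partial\Omega}v_k=0$), together with the standard characterization that $w=\sum_k c_kv_k\in H^1_0(\Omega)$ iff $\sum_k\Lambda_k|c_k|^2<\infty$ — closes this correctly; if you write it up, make that step and the $L^2$/$H^1_0$ limit exchanges explicit, and note (as you implicitly assume) that $A^2(\Omega)$ is infinite-dimensional.
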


Notice that the existence of the essential eigenvalue $0\in \sigma_{\mathrm{ess}}(\Rodzin_0)$ prevents the operators $\Rodzin_a$ from converging in the norm resolvent sense towards $\Rodzin_0$, as $a\to0^+$. Indeed, if there was norm resolvent convergence, we would get that $\lim_{a\to0^+} \sigma_{\mathrm{ess}}(\Rodzin_a) = \sigma_{\mathrm{ess}}(\Rodzin_0)$ by \cite[Satz 9.24]{Weidmann2000}, but this is impossible since $\sigma_{\mathrm{ess}}(\Rodzin_0) = \{0\}$ and $\sigma_{\mathrm{ess}}(\Rodzin_a) = \varnothing$ for every $a>0$.

Once norm resolvent convergence is discarded, it is natural to ask whether the convergence in the strong resolvent sense holds or not. If it does, then one could wonder if the existence of the essential eigenvalue $0$ is the only obstruction to having norm resolvent convergence. The following result answers these questions in the affirmative.  

\begin{theorem} \label{thm:SRCRodzin0}
    The $\overline\partial$-Robin Laplacian $\Rodzin_a$ converges to the $\overline\partial$-Neumann Laplacian $\Rodzin_0$ in the strong resolvent sense, as $a\to0^+$. As a consequence, for every $\Lambda \in \{0\} \cup \sigma(-\Delta_D)$ there exist $\mu(a) \in \sigma(\Rodzin_a)$ such that $\mu(a) \to \Lambda$ as $a\to0^+$. 
    
    Moreover, if the resolvents of $\Rodzin_a$ and $\Rodzin_0$ are projected onto the orthogonal of the Bergman space $A^2(\Omega)$, then convergence in the operator norm holds. That is, 
    \begin{equation}
        \underset{a\to 0^+}{\lim} \, \left \| P^\perp \left( (\Rodzin_0-\lambda)^{-1} - (\Rodzin_a - \lambda)^{-1} \right) \right\|_{L^2(\Omega)\to L^2(\Omega)} = 0 \quad \text{for all } \lambda\in \C\setminus \R,
    \end{equation}
    where $P^\perp: L^2(\Omega) \rightarrow A^2(\Omega)^\perp \subset L^2(\Omega)$ is the orthogonal projection onto
    \begin{equation}
        A^2(\Omega)^\perp := \left \{ u\in L^2(\Omega): \, \langle u, v \rangle_{L^2(\Omega)} = 0 \text{ for all } v \in A^2(\Omega) \right\}.
    \end{equation}
\end{theorem}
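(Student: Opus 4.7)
The plan unfolds in three stages. First, I will obtain strong resolvent convergence via the monotone convergence theorem for quadratic forms. Second, spectral semicontinuity under strong resolvent convergence will yield the existence of $\mu(a)\in\sigma(\Rodzin_a)$ approximating each $\Lambda\in\sigma(\Rodzin_0)=\{0\}\cup\sigma(-\Delta_D)$. Third, a form-based resolvent identity combined with uniform boundary trace estimates will prove norm convergence on the Bergman complement.

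For the strong resolvent convergence, the family of forms $q_a[u]:=4\|\partial_{\bar z} u\|_{L^2(\Omega)}^2 + a\|u\|_{L^2(\partial\Omega)}^2$ on $E(\Omega)$ is closed, non-negative, and monotone non-increasing in $a$, decreasing pointwise to $q_0[u]:=4\|\partial_{\bar z} u\|_{L^2(\Omega)}^2$ as $a\to 0^+$. Since $\mathcal{C}^\infty(\overline\Omega) \subset E(\Omega)$ is dense in the maximal $\bar\partial$-domain $\{u\in L^2(\Omega): \partial_{\bar z} u\in L^2(\Omega)\}$ (which one checks by approximating via mollified Cauchy transforms), the closure of $q_0|_{E(\Omega)}$ under its graph norm has this maximal domain; an integration by parts using $\Delta = 4\partial_z\partial_{\bar z}$ identifies the associated self-adjoint operator as $\Rodzin_0$. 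Simon's theorem on monotone decreasing forms then provides $\Rodzin_a\to\Rodzin_0$ in strong resolvent sense, and the standard spectral semicontinuity (\cite[Satz~9.24]{Weidmann2000}) consequently yields the claimed existence of $\mu(a)\in\sigma(\Rodzin_a)$ converging to any prescribed $\Lambda\in\sigma(\Rodzin_0)$.

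For the norm convergence on $A^2(\Omega)^\perp$, the central ingredient is the quadratic-form identity
\begin{equation*}
\bigl\langle (\Rodzin_0-\lambda)^{-1} f - (\Rodzin_a-\lambda)^{-1} f,\,h\bigr\rangle_{L^2(\Omega)} = a\int_{\partial\Omega}(\Rodzin_a-\lambda)^{-1} f\cdot \overline{(\Rodzin_0-\bar\lambda)^{-1} h}\,d\upsigma,
\end{equation*}
obtained for $u_a:=(\Rodzin_a-\lambda)^{-1} f$ and $v:=(\Rodzin_0-\bar\lambda)^{-1} h$ by integrating $\langle\Rodzin_a u_a, v\rangle - \langle u_a,\Rodzin_0 v\rangle$ by parts, using the boundary conditions $2\overline\nu\partial_{\bar z} u_a = -au_a$ and $\partial_{\bar z} v|_{\partial\Omega}=0$ together with the vanishing of $\int_{\partial\Omega}\partial_\tau(u_a\overline v)\,d\upsigma$ on the closed curve $\partial\Omega$. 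The variational characterization \eqref{eq:mukOmega} furnishes the uniform bound $\sqrt a\,\|u_a\|_{L^2(\partial\Omega)}\leq C\|f\|_{L^2(\Omega)}$. For $h\in A^2(\Omega)^\perp$, I expand $v$ in the orthonormal basis $\{e_k:=\partial_z v_k/\|\partial_z v_k\|_{L^2(\Omega)}\}_k$ of $A^2(\Omega)^\perp$ provided by \Cref{prop:SpectrumRodzin0}, where $v_k$ are Dirichlet eigenfunctions with eigenvalues $\Lambda_k$. Since $v_k|_{\partial\Omega}=0$, one computes $e_k|_{\partial\Omega}=\frac{\overline\nu}{2\|\partial_z v_k\|_{L^2(\Omega)}}\partial_\nu v_k$ and $\|\partial_z v_k\|_{L^2(\Omega)}^2=\Lambda_k/4$, so $\|e_k|_{\partial\Omega}\|_{L^2(\partial\Omega)}^2 = \|\partial_\nu v_k\|_{L^2(\partial\Omega)}^2/\Lambda_k$. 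A Rellich--Pohozaev identity with a vector field $F\in\mathcal{C}^1(\overline\Omega)$ satisfying $F\cdot\nu\geq c>0$ on $\partial\Omega$ (which exists on any $\mathcal{C}^2$ domain) yields $\|\partial_\nu v_k\|_{L^2(\partial\Omega)}^2\leq C\Lambda_k$, so $\|e_k|_{\partial\Omega}\|_{L^2(\partial\Omega)}\leq C'$ uniformly in $k$. Combined with Weyl's law $\Lambda_k\sim k$ (so that $\sum_k|\Lambda_k-\bar\lambda|^{-2}<\infty$) and Cauchy--Schwarz, this gives $\|v|_{\partial\Omega}\|_{L^2(\partial\Omega)}\leq C_\lambda\|h\|_{L^2(\Omega)}$. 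Substituting into the identity and taking the supremum over $h\in A^2(\Omega)^\perp$ with $\|h\|_{L^2}=1$ shows $\|P^\perp[(\Rodzin_0-\lambda)^{-1} - (\Rodzin_a-\lambda)^{-1}]\|_{L^2\to L^2} = O(\sqrt a)$. The main obstacle is precisely this trace bound for $v$: since $\Dom(\Rodzin_0)\not\subset H^1(\Omega)$ in general, the classical trace theorem does not apply directly, and the eigenfunction expansion above is the key device for circumventing this, reducing the estimate to a uniform $L^2(\partial\Omega)$ bound for the normalized normal derivatives of Dirichlet eigenfunctions.
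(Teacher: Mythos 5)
Your proposal is correct in substance, but it takes a genuinely different route from the paper in both halves, and two of its steps carry real weight that you only gesture at. For the strong resolvent convergence, the paper does not use monotone form convergence: it proves weak convergence of the resolvent difference directly, integrating by parts against test functions in $\Dom(\Rodzin_0)\cap H^1(\Omega)$, whose density in $\Dom(\Rodzin_0)$ with respect to $\|\cdot\|_{L^2(\Omega)}+\|\partial_{\bar z}\cdot\|_{L^2(\Omega)}+\|\Delta\cdot\|_{L^2(\Omega)}$ is the content of \Cref{lemma:densityDomR0}. Your route via Simon's theorem for decreasing forms is legitimate (the forms $B_a$ are closed on the fixed domain $E(\Omega)$ and decrease as $a\downarrow 0$), but it shifts all the work onto identifying the closure of $q_0|_{E(\Omega)}$: you must actually prove that $\mathcal C^\infty(\overline\Omega)$ is dense, for the graph norm, in $\{u\in L^2(\Omega):\partial_{\bar z}u\in L^2(\Omega)\}$ (a Friedrichs-type translation-plus-mollification argument near the $\mathcal C^2$ boundary, rather than ``mollified Cauchy transforms''), and then identify the associated operator as $\Rodzin_0$ via the Green formula and \Cref{lemma:propertiesWirtinger}$(iv)$; this is comparable in effort to the paper's \Cref{lemma:densityDomR0}, so nothing is gained for free, though it is a clean alternative. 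Also, the spectral consequence should be attributed to spectral inclusion under strong resolvent convergence \cite[Theorem VIII.24 (a)]{ReedSimon1980}; \cite[Satz 9.24]{Weidmann2000} concerns essential spectra under norm resolvent convergence and is not the right tool here.

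For the projected norm convergence, the paper notes that $v=(\Rodzin_0-\bar\lambda)^{-1}P^\perp g$ is orthogonal to $A^2(\Omega)$ and invokes \Cref{lemma:GainRegularityOrthogonal} to get $v\in H^1(\Omega)$ with $\|v\|_{H^1(\Omega)}\leq C\|\partial_{\bar z}v\|_{L^2(\Omega)}$, so the trace theorem plus the uniform $H^{-1/2}(\partial\Omega)$ bound on $u_a$ from \Cref{lemma:propertiesWirtinger}$(ii)$ gives an $O(a)$ rate. You instead expand $v$ in the eigenbasis $\{\partial_z v_k\}$ of $A^2(\Omega)^\perp$ from \Cref{prop:SpectrumRodzin0} and control boundary values through the Rellich bound $\|\partial_\nu v_k\|_{L^2(\partial\Omega)}^2\leq C\Lambda_k$ and Weyl's law; this is a valid and rather pleasant way to obtain the uniform trace bound for the restricted resolvent without \Cref{lemma:GainRegularityOrthogonal}, but you should justify that $t_{\partial\Omega}v$ coincides with the term-by-term sum of traces (for instance: the partial sums converge to $v$ together with their $\partial_{\bar z}$ in $L^2(\Omega)$, so their traces converge in $H^{-1/2}(\partial\Omega)$ by \Cref{lemma:propertiesWirtinger}$(ii)$, identifying the $L^2(\partial\Omega)$ limit), and preferably derive the boundary identity first for the finite partial sums before passing to the limit. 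Two harmless inaccuracies: the bound $\sqrt a\,\|u_a\|_{L^2(\partial\Omega)}\leq C\|f\|_{L^2(\Omega)}$ comes from the energy identity for the resolvent (as in the proof of \Cref{prop:BoundedResolvent}), not from \eqref{eq:mukOmega}; and your estimate gives $O(\sqrt a)$ rather than the paper's $O(a)$, which is immaterial for the statement.
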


The proof of this theorem is given in \Cref{sec:NRCDirichlet}, following the same ideas as in the proof of \cite[Theorem 1.4]{DuranMas2024}.

\subsection{Relation between the $\overline\partial$-Robin Laplacian and quantum dot Dirac operators} \label{sec:RelationRodzinDirac}

As mentioned at the beginning of the introduction, one of our interests of studying the operators $\{\Rodzin_a\}_{a\in(0,+\infty)}$ is that their eigenvalues characterize the positive eigenvalues of quantum dot Dirac operators. 

These are operators with confining boundary conditions, used in relativistic quantum mechanics to describe excitations in graphene quantum dots and nano-ribbons. Their energies are modeled by the eigenvalues $\lambda= \lambda(\theta,m)$ of the problem
\begin{equation} \label{eq:EigenDirac}
    \begin{cases}
        -2i \partial_z v = (\lambda-m) u & \text{in } \Omega, \\
        -2i \partial_{\bar z} u = (\lambda+m)v & \text{in } \Omega, \\
        v = i \frac{1-\sin \theta}{\cos \theta} \nu u & \text{on } \partial \Omega.
    \end{cases}
\end{equation}
Although one can make sense of \eqref{eq:EigenDirac} for all $\theta\in\R$ and $m\in\R$, which are the parameters of the model, here we restrict ourselves to $\theta \in (-\pi/2, \pi/2)$ and $m\geq 0$; see the introduction of~\cite{DuranMasSanzPerela2025} for more details. The case $m=0$ and $\theta=0$ is of special interest in physics, and refers to the Dirac operator with infinite mass boundary conditions. A hot open problem in spectral geometry is to prove ---or disprove--- that among all planar domains of given area, the disk is the unique minimizer for the smallest positive eigenvalue $\lambda_{1}(0,0)$ of \eqref{eq:EigenDirac} \cite[Problem 5.1]{Webpage2019}. 

This problem was tackled in \cite{Antunes2021}, and the argumentation used there strongly inspires this work. The idea in \cite{Antunes2021} is to translate \cite[Problem 5.1]{Webpage2019} into a Faber-Krahn type inequality, for every $a>0$, for the first min-max level of the operator $\Rodzin_a-a^2$; see \cite[Section 7]{Antunes2021}. The quadratic term $-a^2$ appears naturally in \cite{Antunes2021} when performing this translation, but turns out to conceal some properties that we show in the present work. Motivated by the results of this work, we rewrite \cite[Conjecture 40]{Antunes2021} in our setting. In order to stress the dependence of $\mu_1(a)$ on the domain $\Omega$, let us denote it for the moment by $\mu_{1,\Omega}(a)$.

\begin{conjecture} \label{Conj:Rodzin}
    Let $\Omega\subset\R^2$ be a bounded domain with $\mathcal C^2$ boundary. Let $D\subset\R^2$ be a disk with the same area as $\Omega$. If $\Omega$ is not a disk, then $\mu_{1,D}(a) < \mu_{1,\Omega}(a)$ for all $a>0$.
\end{conjecture}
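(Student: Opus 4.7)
The plan is to adapt the Bossel--Daners--Kennedy proof of the Faber--Krahn inequality for the classical Robin Laplacian to the $\overline\partial$-Robin setting. By the min-max characterization of \Cref{thm:PropertiesMukOmega} together with the identity $\mu_{1,D}(a) = \mu_1^{\mathrm{Rob}}(a)$ on the disk recalled in \Cref{rmk:simplicity} (cf.\ \Cref{cor:MuDiskIsMuRobin}), the conjecture reduces to proving that for every $u\in E(\Omega)\setminus\{0\}$,
\begin{equation*}
    \frac{4\int_\Omega |\partial_{\bar z}u|^2 + a\int_{\partial\Omega}|u|^2}{\int_\Omega |u|^2} \;\geq\; \mu_{1,D}(a),
\end{equation*}
with strict inequality unless $\Omega$ is a disk of the same area. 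For real-valued $u$ the identity $4|\partial_{\bar z}u|^2 = |\nabla u|^2$ turns this into the classical Bossel--Daners--Kennedy inequality, so the whole difficulty is concentrated in test functions with a genuine complex phase.

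Writing $u=p+iq$ with $p,q$ real, a pointwise computation gives
\begin{equation*}
    4|\partial_{\bar z}u|^2 \;=\; |\nabla p|^2 + |\nabla q|^2 - 2\,J(p,q), \qquad J(p,q) := \partial_1 p\,\partial_2 q - \partial_2 p\,\partial_1 q.
\end{equation*}
Applying Bossel--Daners--Kennedy to $p$ and $q$ separately and summing yields
\begin{equation*}
    4\int_\Omega|\partial_{\bar z}u|^2 + a\int_{\partial\Omega}|u|^2 \;\geq\; \mu_{1,D}(a)\int_\Omega |u|^2 - 2\int_\Omega J(p,q),
\end{equation*}
so the task boils down to controlling the Jacobian. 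Using the pointwise identity $J(p,q) = |\partial_z u|^2 - |\partial_{\bar z}u|^2$ and integrating by parts with $2\nu\partial_z = \partial_\nu - i\partial_\tau$ and $2\bar\nu\partial_{\bar z} = \partial_\nu + i\partial_\tau$, one obtains the clean expression
\begin{equation*}
    \int_\Omega J(p,q) \;=\; -\frac{i}{2}\int_{\partial\Omega}(\partial_\tau u)\,\bar u\;d\upsigma \;=\; \frac{1}{2}\int_{\partial\Omega}|u|^2\,\partial_\tau(\arg u)\,d\upsigma
\end{equation*}
(the last identity holding wherever $|u|>0$). The desired inequality would then follow if one could show that the first eigenfunction has non-positive phase winding, weighted by $|u|^2$, along $\partial\Omega$.

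The hardest step is precisely this sign control. The Jacobian term has no a priori definite sign, is invariant under global phase rotations $u\mapsto e^{i\alpha}u$, and is not amenable to the usual rearrangement techniques (Schwarz, Steiner, concentric), which act on $|u|$ and destroy the $\overline\partial$-structure. The naive attempt of comparing the Rayleigh quotient of $u$ with that of $\bar u$ in fact yields the \emph{opposite} sign, namely $\int_\Omega J(p,q)\geq 0$ at the minimizer; thus the symmetric part of the form is insufficient, and the oblique boundary condition $(\partial_\nu + i\partial_\tau)u + au = 0$ must be genuinely exploited, most likely through a Bossel-type level-set argument applied to an object more refined than $|u|$ alone. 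This is where one should expect essentially new ideas to be required.

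As a partial result accessible from the tools already developed, combining \Cref{thm:NRCDirichletLaplacian} with the classical Faber--Krahn inequality for the Dirichlet Laplacian establishes the conjecture asymptotically as $a\to+\infty$: one has $\mu_{1,\Omega}(a)\to\Lambda_{1,\Omega}$, and $\Lambda_{1,D}<\Lambda_{1,\Omega}$ whenever $\Omega$ is not a disk, so by the continuity in $a$ of \Cref{thm:PropertiesMukOmegaGlobal} the strict inequality $\mu_{1,D}(a)<\mu_{1,\Omega}(a)$ propagates to all sufficiently large $a$. The small-$a$ regime is more subtle since both $\mu_{1,\Omega}(a)$ and $\mu_{1,D}(a)$ vanish as $a\to 0^+$; a refined analysis via the derivative formula in item~\ref{item:Derivative} of \Cref{thm:PropertiesMukOmegaGlobal}, combined with the identification of $\ker(\Rodzin_0)$ with the Bergman space in \Cref{prop:SpectrumRodzin0}, should produce the first-order behavior of $\mu_1(a)$ near $a=0$, and ideally a monotonicity or concavity argument in $a$ that extends the inequality to all of $(0,+\infty)$.
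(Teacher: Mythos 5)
You are reviewing a statement that the paper itself leaves open: \Cref{Conj:Rodzin} is a conjecture, and the paper contains no proof of it. Your proposal, to its credit, does not claim one either, so the only part that can be checked as a complete argument is your last paragraph, and that coincides with the paper's own supporting observation (end of \Cref{sec:RelationRodzinDirac}): since $\mu_{1,\Omega}(a)\to\Lambda_{1,\Omega}$ as $a\to+\infty$ (\Cref{thm:NRCDirichletLaplacian}, \Cref{thm:PropertiesMukOmegaGlobal}) and $\Lambda_{1,D}<\Lambda_{1,\Omega}$ by the classical Faber--Krahn inequality when $\Omega$ is not a disk, while $\mu_{1,D}(a)<\Lambda_{1,D}$ for \emph{every} $a>0$ by \Cref{thm:PropertiesMukOmega} (or item~\ref{item:ContIncBij} of \Cref{thm:PropertiesMukOmegaGlobal}), one gets $\mu_{1,D}(a)<\Lambda_{1,D}<\mu_{1,\Omega}(a)$ for all $a>a_\Omega$. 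Note that your phrasing ``by continuity the strict inequality propagates'' is slightly off the mark: continuity in $a$ is not the mechanism; what closes the argument is the uniform upper bound $\mu_{1,D}(a)<\Lambda_{1,D}$, which you should cite explicitly. Likewise, your remarks on the small-$a$ regime (slope of $\mu_1$ at $a=0$, the Bergman space, the level $S_1$ of \Cref{prop:MinMaxSplope}) match what the paper defers to the forthcoming work \cite{DuranMasSanzPerela2025}, but they are a program, not a proof.

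Concerning your main strategy: the pointwise identity $4|\partial_{\bar z}u|^2=|\nabla p|^2+|\nabla q|^2-2J(p,q)$ and the boundary expression $\int_\Omega J(p,q)=-\tfrac{i}{2}\int_{\partial\Omega}(\partial_\tau u)\,\bar u\,d\upsigma$ are correct (the latter is exactly the difference of the two identities in \Cref{lemma:RelationGradientDz}), but, as you yourself observe, the route cannot close as set up: comparing the minimizer $u$ with $\bar u$ forces $\int_\Omega J(p,q)\geq 0$ at the minimizer, whereas your reduction needs $\int_\Omega J(p,q)\leq 0$. So the step ``show the first eigenfunction has non-positive weighted phase winding'' is not merely hard, it is false in the form you need unless the Jacobian term actually vanishes, and nothing in the paper's toolkit gives that. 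Also be aware that the logical direction of your reduction is uphill: as \Cref{rmk:simplicity} and \Cref{cor:MuDiskIsMuRobin} show, $\mu_{1,\Omega}(a)\leq\mu_{1,\Omega}^{\mathrm{Rob}}(a)$ with equality on disks, so \Cref{Conj:Rodzin} \emph{implies} the Bossel--Daners--Kennedy inequality; deducing it \emph{from} Bossel--Daners--Kennedy applied componentwise necessarily loses exactly the Jacobian term you cannot control. In short: the conjecture remains open, your partial large-$a$ result is correct and is the same as the paper's, and the rest should be presented as a discussion of obstructions rather than as steps of a proof.
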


In the paper \cite{DuranMasSanzPerela2025} it is shown how the positive eigenvalues $\lambda(\theta,m)$ of \eqref{eq:EigenDirac} are characterized, for every $m\geq0$, by the eigenvalues of $\Rodzin_a$, and vice versa. More interestingly, in the paper~\cite{DuranMasSanzPerela2025} it is shown that if $\Omega$ is a bounded $\mathcal C^2$ domain for which \Cref{Conj:Rodzin} holds true at some fixed $a>0$, then $\lambda_{1}(\theta,m)$ is lower for a disk with the same area as $\Omega$, for some~$\theta\in (-\pi/2, \pi/2)$ depending on $a$, $m$, and $\Omega$; and vice versa. As a consequence, if \Cref{Conj:Rodzin} holds true, then a Faber-Krahn type inequality holds for $\lambda_{1}(\theta,m)$ for every $\theta\in (-\pi/2, \pi/2)$ and every $m\geq0$.

A positive result supporting \Cref{Conj:Rodzin} follows from the convergence to the Dirichlet Laplacian; see \Cref{thm:PropertiesMukOmegaGlobal}. Indeed, as a consequence of the fact that $\mu_{1,\Omega}(a)$ increasingly converges to the first eigenvalue of the Dirichlet Laplacian as $a\to+\infty$ together with the Faber-Krahn inequality \cite{Faber1923,Krahn1925}, if $\Omega$ is different form a disk $D$ of the same area, then there exists a large enough $a_\Omega>0$ ---depending on $\Omega$--- such that $\mu_{1,\Omega}(a) > \Lambda_{1,D} > \mu_{1,D}(a)$ for all $a>a_\Omega$, where $\Lambda_{1,D}$ is the first eigenvalue of the Dirichlet Laplacian in $D$. In the paper \cite{DuranMasSanzPerela2025} we show an analogous shape optimization result in the regime of small enough $a>0$, assuming in addition that the underlying domain $\Omega$ is simply connected.

\subsection{Organization of the paper} \label{sec:Organization}

In \Cref{sec:PreliminariesComplex} we present some preliminary results in complex analysis that will be used throughout the work. 

With these tools at hand, in \Cref{sec:TheRodzinLaplacian} we study the operator associated to the problem \eqref{eq:BvPRodzinLaplacian}, for every fixed $a>0$. First, in \Cref{sec:HSandSFsetting} we study the Hilbert space $E(\Omega)$ defined in \eqref{eq:AmbientHilbertSpace} and a suitable sesquilinear form, which provide the adequate weak formulation of \eqref{eq:BvPRodzinLaplacian}. Next, in \Cref{sec:AssociatedOperator} we establish the first part of \Cref{thm:IntroRodzinLaplacian}, proving that the $\overline\partial$-Robin Laplacian $\Rodzin_a$ is the unique self-adjoint operator associated to the problem \eqref{eq:BvPRodzinLaplacian}. Next, in \Cref{sec:LocalSpectralProperties} we establish the second part of \Cref{thm:IntroRodzinLaplacian}, regarding the discreteness of the spectrum of $\Rodzin_a$, and \Cref{thm:PropertiesMukOmega}, characterizing the eigenvalues of $\Rodzin_a$ as min-max levels of a functional linear in $a>0$. 

After this study for fixed $a>0$, in \Cref{sec:Family} we study the operators $\Rodzin_a$ as the boundary parameter $a$ moves in $(0,+\infty)$. First, in \Cref{sec:EigenvalueCurves} we verify that the family of operators $\{\Rodzin_a\}_{a>0}$ fits in Kato's perturbation theory developed in \cite{Kato1995}, in order to deduce analytic properties of the eigenvalues of $\Rodzin_a$. Next, in \Cref{sec:GlobalSpectralProperties} we establish \Cref{thm:PropertiesMukOmegaGlobal}, exhibiting the properties that such eigenvalues satisfy as functions of the boundary parameter $a>0$. \Cref{sec:Family} concludes with the study of the convergence of the operators $\Rodzin_a$ in a resolvent sense, giving in \Cref{sec:NRCDirichlet} the proofs of \Cref{thm:NRCDirichletLaplacian,thm:SRCRodzin0}, and of \Cref{prop:SpectrumRodzin0}.

This article contains one appendix, namely \Cref{sec:EigenBall}, devoted to the spectral analysis of $\Rodzin_a$ when the underlying domain is a disk.

We conclude this introductory section justifying the assumption that $\Omega$ has a $\mathcal C^2$ boundary. In order to relax this regularity to Lipschitz, the following obstructions ---which are beyond the scope of this work--- should be addressed.

\begin{remark}
    The $\mathcal C^2$ regularity of $\Omega$ is used in this paper in \Cref{lemma:AntunesRegularity,lemma:SingleLayer,lemma:GainRegularityOrthogonal,lemma:RegularityEstimatesRodzin}, and \Cref{rmk:CauchyPompeiu,rmk:enoughForGainRegularity}. However, it is only crucially needed in \Cref{lemma:AntunesRegularity,lemma:RegularityEstimatesRodzin} (which yield~\Cref{thm:IntroRodzinLaplacian}). On the one hand, the proof of \Cref{lemma:RegularityEstimatesRodzin} needs the boundary of $\Omega$ to have a well-defined and uniformly bounded mean curvature. On the other hand, the proof of \Cref{lemma:AntunesRegularity} is based on the results obtained throughout~\cite{Benguria2017Self}, where the~$\mathcal C^2$ regularity of~$\partial\Omega$ plays a key role. Indeed, the paper~\cite{Benguria2017Self} concerns the self-adjointness of quantum dot Dirac operators ---recall \Cref{sec:RelationRodzinDirac}---, which is known to be sensitive to the regularity of the boundary; see~\cite{BenhellalPank2025,Pizzichillo2021} and the references therein.
\end{remark}

\section{Preliminary results from complex analysis} \label{sec:PreliminariesComplex}

Since the problem \eqref{eq:BvPRodzinLaplacian} is related to the decomposition of the Laplacian as $\Delta= 4\partial_z\partial_{\bar z}$, it is convenient to study some of the properties that the ``Cauchy-Riemann operators" $\partial_z$ and $\partial_{\bar z}$ satisfy.

\subsection{Maximal Wirtinger operators} \label{sec:Wirtinger}

In this work, the maximal Wirtinger operators 
\begin{equation}
    \begin{split}
        \Dom(\partial_\mathrm{h}) := \left\{ u\in L^2(\Omega): \, \partial_{\bar z} u \in L^2(\Omega) \right\}, & \quad \partial_\mathrm{h} u := \partial_{\bar z} u \quad \text{for every } u\in \Dom(\partial_\mathrm{h}), \\
        \Dom(\partial_{\mathrm{ah}}) := \left\{ u\in L^2(\Omega): \, \partial_z u \in L^2(\Omega) \right\}, & \quad \partial_{\mathrm{ah}} u := \partial_z u \quad \text{for every } u\in \Dom(\partial_{\mathrm{ah}}),
    \end{split}
\end{equation}
will play a crucial role. The following properties are proven in \cite[Section 3]{Antunes2021}.

\begin{lemma} \label{lemma:propertiesWirtinger}
    The following properties hold:
    \begin{enumerate}[label=$(\roman*)$]
        \item \cite[Section 3]{Antunes2021} Let $\# \in \{\mathrm{h, ah}\}$. $\Dom(\partial_\#)$ is a Hilbert space when endowed with the scalar product defined for $u,v \in \Dom(\partial_\#)$ by
        \begin{equation}
            \langle u, v \rangle_\# = \langle u, v \rangle_{L^2(\Omega)} + \langle \partial_\# u, \partial_\# v \rangle_{L^2(\Omega)}.
        \end{equation}
        \item \cite[Lemma 15]{Antunes2021} Let $\# \in \{\mathrm{h, ah}\}$. The trace operator $t_{\partial\Omega} : H^1(\Omega) \rightarrow H^{1/2}(\partial\Omega)$ extends into a linear bounded operator from $\Dom(\partial_\#)$ to $H^{-1/2}(\partial\Omega)$, which we call again $t_{\partial\Omega}$. In particular, for every $u\in \Dom(\partial_\#)$
        \begin{equation}
            \| t_{\partial\Omega} u \|_{H^{-1/2}(\partial\Omega)} \leq C_\Omega \left( \|u\|_{L^2(\Omega)} + \|\partial_\# u\|_{L^2(\Omega)} \right),
        \end{equation}
        for some constant $C_\Omega>0$ depending only on $\Omega$.
        \item \cite[Remark 17]{Antunes2021} For every $u\in \Dom(\partial_{\mathrm{ah}})$, $v\in \Dom(\partial_{\mathrm{h}})$, and $w\in H^1(\Omega)$, the following Green formulas hold:
        \begin{equation} \label{eq:GreenFormulas}
            \begin{split}
                \langle \partial_z u, w \rangle_{L^2(\Omega)} & = - \langle u, \partial_{\bar z}w \rangle_{L^2(\Omega)} + \dfrac{1}{2} \overline{ \langle t_{\partial\Omega} u, \nu t_{\partial\Omega} w \rangle}_{H^{-1/2}(\partial\Omega), H^{1/2}(\partial\Omega)}, \\
                \langle \partial_{\bar z} v, w \rangle_{L^2(\Omega)} & = - \langle v, \partial_z w \rangle_{L^2(\Omega)} + \dfrac{1}{2} \overline{ \langle t_{\partial\Omega} v, \bar \nu t_{\partial\Omega} w \rangle}_{H^{-1/2}(\partial\Omega), H^{1/2}(\partial\Omega)}.
            \end{split}
        \end{equation}
        \item \cite[Lemma 18]{Antunes2021} Let $\# \in \{\mathrm{h, ah}\}$ and $u\in \Dom(\partial_\#)$. If $t_{\partial\Omega} u \in H^{1/2}(\partial\Omega)$, then $u\in H^1(\Omega)$.
    \end{enumerate}
\end{lemma}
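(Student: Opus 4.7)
The plan is to establish the four items using the standard theory of closed unbounded operators together with the Green formulas for the Cauchy--Riemann operators on a $\mathcal{C}^2$ domain.

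For \emph{(i)}, I would verify that $\partial_\#$ on its maximal domain is a closed unbounded operator on $L^2(\Omega)$: if $u_n\to u$ and $\partial_\# u_n\to g$ in $L^2(\Omega)$, passing to the distributional limit forces $g=\partial_\# u$, so $u\in\Dom(\partial_\#)$. Closedness of $\partial_\#$ is equivalent to completeness of the graph-norm scalar product, which is exactly the claim.

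For \emph{(ii)}, I would use the Green formula from \emph{(iii)} as a defining identity. Given $u\in\Dom(\partial_{\bar z})$ and $\psi\in H^{1/2}(\partial\Omega)$, pick $w\in H^1(\Omega)$ with $t_{\partial\Omega}w=\nu\psi$ and $\|w\|_{H^1(\Omega)}\leq C_\Omega\|\psi\|_{H^{1/2}(\partial\Omega)}$, obtained via a continuous right inverse of the classical trace, and set
\[
\overline{\langle t_{\partial\Omega}u,\psi\rangle}_{H^{-1/2}(\partial\Omega),H^{1/2}(\partial\Omega)} := 2\langle\partial_{\bar z}u,w\rangle_{L^2(\Omega)}+2\langle u,\partial_z w\rangle_{L^2(\Omega)}.
\]
Independence of the choice of $w$ follows because the right-hand side vanishes whenever $t_{\partial\Omega}w=0$ (approximating $u$ by functions smooth up to the boundary mollified inside $\Omega$ to justify classical integration by parts), and the bound on $\|t_{\partial\Omega}u\|_{H^{-1/2}(\partial\Omega)}$ by $\|u\|_{L^2(\Omega)}+\|\partial_{\bar z}u\|_{L^2(\Omega)}$ follows from Cauchy--Schwarz applied to both terms on the right. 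The case of $\partial_z$ is analogous.

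For \emph{(iii)}, the formulas hold for $u,v,w\in\mathcal{C}^\infty(\overline\Omega)$ by the classical Stokes theorem, and they extend by density of $\mathcal{C}^\infty(\overline\Omega)$ in the graph-norm spaces $\Dom(\partial_\#)$ combined with the trace continuity from \emph{(ii)}.

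For \emph{(iv)}, which I expect to be the main obstacle, I would split $u=g+(u-g)$ with $g\in H^1(\Omega)$ solving $\partial_{\bar z}g=\partial_{\bar z}u$. To construct $g$, let $h\in H^2(\Omega)\cap H^1_0(\Omega)$ be the unique solution of $\Delta h=2\partial_{\bar z}u\in L^2(\Omega)$, which exists by elliptic regularity for the Dirichlet Laplacian on a $\mathcal{C}^2$ domain, and set $g=2\partial_z h\in H^1(\Omega)$, so that $\partial_{\bar z}g=\tfrac12\Delta h=\partial_{\bar z}u$. Then $u-g$ is distributionally holomorphic on $\Omega$, hence harmonic, and its trace in the extended sense of \emph{(ii)} coincides with $t_{\partial\Omega}u-t_{\partial\Omega}g\in H^{1/2}(\partial\Omega)$. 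By uniqueness of the Dirichlet problem for the Laplacian, $u-g$ must coincide with the $H^1(\Omega)$ harmonic extension of that boundary trace, whence $u-g\in H^1(\Omega)$ and consequently $u\in H^1(\Omega)$.
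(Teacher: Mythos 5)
The paper itself gives no proof of this lemma: all four items are quoted from \cite{Antunes2021} (Section 3, Lemma 15, Remark 17, Lemma 18), and the paper only observes that those proofs never use $\mathcal C^\infty$ regularity of $\partial\Omega$, so they remain valid for $\mathcal C^2$ domains. Your self-contained construction is therefore a genuinely different route, and most of it is sound. Item $(i)$ is the standard closedness of the maximal operator and is fine. Defining the extended trace in $(ii)$ through the smooth Green identity, a right inverse of the classical trace (note $\nu\psi\in H^{1/2}(\partial\Omega)$ because $\nu\in\mathcal C^1$), and Cauchy--Schwarz is the usual construction, and then $(iii)$ holds essentially by definition for $u\in\Dom(\partial_\#)$ and $w\in H^1(\Omega)$. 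Two presentational caveats: well-definedness in $(ii)$ is cleaner by fixing $u$ and approximating $w\in H^1_0(\Omega)$ by $\mathcal C^\infty_c(\Omega)$ functions, using directly the distributional definition of $\partial_{\bar z}u$, rather than mollifying $u$; and in $(iii)$ you should not invoke density of $\mathcal C^\infty(\overline\Omega)$ in $\Dom(\partial_\#)$ for the graph norm --- that Friedrichs-type density is itself a nontrivial fact you have not proved, and it is not needed once $(ii)$ is taken as the definition of the trace.

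The genuine gap is the last step of $(iv)$. After the (correct, and nicely parallel to \Cref{lemma:BergmanCharacterization}) reduction $u=g+\varphi$ with $g=2\partial_z h\in H^1(\Omega)$ and $\varphi:=u-g$ holomorphic, you only know that $\varphi\in L^2(\Omega)$ and that its trace \emph{in the extended sense of $(ii)$} lies in $H^{1/2}(\partial\Omega)$. ``Uniqueness of the Dirichlet problem'' cannot be invoked directly: variational uniqueness concerns $H^1(\Omega)$ solutions, which is exactly what you are trying to prove $\varphi$ to be, while uniqueness of very weak (transposition) solutions concerns functions whose boundary values are encoded by pairing against $\partial_\nu v$ for $v\in H^2(\Omega)\cap H^1_0(\Omega)$, not by the $\overline\partial$-duality of $(ii)$. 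To close the argument you must bridge the two notions, for instance: for $v\in H^2(\Omega)\cap H^1_0(\Omega)$ apply the Green formula of $(iii)$ to $\varphi$ with $w=\partial_{\bar z}\overline v\in H^1(\Omega)$, and use that $\partial_\tau v=0$ on $\partial\Omega$ (so $2\nu\partial_z v=\partial_\nu v$ there) to rewrite the boundary term as a pairing of the trace of $\varphi$ with $\partial_\nu v$; this exhibits $\varphi$ as a very weak solution of the Dirichlet problem with datum $t_{\partial\Omega}u-t_{\partial\Omega}g$, after which transposition uniqueness (testing with $\Delta v$ equal to the difference between $\varphi$ and the harmonic extension) does give $\varphi\in H^1(\Omega)$. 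Alternatively, since the trace of $\varphi$ lies in $H^{1/2}(\partial\Omega)\subset L^2(\partial\Omega)$, the reproducing formula $\varphi=\Phi_{\mathrm h}(t_{\partial\Omega}\varphi)$ of \Cref{rmk:CauchyPompeiu} together with the $H^{1/2}(\partial\Omega)\to H^1(\Omega)$ boundedness of $\Phi_{\mathrm h}$ in \Cref{lemma:SingleLayer} concludes immediately --- but that is precisely the content of the cited \cite[Lemma 18]{Antunes2021}, not a consequence of standard Dirichlet uniqueness. As written, this step is missing, and it is the heart of item $(iv)$.
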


For the sake of simplifying the notation, from now on we will denote $u = t_{\partial\Omega} u$ when no confusion arises.

\begin{remark}
    On the one hand, it is remarkable to say that throughout \cite{Antunes2021} the domain $\Omega$ is assumed to be bounded and $\mathcal C^\infty$; however, the proofs in \cite{Antunes2021} of the properties of \Cref{lemma:propertiesWirtinger} do not use the $\mathcal C^\infty$ regularity of $\partial\Omega$; they are valid for $\mathcal C^2$ domains, the setting of this work. On the other hand, the Green formulas appear in \Cref{lemma:propertiesWirtinger} with complex conjugate on the boundary terms, because we define $\langle \cdot, \cdot \rangle_{L^2(\Omega)}$ to be linear with respect to the first entry ---recall \Cref{sec:Notation}.
\end{remark}

Notice from \Cref{lemma:propertiesWirtinger} $(ii)$ that a function in $\Dom(\partial_\mathrm{h})$ has a trace a priori only in $H^{-1/2}(\partial\Omega)$. The following lemma gives a criterion to ensure that the trace is actually in $H^{1/2}(\partial\Omega)$.

\begin{lemma} \label{lemma:AntunesRegularity}
    Let $u\in L^2(\Omega)$ be such that $\partial_{\bar z} u \in L^2(\Omega)$ and $\Delta u\in L^2(\Omega)$. If there exists a $\mathcal C^1(\partial \Omega, \R)$ function $\vartheta$ vanishing nowhere in $\partial \Omega$ such that $\partial_{\bar z} u = \vartheta \nu u$ in $H^{-1/2}(\partial\Omega)$, then $u \in H^1(\Omega)$ and $\partial_{\bar z}u \in H^1(\Omega)$. In particular, the equality $\partial_{\bar z} u = \vartheta \nu u$ holds in $H^{1/2}(\partial\Omega)$.
\end{lemma}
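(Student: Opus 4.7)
Setting $w:=\partial_{\bar z}u$, the hypothesis $\Delta u\in L^2(\Omega)$ gives $\partial_z w = \tfrac14 \Delta u\in L^2(\Omega)$, so $w\in\Dom(\partial_{\mathrm{ah}})$ and $t_{\partial\Omega}w\in H^{-1/2}(\partial\Omega)$ by \Cref{lemma:propertiesWirtinger}~$(ii)$. The whole statement then reduces to proving $t_{\partial\Omega}u\in H^{1/2}(\partial\Omega)$. Indeed, once this is known, \Cref{lemma:propertiesWirtinger}~$(iv)$ applied to $u\in\Dom(\partial_{\mathrm{h}})$ yields $u\in H^1(\Omega)$; and since $\partial\Omega\in\mathcal C^2$ forces $\nu\in\mathcal C^1(\partial\Omega,\C)$ and $\vartheta\in\mathcal C^1(\partial\Omega,\R)$ vanishes nowhere, the product $\vartheta\nu$ is a nowhere-vanishing $\mathcal C^1$ multiplier on $\partial\Omega$ that preserves $H^{1/2}(\partial\Omega)$. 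The boundary relation $t_{\partial\Omega}w=\vartheta\nu\,t_{\partial\Omega}u$ therefore lies in $H^{1/2}(\partial\Omega)$, and a second application of \Cref{lemma:propertiesWirtinger}~$(iv)$ to $w\in\Dom(\partial_{\mathrm{ah}})$ gives $\partial_{\bar z}u=w\in H^1(\Omega)$. The boundary equality $\partial_{\bar z}u=\vartheta\nu u$ then holds in $H^{1/2}(\partial\Omega)$ by continuity of the $H^1$-trace.

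To produce $t_{\partial\Omega}u\in H^{1/2}(\partial\Omega)$, I would first split off the inhomogeneity. Let $u_0\in H^2(\Omega)\cap H^1_0(\Omega)$ be the solution of the Dirichlet problem $-\Delta u_0=-\Delta u$, which exists by standard $H^2$-elliptic regularity on $\mathcal C^2$ domains. Then $\phi:=u-u_0\in L^2(\Omega)$ is harmonic in $\Omega$, satisfies $\partial_{\bar z}\phi\in L^2(\Omega)$ and $t_{\partial\Omega}\phi=t_{\partial\Omega}u$ (because $u_0|_{\partial\Omega}=0$), and by subtracting the boundary values of $u_0$ it solves
\[
\partial_{\bar z}\phi - \vartheta\nu\,\phi \;=\; -t_{\partial\Omega}(\partial_{\bar z}u_0) \;=:\; R \quad \text{in } H^{-1/2}(\partial\Omega),
\]
with $R\in H^{1/2}(\partial\Omega)$ because $\partial_{\bar z}u_0\in H^1(\Omega)$. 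Hence the task reduces to: given a harmonic $\phi\in L^2(\Omega)$ with $\partial_{\bar z}\phi\in L^2(\Omega)$ satisfying an oblique Robin-type boundary relation with $H^{1/2}(\partial\Omega)$-data, show that $t_{\partial\Omega}\phi\in H^{1/2}(\partial\Omega)$.

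For the harmonic problem I would localize via a partition of unity on $\partial\Omega$ and flatten $\partial\Omega$ through $\mathcal C^2$ diffeomorphisms mapping $\Omega$ locally to $\{y>0\}$. On the tangential-Fourier side in the variable $x$, harmonicity combined with $L^2$-decay as $y\to+\infty$ forces $\hat\phi(\xi,y)=A(\xi)e^{-|\xi|y}$, and the boundary relation, written in flattened coordinates as $\partial_y\phi-i\partial_x\phi+2\widetilde\vartheta\,\phi=\widetilde R$ at $y=0$, becomes the symbolic identity $(\xi-|\xi|+2\widetilde\vartheta)\,A(\xi)=\widehat{\widetilde R}(\xi)$. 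When $\widetilde\vartheta<0$---which is the regime of the application with $\vartheta=-a/2$---the symbol is bounded below by $c(1+|\xi|)$ uniformly in $\xi$, and inverting it against $\widehat{\widetilde R}\in H^{1/2}$ yields $(1+|\xi|)^{1/2}A(\xi)\in L^2(\xi)$, i.e.\ $t_{\partial\Omega}\phi\in H^{1/2}$ locally, from which a partition-of-unity patching gives the global conclusion.

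The main obstacle is the rigorous execution of that last step in the variable-coefficient, low-interior-regularity setting. The $\mathcal C^2$ flattening and the $\mathcal C^1$ coefficient $\vartheta$ produce lower-order terms that must be absorbed as perturbations of the flat, constant-coefficient model, and because the interior datum $\phi$ is only $L^2$ one cannot appeal to the standard $H^1$-based elliptic regularity theory. Moreover, if $\vartheta$ takes positive values (a case the statement allows but the paper's application avoids), the symbol above has an isolated zero at $\xi=-\vartheta$, so the Lopatinsky-Shapiro ellipticity condition fails at a single point and an \emph{ad hoc} argument---using the a priori $L^2$-bound on $\phi$ to dispatch that finite-codimensional mode---is needed. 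A cleaner alternative, available once the Hilbert-space machinery of \Cref{sec:HSandSFsetting} is in place, is to construct via Lax-Milgram on a shifted coercive sesquilinear form on $E(\Omega)$ a solution $U\in H^1(\Omega)$ with $\partial_{\bar z}U\in H^1(\Omega)$ solving the same equation and boundary condition, and to identify $u=U$ by uniqueness in the maximal class; this reroutes the technical core into coercivity and weak uniqueness.
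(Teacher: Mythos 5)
Your opening reduction is fine: since $\partial_z(\partial_{\bar z}u)=\tfrac14\Delta u\in L^2(\Omega)$, both $u$ and $w=\partial_{\bar z}u$ lie in the maximal domains, and because $\vartheta\nu$ is a nowhere-vanishing $\mathcal C^1$ multiplier on $\partial\Omega$, it would indeed suffice to prove $t_{\partial\Omega}u\in H^{1/2}(\partial\Omega)$ and then apply \Cref{lemma:propertiesWirtinger}~$(iv)$ twice. But the heart of the lemma is precisely that trace upgrade, and at that point your argument is only a sketch with genuine gaps, which you yourself flag. Concretely: (1) the claimed uniform bound $|\xi-|\xi|+2\widetilde\vartheta|\geq c(1+|\xi|)$ is false --- on the half-line where $\xi$ and $|\xi|$ cancel the symbol is the constant $2\widetilde\vartheta$; a constant lower bound still suffices to pass $H^{1/2}$ data to $H^{1/2}$ traces, but the misstatement signals that the model computation was not checked, let alone the variable-coefficient version. (2) The localization/flattening step with coefficients only $\mathcal C^1$/$\mathcal C^2$ and interior data only in $L^2$ (so that the usual $H^1$-based elliptic boundary regularity is unavailable) is exactly the pseudodifferential-type machinery the paper is at pains to avoid, and you do not execute it. (3) The statement allows $\vartheta$ of either sign, and when $\widetilde\vartheta>0$ your symbol has a zero, the Lopatinsky--Shapiro condition fails, and you offer no argument. (4) The ``cleaner alternative'' is circular: after producing a regular solution $U$ by Lax--Milgram, identifying $u=U$ requires a uniqueness statement in the maximal class, i.e.\ an integration by parts for $w=u-U$ whose boundary term pairs $t_{\partial\Omega}(\partial_{\bar z}w)$ against $t_{\partial\Omega}w$, and with both traces known only in $H^{-1/2}(\partial\Omega)$ this pairing is not defined --- you would need precisely the trace regularity the lemma is meant to establish (and, in addition, coercivity only helps for one sign of $\vartheta$).

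For comparison, the paper's proof avoids all of this by a short reduction to a known regularity theorem for two-dimensional Dirac operators on $\mathcal C^2$ domains (\Cref{lemma:BenguriaRegularity}): extend $\vartheta$ to a nowhere-vanishing $\eta\in\mathcal C^1(\overline\Omega)$, set $\varphi=(u,\tfrac{i}{\eta}\partial_{\bar z}u)^\intercal$, check that $\varphi$ and its Dirac image are in $L^2$ and that the boundary relation becomes $v=i\nu u$ in $H^{-1/2}(\partial\Omega)$ (the sign of $\vartheta$ cancels), and conclude $\varphi\in H^1(\Omega,\C^2)$ with the boundary equation in $H^{1/2}(\partial\Omega)$. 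If you want to salvage your route, you would have to carry out the flattened variable-coefficient analysis in the $L^2$ setting (or supply an independent self-adjointness/uniqueness argument for the maximal operator), which is substantially harder than the reduction the paper uses.
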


A simplified version of this lemma is shown in the proof of \cite[Proposition 32]{Antunes2021}, assuming in addition that $\Omega$ is $\mathcal C^\infty$. The need for this regularity assumption arises from the strategy of showing regularity through pseudo-differential operator techniques. An alternative (but long) proof could be done, replacing the use of pseudo-differential operators by \cite[Theorem~4.3.2]{Nedelec2001} and explicit regularity estimates of the kernels of the singular integral operators appearing throughout \cite{Antunes2021}, for which $\Omega$ may simply be $\mathcal C^2$. Instead, we next give a simple proof of \Cref{lemma:AntunesRegularity} for $\mathcal C^2$ domains, which already unveils the relation of $\Rodzin_a$ with quantum dot Dirac operators ---recall \Cref{sec:RelationRodzinDirac}. Our proof of \Cref{lemma:AntunesRegularity} will be a consequence of the following regularity result, which is a restatement of \cite[Theorem 1.1, Remark 1, Lemma 2.2, and Lemma 3.3]{Benguria2017Self}.

\begin{lemma}[\cite{Benguria2017Self}] \label{lemma:BenguriaRegularity}
    Let $\Omega$ be a bounded $\mathcal C^2$ domain in $\R^2$. If $\varphi = (u, v)^\intercal \in L^2(\Omega,\C^2)$ is such that
    \begin{equation}
        \begin{split}
            & \begin{pmatrix}
            0 & \partial_z \\
            \partial_{\bar z} & 0
        \end{pmatrix} \varphi \in L^2(\Omega, \C^2), \text{ and } \\
        &   v = i\nu \dfrac{1-\sin \theta}{\cos \theta} u \ \text{ in } H^{-1/2}(\partial\Omega) \text{ for some } \theta \text{ such that } \cos\theta \neq 0,
        \end{split}
    \end{equation}
    then $\varphi\in H^1(\Omega,\C^2)$, and the boundary equation holds in $H^{1/2}(\partial\Omega)$.
\end{lemma}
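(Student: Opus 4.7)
The plan is to reformulate the scalar equation as a first-order Dirac system on the spinor $\varphi := (u, \varepsilon iw)^\intercal$, where $w := \partial_{\bar z} u$ and $\varepsilon \in \{+1,-1\}$ is chosen according to the (locally constant) sign of $\vartheta$; then \Cref{lemma:BenguriaRegularity} applied to $\varphi$ will yield the desired $H^1$-regularity for both $u$ and $\partial_{\bar z}u$.

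First, by hypothesis $w \in L^2(\Omega)$, and the factorization $\Delta = 4\partial_z\partial_{\bar z}$ combined with $\Delta u \in L^2(\Omega)$ gives $\partial_z w = \tfrac14 \Delta u \in L^2(\Omega)$. Hence $\varphi \in L^2(\Omega,\C^2)$ and
\begin{equation*}
    \begin{pmatrix} 0 & \partial_z \\ \partial_{\bar z} & 0 \end{pmatrix}\varphi = \begin{pmatrix} \tfrac{\varepsilon i}{4}\Delta u \\ w \end{pmatrix} \in L^2(\Omega,\C^2),
\end{equation*}
so the interior hypothesis of \Cref{lemma:BenguriaRegularity} is satisfied. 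Note that $\varepsilon$ is well defined as a locally constant function on $\partial\Omega$ by continuity and non-vanishing of $\vartheta$.

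Next, I translate the boundary hypothesis. Multiplying $\partial_{\bar z}u = \vartheta\nu u$ by $\varepsilon i$, the second component of $\varphi$ obeys $\varepsilon iw = i\nu(\varepsilon\vartheta)u$ in $H^{-1/2}(\partial\Omega)$; matching with $v = i\nu\tfrac{1-\sin\theta}{\cos\theta}u$ amounts to solving $\varepsilon\vartheta = \tfrac{1-\sin\theta}{\cos\theta} = \tan(\pi/4 - \theta/2)$. Since $\varepsilon\vartheta = |\vartheta|$ is a strictly positive $\mathcal{C}^1$ function on $\partial\Omega$, bounded and bounded away from zero by compactness of $\partial\Omega$, this defines a $\mathcal{C}^1$ function $\theta:\partial\Omega \to (-\pi/2,\pi/2)$ with $\cos\theta$ bounded away from zero. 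Applying \Cref{lemma:BenguriaRegularity} to this boundary datum yields $\varphi \in H^1(\Omega,\C^2)$, hence $u \in H^1(\Omega)$ and $\partial_{\bar z}u \in H^1(\Omega)$. The final claim that $\partial_{\bar z}u = \vartheta\nu u$ holds in $H^{1/2}(\partial\Omega)$ is then automatic from the improved regularity: both sides are now elements of $H^{1/2}(\partial\Omega)$ that agree as distributions in $H^{-1/2}(\partial\Omega)$.

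The main obstacle is the mismatch between the variable function $\vartheta$ and the single parameter $\theta$ in the stated form of \Cref{lemma:BenguriaRegularity}. I expect this to be reconciled by reading \Cref{lemma:BenguriaRegularity} as remaining valid when $\theta$ is a $\mathcal{C}^1$ function on $\partial\Omega$, since the elliptic-regularity argument in \cite{Benguria2017Self} depends only on the smoothness of the boundary projector and on its ellipticity bound, both of which are preserved under $\mathcal{C}^1$ variation. Should one insist on the literal constant-$\theta$ statement, the alternative is to localize via a partition of unity subordinate to patches on which $\vartheta$ is close to a constant, apply \Cref{lemma:BenguriaRegularity} on each patch to a cutoff of $\varphi$, and control the commutators with $\partial_{\bar z}$ through the $\mathcal{C}^1$-norm of $\vartheta$ so as to assemble the local $H^1$-bounds into a global one.
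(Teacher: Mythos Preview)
There is a mismatch: the statement you were given, \Cref{lemma:BenguriaRegularity}, is a result quoted from \cite{Benguria2017Self} that the paper does not itself prove. Your proposal is in fact an attempt at \Cref{lemma:AntunesRegularity}, which \emph{uses} \Cref{lemma:BenguriaRegularity} as a black box. Assuming that is what was intended, here is the comparison.

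Both the paper and you build a spinor out of $u$ and $\partial_{\bar z}u$ and invoke \Cref{lemma:BenguriaRegularity}, but the paper sidesteps your ``main obstacle'' (the variable $\theta$) by a simple trick. Rather than matching the boundary weight $\vartheta$ to a point-dependent parameter $\theta$, the paper first extends $\vartheta$ to a nowhere-vanishing $\mathcal C^1$ function $\eta$ on all of $\overline\Omega$ (via a tubular-neighborhood construction) and then sets $v := \tfrac{i}{\eta}\,\partial_{\bar z}u$. On $\partial\Omega$ this gives $v = \tfrac{i}{\vartheta}\cdot\vartheta\nu u = i\nu u$, which is exactly the boundary condition of \Cref{lemma:BenguriaRegularity} with the \emph{constant} $\theta=0$. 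The price is an extra commutator term $i(\partial_{\bar z}u)\,\partial_z(1/\eta)$ in the first component of the Dirac action, but since $\eta\in\mathcal C^1(\overline\Omega)$ is bounded away from zero and $\partial_{\bar z}u\in L^2(\Omega)$, this term lies in $L^2$. Thus no extension of \Cref{lemma:BenguriaRegularity} to variable $\theta$, and no localization argument, is needed. Your route could be completed by carrying out one of the two workarounds you sketch, but the paper's device is cleaner and stays within the literal constant-$\theta$ statement; it also avoids the loose end in your construction where the sign $\varepsilon$ is defined only on $\partial\Omega$ yet used to build $\varphi$ on all of $\Omega$.
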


Indeed, our proof of \Cref{lemma:AntunesRegularity} relies on building a function $\varphi$ from $u$ and $\partial_{\bar z} u$ which fits in the hypothesis of \Cref{lemma:BenguriaRegularity}.

\begin{proof}[Proof of \Cref{lemma:AntunesRegularity}]
    Since $\vartheta\in \mathcal C^1(\partial \Omega, \R)$ and does not vanish, it is either positive or negative on $\partial\Omega$. Without loss of generality, assume that $\vartheta$ is positive (the other case follows by analogous arguments). We extend $\vartheta$ in $\overline \Omega$ without vanishing.

    For $\epsilon>0$, set $\Omega_\epsilon := \{x\in \Omega: \mathrm{dist}(x,\partial\Omega) < \epsilon\}$. Since $\Omega$ is $\mathcal C^2$, for small enough $\epsilon>0$ the map
    \begin{equation}
        \begin{matrix}
            \partial\Omega \times (0,\epsilon) & \longrightarrow & \Omega_\epsilon \\
            (x,t) & \longmapsto & x-t\nu(x)
        \end{matrix}
    \end{equation}
    is a $\mathcal C^1$-diffeomorphism. Thus, setting
    \begin{equation}
        \begin{matrix}
            \gamma :& \Omega_\epsilon & \longrightarrow & \R \\
            & x-t\nu(x) & \longmapsto & \vartheta(x) 
        \end{matrix}
    \end{equation}
    for $x\in \partial\Omega$, $t\in(0,\epsilon)$, and taking $\eta := (1-\xi)\gamma +\xi$, where $\xi \in \mathcal C^\infty_c(\Omega\setminus\Omega_{\epsilon/2})$ is a cutoff function such that $\chi_{\Omega\setminus\Omega_\epsilon}\leq \xi\leq \chi_\Omega$, we have that $\eta \in \mathcal C^1(\overline \Omega)$ ---here, $\chi_A$ denotes the characteristic function of $A$. Moreover, $\eta$ vanishes nowhere in $\overline \Omega$, and $\eta|_{\partial\Omega} = \vartheta$. 

    Set $v:=\frac{i}{\eta} \partial_{\bar z} u$, which is well defined because $\eta$ vanishes nowhere in $\overline \Omega$. Then set $\varphi := (u, v)^\intercal$, and notice that $\varphi \in L^2(\Omega, \C^2)$ by the hypothesis in the statement and the fact that $\frac{1}{|\eta|}$ is bounded in $\overline \Omega$. Moreover,
    \begin{equation}
        \begin{pmatrix}
            0 & \partial_z \\
            \partial_{\bar z} & 0
        \end{pmatrix} \varphi = \begin{pmatrix}
            0 & \partial_z \\
            \partial_{\bar z} & 0
        \end{pmatrix} \begin{pmatrix}
            u \\
            \frac{i}{\eta }\partial_{\bar z} u
        \end{pmatrix} = \begin{pmatrix}
            \frac{i}{4\eta} \Delta u +i \partial_{\bar z} u \partial_z \frac{1}{\eta} \\
            \partial_{\bar z} u
        \end{pmatrix}
    \end{equation}
    is also in $L^2(\Omega,\C^2)$, by the hypothesis in the statement and the fact that $|\partial_z \frac{1}{\eta}|$ is bounded in $\overline \Omega$. In addition, since $\partial_{\bar z} u = \vartheta \nu u$ in $H^{-1/2}(\partial\Omega)$ by hypothesis, on the boundary there holds
    \begin{equation}
        v = \frac{i}{\eta} \partial_{\bar z} u = \frac{i}{\vartheta} \vartheta \nu u = i \nu u \quad \text{in } H^{-1/2}(\partial\Omega).
    \end{equation}
    Hence, by \Cref{lemma:BenguriaRegularity} with $\theta=0$, $\varphi\in H^1(\Omega, \C^2)$ and the boundary equation holds in $H^{1/2}(\partial\Omega, \C^2)$. That is, $u\in H^1(\Omega)$, $\partial_{\bar z} u = -i\eta v \in H^1(\Omega)$ ---since $v\in H^1(\Omega)$ and $\eta\in \mathcal C^1(\overline\Omega)$---, and $\partial_{\bar z} u = -i\eta v = -i \vartheta i \nu u = \vartheta\nu  u $ in $H^{1/2}(\partial\Omega)$.
\end{proof}

We conclude our review on the maximal Wirtinger operators with the Cauchy integrals defined by their fundamental solutions. Since the fundamental solution of $-\Delta$ is
\begin{equation} \label{eq:FundSolLaplacian}
    G(x) :=\dfrac{-1}{2\pi} \ln |x| \quad \text{for all } x=(x_1,x_2)\in\R^2\setminus\{0\},
\end{equation}
it is clear that the fundamental solution of $\partial_{\bar z}$ is
\begin{equation} \label{eq:FundSolDbar}
    K(x) := -4\partial_{z} G(x) = \dfrac{1}{\pi} \dfrac{1}{x_1+ix_2} \equiv \dfrac{1}{\pi} \dfrac{1}{z} \quad \text{for all } x=(x_1,x_2)\equiv x_1+ix_2=z,
\end{equation}
and that the fundamental solution of $\partial_z$ is $\overline{K(x)}$. Recall that the complex measure $d\xi$ is related with the surface measure $d\upsigma(\xi)$ by $d\xi = i\nu(\xi) d\upsigma(\xi)$, for every $\xi=\xi_1+i\xi_2 \equiv(\xi_1,\xi_2)\in \partial\Omega$.

\begin{lemma} \label{lemma:SingleLayer}
    Let $K$ be the fundamental solution defined in \eqref{eq:FundSolDbar}, that is, $K(z) = \frac{1}{\pi z}$ for $z\in \C\setminus\{0\}$. If $\Omega$ is $\mathcal C^2$ and $s\in(-1,1/2]$, then the Cauchy integral operator
    \begin{equation}
        \begin{array}{cccl}
            \Phi_\mathrm{h} : & H^s(\partial\Omega) & \rightarrow & H^{s+1/2}(\Omega) \\
            & f & \mapsto & \Phi_\mathrm{h}f(z)= \displaystyle{-\frac{1}{2} \int_{\partial\Omega} K(z-\xi)f(\xi) \nu(\xi) \, d\upsigma(\xi) = \frac{1}{2\pi i} \int_{\partial\Omega} \dfrac{f(\xi)}{\xi-z} \, d\xi}
        \end{array}
    \end{equation}
    is well defined and bounded. The same is true for $\Phi_\mathrm{ah}f:= \overline{\Phi_\mathrm{h}\overline f}$.
\end{lemma}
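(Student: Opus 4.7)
The plan is to reduce the statement to the classical mapping properties of the single-layer potential of $-\Delta$. Since $K(z)=-4\partial_z G(z)$ by \eqref{eq:FundSolLaplacian}-\eqref{eq:FundSolDbar}, I would first rewrite
\begin{equation*}
    \Phi_\mathrm{h} f(z) = -\tfrac{1}{2}\int_{\partial\Omega} K(z-\xi)\,\nu(\xi)\,f(\xi)\, d\upsigma(\xi) = 2\,\partial_z \mathcal{S}[\nu f](z), \quad z\in \Omega,
\end{equation*}
where $\mathcal{S}g(z):=\int_{\partial\Omega}G(z-\xi)g(\xi)\,d\upsigma(\xi)$ is the classical single-layer potential of $-\Delta$. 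The commutation of $\partial_z$ with the integral is valid pointwise for smooth data by differentiation under the integral, and it passes to rough $f\in H^s(\partial\Omega)$ by density (or by reading the boundary integrals as duality pairings when $s<0$).

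Next I would invoke the classical mapping property $\mathcal{S}:H^s(\partial\Omega)\to H^{s+3/2}(\Omega)$, bounded for every $s$ in a range containing $(-1,1/2]$ (this is essentially \cite[Theorem~4.3.2]{Nedelec2001}, and only uses that $\partial\Omega$ is $\mathcal{C}^2$). Because $\partial\Omega$ is $\mathcal{C}^2$, the outward normal satisfies $\nu\in \mathcal{C}^1(\partial\Omega,\C)$, so multiplication by $\nu$ is bounded on $H^s(\partial\Omega)$ for $|s|\leq 1$. Composing with the bounded differentiation $\partial_z:H^{s+3/2}(\Omega)\to H^{s+1/2}(\Omega)$, this yields the desired bound $\|\Phi_\mathrm{h}f\|_{H^{s+1/2}(\Omega)}\leq C_{\Omega,s}\|f\|_{H^s(\partial\Omega)}$ for every $s\in(-1,1/2]$. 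The corresponding statement for $\Phi_{\mathrm{ah}}$ is immediate from the defining relation $\Phi_{\mathrm{ah}}f=\overline{\Phi_\mathrm{h}\overline{f}}$, since complex conjugation is an isometry on every $H^t$ space.

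I expect the main obstacle to be pinpointing the full admissible range $s\in(-1,1/2]$ for the single-layer mapping property rather than a smaller one. The safest endpoint to handle by hand is $s=-1/2$, where $\mathcal{S}:H^{-1/2}(\partial\Omega)\to H^1(\Omega)$ is standard via the coercivity of the associated energy; duality (using that $\mathcal{S}$ is essentially self-adjoint modulo conjugation) extends the property to $s$ near $1/2$, and interpolation fills the intermediate values. An alternative, more self-contained route at the upper endpoint $s=1/2$ is to argue directly through the Cauchy--Pompeiu formula: for $f\in H^{1/2}(\partial\Omega)$, choose an $H^1(\Omega)$-extension $F$ with $F|_{\partial\Omega}=f$ and write
\begin{equation*}
    \Phi_\mathrm{h}f(z) = F(z) + \frac{1}{\pi}\int_\Omega \frac{\partial_{\bar\xi}F(\xi)}{\xi-z}\,dA(\xi), \quad z\in\Omega;
\end{equation*}
both summands lie in $H^1(\Omega)$, the second by the well-known $L^2(\Omega)\to H^1(\Omega)$ boundedness of the solid Cauchy transform, after which duality against the formal adjoint and interpolation recover the remaining values of $s$.
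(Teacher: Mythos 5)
Your proposal is correct and follows essentially the same route as the paper: write $\Phi_\mathrm{h}f = 2\,\partial_z\,\mathrm{SL}(\nu f)$ using $K=-4\partial_z G$, invoke the single-layer mapping property $H^s(\partial\Omega)\to H^{s+3/2}(\Omega)$ for $s\in(-1,1/2]$ (the paper cites \cite[Corollary 6.14]{McLean2000} rather than N\'ed\'elec), use that $\nu\in\mathcal C^1(\partial\Omega)$ to handle the multiplication, and compose with $\partial_z:H^{s+3/2}(\Omega)\to H^{s+1/2}(\Omega)$; the statement for $\Phi_{\mathrm{ah}}$ then follows by conjugation. The extra endpoint/interpolation and Cauchy--Pompeiu discussion in your last paragraph is unnecessary once the cited single-layer result is used, since it already covers the full range $s\in(-1,1/2]$ for $\mathcal C^2$ boundaries.
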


This result is proven for $s\in \{-1/2, 0, 1/2\}$ in \cite[Proposition 22]{Antunes2021}, assuming in addition that $\Omega$ is $\mathcal C^\infty$. Again, the need for this regularity assumption is due to the use of pseudo-differential opearators. We now provide an alternative proof, deriving the boundedness of the Cauchy integral $\Phi_{\mathrm{h}}$ from the boundedness of the single layer given by the fundamental solution of the Laplacian, for which $\Omega$ may simply be $\mathcal C^2$.

\begin{proof}[Proof of \Cref{lemma:SingleLayer}]
    Recall that the fundamental solution $K$ of $\partial_{\bar z}$ is given in terms of the fundamental solution $G$ of $-\Delta$ by $K = -4\partial_z G$. Since $\Omega$ is $\mathcal C^2$, by \cite[Corollary 6.14]{McLean2000}, the single layer operator
    \begin{equation}
        \begin{array}{cccl}
            \mathrm{SL} : & H^s(\partial\Omega) & \rightarrow & H^{s+3/2}(\Omega) \\
            & f & \mapsto & \int_{\partial\Omega} G(\cdot-y)f(y) d\upsigma(y)
        \end{array}
    \end{equation}
    is bounded for every $s\in(-1,1/2]$. Hence, since $\nu\in\mathcal C^1(\partial\Omega)$, composing $\mathrm{SL}(-\frac{1}{2}f\nu)$ with $-4\partial_z: H^{s+3/2}(\Omega) \rightarrow H^{s+1/2}(\Omega)$, we readily obtain the result for $\Phi_{\mathrm{h}}$.
\end{proof}

\begin{remark} \label{rmk:CauchyPompeiu}
    In \cite[Theorem 21]{Antunes2021} it is proven that for every holomorphic function $u\in L^2(\Omega)$ whose trace ---a priori in $H^{-1/2}(\partial\Omega)$, by \Cref{lemma:propertiesWirtinger}--- is actually in $L^2(\partial\Omega)$, it holds $u=\Phi_{\mathrm{h}}u$ and $t_{\partial\Omega} \Phi_{\mathrm{h}}u = t_{\partial\Omega} u$. Namely, for such functions 
    \begin{equation} \label{eq:CauchyPompeiu}
        u(z) = \Phi_{\mathrm{h}}u(z) = \dfrac{1}{2\pi i} \int_{\partial\Omega} \dfrac{u(\xi)}{\xi-z} \, d\xi \quad \text{for every } z\in \Omega, \quad \text{and} \quad \Phi_{\mathrm{h}}u = u \, \text{ in } L^2(\partial\Omega).
    \end{equation}
    The proof of \cite[Theorem 21]{Antunes2021} assumes in addition that $\Omega$ is $\mathcal C^\infty$, but it only relies on the boundedness of $\Phi_{\mathrm{h}}$ as an operator from $H^s(\partial\Omega)$ to $H^{s+1/2}(\Omega)$, for $s\in \{-1/2, 0, 1/2\}$. Hence, by \Cref{lemma:SingleLayer}, \eqref{eq:CauchyPompeiu} also holds for $\mathcal C^2$ domains.
\end{remark}

\subsection{The Bergman space} \label{sec:BergmanSpace}

The kernel of the maximal Wirtinger operator $\partial_{\mathrm{h}}$ turns out to be the so-called Bergman space, which we already introduced in \Cref{prop:SpectrumRodzin0},
\begin{equation} \label{eq:BergmanSpace}
    A^2(\Omega) = \left\{u\in L^2(\Omega): \, \partial_{\bar z} u = 0 \text{ in } \Omega \right\}.
\end{equation}
It is well known that $A^2(\Omega)$ is a closed, proper, subspace of $L^2(\Omega)$ ---actually, it is a Hilbert space \cite[Proposition 1.13]{Conway2007}. We recall that we denote the orthogonal of the Bergman space as
\begin{equation}
    A^2(\Omega)^\perp = \left \{ u\in L^2(\Omega): \, \langle u, v \rangle_{L^2(\Omega)} = 0 \text{ for all } v \in A^2(\Omega) \right\},
\end{equation}
and that since $A^2(\Omega)$ is a closed, proper, subspace of $L^2(\Omega)$, the orthogonal projections
\begin{equation}
    P^\perp: L^2(\Omega) \rightarrow A^2(\Omega)^\perp \subset L^2(\Omega) \quad \text{and} \quad P = Id-P^\perp : L^2(\Omega) \rightarrow A^2(\Omega) \subset L^2(\Omega)
\end{equation}
are well defined, bounded, self-adjoint operators in $L^2(\Omega)$, with $\|P^\perp\|_{L^2(\Omega)\to L^2(\Omega)} = 1$ and $\|P\|_{L^2(\Omega)\to L^2(\Omega)} = 1$.

In this work, many proofs involving $L^2(\Omega)$ functions $u$ with some extra regularity assumptions will be based on decomposing $u=Pu+P^\perp u$ and studying separately the projections $Pu$ and $P^\perp u$. The following lemmas, which account for regularity estimates, will therefore be useful.

\begin{lemma} \label{lemma:BergmanRegularity}
    If the trace of $u_\mathrm{h}\in A^2(\Omega)$ ---a priori in $H^{-1/2}(\partial\Omega)$, by \Cref{lemma:propertiesWirtinger}--- is actually in $L^2(\partial\Omega)$, then $u_\mathrm{h}\in H^{1/2}(\Omega)$ and
    \begin{equation}
        \|u_\mathrm{h}\|_{H^{1/2}(\Omega)} \leq C_\Omega\|u_\mathrm{h}\|_{L^2(\partial\Omega)},
    \end{equation}
    for some constant $C_\Omega>0$ depending only on $\Omega$.
\end{lemma}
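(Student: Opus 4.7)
The plan is to exploit the reproducing property of the Cauchy integral for holomorphic $L^2$ functions, combined with the mapping properties of $\Phi_{\mathrm{h}}$ established earlier in the section. The strategy has essentially two lines: first represent $u_{\mathrm{h}}$ in terms of its boundary trace via the Cauchy integral, then read off the $H^{1/2}(\Omega)$ regularity from the boundedness of $\Phi_{\mathrm{h}}$.

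More concretely, since $u_{\mathrm{h}} \in A^2(\Omega)$, we have $\partial_{\bar z} u_{\mathrm{h}} = 0$, so $u_{\mathrm{h}}$ is a holomorphic $L^2(\Omega)$ function. By hypothesis, its trace is in $L^2(\partial\Omega)$, so we are precisely in the setting of \Cref{rmk:CauchyPompeiu} (the Cauchy--Pompeiu formula), which gives
\begin{equation}
    u_{\mathrm{h}} = \Phi_{\mathrm{h}} u_{\mathrm{h}} \quad \text{in } \Omega.
\end{equation}
Now apply \Cref{lemma:SingleLayer} with $s=0$, which asserts that $\Phi_{\mathrm{h}} : L^2(\partial\Omega) \to H^{1/2}(\Omega)$ is bounded. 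Combining these two facts yields $u_{\mathrm{h}} \in H^{1/2}(\Omega)$ together with
\begin{equation}
    \|u_{\mathrm{h}}\|_{H^{1/2}(\Omega)} = \|\Phi_{\mathrm{h}} u_{\mathrm{h}}\|_{H^{1/2}(\Omega)} \leq C_\Omega \|u_{\mathrm{h}}\|_{L^2(\partial\Omega)},
\end{equation}
where $C_\Omega$ is the operator norm furnished by \Cref{lemma:SingleLayer}.

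There is essentially no obstacle here: both ingredients (the Cauchy reproducing property and the $L^2 \to H^{1/2}$ boundedness of the Cauchy integral) are already in place, and the only thing to verify is that the hypotheses match, which they do verbatim. The statement is a direct corollary of the preceding subsection rather than something requiring a genuinely new argument.
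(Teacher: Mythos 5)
Your proof is correct and is essentially identical to the paper's own argument: both use the reproducing identity $u_{\mathrm{h}}=\Phi_{\mathrm{h}}u_{\mathrm{h}}$ from \Cref{rmk:CauchyPompeiu} and then the boundedness of $\Phi_{\mathrm{h}}:L^2(\partial\Omega)\to H^{1/2}(\Omega)$ from \Cref{lemma:SingleLayer} with $s=0$. Nothing is missing.
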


\begin{proof}
    For the sake of notation, write $v=u_\mathrm{h}$. On the one hand, by \Cref{rmk:CauchyPompeiu} it holds $v= \Phi_{\mathrm{h}} v$. On the other hand, by \Cref{lemma:SingleLayer} with $s=0$, there exists a constant $C_\Omega$ depending only on $\Omega$ such that $\|\Phi_{\mathrm{h}} v\|_{H^{1/2}(\Omega)} \leq C_\Omega \|v\|_{L^2(\partial \Omega)}$.
\end{proof}

\begin{lemma} \label{lemma:GainRegularityOrthogonal}
    If $u_\perp\in A^2(\Omega)^\perp \cap \Dom(\partial_{\mathrm{h}})$, then $u_\perp\in H^1(\Omega)$ and 
    \begin{equation}
        \|u_\perp\|_{H^1(\Omega)} \leq C  \|\partial_{\bar z} u_\perp\|_{L^2(\Omega)},
    \end{equation}
    for some constant $C>0$ depending only on $\Omega$.
\end{lemma}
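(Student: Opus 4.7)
The plan is to write $u_\perp = w - Pw$, where $w$ is a particular $H^1(\Omega)$-solution of $\partial_{\bar z} w = \partial_{\bar z} u_\perp$ built via the volume Cauchy transform and $P$ is the orthogonal projection onto $A^2(\Omega)$, and then to bound both terms in $H^1(\Omega)$.

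Set $f := \partial_{\bar z} u_\perp \in L^2(\Omega)$ and define the Cauchy transform
\begin{equation*}
    w(z) := (K * f\chi_\Omega)(z) = -\frac{1}{\pi}\int_\Omega \frac{f(\zeta)}{\zeta - z}\, dA(\zeta),
\end{equation*}
where $K$ is the fundamental solution of $\partial_{\bar z}$ from \eqref{eq:FundSolDbar}. By construction, $\partial_{\bar z} w = f \chi_\Omega$ as a tempered distribution on $\R^2$, while $\partial_z w = S(f\chi_\Omega)$, with $S$ the Beurling--Ahlfors transform. Since $S$ is bounded on $L^2(\R^2)$ by Calder\'on--Zygmund theory, and Young's inequality applied to the locally $L^p$-integrable kernel $K$ (for any $p<2$) controls $\|w\|_{L^2(\Omega)}$, one obtains
\begin{equation*}
    w \in H^1(\Omega), \qquad \|w\|_{H^1(\Omega)} \leq C_\Omega \|\partial_{\bar z} u_\perp\|_{L^2(\Omega)}.
\end{equation*}
Because $\partial_{\bar z}(u_\perp - w) = 0$ in $\Omega$, the difference $u_\perp - w$ lies in $A^2(\Omega)$. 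Combined with $Pu_\perp = 0$ (the orthogonality hypothesis), this yields the clean decomposition $u_\perp = P^\perp w = w - Pw$.

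What remains, and constitutes the main obstacle, is to show that $Pw \in H^1(\Omega)$ with $\|Pw\|_{H^1(\Omega)} \leq C_\Omega \|w\|_{H^1(\Omega)}$, i.e.\ the boundedness of the Bergman projection on $H^1(\Omega)$ for bounded $\mathcal C^2$ planar domains. In the simply connected case this follows by transport through the Riemann map, using Kellogg-type boundary regularity of the conformal map ($\mathcal C^{1,\alpha}$ up to $\partial\Omega$ for $\mathcal C^2$ boundary) to reduce the claim to the trivial case of the disk; for general $\mathcal C^2$ domains it is a classical consequence of the structure of the Bergman kernel, equivalently of the $H^1$-regularity of the canonical $L^2$-solution operator to the $\bar\partial$-equation. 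Combining this bound with the estimate on $w$ above then produces the announced inequality for $u_\perp$.
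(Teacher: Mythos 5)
Your reduction is correct as far as it goes: with $w=K*(\partial_{\bar z}u_\perp\,\chi_\Omega)$ one indeed has $w\in H^1(\Omega)$ with $\|w\|_{H^1(\Omega)}\leq C_\Omega\|\partial_{\bar z}u_\perp\|_{L^2(\Omega)}$ (Young for the $L^2$ part, Beurling transform for $\partial_z w$), $u_\perp-w\in A^2(\Omega)$, and hence $u_\perp=P^\perp w=w-Pw$; if one also knew $\|Pw\|_{H^1(\Omega)}\leq C\|w\|_{H^1(\Omega)}$, the lemma would follow with the correct right-hand side. The genuine gap is precisely that last claim: the $H^1(\Omega)$-boundedness of the Bergman projection on a bounded $\mathcal C^2$ planar domain is not a result you can wave through as ``classical''. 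The standard Sobolev regularity theory for the Bergman projection (Condition R, Bell--Ligocka) is formulated for smooth boundaries; at $\mathcal C^2$ regularity, and for the multiply connected domains the paper explicitly allows (it even works on an annulus), there is no off-the-shelf statement to cite. Your conformal-map sketch only treats the simply connected case, and even there Kellogg gives $\Phi\in\mathcal C^{1,\alpha}(\overline D)$ but in general \emph{not} $\Phi''\in L^\infty$ for a merely $\mathcal C^2$ boundary, so transporting $H^1$-bounds through the transformation formula requires genuine product estimates with $\Phi''\in L^p$, $p<\infty$, which you do not carry out. In effect the orthogonality hypothesis is used only to write $u_\perp=P^\perp w$, and the entire difficulty of the lemma has been shifted into an unproven regularity property of $P$.

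For comparison, the paper avoids Bergman projection regularity altogether and uses the orthogonality in a different, more elementary way: it forms the volume potential $v_K(x)=\int_\Omega\overline{K(x-y)}\,u_\perp(y)\,dy$, so that $\partial_z v_K=u_\perp$, shows $v_K\in H^1(\Omega)$, and then uses $u_\perp\perp A^2(\Omega)$ tested against the boundary Cauchy potentials $\Phi_{\mathrm h}(-2g\bar\nu)\in A^2(\Omega)$ to conclude that the trace of $v_K$ vanishes, i.e.\ $v_K\in H^1_0(\Omega)$. Since $-\Delta v_K=-4\partial_{\bar z}u_\perp\in L^2(\Omega)$, global $H^2$ regularity for the Dirichlet problem on a $\mathcal C^2$ domain gives $\|v_K\|_{H^2(\Omega)}\leq C\|\partial_{\bar z}u_\perp\|_{L^2(\Omega)}$, and hence $\|u_\perp\|_{H^1(\Omega)}=\|\partial_z v_K\|_{H^1(\Omega)}\leq C\|\partial_{\bar z}u_\perp\|_{L^2(\Omega)}$. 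If you want to salvage your route, the cleanest fix is to replace the Bergman projection step by this observation: $P^\perp w=\partial_z w_w$, where $w_w$ solves the Dirichlet problem $\Delta w_w=4\partial_{\bar z}u_\perp$, $w_w=0$ on $\partial\Omega$, and elliptic $H^2$ estimates give the bound directly --- which is essentially the paper's argument.
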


\begin{proof}
    For the sake of notation, write $v=u_\perp$. We will follow the ideas from the proof of \cite[Lemma 3.2]{DuranMas2024}. More specifically, the idea is to consider
    \begin{equation} \label{eq:vk}
        v_K(x) := \int_\Omega \overline{K(x-y)} v(y) \, dy \quad \text{for a.e. } x\in\Omega,
    \end{equation}
    where $K$ is the fundamental solution of $\partial_{\bar z}$ as in \eqref{eq:FundSolDbar}, show that it solves 
    \begin{equation}
        \begin{cases}
            -\Delta v_K = -4\partial_{\bar z} v & \text{in } \Omega, \\
            v_K = 0 & \text{on } \partial\Omega,
        \end{cases}
    \end{equation}
    and then use elliptic estimates. The orthogonality assumption $v\in A^2(\Omega)^\perp$ will be used crucially to show that $v_K\in H^1_0(\Omega)$. The details go as follows.

    First, we show that $v_K\in H^1(\Omega)$ with
    \begin{equation}\label{eq:vk_H1_v_L2}
        \|v_K\|_{H^1(\Omega)} \leq C \|v\|_{L^2(\Omega)},
    \end{equation}
    for some $C>0$ depending only on $\Omega$. For this, let $G$ be the fundamental solution of $-\Delta$ as in \eqref{eq:FundSolLaplacian}, and define
    \begin{equation} \label{def:Newt_pot}
        v_G(x):=\int_\Omega G(x-y)v(y)\,dy \quad \text{for a.e. } x\in\Omega.
    \end{equation}
    On the one hand, since $G \in L^1(\Omega)$ ---because $\Omega$ is bounded---, and $v_G$ is the convolution of $G$ and $v$, by Young's convolution inequality \cite[Section 4.2]{Lieb2001} we get that $v_G\in L^2(\Omega)$ with $\|v_G\|_{L^2(\Omega)} \leq \|G\|_{L^1(\Omega)} \|v\|_{L^2(\Omega)}$. On the other hand, since $v_G$ is the Newtonian potential of $v \in L^2(\Omega)$, by \cite[Theorem 9.9]{Gilbarg-Trudinger} we get that $v_G\in H^2(\Omega)$ with $\|D^2 v_G\|_{L^2(\Omega)} \leq C \|v\|_{L^2(\Omega)}$ for some $C>0$ depending only on $\Omega$. Using now the Gagliardo-Nirenberg interpolation inequality in bounded domains \cite[Theorem 1]{Nirenberg1966}, we have that $\|v_G\|_{H^2(\Omega)} \leq C \|v\|_{L^2(\Omega)}$ for some $C>0$ depending only on $\Omega$. Since $\overline K=-4 \partial_{\bar z} G$, it holds that $v_K=-4 \partial_{\bar z} v_G$ and, thus, $v_K\in H^1(\Omega)$ and \eqref{eq:vk_H1_v_L2} holds true.

    To show that $v_K\in H^1_0(\Omega)$, we will use, given $g\in \mathcal C^1(\partial\Omega)$, the function $w_g$ defined by
    \begin{equation}
        w_g(x):= \Phi_{\mathrm{h}}(-2g\overline\nu) (x) = \int_{\partial\Omega} K(x-y) g(y)\,d\upsigma(y) \quad \text{for all } x\in\Omega.
    \end{equation}
    That $w_g\in L^2(\Omega)$ follows from \Cref{lemma:SingleLayer} with $s=0$. However, let us give some details of the proof of this fact, which will be useful afterward in \eqref{eq:dom_conv_estimate}. By the Cauchy-Schwarz inequality,
    \begin{equation}
        |w_g(x)|^2\leq\int_{\partial\Omega} |K(x-y)|^{1/2}\,d\upsigma(y)\int_{\partial\Omega} |K(x-t)|^{3/2}|g(t)|^{2}\,d\upsigma(t) \quad\text{for all } x\in\Omega.
    \end{equation}
    Since $|K(x)|^{1/2}=(\pi|x|)^{-1/2}$ for all $x\in\R^2\setminus\{0\}$, using that $-1/2>-1$, and that $\partial\Omega$ is bounded and $\mathcal C^2$, it easily follows that the first integral is finite. Therefore, since $|K(x)|^{3/2}=(\pi|x|)^{-3/2}$ for all $x\in\R^2\setminus\{0\}$, using Fubini's theorem, that $-3/2>-2$, and that $\Omega$ is bounded, we obtain
    \begin{equation}
        \|w_g\|_{L^2(\Omega)}^2\leq C\int_{\partial\Omega}|g(t)|^2\int_\Omega |K(x-t)|^{3/2}\,dx\,d\upsigma(t) \leq C\|g\|_{L^2(\partial\Omega)}^2
    \end{equation}
    for some $C>0$ depending only on $\Omega$. This proves that $w_g\in L^2(\Omega)$. Moreover, $w_g$ is continuously differentiable in $\Omega$. In particular, using that $\partial_{\bar z} K(x)=0$ for all $x\in\R^2\setminus\{0\}$, we get that $\partial_{\bar z} w_g(x)=0$ for all $x\in\Omega$, that is, $w_g\in A^2(\Omega)$.

    Having $w_g$ defined, we claim that 
    \begin{equation} \label{eq:auxPairingsvK}
        \langle v, w_g\rangle_{L^2(\Omega)} = - \langle v_K, g\rangle_{L^2(\partial\Omega)},
    \end{equation}
    and thus since $v\in A^2(\Omega)^\perp$ and $w_g\in A^2(\Omega)$ we have $\langle v_K, g\rangle_{L^2(\partial\Omega)}=0$. Since this holds for all $g\in \mathcal C^1(\partial\Omega)$, we deduce that the trace of $v_K$ on $\partial\Omega$ vanishes and, therefore, $v_K\in H^1_0(\Omega)$. Let us show \eqref{eq:auxPairingsvK}, which will follow from
    \begin{equation}\label{eq:norm_conv_zero_trace}
        \begin{split}
            \langle v, w_g\rangle_{L^2(\Omega)}
            &=\int_\Omega \int_{\partial\Omega} v(x)\,\overline{K(x-y)g(y)}\,d\upsigma(y)\,dx\\
            & = -\int_{\partial\Omega} \int_\Omega \overline{K(y-x)}v(x)\,\overline{g(y)}\,dx\,d\upsigma(y) = - \langle v_K, g\rangle_{L^2(\partial\Omega)}
        \end{split}
    \end{equation}
    once we justify the equalities. For the second equality we used Fubini's theorem and the estimate
    \begin{equation}\label{eq:dom_conv_estimate}
        \begin{split}
            & \Big(\int_\Omega \int_{\partial\Omega} |v(x)||K(x-y)||g(y)|\,d\upsigma(y)\,dx\Big)^{2}\\
            &\quad \leq\Big(\int_\Omega |v(x)|^2 \int_{\partial\Omega} |K(x-y)|^{1/2}\,d\upsigma(y)\,dx\Big) \Big(\int_{\partial\Omega}|g(\tilde y)|^2\int_\Omega  |K(\tilde x-\tilde y)|^{3/2}\,d\tilde x\,d\upsigma(\tilde y)\Big)\\
            &\quad <+\infty,
        \end{split}
    \end{equation}
    which follows similarly to what we argued to prove that $w_g\in L^2(\Omega)$. Next, we justify the third equality in \eqref{eq:norm_conv_zero_trace}, in which the term $v_K$ on the right hand side denotes the trace on $\partial\Omega$ of $v_K\in H^1(\Omega)$. Given $\epsilon>0$ set $\Omega_\epsilon := \{x\in \Omega: \operatorname{dist}(x, \partial \Omega)>\epsilon \}$ and $v_\epsilon:= \chi_{\Omega_\epsilon} v$, where $\chi_{\Omega_\epsilon}$ is the characteristic function of $\Omega_\epsilon$. Let $(v_\epsilon)_K$ be defined as in $\eqref{def:Newt_pot}$ replacing $v$ by $v_\epsilon$. On the one hand, note that $(v_\epsilon)_K$ is continuous in a neighborhood of $\partial\Omega$ since $v_\epsilon$ vanishes outside $\Omega_\epsilon$. Hence, the trace of $(v_\epsilon)_K$ on $\partial\Omega$ is given by the formula
    \begin{equation}\label{eq:trace_approx_vK_vKeps2}
        (v_\epsilon)_K(y) = \int_{\Omega} \overline{K(y-x)}v_\epsilon(x) \,dx = \int_{\Omega_\epsilon} \overline{K(y-x)}v(x) \,dx \quad \text{for all } y\in \partial \Omega.
    \end{equation}
    On the other hand, by the trace theorem and \eqref{eq:vk_H1_v_L2} applied to $v_\epsilon-v$, we have
    \begin{equation}\label{eq:trace_approx_vK_vKeps}
        \|(v_\epsilon)_K - v_K\|_{L^2(\partial\Omega)} \leq C \|(v_\epsilon)_K - v_K\|_{H^1(\Omega)} \leq C \|v_\epsilon - v\|_{L^2(\Omega)},
    \end{equation}
    for some $C>0$ depending only on $\Omega$. Now, applying \eqref{eq:trace_approx_vK_vKeps} and \eqref{eq:trace_approx_vK_vKeps2}, and then using dominated convergence thanks to \eqref{eq:dom_conv_estimate}, we conclude that
    \begin{equation}
        \begin{split}
            \langle v_K, g\rangle_{L^2(\partial\Omega)} & =         \lim_{\epsilon \downarrow 0}  \,\langle (v_\epsilon)_K, g\rangle_{L^2(\partial\Omega)} = \lim_{\epsilon \downarrow 0}  \int_{\partial \Omega} \int_{\Omega_\epsilon} \overline{K(y-x)}v(x) \, \overline{g(y)} \,dx\,d\upsigma(y) \\
            & = \int_{\partial \Omega} \int_{\Omega} \overline{K(y-x)}v(x) \, \overline{g(y)} \,dx\,d\upsigma(y).
        \end{split}
    \end{equation}

    Having now $v_k\in H^1_0(\Omega)$, recall that $\partial_{\bar z} K=\delta_0$, which leads to $\partial_z v_K=v$ by the definition of $v_K$ in terms of $v$. Therefore, in the sense of distributions in $\Omega$, it holds that
    \begin{equation}
    -\Delta v_K=-4\partial_{\bar z} (\partial_z v_K) =-4\partial_{\bar z} v \in L^2(\Omega).
    \end{equation}
    Using now that $\Omega$ is bounded, that $\partial\Omega$ is of class $\mathcal C^2$, and the global $H^2$-regularity theorem from \cite[Theorem 4 in Section 6.3.2 and Remark in pg. 335]{Evans2010}, we deduce that $v_K\in H^2(\Omega)$ and that there exists $C>0$ depending only on $\Omega$ such that
    \begin{equation}\label{eq:norm_conv_reg_est_H2}
    \|v_K\|_{H^2(\Omega)} \leq C\|\Delta v_K\|_{L^2(\Omega)}.
    \end{equation}

    The proof of the lemma finishes as follows. On the one hand, since $v_K\in H^2(\Omega)$ and $\partial_z v_K=v$, we get $v\in H^1(\Omega)$ and $\|v\|_{H^1(\Omega)}\leq C\|v_K\|_{H^2(\Omega)}$. On the other hand, \eqref{eq:vk_H1_v_L2} leads to $\|v_K\|_{L^2(\Omega)}\leq C\|v\|_{L^2(\Omega)}$. With these estimates in hand, and recalling that $-\Delta v_K= -4\partial_{\bar z} v$, from \eqref{eq:norm_conv_reg_est_H2} we conclude that
    \begin{equation}
        \|v\|_{H^1(\Omega)}\leq C\|v_K\|_{H^2(\Omega)} \leq C\|\Delta v_K\|_{L^2(\Omega)} \leq C\|\partial_{\bar z} v\|_{L^2(\Omega)}
    \end{equation}
    for some $C>0$ depending only on $\Omega$, as desired.
\end{proof}

\begin{remark} \label{rmk:enoughForGainRegularity}
    In view of the proof of \Cref{lemma:BergmanOrthRegularity}, in order to conclude that a function $u_\perp \in \mathrm{Dom}(\partial_{\mathrm{h}})$ is actually in $H^1(\Omega)$ it is enough to show that $u_\perp$ is orthogonal in $L^2(\Omega)$ to the set $\{\Phi_{\mathrm{h}}(-2g\overline\nu): g\in \mathcal C^1(\partial\Omega) \}$, which, on the one hand, it is a subspace of $A^2(\Omega)\cap H^1(\Omega)$ ---by \Cref{lemma:SingleLayer} and the proof of \Cref{lemma:BergmanOrthRegularity}---, and, on the other hand, it coincides with $\{\Phi_{\mathrm{h}}(g): g\in \mathcal C^1(\partial\Omega) \}$ ---by the arbitrariness of $g\in \mathcal C^1(\partial\Omega)$.
\end{remark}

As explained at the beginning of this section, we will be interested in decomposing a function $u$ as $u=Pu+P^\perp u$. As the following lemma shows, when $u\in \mathrm{Dom}(\partial_{\mathrm{h}})$ we can explicitly describe who these projections are.

\begin{lemma} \label{lemma:BergmanCharacterization}
    Given $u\in \mathrm{Dom}(\partial_{\mathrm{h}})$, let $w_u \in H^2(\Omega)\cap H^1_0(\Omega)$ be the unique solution to the Dirichlet problem
    \begin{equation} \label{lemma:BergmanDirichlet}
        \begin{cases}
            \Delta w_u = 4\partial_{\bar z} u & \text{in } \Omega, \\
            w_u = 0 & \text{on } \partial\Omega.
        \end{cases}
    \end{equation}
    Then it holds that $u=u_\mathrm{h}+u_\perp$ with $u_\mathrm{h} := Pu = u - \partial_z w_u \in A^2(\Omega)$ and $u_\perp := P^\perp u = \partial_z w_u \in A^2(\Omega)^\perp \cap H^1(\Omega)$. If, in addition, $u\in L^2(\partial\Omega)$, then $u_\mathrm{h} \in L^2(\partial\Omega)$, hence $u_\mathrm{h} \in A^2(\Omega) \cap H^{1/2}(\Omega)$. 
\end{lemma}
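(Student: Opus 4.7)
My plan is to verify the four claims in the order they appear, using only standard elliptic theory for the Dirichlet problem, the computation $\partial_z\partial_{\bar z} = \tfrac14\Delta$, and the Green formulas in \Cref{lemma:propertiesWirtinger}~$(iii)$.

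First, I would note that since $u \in \mathrm{Dom}(\partial_{\mathrm{h}})$ we have $4\partial_{\bar z} u \in L^2(\Omega)$, so the Dirichlet problem \eqref{lemma:BergmanDirichlet} admits a unique solution $w_u \in H^2(\Omega)\cap H^1_0(\Omega)$ by the standard global $H^2$-regularity theorem for the Dirichlet Laplacian on bounded $\mathcal C^2$ domains (cited earlier in the proof of \Cref{lemma:GainRegularityOrthogonal} from \cite{Evans2010}). Since $w_u\in H^2(\Omega)$, the function $u_\perp := \partial_z w_u$ automatically lies in $H^1(\Omega)$, so the regularity parts of the statement reduce to checking algebraic identities and one orthogonality.

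Next, I would check that $u_{\mathrm{h}} := u - \partial_z w_u$ lies in $A^2(\Omega)$. Since $\partial_z w_u\in H^1(\Omega)\subset \mathrm{Dom}(\partial_{\mathrm{h}})$ and $u\in\mathrm{Dom}(\partial_{\mathrm{h}})$, the distributional derivative gives
\begin{equation}
    \partial_{\bar z} u_{\mathrm{h}} = \partial_{\bar z} u - \partial_{\bar z}\partial_z w_u = \partial_{\bar z} u - \tfrac{1}{4}\Delta w_u = \partial_{\bar z} u - \partial_{\bar z} u = 0 \quad \text{in } \Omega,
\end{equation}
using the equation satisfied by $w_u$. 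Hence $u_{\mathrm{h}}\in A^2(\Omega)$, and consequently $Pu = u_{\mathrm{h}}$ and $P^\perp u = u_\perp$ provided I also verify that $u_\perp\in A^2(\Omega)^\perp$. For this orthogonality, I would apply the second Green formula of \Cref{lemma:propertiesWirtinger}~$(iii)$ with $v$ there taken to be an arbitrary $v \in A^2(\Omega)\subset \mathrm{Dom}(\partial_{\mathrm{h}})$ and $w$ there taken to be $w_u\in H^1(\Omega)$. This yields
\begin{equation}
    \langle \partial_{\bar z} v, w_u \rangle_{L^2(\Omega)} = -\langle v, \partial_z w_u \rangle_{L^2(\Omega)} + \tfrac{1}{2} \overline{ \langle t_{\partial\Omega} v, \bar\nu\, t_{\partial\Omega} w_u \rangle}_{H^{-1/2}(\partial\Omega), H^{1/2}(\partial\Omega)}.
\end{equation}
The left-hand side vanishes because $\partial_{\bar z} v = 0$, and the boundary term vanishes because $w_u\in H^1_0(\Omega)$ has zero trace. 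Therefore $\langle v,\partial_z w_u\rangle_{L^2(\Omega)} = 0$, i.e.\ $\langle u_\perp,v\rangle_{L^2(\Omega)}=0$ for every $v\in A^2(\Omega)$, which is exactly $u_\perp \in A^2(\Omega)^\perp$.

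Finally, for the additional assertion when $u\in L^2(\partial\Omega)$, I would use that $\partial_z w_u \in H^1(\Omega)$, so by the standard trace theorem $t_{\partial\Omega}(\partial_z w_u) \in H^{1/2}(\partial\Omega)\subset L^2(\partial\Omega)$; consequently $u_{\mathrm{h}} = u - \partial_z w_u$ has trace in $L^2(\partial\Omega)$, and then \Cref{lemma:BergmanRegularity} upgrades $u_{\mathrm{h}}\in A^2(\Omega)$ to $u_{\mathrm{h}}\in A^2(\Omega)\cap H^{1/2}(\Omega)$. The only step requiring any real thought is the orthogonality check, where one must be careful that $v\in A^2(\Omega)$ is allowed as the first argument in the Green formula (which is fine because $A^2(\Omega)\subset\mathrm{Dom}(\partial_{\mathrm{h}})$) and that $w_u\in H^1(\Omega)$ so that the boundary pairing is meaningful; the vanishing of both resulting terms then drops out cleanly from the PDE and the boundary condition defining $w_u$.
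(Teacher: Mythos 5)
Your proof is correct, but it follows a genuinely different route from the paper's. The paper starts from the abstract decomposition $u=Pu+P^\perp u$, invokes \Cref{lemma:GainRegularityOrthogonal} to get $P^\perp u\in H^1(\Omega)$, and then identifies $w_u$ with the Cauchy-type potential $(P^\perp u)_K$ constructed inside the proof of that lemma (which was shown there to solve \eqref{lemma:BergmanDirichlet}), so that $P^\perp u=\partial_z(P^\perp u)_K=\partial_z w_u$ follows from uniqueness for the Dirichlet problem. You instead exhibit the candidate decomposition $u=(u-\partial_z w_u)+\partial_z w_u$ explicitly and verify its two defining properties directly: $\partial_{\bar z}(u-\partial_z w_u)=\partial_{\bar z}u-\tfrac14\Delta w_u=0$ gives membership in $A^2(\Omega)$, and the Green formula of \Cref{lemma:propertiesWirtinger}~$(iii)$ applied with $v\in A^2(\Omega)\subset\Dom(\partial_{\mathrm h})$ and $w_u\in H^1_0(\Omega)$ (so that both the interior term $\langle\partial_{\bar z}v,w_u\rangle$ and the boundary pairing vanish) gives $\partial_z w_u\in A^2(\Omega)^\perp$; uniqueness of the orthogonal decomposition with respect to the closed subspace $A^2(\Omega)$ then identifies the two pieces with $Pu$ and $P^\perp u$, and $H^1$ regularity of $u_\perp$ comes for free from $w_u\in H^2(\Omega)$ rather than from \Cref{lemma:GainRegularityOrthogonal}. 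Your argument is more self-contained and elementary, relying only on elliptic $H^2$ regularity and the Green formula, whereas the paper's argument recycles the potential-theoretic construction it needs anyway and makes the link between $w_u$ and the kernel representation $(P^\perp u)_K$ explicit (which is also what carries the orthogonality information there). The final step on traces is the same in both: linearity of the trace gives $t_{\partial\Omega}u_{\mathrm h}=u-t_{\partial\Omega}(\partial_z w_u)\in L^2(\partial\Omega)$, and \Cref{lemma:BergmanRegularity} upgrades $u_{\mathrm h}$ to $H^{1/2}(\Omega)$.
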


\begin{proof}
    Write $u=Pu+P^\perp u$, where $P$ and $P^\perp$ are the orthogonal projections onto $A^2(\Omega)$ and $A^2(\Omega)^\perp$, respectively. Since $\partial_{\bar z} P^\perp u =\partial_{\bar z} u \in L^2(\Omega)$, we have that $P^\perp u \in A^2(\Omega)^\perp \cap \mathrm{Dom}(\partial_{\mathrm{h}})$. In particular, by \Cref{lemma:GainRegularityOrthogonal} we have that $P^\perp u \in H^1(\Omega)$. Actually, defining $(P^\perp u)_K$ as in \eqref{eq:vk}, namely, 
    \begin{equation}
        (P^\perp u)_K(x) := \int_\Omega \overline{K(x-y)} P^\perp u(y) \, dy \quad \text{for a.e. } x\in\Omega,
    \end{equation}
    where $K$ is the fundamental solution of $\partial_{\bar z}$ as in \eqref{eq:FundSolDbar}, in the proof of \Cref{lemma:GainRegularityOrthogonal} we have shown that $(P^\perp u)_K$ solves the Dirichlet problem
    \begin{equation}
        \begin{cases}
            \Delta (P^\perp u)_K = 4\partial_{\bar z} P^\perp u = 4\partial_{\bar z} u & \text{in } \Omega, \\
            (P^\perp u)_K = 0 & \text{on } \partial\Omega.
        \end{cases}
    \end{equation}
    By uniqueness ---see, for example, \cite[Section 6]{Evans2010}---, we conclude that $(P^\perp u)_K=w_u$. 

    We now characterize the projections. On the one hand, since $K$ is the fundamental solution of $\partial_{\bar z}$, it is clear that $\partial_z (P^\perp u)_K = P^\perp u$. On the other hand, we have already proved that $(P^\perp u)_K=w_u$. Hence, $P^\perp u = \partial_z w_u$, and $Pu = u - P^\perp u = u-\partial_z w_u$.

    To conclude, if $u$ is in addition in $L^2(\partial\Omega)$, then $Pu = u-\partial_z w_u \in L^2(\partial\Omega)$, because $w_u \in H^2(\Omega)$ ---see, for example, \cite[Section 6.3.2]{Evans2010}. By \Cref{lemma:BergmanRegularity} we then conclude that $Pu \in H^{1/2}(\Omega)$.
\end{proof}

We recall from \Cref{lemma:GainRegularityOrthogonal} that for a function $u_\perp \in A^2(\Omega)^\perp \cap \Dom(\partial_{\mathrm{h}})$, the $L^2(\Omega)-$norm of $\partial_{\bar z} u_\perp$ controls the $H^1(\Omega)-$norm of $u_\perp$. As an application of \Cref{lemma:BergmanCharacterization}, we can give a sharp constant for the weaker control of the $L^2(\Omega)-$norm of $u_\perp$ by the $L^2(\Omega)-$norm of $\partial_z u_\perp$.

\begin{lemma} \label{lemma:BergmanOrthRegularity}
    Let $u_\perp\in A^2(\Omega)^\perp\cap \Dom(\partial_{\mathrm{h}})$. Then 
    \begin{equation}
        \|u_\perp\|_{L^2(\Omega)} \leq \frac{2}{\sqrt{\Lambda_1}} \| \partial_{\bar z} u_\perp \|_{L^2(\Omega)},
    \end{equation}
    where $\Lambda_{1}$ is the first eigenvalue of the Dirichlet Laplacian in $\Omega$. Moreover, there is equality if and only if $u_\perp = c \partial_z U_{1}$ for some $c\in \C$, where $U_{1}$ is the first eigenfunction of the Dirichlet Laplacian in $\Omega$, that is, the solution (up to a multiplying constant) to
    \begin{equation} \label{eq:DirichletLaplacian}
        \begin{cases}
            -\Delta U_{1} = \Lambda_{1} U_{1} & \text{in } \Omega, \\
            U_{1} = 0 & \text{on } \partial \Omega.
        \end{cases}
    \end{equation}
\end{lemma}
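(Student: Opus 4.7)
The plan is to reduce the inequality to a spectral estimate for the Dirichlet Laplacian via the Bergman decomposition from \Cref{lemma:BergmanCharacterization}.

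First, I would invoke \Cref{lemma:BergmanCharacterization} to write $u_\perp = \partial_z w$, where $w \in H^2(\Omega) \cap H_0^1(\Omega)$ is the unique solution of $\Delta w = 4\partial_{\bar z} u_\perp$ with zero Dirichlet data. In particular, $\partial_{\bar z} u_\perp = \partial_{\bar z}\partial_z w = \tfrac14 \Delta w$, so the claim translates into the Hilbert-space inequality
\begin{equation}
    4\|\partial_z w\|_{L^2(\Omega)}^2 \leq \frac{1}{\Lambda_1}\,\|\Delta w\|_{L^2(\Omega)}^2.
\end{equation}

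Next, I would replace $\|\partial_z w\|_{L^2}^2$ by $\|\nabla w\|_{L^2}^2$. Applying the Green formula from \Cref{lemma:propertiesWirtinger}~$(iii)$ to $u = w \in H^1(\Omega)$ and $v = \partial_z w \in H^1(\Omega)$ (which has trace zero because $w \in H_0^1(\Omega)$), the boundary contribution vanishes, and one obtains
\begin{equation}
    \|\partial_z w\|_{L^2(\Omega)}^2 = -\langle w, \partial_{\bar z}\partial_z w\rangle_{L^2(\Omega)} = -\tfrac14 \langle w, \Delta w\rangle_{L^2(\Omega)} = \tfrac14 \|\nabla w\|_{L^2(\Omega)}^2,
\end{equation}
the last equality being the standard Green identity for $w \in H^2 \cap H_0^1$. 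Thus the inequality to prove becomes $\Lambda_1 \|\nabla w\|_{L^2}^2 \leq \|\Delta w\|_{L^2}^2$.

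Finally, I would expand $w$ in the $L^2(\Omega)$-orthonormal basis $\{U_k\}_{k\in\N}$ of Dirichlet eigenfunctions with eigenvalues $0 < \Lambda_1 \leq \Lambda_2 \leq \cdots$. Writing $w = \sum_k c_k U_k$, and using that $w \in H^2 \cap H_0^1$ so both series converge,
\begin{equation}
    \|\nabla w\|_{L^2(\Omega)}^2 = \sum_{k\in\N} \Lambda_k |c_k|^2, \qquad \|\Delta w\|_{L^2(\Omega)}^2 = \sum_{k\in\N} \Lambda_k^2 |c_k|^2,
\end{equation}
so the desired estimate follows termwise from $\Lambda_1 \Lambda_k \leq \Lambda_k^2$. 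Chaining the identities gives $\|u_\perp\|_{L^2}^2 \leq \tfrac{4}{\Lambda_1}\|\partial_{\bar z}u_\perp\|_{L^2}^2$.

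For the equality case, equality in the spectral step forces $c_k = 0$ whenever $\Lambda_k > \Lambda_1$; since the first Dirichlet eigenvalue on a bounded connected domain is simple, this yields $w = c U_1$ and consequently $u_\perp = \partial_z w = c \partial_z U_1$. Conversely, taking $w = c U_1$ and unwinding the identities shows that equality is achieved. The main delicate point is the boundary bookkeeping in the Green formula, which is why the use of $w \in H_0^1$ via \Cref{lemma:BergmanCharacterization} is essential; once that is in place the argument reduces to a clean spectral inequality.
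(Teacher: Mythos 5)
Your proposal is correct, and it follows the paper's reduction (decompose via \Cref{lemma:BergmanCharacterization}, pass to the Dirichlet problem for $w$, and use the identity $\|u_\perp\|_{L^2(\Omega)}^2=\tfrac14\|\nabla w\|_{L^2(\Omega)}^2$) but then diverges in how the key estimate is established. The paper proves $\|\nabla w\|_{L^2(\Omega)}\leq \Lambda_1^{-1/2}\|\Delta w\|_{L^2(\Omega)}$ by multiplying $\Delta w=4\partial_{\bar z}u_\perp$ by $\overline{w}$ and chaining the triangle inequality, Cauchy--Schwarz, and the sharp Poincar\'e inequality in $H^1_0(\Omega)$; the equality case then requires tracking saturation of all three inequalities and, at the end, ruling out a second function $\tilde u$ with $w_{\tilde u}=U_1$. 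You instead expand $w$ in the Dirichlet eigenbasis and obtain $\Lambda_1\|\nabla w\|_{L^2(\Omega)}^2\leq\|\Delta w\|_{L^2(\Omega)}^2$ termwise from $\Lambda_1\Lambda_k\leq\Lambda_k^2$; this buys a cleaner equality analysis, since equality immediately forces $c_k=0$ for $\Lambda_k>\Lambda_1$ and simplicity of the first Dirichlet eigenvalue gives $w=cU_1$, while $u_\perp=\partial_z w$ is uniquely determined by $u_\perp$, so the paper's final uniqueness step is automatic. Two small points to tidy: in your Green-formula step the boundary pairing $\overline{\langle t_{\partial\Omega} w,\nu\, t_{\partial\Omega}(\partial_z w)\rangle}_{H^{-1/2}(\partial\Omega),H^{1/2}(\partial\Omega)}$ vanishes because $w$ (not $\partial_z w$) has zero trace, so the parenthetical should be attributed to $w$; and in the converse direction you should note explicitly that $c\,\partial_z U_1$ lies in $A^2(\Omega)^\perp\cap\Dom(\partial_{\mathrm h})$ (which follows from \Cref{lemma:BergmanCharacterization}, since $\partial_{\bar z}(\partial_z U_1)=-\tfrac{\Lambda_1}{4}U_1\in L^2(\Omega)$ and $w_{\partial_z U_1}=U_1$), so that it is an admissible equality case. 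Neither issue affects the validity of the argument.
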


\begin{proof}
    For the sake of notation, write $u=u_\perp$. By \Cref{lemma:BergmanCharacterization}, $u=\partial_z w_u$, where $w_u\in H^2(\Omega)\cap H^1_0(\Omega)$ is the unique solution to \eqref{lemma:BergmanDirichlet}.
    Multiplying the equation $\Delta w_u = 4\partial_{\bar z}u$ by $\overline{w_u}$ and integrating in $\Omega$, by the divergence theorem we have
    \begin{equation}
        \int_\Omega |\nabla w_u|^2 = -4 \int_\Omega \partial_{\bar z} u \, \overline{w_u}.
    \end{equation} 
    Using the triangle inequality, Cauchy-Schwarz and the sharp Poincar\'e inequality for $w_u\in H^1_0(\Omega)$, we get
    \begin{equation} \label{eq:SharpInequalities}
        \|\nabla w_u \|_{L^2(\Omega)}^2 \leq 4\int_\Omega |\partial_{\bar z} u| \, |w_u| \leq 4 \|\partial_{\bar z} u \|_{L^2(\Omega)} \|w_u\|_{L^2(\Omega)} \leq \frac{4}{\sqrt{ \Lambda_{1}}} \|\partial_{\bar z} u \|_{L^2(\Omega)} \|\nabla w_u\|_{L^2(\Omega)}.
    \end{equation}
    Applying the divergence theorem twice to $w_u\in H^1_0(\Omega)$, and using \eqref{eq:SharpInequalities}, we get
    \begin{equation}
        \|u\|_{L^2(\Omega)}^2 = \int_\Omega |\partial_z w_u|^2 = -\frac{1}{4} \int_\Omega \Delta w_u \, \overline{w_u} = \frac{1}{4} \int_\Omega |\nabla w_u|^2 \leq \frac{4}{\Lambda_{1}}  \|\partial_{\bar z} u \|_{L^2(\Omega)}^2,
    \end{equation}
    as desired. Notice that there is equality if and only if all inequalities in \eqref{eq:SharpInequalities} are equalities, that is, if and only if
    \begin{enumerate}[label=$(\roman*)$]
        \item \label{SharpTI} Sharp triangle inequality: $-4 \partial_{\bar z} u \overline{w_u} \geq 0$;
        \item \label{SharpCS} Sharp Cauchy-Schwarz: either $w_u=0$ ---which yields $u=\partial_z w_u = 0$--- or $4|\partial_{\bar z} u| = k |w_u|$ for some constant $k\geq 0$;
        \item \label{SharpP} Sharp Poincar\'e inequality in $H^1_0(\Omega)$: if $w_u\neq 0$, $w_u$ is the first eigenfunction of the Dirichlet Laplacian in $\Omega$.
    \end{enumerate}
    Assuming that $u\neq 0$, we verify that these three conditions are satisfied by the functions of the statement. Let $U_{1}$ be the first eigenfunction of the Dirichlet Laplacian in $\Omega$, namely, the solution (up to a multiplying constant) of \eqref{eq:DirichletLaplacian}, and take $u=\partial_z U_{1}$. Notice that $\Delta U_{1} = 4 \partial_{\bar z} u$ in $\Omega$, and $U_{1} = 0$ on $\partial \Omega$. Therefore, by \Cref{lemma:BergmanCharacterization}, $w_u = U_{1}$. As a consequence, $u=P^\perp u \in A^2(\Omega)^\perp \cap \mathrm{Dom}(\partial_{\mathrm{h}})$, and properties \ref{SharpTI}, \ref{SharpCS}, and \ref{SharpP} are satisfied. 
    
    In order to conclude the proof, it only remains to show that there can not exist a function $\tilde u \in A^2(\Omega)^\perp\cap \Dom(\partial_{\mathrm{h}})$ different from $\partial_z U_{1}$ with $w_{\tilde u} = U_{1}$ ---this last equality holds in view of \ref{SharpP}. Indeed, by \Cref{lemma:BergmanCharacterization} we have that $\tilde u = P\tilde u + P^\perp \tilde u = 0+\partial_z w_{\tilde u} = \partial_z U_1$.
\end{proof}

As a consequence of the previous estimates, we have the following Poincar\'e type inequality.

\begin{lemma} \label{lemma:PoincareTypeIneq}
    For every $u\in \Dom(\partial_{\mathrm{h}})$ whose trace ---a priori in $H^{-1/2}(\partial\Omega)$, by \Cref{lemma:propertiesWirtinger}--- is actually in $L^2(\partial\Omega)$, there holds
    \begin{equation} \label{eq:PoincareTypeIneq}
        \int_\Omega |u|^2 \leq K_\Omega \left(\int_\Omega |\partial_{\bar z} u|^2 + \int_{\partial\Omega} |u|^2 \right),
    \end{equation}
    where $K_\Omega>0$ is a constant depending only on $\Omega$.
\end{lemma}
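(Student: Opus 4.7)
The plan is to exploit the Bergman decomposition $u=u_{\mathrm{h}}+u_\perp$, with $u_{\mathrm{h}}=Pu\in A^2(\Omega)$ and $u_\perp=P^\perp u\in A^2(\Omega)^\perp\cap\mathrm{Dom}(\partial_{\mathrm{h}})$, and bound each summand separately. The key observation is that the two pieces carry the two different sides of the right-hand side of \eqref{eq:PoincareTypeIneq}: the bulk term $\int_\Omega|\partial_{\bar z}u|^2$ controls $u_\perp$, while the boundary term $\int_{\partial\Omega}|u|^2$ controls $u_{\mathrm{h}}$ (which is holomorphic and hence determined by its trace through the Cauchy integral).

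First, since $\partial_{\bar z}u_{\mathrm{h}}=0$, one has $\partial_{\bar z} u_\perp=\partial_{\bar z}u$. Hence \Cref{lemma:BergmanOrthRegularity} gives immediately
\begin{equation}
    \|u_\perp\|_{L^2(\Omega)} \le \tfrac{2}{\sqrt{\Lambda_1}}\,\|\partial_{\bar z}u\|_{L^2(\Omega)}.
\end{equation}
For the holomorphic part, the hypothesis $u\in L^2(\partial\Omega)$ combined with the last assertion of \Cref{lemma:BergmanCharacterization} yields $u_{\mathrm{h}}\in A^2(\Omega)\cap L^2(\partial\Omega)$, so \Cref{lemma:BergmanRegularity} (together with the continuous embedding $H^{1/2}(\Omega)\hookrightarrow L^2(\Omega)$) provides
\begin{equation}
    \|u_{\mathrm{h}}\|_{L^2(\Omega)} \le C_\Omega\,\|u_{\mathrm{h}}\|_{L^2(\partial\Omega)}.
\end{equation}

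The remaining and pivotal step is to bound $\|u_{\mathrm{h}}\|_{L^2(\partial\Omega)}$ in terms of $\|u\|_{L^2(\partial\Omega)}$ and $\|\partial_{\bar z}u\|_{L^2(\Omega)}$. For this, I use the explicit description $u_{\mathrm{h}}=u-\partial_z w_u$ from \Cref{lemma:BergmanCharacterization}, where $w_u\in H^2(\Omega)\cap H^1_0(\Omega)$ solves the Dirichlet problem \eqref{lemma:BergmanDirichlet}. Standard global $H^2$-regularity for the Dirichlet Laplacian on the $\mathcal C^2$ domain $\Omega$ gives $\|w_u\|_{H^2(\Omega)}\le C\|\Delta w_u\|_{L^2(\Omega)}=4C\|\partial_{\bar z}u\|_{L^2(\Omega)}$, so $\partial_z w_u\in H^1(\Omega)$ with the same control, and the trace theorem then yields $\|\partial_z w_u\|_{L^2(\partial\Omega)}\le C\|\partial_{\bar z}u\|_{L^2(\Omega)}$. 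Therefore
\begin{equation}
    \|u_{\mathrm{h}}\|_{L^2(\partial\Omega)} \le \|u\|_{L^2(\partial\Omega)} + C\,\|\partial_{\bar z}u\|_{L^2(\Omega)}.
\end{equation}

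Putting the three estimates together and using the triangle inequality $\|u\|_{L^2(\Omega)}\le\|u_{\mathrm{h}}\|_{L^2(\Omega)}+\|u_\perp\|_{L^2(\Omega)}$, one obtains $\|u\|_{L^2(\Omega)}\le C\bigl(\|u\|_{L^2(\partial\Omega)}+\|\partial_{\bar z}u\|_{L^2(\Omega)}\bigr)$ for some constant $C$ depending only on $\Omega$; squaring and using $(a+b)^2\le 2(a^2+b^2)$ delivers \eqref{eq:PoincareTypeIneq}. I do not foresee a real obstacle here: all the analytic work has been done in \Cref{lemma:BergmanRegularity,lemma:GainRegularityOrthogonal,lemma:BergmanCharacterization,lemma:BergmanOrthRegularity}; the only slightly delicate point is step three, namely keeping track of the boundary norm of $\partial_z w_u$, for which the elliptic $H^2$-regularity and the trace inequality suffice.
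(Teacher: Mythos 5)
Your proposal is correct and follows essentially the same route as the paper: the Bergman decomposition $u=u_{\mathrm{h}}+u_\perp$ from \Cref{lemma:BergmanCharacterization}, the bound on $u_\perp$ via \Cref{lemma:BergmanOrthRegularity}, the bound $\|u_{\mathrm{h}}\|_{L^2(\Omega)}\leq C\|u_{\mathrm{h}}\|_{L^2(\partial\Omega)}$ via \Cref{lemma:BergmanRegularity}, and the control of $\|\partial_z w_u\|_{L^2(\partial\Omega)}$ through $H^2$-regularity of the Dirichlet problem and the trace theorem. The only cosmetic difference is that the paper invokes $L^2(\Omega)$-orthogonality of the two pieces where you use the triangle inequality, which changes nothing of substance.
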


\begin{proof}
    Decompose $u$ as $u=u_\mathrm{h}+u_\perp$ with $u_\mathrm{h} \in A^2(\Omega)$ and $u_\perp \in A^2(\Omega)^\perp$, as in \Cref{lemma:BergmanCharacterization}. Due to orthogonality, it is enough to estimate the $L^2(\Omega)-$norms of $u_\mathrm{h}$ and $u_\perp$ separately. On the one hand, using \Cref{lemma:BergmanOrthRegularity} we have
    \begin{equation} \label{eq:auxiliarOrthBound}
        \|u_\perp\|_{L^2(\Omega)}^2 \leq \frac{4}{\Lambda_{1}} \| \partial_{\bar z} u_\perp \|_{L^2(\Omega)}^2 = \frac{4}{\Lambda_{1}} \| \partial_{\bar z} u \|_{L^2(\Omega)}^2,
    \end{equation}
    where we have used that $\partial_{\bar z} u = \partial_{\bar z} u_\perp$ because $u_\mathrm{h}\in A^2(\Omega)$. On the other hand, $u_\mathrm{h} = u - \partial_z w_u\in L^2(\partial\Omega)$  ---where $w_u$ is the unique solution to \eqref{lemma:BergmanDirichlet}---, and $\|u_\mathrm{h}\|_{L^2(\Omega)} \leq C \|u_\mathrm{h}\|_{L^2(\partial \Omega)}$, by \Cref{lemma:BergmanRegularity}. Using this, the triangle inequality, the trace theorem, and the bound $\|w_u\|_{H^2(\Omega)} \leq C' \|\partial_{\bar z} u\|_{L^2(\Omega)}$, for some constant $C'>0$ depending only on $\Omega$, which follows by standard regularity theory \cite[Theorem 4 in Section 6.3.2 and Remark in pg. 335]{Evans2010}, we get
    \begin{equation} \label{eq:BergmanAuxReg}
        \begin{split}
            \|u_\mathrm{h}\|_{L^2(\Omega)} & \leq C \|u-\partial_z w_u\|_{L^2(\partial\Omega)} \leq C \left( \|u\|_{L^2(\partial\Omega)} + \|\partial_z w_u \|_{L^2(\partial\Omega)} \right) \\
        & \leq C \left( \|u\|_{L^2(\partial\Omega)} + \|\nabla w_u \|_{L^2(\partial\Omega)} \right) \leq C \left( \|u\|_{L^2(\partial\Omega)} + C_\Omega\|\nabla w_u \|_{H^1(\Omega)} \right) \\
        & \leq C \left( \|u\|_{L^2(\partial\Omega)} + C_\Omega\|w_u \|_{H^2(\Omega)} \right) \leq C \left( \|u\|_{L^2(\partial\Omega)} + C_\Omega C'\|\partial_{\bar z} u\|_{L^2(\Omega)} \right).
        \end{split}
    \end{equation}
    This concludes the proof of the lemma.
\end{proof}

\subsection{A Hilbert subspace of the Bergman space} \label{sec:HilbertSubspace}

We recall that functions in the Bergman space $A^2(\Omega)$ have trace in $H^{-1/2}(\partial\Omega)$, by \Cref{lemma:propertiesWirtinger}. We abuse notation and denote by $A^2(\Omega)\cap L^2(\partial\Omega)$ the set of those functions in $A^2(\Omega)$ whose trace is actually in $L^2(\partial\Omega)$ ---in the literature, this space is sometimes called \emph{Hardy space}. As the following lemma shows, this is a Hilbert space when endowed with the natural scalar product.

\begin{lemma} \label{prop:HSBergmanL2Bdry}
    The space $A^2(\Omega)\cap L^2(\partial\Omega)$ is a Hilbert space when endowed with the scalar product $\langle \cdot, \cdot\rangle_{L^2(\partial\Omega)}$. 
\end{lemma}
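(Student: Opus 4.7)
The plan is to verify the two ingredients needed: that $\langle \cdot,\cdot\rangle_{L^2(\partial\Omega)}$ restricts to a genuine scalar product on $A^2(\Omega)\cap L^2(\partial\Omega)$, and that the space is complete with respect to the induced norm.

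For the scalar product structure, the only non-trivial point is positive definiteness. Given $u\in A^2(\Omega)\cap L^2(\partial\Omega)$ with $\|u\|_{L^2(\partial\Omega)}=0$, I would invoke \Cref{rmk:CauchyPompeiu}: since $u$ is holomorphic in $\Omega$ and its trace belongs to $L^2(\partial\Omega)$, the reproducing formula $u=\Phi_{\mathrm{h}}u$ holds; but $\Phi_{\mathrm{h}}u$ is obtained by integrating the (vanishing) trace against $K$, so $u\equiv 0$ in $\Omega$. Hermitianity and sesquilinearity are inherited from $L^2(\partial\Omega)$.

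For completeness, take a Cauchy sequence $\{u_n\}\subset A^2(\Omega)\cap L^2(\partial\Omega)$ with respect to $\|\cdot\|_{L^2(\partial\Omega)}$. Then $\{t_{\partial\Omega}u_n\}$ converges to some $g\in L^2(\partial\Omega)$. The candidate limit is the function $u:=\Phi_{\mathrm{h}}g$ defined in $\Omega$. Three facts need to be checked:
\begin{enumerate}[label=$(\roman*)$]
    \item $u\in L^2(\Omega)$ and $u_n\to u$ in $L^2(\Omega)$. By \Cref{rmk:CauchyPompeiu} each $u_n$ coincides in $\Omega$ with $\Phi_{\mathrm{h}}(t_{\partial\Omega}u_n)$, and by \Cref{lemma:SingleLayer} (with $s=0$) the operator $\Phi_{\mathrm{h}}\colon L^2(\partial\Omega)\to H^{1/2}(\Omega)\hookrightarrow L^2(\Omega)$ is bounded, so $u_n=\Phi_{\mathrm{h}}(t_{\partial\Omega}u_n)\to \Phi_{\mathrm{h}}g=u$ in $H^{1/2}(\Omega)$, hence in $L^2(\Omega)$.
    \item $u\in A^2(\Omega)$. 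This is because $K(\cdot-\xi)$ is holomorphic on $\Omega$ for each $\xi\in\partial\Omega$, so $\partial_{\bar z}u=0$ in $\Omega$; alternatively, $\partial_{\bar z}u_n=0\to 0=\partial_{\bar z}u$ in $L^2(\Omega)$.
    \item $t_{\partial\Omega}u=g$ in $L^2(\partial\Omega)$. From $(i)$ and $(ii)$, $u_n\to u$ in the $\Dom(\partial_{\mathrm{h}})$-norm, and the trace operator $t_{\partial\Omega}\colon\Dom(\partial_{\mathrm{h}})\to H^{-1/2}(\partial\Omega)$ from \Cref{lemma:propertiesWirtinger}$(ii)$ is continuous, so $t_{\partial\Omega}u_n\to t_{\partial\Omega}u$ in $H^{-1/2}(\partial\Omega)$. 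On the other hand, $t_{\partial\Omega}u_n\to g$ in $L^2(\partial\Omega)\hookrightarrow H^{-1/2}(\partial\Omega)$; uniqueness of limits in $H^{-1/2}(\partial\Omega)$ yields $t_{\partial\Omega}u=g\in L^2(\partial\Omega)$.
\end{enumerate}
Thus $u\in A^2(\Omega)\cap L^2(\partial\Omega)$ and $u_n\to u$ in the norm $\|\cdot\|_{L^2(\partial\Omega)}$, proving completeness.

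The only mildly delicate step is point $(iii)$: the candidate $u=\Phi_{\mathrm{h}}g$ only has an a priori $H^{-1/2}(\partial\Omega)$ trace, and one must match it with $g\in L^2(\partial\Omega)$. The argument works cleanly by using the ambient space $\Dom(\partial_{\mathrm{h}})$, in which the trace map is continuous, together with the embedding $L^2(\partial\Omega)\hookrightarrow H^{-1/2}(\partial\Omega)$. The rest of the proof is essentially assembly of \Cref{lemma:propertiesWirtinger}, \Cref{lemma:SingleLayer} and \Cref{rmk:CauchyPompeiu}.
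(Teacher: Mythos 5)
Your proof is correct, but it reaches the conclusion by a different mechanism than the paper. The paper's proof never names a candidate limit explicitly: it first uses \Cref{lemma:BergmanRegularity} to upgrade the boundary Cauchy property to a Cauchy property in $L^2(\Omega)$, obtains an interior limit $u_\Omega\in A^2(\Omega)$ from the closedness of the Bergman space and a boundary limit $u_{\partial\Omega}\in L^2(\partial\Omega)$ from completeness of $L^2(\partial\Omega)$, and then matches the two by testing against $\overline\nu g$ for $g\in H^{1/2}(\partial\Omega)$ via the Green formulas \eqref{eq:GreenFormulas}, concluding $t_{\partial\Omega}u_\Omega=u_{\partial\Omega}$ first in $H^{-1/2}(\partial\Omega)$ and then in $L^2(\partial\Omega)$. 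You instead construct the limit explicitly as $u=\Phi_{\mathrm{h}}g$, using the reproducing identity of \Cref{rmk:CauchyPompeiu} together with the boundedness $\Phi_{\mathrm{h}}\colon L^2(\partial\Omega)\to H^{1/2}(\Omega)$ of \Cref{lemma:SingleLayer} to get interior convergence, and you match the trace through the continuity of $t_{\partial\Omega}\colon\Dom(\partial_{\mathrm{h}})\to H^{-1/2}(\partial\Omega)$ from \Cref{lemma:propertiesWirtinger}$(ii)$ plus the embedding $L^2(\partial\Omega)\hookrightarrow H^{-1/2}(\partial\Omega)$ and uniqueness of limits. The underlying machinery is ultimately the same (indeed \Cref{lemma:BergmanRegularity} is itself proved from \Cref{rmk:CauchyPompeiu} and \Cref{lemma:SingleLayer}), but your trace-identification step avoids choosing extensions of boundary test functions and the Green-formula computation, which is a clean alternative; the paper's version keeps the limit abstract and leans on the already-established quantitative estimate. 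You also verify positive definiteness of $\langle\cdot,\cdot\rangle_{L^2(\partial\Omega)}$ explicitly, a point the paper leaves implicit (it follows at once from \Cref{lemma:BergmanRegularity}); including it is harmless and arguably makes the statement complete.
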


\begin{proof}
    Let $\{u_j\}_j \subset A^2(\Omega)\cap L^2(\partial\Omega)$ be a Cauchy sequence with respect to $\|\cdot\|_{L^2(\partial\Omega)}$. By \Cref{lemma:BergmanRegularity}, $\{u_j\}_j$ is also a Cauchy sequence with respect to $\|\cdot\|_{L^2(\Omega)}$.
    \begin{enumerate}[label=$(\roman*)$]
        \item On the one hand, since $(L^2(\partial\Omega), \langle \cdot, \cdot\rangle_{L^2(\partial\Omega)})$ is a Hilbert space, there exists $u_{\partial\Omega}\in L^2(\partial\Omega)$ such that $\|u_{\partial\Omega} - u_j\|_{L^2(\partial\Omega)} \to 0$ as $j \to +\infty$.
        \item On the other hand, since $(A^2(\Omega), \langle \cdot, \cdot\rangle_{L^2(\Omega)})$ is a Hilbert space, there exists $u_{\Omega}\in A^2(\Omega)$ such that $\|u_{\Omega} - u_j\|_{L^2(\Omega)} \to 0$ as $j \to +\infty$.
    \end{enumerate}
    Therefore, for every $g\in H^{1/2}(\partial\Omega)$, denoting also by $g$ an extension in $\Omega$, by the Green's formulas \eqref{eq:GreenFormulas} we have
    \begin{equation}
        \begin{split}
            \dfrac{1}{2} \langle u_{\partial\Omega}, \overline \nu g \rangle_{L^2(\partial\Omega)} & = \underset{j\to +\infty}{\lim} \, \dfrac{1}{2} \langle u_j, \overline \nu g \rangle_{L^2(\partial\Omega)} = \underset{j\to +\infty}{\lim} \, \int_\Omega \partial_{\bar z} (u_j \, \overline g) = \underset{j\to +\infty}{\lim} \, \int_\Omega  u_j \, \partial_{\bar z} \overline g = \int_\Omega  u_\Omega \, \partial_{\bar z} \overline g \\
            & = \int_\Omega \partial_{\bar z} (u_\Omega \, \overline g) = \dfrac{1}{2} \overline{\langle t_{\partial\Omega} u_\Omega, \overline \nu g \rangle}_{H^{-1/2}(\partial\Omega), H^{1/2}(\partial\Omega)}.
        \end{split}
    \end{equation}
    In view of the property of the pairing \eqref{eq:Brezis}, this yields $t_{\partial\Omega} u_\Omega = u_{\partial\Omega}$ first in $H^{-1/2}(\partial\Omega)$, and then in $L^2(\partial\Omega)$. As a consequence, $u_j \to u_\Omega$ in $L^2(\partial\Omega)$ with $u_\Omega\in A^2(\Omega)\cap L^2(\partial\Omega)$, as desired.
\end{proof}

Our last preliminary result from complex analysis shall be needed in \Cref{sec:GlobalSpectralProperties}.

\begin{proposition} \label{prop:MinMaxSplope}
    For every $f\in A^2(\Omega)$, there exists a unique $u_f\in A^2(\Omega)\cap L^2(\partial\Omega)$ such that
    \begin{equation} \label{eq:RFSlope}
        \int_{\partial\Omega} u_f \, \overline v = \int_\Omega f \, \overline v \quad \text{for every } v\in A^2(\Omega)\cap L^2(\partial\Omega).
    \end{equation}
    Moreover, the operator 
    \begin{equation}
        \begin{array}{rcll}
            \mathcal S: & A^2(\Omega) & \longrightarrow & A^2(\Omega) \\
            & f & \longmapsto & \, \, \, u_f \quad \text{(the unique solution to \eqref{eq:RFSlope})}
        \end{array}
    \end{equation}
    is compact and self-adjoint in $A^2(\Omega)$. As a consequence, there exists a countably infinite orthonormal basis $\{w_k\}_k$ of $A^2(\Omega)$ consisting of eigenvectors of $\mathcal S$, with corresponding eigenvalues $(S_{k})^{-1}$, where
    \begin{equation}
        S_{k} = \underset{\substack{ F\subset A^2(\Omega)\cap L^2(\partial\Omega) \\ \mathrm{dim}(F)=k }}{\inf} \, \, \underset{u\in F\setminus\{0\}}{\sup} \, \dfrac{ \int_{\partial\Omega} |u|^2}{\int_\Omega |u|^2} \in (0,+\infty).
    \end{equation}
\end{proposition}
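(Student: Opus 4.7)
The plan is to realize $\mathcal S$ via Riesz representation on the Hilbert space furnished by \Cref{prop:HSBergmanL2Bdry}, derive its compactness and self-adjointness by direct estimates leaning on \Cref{lemma:BergmanRegularity}, and extract the orthonormal basis and min-max characterization from the spectral theorem for compact self-adjoint operators.

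\textbf{Construction of $u_f$ and the operator $\mathcal S$.} The starting observation is that, for every $v\in A^2(\Omega)\cap L^2(\partial\Omega)$, \Cref{lemma:BergmanRegularity} provides the bound $\|v\|_{L^2(\Omega)}\le\|v\|_{H^{1/2}(\Omega)}\le C_\Omega\|v\|_{L^2(\partial\Omega)}$. Hence, for fixed $f\in A^2(\Omega)$, the conjugate-linear functional $L_f(v):=\int_\Omega f\,\overline v$ satisfies $|L_f(v)|\le C_\Omega\|f\|_{L^2(\Omega)}\|v\|_{L^2(\partial\Omega)}$, so it is continuous on the Hilbert space $(A^2(\Omega)\cap L^2(\partial\Omega),\langle\cdot,\cdot\rangle_{L^2(\partial\Omega)})$ of \Cref{prop:HSBergmanL2Bdry}. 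Riesz representation yields a unique $u_f$ in that space solving \eqref{eq:RFSlope}. Testing \eqref{eq:RFSlope} with $v=u_f$ and applying Cauchy--Schwarz together with \Cref{lemma:BergmanRegularity} once more produces $\|u_f\|_{L^2(\partial\Omega)}\le C_\Omega\|f\|_{L^2(\Omega)}$ and then $\|u_f\|_{H^{1/2}(\Omega)}\le C_\Omega^{2}\|f\|_{L^2(\Omega)}$, so $\mathcal S\colon A^2(\Omega)\to A^2(\Omega)$ is well defined and bounded.

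\textbf{Compactness, self-adjointness, positivity.} The $H^{1/2}(\Omega)$ estimate combined with the compact embedding $H^{1/2}(\Omega)\hookrightarrow L^2(\Omega)$ gives compactness of $\mathcal S$. For self-adjointness, testing \eqref{eq:RFSlope} for $f$ with $v=u_g$ and for $g$ with $v=u_f$ and taking conjugates yields
\begin{equation*}
    \langle \mathcal S f,g\rangle_{L^2(\Omega)} = \int_\Omega u_f\overline g = \overline{\int_{\partial\Omega} u_g\overline{u_f}} = \int_\Omega f\,\overline{u_g} = \langle f,\mathcal S g\rangle_{L^2(\Omega)}.
\end{equation*}
Setting $g=f$ collapses this to $\langle \mathcal S f,f\rangle_{L^2(\Omega)}=\|u_f\|_{L^2(\partial\Omega)}^{2}\ge 0$, so $\mathcal S$ is positive.

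\textbf{Spectral decomposition and identification with min-max levels.} The spectral theorem for compact self-adjoint positive operators provides an orthonormal basis $\{w_k\}$ of $A^2(\Omega)$ with $\mathcal S w_k=s_k w_k$, $s_k\ge 0$ and $s_k\to 0$. For every $k$ with $s_k>0$ one has $u_{w_k}=s_k w_k$, and testing \eqref{eq:RFSlope} against $v=w_j$ while normalizing $\|w_k\|_{L^2(\Omega)}=1$ gives $s_k\int_{\partial\Omega}w_k\overline{w_j}=\int_\Omega w_k\overline{w_j}=\delta_{jk}$. Thus the $\{w_k\}$ are simultaneously orthogonal in $L^2(\Omega)$ and in $L^2(\partial\Omega)$, with $\int_{\partial\Omega}|w_k|^2=1/s_k$. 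A standard Courant--Fischer-type argument inside $A^2(\Omega)\cap L^2(\partial\Omega)$ then identifies $1/s_k$ with the min-max level $S_k$: the upper bound $S_k\le 1/s_k$ is obtained by testing on $\mathrm{span}\{w_1,\dots,w_k\}$, where the Rayleigh quotient equals $\sum|c_j|^{2}/s_j$ over $\sum|c_j|^{2}$; the lower bound follows from a dimension count producing a non-zero element of $F\cap\mathrm{span}\{w_k,w_{k+1},\dots\}$ for any $k$-dimensional $F$. That $S_k\in(0,+\infty)$ is in turn a consequence, respectively, of \Cref{lemma:BergmanRegularity} (lower bound) and of the infinite-dimensionality of $A^2(\Omega)\cap L^2(\partial\Omega)$ (upper bound).

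\textbf{The main obstacle} is ruling out a non-trivial kernel of $\mathcal S$, so that the basis is genuinely countably infinite with all eigenvalues positive. This amounts to the density of $A^2(\Omega)\cap L^2(\partial\Omega)$ in $A^2(\Omega)$, which is classical for bounded $\mathcal C^2$ domains and can be secured via the Cauchy integral representation of \Cref{rmk:CauchyPompeiu} together with smoothing of boundary data; once this density is in place, $\mathcal S f=0$ forces $\int_\Omega f\overline v=0$ on a dense subspace of $A^2(\Omega)$, whence $f=0$.
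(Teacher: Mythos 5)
Your route is essentially the paper's: Riesz representation on the Hilbert space $(A^2(\Omega)\cap L^2(\partial\Omega),\langle\cdot,\cdot\rangle_{L^2(\partial\Omega)})$ of \Cref{prop:HSBergmanL2Bdry}, the bound $\|u_f\|_{L^2(\partial\Omega)}\leq C\|f\|_{L^2(\Omega)}$ by testing \eqref{eq:RFSlope} with $v=u_f$, compactness through the $H^{1/2}(\Omega)$ estimate of \Cref{lemma:BergmanRegularity} and the compact embedding into $L^2(\Omega)$, the same symmetry computation, and a Courant--Fischer identification of $1/s_k$ with $S_k$. All of that is correct. The genuine gap is precisely the step you flag as the main obstacle: injectivity of $\mathcal S$, equivalently density of $A^2(\Omega)\cap L^2(\partial\Omega)$ in $A^2(\Omega)$. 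Your proposed fix does not work as stated: the reproducing formula of \Cref{rmk:CauchyPompeiu} is only valid for holomorphic $u\in L^2(\Omega)$ whose trace is \emph{already} in $L^2(\partial\Omega)$, which is exactly the property a general element of $A^2(\Omega)$ may lack, so it cannot be used to represent, smooth, or approximate an arbitrary Bergman function. The density you call classical is a Farrell--Markushevich/Mergelyan-type approximation statement; it is true for bounded $\mathcal C^2$ domains but requires a real argument (especially for multiply connected $\Omega$) and is established nowhere in the paper, so as written this step is an appeal to an unproved external fact.

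The paper closes this point with a short self-contained argument that you could adopt verbatim: if $\mathcal S f=0$, then \eqref{eq:RFSlope} gives $\langle f,v\rangle_{L^2(\Omega)}=0$ for every $v\in A^2(\Omega)\cap L^2(\partial\Omega)$, in particular for $v=\Phi_{\mathrm{h}}g$ with $g\in\mathcal C^1(\partial\Omega)$; by \Cref{rmk:enoughForGainRegularity} this orthogonality alone upgrades $f$ to $H^1(\Omega)$, hence $f$ has an $L^2(\partial\Omega)$ trace and is itself an admissible test function, and $v=f$ forces $f\equiv0$. Note also a minor ordering issue in your sketch: testing \eqref{eq:RFSlope} for $w_k$ against $v=w_j$ presupposes $w_j\in L^2(\partial\Omega)$, i.e.\ $s_j\neq0$, so triviality of the kernel must be settled before the simultaneous orthogonality and the relation $\int_{\partial\Omega}|w_k|^2=1/s_k$ are available. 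Once the kernel is handled, the remainder of your argument (Bessel's inequality in $L^2(\partial\Omega)$ for the lower bound in the min-max identification, finiteness of $S_k$, and positivity via \Cref{lemma:BergmanRegularity}) goes through.
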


\begin{proof}
    Given $f\in L^2(\Omega)$, we define the linear functional 
    \begin{equation}
        \begin{matrix}
            T_f: & A^2(\Omega)\cap L^2(\partial\Omega) & \longrightarrow & \C \\
            & v & \longmapsto & \int_\Omega \overline f \, v
        \end{matrix}
    \end{equation}
    and claim that it is continuous with respect to the norm $\|\cdot\|_{L^2(\partial\Omega)}$. Indeed, by Cauchy-Schwarz and \Cref{lemma:BergmanRegularity}
    \begin{equation}
        |T_f(v)| \leq \int_\Omega |f| \, |v| \leq \|f \|_{L^2(\Omega)} \|v \|_{L^2(\Omega)} \leq C \|f \|_{L^2(\Omega)} \|v \|_{L^2(\partial\Omega)}.
    \end{equation}
    Hence, $T_f$ is in the dual space of $A^2(\Omega)\cap L^2(\partial\Omega)$. Then the first part of the statement follows by \Cref{prop:HSBergmanL2Bdry} and Riesz-Fr\'echet theorem \cite[Theorem 5.5 in Section 5.2]{Brezis2011}. As a consequence, the operator $\mathcal S$ is linear and continuous. Indeed, plugging $v=u_f$ in \eqref{eq:RFSlope} we get
    \begin{equation} \label{eq:auxiliarCompact}
        \|u_f\|_{L^2(\partial \Omega)}^2 =|T_f(u_f)| \leq  C \|f \|_{L^2(\Omega)} \|u_f \|_{L^2(\partial\Omega)},
    \end{equation}
    and combining this with \Cref{lemma:BergmanRegularity} we obtain
    \begin{equation}
        \|\mathcal S f\|_{L^2(\Omega)} = \|u_f\|_{L^2(\Omega)} \leq \|u_f\|_{H^{1/2}(\Omega)} \leq C \|u_f\|_{L^2(\partial \Omega)} \leq C \|f \|_{L^2(\Omega)}.
    \end{equation}
    
    Let us now prove that the operator $\mathcal S$ is self-adjoint. Since $\mathcal S$ is bounded, it is enough to show that it is symmetric. Given $f$ and $g$ in $L^2(\Omega)$, denote $\mathcal Sf = u_f$ and $\mathcal Sg = u_g$. Then
    \begin{equation}
        \langle \mathcal Sf, g \rangle_{L^2(\Omega)} = \overline{\int_\Omega  g \, \overline{u_f}} = \overline{\int_{\partial\Omega} u_g \, \overline{u_f} } = \int_{\partial\Omega} u_f \, \overline{u_g} = \int_\Omega f \, \overline{u_g} = \langle f, u_g \rangle_{L^2(\Omega)} = \langle f, \mathcal Sg \rangle_{L^2(\Omega)},
    \end{equation}
    where in the second equality we have used that $u_g$ is the solution to \eqref{eq:RFSlope} for $g$, and in the fourth equality we have used that $u_f$ is the solution to \eqref{eq:RFSlope} for $f$. This shows that $\mathcal S$ is self-adjoint. 
    
    Let us now show that $\mathcal S$ is compact. If $\{f_j\}_j \subset A^2(\Omega)$ is a bounded sequence, then by \eqref{eq:auxiliarCompact} $\{\mathcal Sf_j\}_j$ is a bounded sequence in $L^2(\partial\Omega)$. Moreover, since $\mathcal Sf_j \in A^2(\Omega)\cap L^2(\partial\Omega)$, by \Cref{rmk:CauchyPompeiu} it holds $\mathcal Sf_j = \Phi_{\mathrm{h}} \mathcal Sf_j$, and hence by \Cref{lemma:SingleLayer} $\{\mathcal Sf_j\}_j$ is a bounded sequence in $H^{1/2}(\Omega)$. By the compact embedding of $H^{1/2}(\Omega)$ in $L^2(\Omega)$ ---because $\Omega$ is bounded, see \cite[Lemma 6.10]{Palatucci2012}---, we conclude that there exists a subsequence, which we call again $\{\mathcal S f_j\}_j \subset A^2(\Omega)\cap L^2(\partial\Omega)$, converging to some $u\in L^2(\Omega)$ strongly in $L^2(\Omega)$. In particular, $\partial_{\bar z} u =0$ in the weak sense. Indeed, for every $v\in \mathcal C^\infty_c(\Omega)$ we have
    \begin{equation}
        \int_\Omega u \, \overline{\partial_z v} = \underset{j\to+\infty}{\lim} \, \int_\Omega \mathcal S f_j \, \overline{\partial_z v} = \underset{j\to+\infty}{\lim} \, - \int_\Omega \partial_{\bar z} \mathcal S f_j \, \overline v = 0.
    \end{equation}    
    We therefore conclude that the limit $u$ is in $A^2(\Omega)$, which shows that $\mathcal S$ is compact.

    Since $\mathcal S$ is a compact self-adjoint operator, by the spectral theorem \cite[Theorem 6.11 in Section 6.4]{Brezis2011} there exist $\{s_k\}_k \subset \R$ and $\{w_k\}_k$ orthonormal basis in $L^2(\Omega)$ such that $\mathcal S w_k = s_k w_k$, for every $k\in \N$. We now justify that zero is not an eigenvalue of $\mathcal S$. By way of contradiction, if $f\in A^2(\Omega)\setminus\{0\}$ was such that $\mathcal Sf =0$, then by \eqref{eq:RFSlope} we would have that
    \begin{equation} \label{eq:auxWoC0eig}
        \int_\Omega f \, \overline v =0 \quad \text{for every } v\in A^2(\Omega)\cap L^2(\partial\Omega).
    \end{equation}
    In particular, $f\in A^2(\Omega)\subset \mathrm{Dom}(\partial_{\mathrm{h}})$ would be orthogonal in $L^2(\Omega)$ to $\Phi_{\mathrm{h}}g \in A^2(\Omega)\cap L^2(\partial\Omega)$, for every $g\in \mathcal C^1(\partial\Omega)$. As a consequence, \Cref{rmk:enoughForGainRegularity} would yield that $f\in A^2(\Omega)\cap H^1(\Omega) \subset A^2(\Omega)\cap L^2(\partial\Omega)$, hence taking $v=f$ in \eqref{eq:auxWoC0eig} we would conclude that $f\equiv 0$, reaching a contradiction.

    Using that $\mathcal S w_k = s_k w_k$ and that $s_k\neq 0$, we now can show that $s_k>0$ for every $k\in \N$. Indeed, on the one hand, we have that $w_k = s_k^{-1} \mathcal S w_k \in A^2(\Omega)\cap L^2(\partial\Omega)$. On the other hand, by definition, for every $v\in A^2(\Omega)\cap L^2(\partial\Omega)$ there holds
    \begin{equation} \label{eq:auxiliarEL1}
        \int_\Omega w_k \, \overline v = \int_{\partial\Omega} \mathcal S w_k \, \overline v = \int_{\partial\Omega} s_k w_k \, \overline v = s_k \int_{\partial\Omega} w_k \, \overline v.
    \end{equation}
    In particular, taking $v=w_k$ we conclude that $s_k>0$. Notice that since $\{w_k\}_k$ is an orthonormal basis in $L^2(\Omega)$, \eqref{eq:auxiliarEL1} yields that $\{w_k\}_k$ is orthogonal in $L^2(\partial\Omega)$. Actually, $\{w_k\}_k$ is an orthogonal basis in $A^2(\Omega)\cap L^2(\partial\Omega)$. Indeed, if there existed $u\in A^2(\Omega)\cap L^2(\partial\Omega)$ orthogonal to every $w_k$ in $L^2(\partial\Omega)$, \eqref{eq:auxiliarEL1} would yield that $u$ is orthogonal to every $w_k$ in $L^2(\Omega)$, and since $\{w_k\}_k$ is an orthonormal basis in $L^2(\Omega)$ this would lead to $u=0$.

    As a consequence of the previous facts, every $u\in A^2(\Omega)\cap L^2(\partial\Omega)$ can be written in the form
    \begin{equation}
        u = \underset{k\in \N}{\sum} \, c_k w_k \quad \text{in } A^2(\Omega)\cap L^2(\partial\Omega), \text{ for some } c_k\in \C, \text{ with} \quad \dfrac{ \int_{\partial\Omega} |u|^2}{\int_\Omega |u|^2} = \dfrac{\underset{k\in \N}{\sum} \, |c_k|^2/s_k}{\underset{k\in \N}{\sum} \, |c_k|^2}.
    \end{equation}
    Therefore,
    \begin{equation} \label{eq:auxiliarEL2}
        S_k:= \dfrac{1}{s_k} = \underset{\substack{ F\subset A^2(\Omega)\cap L^2(\partial\Omega) \\ \mathrm{dim}(F)=k }}{\inf} \, \, \underset{u\in F\setminus\{0\}}{\sup} \, \dfrac{ \int_{\partial\Omega} |u|^2}{\int_\Omega |u|^2},
    \end{equation}
    which concludes the proof.
\end{proof} 

In this work, we do not investigate to which problem \eqref{eq:RFSlope} is its weak formulation. However, in the paper \cite{DuranMasSanzPerela2025} we show that the min-max level $S_{1}$ of \Cref{prop:MinMaxSplope} is related with the slope of the eigenvalue curve $\mu_{1}(a)$ of $\Rodzin_a$ ---see \Cref{thm:PropertiesMukOmega,thm:PropertiesMukOmegaGlobal}--- when departing from $a=0$.

We conclude this section justifying that the sequence of eigenvalues described in \Cref{thm:PropertiesMukOmega} is not bounded from below in the regime $a<0$.

\begin{remark} \label{rmk:Negativea}
    As a consequence of \Cref{prop:MinMaxSplope}, we have that
    \begin{equation}
        \inf_{u\in E(\Omega)\setminus\{0\}}\dfrac{4\int_\Omega |\partial_{\bar z} u|^2 + a\int_{\partial\Omega} |u|^2}{\int_\Omega |u|^2} = -\infty \quad \text{for every } a<0.
    \end{equation}
    Indeed, choosing the subset of competitors $\{w_k\}_{k\in\N} \subset E(\Omega)\setminus\{0\}$ ---which are eigenfunctions of a compact operator with eigenvalues $0<S_k^{-1} \to 0^+$ as $k\to+\infty$---, we have
    \begin{equation}
        \inf_{u\in E(\Omega)\setminus\{0\}}\dfrac{4\int_\Omega |\partial_{\bar z} u|^2 + a\int_{\partial\Omega} |u|^2}{\int_\Omega |u|^2} \leq \inf_{k\in \N}\dfrac{4\int_\Omega |\partial_{\bar z} w_k|^2 + a\int_{\partial\Omega} |w_k|^2}{\int_\Omega |w_k|^2} = \inf_{k\in \N}\ a S_k = -\infty,
    \end{equation}
    because $a<0$.
\end{remark}

\section{The $\overline\partial$-Robin Laplacian $\Rodzin_a$} \label{sec:TheRodzinLaplacian}

In this section, we characterize the unique self-adjoint operator associated to the problem~\eqref{eq:BvPRodzinLaplacian} and study some of its spectral properties, for each fixed $a>0$. To this end, we first explore the natural Hilbert space and sesquilinear form associated to the weak formulation of~\eqref{eq:BvPRodzinLaplacian}, which may be unveiled from \eqref{eq:DivergenceToRodzin}, namely, 
\begin{equation}
4\int_\Omega \partial_{\bar z}u \, \overline{\partial_{\bar z} v}
+a\int_{\partial\Omega}u \, \overline v = \int_\Omega f \, \overline v \quad \text{for all } v\in \mathcal C^\infty(\overline \Omega).
\end{equation}

\subsection{Hilbert space and sesquilinear form setting} \label{sec:HSandSFsetting}

Throughout \Cref{sec:PreliminariesComplex}, we have studied those functions $u$ in $L^2(\Omega)$ with weak derivative $\partial_{\bar z} u$ in $L^2(\Omega)$ and trace in $L^2(\partial\Omega)$. The space of such functions has been formally introduced in \eqref{eq:AmbientHilbertSpace},
\begin{equation}
    E(\Omega) = \left\{ u \in L^2(\Omega): \, \partial_{\bar z}u \in L^2(\Omega), \, u\in L^2(\partial\Omega) \right\}.
\end{equation}
As the following lemma shows, the space $E(\Omega)$ is a Hilbert space when endowed with the natural scalar product, which turns out to be compactly embedded in $L^2(\Omega)$.

\begin{lemma} \label{lemma:AmbientHilbertSpace}
    The space $E(\Omega)$ is a Hilbert space when endowed with the scalar product
    \begin{equation} \label{eq:ScalarProduct}
        \langle u, v \rangle_{E(\Omega)} := \int_\Omega u \, \overline v + \int_\Omega \partial_{\bar z} u \, \overline{\partial_{\bar z} v} + \int_{\partial \Omega} u \, \overline v \quad \text{for } u,v \in E(\Omega)
    \end{equation}
    and the associated norm $\|u\|_{E(\Omega)} := \sqrt{\langle u, u \rangle_{E(\Omega)}}$ for $u\in E(\Omega)$, on which $\mathcal C^\infty(\overline \Omega)$ is dense. Moreover, the space $E(\Omega)$ is compactly embedded in $L^2(\Omega)$.
\end{lemma}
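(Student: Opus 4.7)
The plan is to establish the three assertions in sequence, leveraging the Bergman decomposition of \Cref{lemma:BergmanCharacterization} and the regularity gain from \Cref{lemma:BergmanRegularity,lemma:GainRegularityOrthogonal}.

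\emph{Hilbert space structure.} The sesquilinear form in \eqref{eq:ScalarProduct} is clearly a scalar product: sesquilinearity and Hermitian symmetry are immediate, and definiteness comes from the $L^2(\Omega)$ term. For completeness, I take a Cauchy sequence $\{u_j\}\subset E(\Omega)$. Then $\{u_j\}$ is Cauchy in $L^2(\Omega)$, $\{\partial_{\bar z}u_j\}$ is Cauchy in $L^2(\Omega)$, and $\{u_j\}$ is Cauchy in $L^2(\partial\Omega)$. By completeness of these Hilbert spaces, there are limits $u\in L^2(\Omega)$, $w\in L^2(\Omega)$, and $g\in L^2(\partial\Omega)$. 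Testing against $\mathcal C^\infty_c(\Omega)$ identifies $w=\partial_{\bar z}u$ in the distributional sense. For the trace, I use that by \Cref{lemma:propertiesWirtinger}$(ii)$ the trace extends continuously from $\Dom(\partial_\mathrm{h})$ to $H^{-1/2}(\partial\Omega)$, so $t_{\partial\Omega}u_j\to t_{\partial\Omega}u$ in $H^{-1/2}(\partial\Omega)$; combined with $u_j\to g$ in $L^2(\partial\Omega)\hookrightarrow H^{-1/2}(\partial\Omega)$, uniqueness of limits yields $g=t_{\partial\Omega}u$. Hence $u\in E(\Omega)$ and $u_j\to u$ in $E(\Omega)$.

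\emph{Density of $\mathcal C^\infty(\overline\Omega)$.} Given $u\in E(\Omega)$, I apply \Cref{lemma:BergmanCharacterization} to write $u=u_\mathrm{h}+u_\perp$ with $u_\mathrm{h}\in A^2(\Omega)\cap L^2(\partial\Omega)\cap H^{1/2}(\Omega)$ and $u_\perp=\partial_z w_u\in A^2(\Omega)^\perp\cap H^1(\Omega)$. For the $u_\perp$ piece, the standard density of $\mathcal C^\infty(\overline\Omega)$ in $H^1(\Omega)$ for $\mathcal C^2$ domains together with the trace theorem gives an $H^1$-approximation, which trivially dominates the $E(\Omega)$-norm. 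For the holomorphic piece $u_\mathrm{h}$, I approximate its boundary trace in $L^2(\partial\Omega)$ by a sequence $g_n\in\mathcal C^\infty(\partial\Omega)$ and set $v_n:=\Phi_\mathrm{h}g_n\in A^2(\Omega)$. By \Cref{rmk:CauchyPompeiu}, $u_\mathrm{h}=\Phi_\mathrm{h}u_\mathrm{h}$ and $t_{\partial\Omega}v_n=g_n$ in $L^2(\partial\Omega)$; \Cref{lemma:SingleLayer} with $s=0$ gives $v_n\to u_\mathrm{h}$ in $H^{1/2}(\Omega)$, hence in $L^2(\Omega)$, while $\partial_{\bar z}v_n=0=\partial_{\bar z}u_\mathrm{h}$ trivially. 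Since $g_n$ is smooth and $\partial\Omega$ is $\mathcal C^2$, classical regularity of Cauchy integrals with smooth densities makes each $v_n$ extend smoothly up to $\overline\Omega$. A further mollification within $\mathcal C^\infty(\overline\Omega)$ of the combined approximation produces the desired smooth sequence converging in $E(\Omega)$. \emph{This holomorphic-approximation step is the main obstacle}: if one is unwilling to invoke boundary regularity of Cauchy integrals, one may instead use a smooth inward flow provided by the $\mathcal C^2$ regularity of $\partial\Omega$ to pull $u_\mathrm{h}$ into a strictly interior subdomain, mollify there, and pass to the limit.

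\emph{Compact embedding in $L^2(\Omega)$.} Let $\{u_j\}\subset E(\Omega)$ be bounded in the norm $\|\cdot\|_{E(\Omega)}$. I again decompose $u_j=(u_j)_\mathrm{h}+(u_j)_\perp$ by \Cref{lemma:BergmanCharacterization}. By \Cref{lemma:GainRegularityOrthogonal},
\begin{equation}
\|(u_j)_\perp\|_{H^1(\Omega)}\le C\|\partial_{\bar z}(u_j)_\perp\|_{L^2(\Omega)}=C\|\partial_{\bar z}u_j\|_{L^2(\Omega)},
\end{equation}
so $\{(u_j)_\perp\}$ is bounded in $H^1(\Omega)$ and Rellich--Kondrachov yields an $L^2(\Omega)$-convergent subsequence. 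For the holomorphic pieces, the bound \eqref{eq:BergmanAuxReg} derived in the proof of \Cref{lemma:PoincareTypeIneq} gives
\begin{equation}
\|(u_j)_\mathrm{h}\|_{L^2(\partial\Omega)}\le C\bigl(\|u_j\|_{L^2(\partial\Omega)}+\|\partial_{\bar z}u_j\|_{L^2(\Omega)}\bigr),
\end{equation}
and then \Cref{lemma:BergmanRegularity} upgrades this to boundedness of $\{(u_j)_\mathrm{h}\}$ in $H^{1/2}(\Omega)$. The compact embedding $H^{1/2}(\Omega)\hookrightarrow L^2(\Omega)$ (valid since $\Omega$ is bounded) extracts a further $L^2(\Omega)$-convergent subsequence. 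A diagonal argument yields a single subsequence of $\{u_j\}$ converging in $L^2(\Omega)$, proving compactness of the embedding $E(\Omega)\hookrightarrow L^2(\Omega)$.
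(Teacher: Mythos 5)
Your completeness argument and your compactness argument are correct and essentially coincide with the paper's proof (for completeness you invoke the boundedness of $t_{\partial\Omega}:\Dom(\partial_{\mathrm h})\to H^{-1/2}(\partial\Omega)$ from \Cref{lemma:propertiesWirtinger}$(ii)$ instead of redoing the Green-formula computation, which is fine; for compactness you use the same Bergman splitting, \Cref{lemma:GainRegularityOrthogonal} for the orthogonal part and \Cref{rmk:CauchyPompeiu} plus \Cref{lemma:SingleLayer} for the holomorphic part, exactly as in the paper).

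The density step, however, has a genuine gap. You set $v_n:=\Phi_{\mathrm h}g_n$ with $g_n\in\mathcal C^\infty(\partial\Omega)$, $g_n\to t_{\partial\Omega}u_{\mathrm h}$ in $L^2(\partial\Omega)$, and claim ``$t_{\partial\Omega}v_n=g_n$ in $L^2(\partial\Omega)$'' by \Cref{rmk:CauchyPompeiu}. That remark only asserts the reproducing property $t_{\partial\Omega}\Phi_{\mathrm h}u=t_{\partial\Omega}u$ when the density is the trace of a holomorphic $L^2(\Omega)$ function; for an arbitrary smooth density the Plemelj jump formula gives the boundary value $\tfrac12 g_n$ plus the principal-value Cauchy transform of $g_n$, which is not $g_n$. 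Consequently the $L^2(\partial\Omega)$ part of the $E(\Omega)$-norm is not controlled: you do get $v_n\to u_{\mathrm h}$ in $H^{1/2}(\Omega)$ from \Cref{lemma:SingleLayer}, but there is no bounded trace map $H^{1/2}(\Omega)\to L^2(\partial\Omega)$, so convergence of the traces --- which is precisely what the $E(\Omega)$-norm requires --- is unproven. Repairing this route needs the $L^2(\partial\Omega)$-boundedness of the boundary Cauchy singular integral operator (or, for your fallback ``pull in along an inward flow and mollify'', the nontangential/Hardy-space convergence $u_{\mathrm h}(\cdot-\epsilon\nu)\to u_{\mathrm h}$ in $L^2(\partial\Omega)$), neither of which is available in the paper; the claim that $\Phi_{\mathrm h}g_n$ extends smoothly to $\overline\Omega$ for a merely $\mathcal C^2$ boundary is also overstated, though less essential. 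The paper avoids all boundary-behaviour issues by a duality argument: if $u\in E(\Omega)$ satisfies $\langle u,\varphi\rangle_{E(\Omega)}=0$ for all $\varphi\in\mathcal C^\infty(\overline\Omega)$, then testing first with $\varphi\in\mathcal C^\infty_c(\Omega)$ gives $\partial_z\partial_{\bar z}u=u$ in $L^2(\Omega)$, then the Green formulas \eqref{eq:GreenFormulas} turn the orthogonality into the boundary condition $2\bar\nu\partial_{\bar z}u+4u=0$ in $L^2(\partial\Omega)$, and the energy identity
\begin{equation}
    0\le 4\int_\Omega|u|^2=\int_\Omega\Delta u\,\overline u=-4\int_\Omega|\partial_{\bar z}u|^2-4\int_{\partial\Omega}|u|^2\le 0
\end{equation}
forces $u=0$. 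You should either adopt this orthogonal-complement argument or supply the missing boundary estimate for the Cauchy transform; as written, the density claim does not follow.
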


\begin{proof}
    It is straightforward to verify that \eqref{eq:ScalarProduct} is a scalar product. To see that $E(\Omega)$ is a Hilbert space, take a Cauchy sequence $\{u_k\}_k \subset E(\Omega)$ with respect to $\|\cdot\|_{E(\Omega)}$. In particular,
    \begin{enumerate}[label=$(\roman*)$]
        \item $\{u_k\}_k$ is a Cauchy sequence in $L^2(\Omega)$. Hence, there exists $u_\Omega\in L^2(\Omega)$ such that $u_k \to u_\Omega$ in $L^2(\Omega)$, as $k\to +\infty$;
        \item $\{\partial_{\bar z} u_k\}_k$ is a Cauchy sequence in $L^2(\Omega)$. Hence, there exists $v\in L^2(\Omega)$ such that $\partial_{\bar z} u_k \to v$ in $L^2(\Omega)$, as $k\to +\infty$;
        \item $\{u_k\}_k$ is a Cauchy sequence in $L^2(\partial\Omega)$. Hence, there exists $u_{\partial\Omega}\in L^2(\partial\Omega)$ such that $u_k \to u_{\partial\Omega}$ in $L^2(\partial\Omega)$, as $k\to +\infty$.
    \end{enumerate}
    Then, on the one hand, for every $\varphi\in \mathcal C^\infty_c(\Omega)$,
    \begin{equation}
        \int_\Omega u_\Omega \, \overline{\partial_z \varphi} = \underset{k\to+\infty}{\lim} \, \int_\Omega u_k \, \overline{\partial_z \varphi} = \underset{k\to+\infty}{\lim} \, -\int_\Omega \partial_{\bar z} u_k \, \overline \varphi = -\int_\Omega v \, \overline \varphi.
    \end{equation}
    As a consequence, $\partial_{\bar z}u_\Omega = v$ first in the distributional sense, and then in $L^2(\Omega)$. In particular, $u_\Omega\in \mathrm{Dom}(\partial_{\mathrm{h}})$, where we recall that $\partial_{\mathrm{h}}$ is defined in \Cref{sec:Wirtinger}. Knowing this, on the other hand, for every $\varphi \in H^{1/2}(\partial\Omega)$ with extension in $\overline \Omega$ denoted again by $\varphi$, by the Green's formulas \eqref{eq:GreenFormulas} and the property of the pairing \eqref{eq:Brezis}, we have
    \begin{equation}
        \begin{split}
            \frac{1}{2} \overline{ \langle u_{\partial\Omega}, \overline \nu \varphi \rangle}_{H^{-1/2}(\partial\Omega),H^{1/2}(\partial\Omega)} & = \frac{1}{2} \int_{\partial\Omega} \nu u_{\partial\Omega} \, \overline \varphi = \underset{k\to+\infty}{\lim} \, \frac{1}{2} \int_{\partial\Omega} \nu u_k \, \overline \varphi = \underset{k\to+\infty}{\lim} \, \int_\Omega \partial_{\bar z} (u_k \, \overline \varphi) \\
            & = \underset{k\to+\infty}{\lim} \, \int_\Omega \partial_{\bar z} u_k \, \overline \varphi + \int_\Omega u_k \, \overline{\partial_z \varphi} = \int_\Omega \partial_{\bar z} u_\Omega \, \overline \varphi + \int_\Omega u_\Omega \, \overline{ \partial_z \varphi} \\
            & = \frac{1}{2} \overline{ \langle t_{\partial\Omega}u_\Omega, \overline \nu \varphi \rangle}_{H^{-1/2}(\partial\Omega),H^{1/2}(\partial\Omega)}
        \end{split}
    \end{equation}
    hence $t_{\partial\Omega} u_\Omega = u_{\partial\Omega}$ first in $H^{-1/2}(\partial\Omega)$, and then in $L^2(\partial\Omega)$. In conclusion, $u_\Omega\in E(\Omega)$ and $u\to u_\Omega$ in $E(\Omega)$ as $k\to +\infty$, as desired. 
    
    Next, we show that $\mathcal C^\infty(\overline\Omega)$ is dense in $E(\Omega)$. Assume that there exists $u\in E(\Omega)$ orthogonal to every $\varphi\in \mathcal C^\infty(\overline \Omega)$, that is, such that
    \begin{equation} \label{eq:auxOrth}
        0 = \langle u, \varphi \rangle_{E(\Omega)} = \langle u, \varphi \rangle_{L^2(\Omega)} + \langle \partial_{\bar z}u, \partial_{\bar z}\varphi \rangle_{L^2(\Omega)} + \langle u, \varphi \rangle_{L^2(\partial \Omega)}, \quad \text{for every } \varphi\in \mathcal C^\infty(\overline \Omega).
    \end{equation}
    We want to see that $u=0$. On the one hand, from \eqref{eq:auxOrth}, for every $\varphi\in \mathcal C^\infty_c(\Omega)$ it holds
    \begin{equation}
        \int_\Omega \partial_{\bar z} u \, \overline{\partial_{\bar z} \varphi} = -\int_\Omega u \, \overline \varphi,
    \end{equation}
    hence $\partial_z\partial_{\bar z} u = u$ first in the distributional sense, and then in $L^2(\Omega)$. In particular, $\partial_{\bar z}u \in \mathrm{Dom}(\partial_{\mathrm{ah}})$. Knowing this, for every $\varphi \in \mathcal C^\infty(\overline\Omega)$, by the Green's formulas \eqref{eq:GreenFormulas} and the property of the pairing \eqref{eq:Brezis}, we have
    \begin{equation}
        \begin{split}
            \frac{1}{2} \overline{ \langle t_{\partial\Omega} \partial_{\bar z} u, \nu \varphi \rangle}_{H^{-1/2}(\partial\Omega),H^{1/2}(\partial\Omega)} & = \int_\Omega \partial_z \partial_{\bar z} u \, \overline \varphi + \int_\Omega \partial_{\bar z} u \, \overline{\partial_{\bar z} \varphi} = \int_\Omega u \, \overline \varphi + \int_\Omega \partial_{\bar z} u \, \overline{\partial_{\bar z} \varphi} = - \int_{\partial\Omega} u \, \overline \varphi \\
            & = - \overline{ \langle t_{\partial\Omega} u, \varphi \rangle}_{H^{-1/2}(\partial\Omega),H^{1/2}(\partial\Omega)},
        \end{split}
    \end{equation}
    where in the second equality we have used that $\partial_z\partial_{\bar z} u = u$, and in the third equality we have used \eqref{eq:auxOrth}. Hence, $\frac{1}{2} t_{\partial\Omega}( \partial_{\bar z}u) = - \nu t_{\partial\Omega}u$ first in $H^{-1/2}(\partial\Omega)$, and then in $L^2(\partial\Omega)$. That is, $u$ solves the eigenvalue problem
    \begin{equation}
        \begin{cases}
            \Delta u = 4u & \text{in } L^2(\Omega), \\
            2\bar \nu \partial_{\bar z}u + 4u = 0 & \text{in } L^2(\partial\Omega).
        \end{cases}
    \end{equation}
    However, multiplying the PDE by $\bar u$ and integrating in $\Omega$ we get, using the divergence theorem,
    \begin{equation}
        0 \leq 4 \int_\Omega |u|^2 = \int_\Omega \Delta u \, \overline u = -4 \int_\Omega |\partial_{\bar z} u|^2 + 2\int_{\partial\Omega} \bar \nu \partial_{\bar z}u \, \overline u = -4 \int_\Omega |\partial_{\bar z} u|^2 - 4 \int_{\partial\Omega} |u|^2 \leq 0,
    \end{equation}
    from where we conclude that $u=0$, as desired. In conclusion, $\mathcal C^\infty(\overline \Omega)$ is dense in $E(\Omega)$ with respect to the norm of $E(\Omega)$. 

    We conclude showing that $E(\Omega)$ is compactly embedded in $L^2(\Omega)$. To this end, let $\{u_j\}_j \subset E(\Omega)$ be a uniformly bounded sequence. Our goal is to find a strongly convergent subsequence in $L^2(\Omega)$. To this end, we decompose $u_j$ as $u_j = (u_j)_\mathrm{h} + (u_j)_\perp$ with $(u_j)_\mathrm{h}\in A^2(\Omega)$ and $(u_j)_\perp \in A^2(\Omega)^\perp$, as in \Cref{lemma:BergmanCharacterization}. By uniform boundedness and orthogonality in $L^2(\Omega)$, there exists $C>0$ such that
    \begin{equation} \label{eq:BoundedSequence}
       \int_\Omega |(u_j)_\mathrm{h}|^2 + \int_\Omega |(u_j)_\perp|^2 + \int_\Omega |\partial_{\bar z} (u_j)_\perp |^2 + \int_{\partial\Omega} |(u_j)_\perp + (u_j)_\mathrm{h}|^2 = \|u_j\|_{E(\Omega)}^2 \leq C \quad \text{for all } j.
    \end{equation}
    We will show that both $\{(u_j)_\mathrm{h}\}_j$ and $\{(u_j)_\perp\}_j$ have strongly convergent subsequences in $L^2(\Omega)$.

    First, by \Cref{lemma:GainRegularityOrthogonal} and \eqref{eq:BoundedSequence}, we get
    \begin{equation} \label{eq:BergamnH1Uniform}
        \|(u_j)_\perp\|_{H^1(\Omega)}^2 \leq C_\Omega  \|\partial_{\bar z} (u_j)_\perp\|_{L^2(\Omega)}^2 \leq C_\Omega C
    \end{equation}
    for some constant $C_\Omega>0$ depending only on $\Omega$. Since $H^1(\Omega)$ is compactly embedded in $L^2(\Omega)$ ---because $\Omega$ is bounded, see \cite[Theorem 9.16]{Brezis2011}---, there exist $u^\perp_\star\in L^2(\Omega)$ and a subsequence, which we call again $\{(u_j)_\perp\}_j$, such that $\|(u_j)_\perp-u^\perp_\star\|_{L^2(\Omega)} \to 0$ as $j\to + \infty$.

    Next, by the triangle inequality, \eqref{eq:BoundedSequence}, the trace theorem, and \eqref{eq:BergamnH1Uniform}, for this subsequence of indices $j$ we get
    \begin{equation} \label{eq:BergmanBdryUnif}
        \int_{\partial\Omega} |(u_j)_\mathrm{h}|^2 \leq 2 \int_{\partial\Omega} |(u_j)_\mathrm{h} + (u_j)_\perp|^2 + 2 \int_{\partial\Omega} |(u_j)_\perp|^2 \leq 2C + 2C_\Omega' C,
    \end{equation}
    for some constant $C_\Omega'>0$ depending only on $\Omega$. Since $(u_j)_\mathrm{h} = \Phi_{\mathrm{h}} (u_j)_\mathrm{h}$ by \Cref{rmk:CauchyPompeiu}, combining \Cref{lemma:SingleLayer} for $s=0$ and \eqref{eq:BergmanBdryUnif} we get
    \begin{equation}
        \|(u_j)_\mathrm{h}\|_{H^{1/2}(\Omega)} \leq K,
    \end{equation}
    for some constant $K>0$ depending only on $\Omega$ and the bound in \eqref{eq:BergmanBdryUnif}. Again, since $H^{1/2}(\Omega)$ is compactly embedded in $L^2(\Omega)$ ---because $\Omega$ is bounded, see \cite[Lemma 6.10]{Palatucci2012}---, there exist $u^\mathrm{h}_\star\in L^2(\Omega)$ and a subsequence, which we call again $\{(u_j)_\mathrm{h}\}_j$, such that $\|(u_j)_\mathrm{h}-u^\mathrm{h}_\star\|_{L^2(\Omega)} \to 0$ as $j\to+ \infty$.

    In conclusion, for this subsequence of indices $j$, the subsequence $\{u_j\}_j$ converges strongly to $u:= u^\perp_\star+u^\mathrm{h}_\star$ in $L^2(\Omega)$. This concludes the proof.
\end{proof}

In view of the equality \eqref{eq:DivergenceToRodzin} resulting from the problem \eqref{eq:BvPRodzinLaplacian}, it is natural to consider, for each fixed $a>0$, the sesquilinear form $B_a(\cdot,\cdot): E(\Omega) \times E(\Omega) \rightarrow \C$ defined by 
\begin{equation} \label{eq:SesquilinearForm}
    B_a(u,v) := 4 \int_\Omega \partial_{\bar z} u \, \overline{\partial_{\bar z} v} + a \int_{\partial \Omega} u \, \overline v.
\end{equation}

This form is the linear in the parameter counterpart of the quadratic form $q_E^\Omega$ appearing in~\cite{Antunes2021}. As a consequence of \Cref{lemma:AmbientHilbertSpace}, the form $B_a$ is a sesquilinear form defined in a Hilbert space. The following lemma summarizes some of its properties.

\begin{lemma} \label{lemma:PropertiesSesquilinearForm}
    For every $a>0$, the sesquilinear form $B_a$ defined in \eqref{eq:SesquilinearForm} is 
    \begin{enumerate}[label=$(\roman*)$]
        \item densely defined in $L^2(\Omega)$ independently of $a$,
        \item symmetric (in the sense of \cite[pg. 309]{Kato1995}),
        \item sectorial (in the sense of \cite[pg. 310]{Kato1995}), and 
        \item closed (in the sense of \cite[pg. 313]{Kato1995}).
    \end{enumerate}
    Moreover, $\mathbb C \ni a \mapsto B_a(u,v)$ is holomorphic in $a$ for each fixed $u,v\in E(\Omega)$.
\end{lemma}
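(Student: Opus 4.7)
The plan is to check the five properties in turn; four of them are essentially immediate and only closedness requires any input beyond inspection.

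For (i): the domain of $B_a$ is $E(\Omega)$ regardless of $a$, and $E(\Omega) \supset \mathcal{C}^\infty(\overline\Omega)$, which is dense in $L^2(\Omega)$. Hence $E(\Omega)$ is dense in $L^2(\Omega)$, with the domain being the same set for every $a$.

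For (ii) and (iii): since $a>0$ is real, a direct calculation using $\overline{\int f \overline g} = \int \overline f g$ gives $\overline{B_a(v,u)} = B_a(u,v)$, so $B_a$ is symmetric. Then
\begin{equation*}
    B_a(u,u) = 4\|\partial_{\bar z}u\|^2_{L^2(\Omega)} + a\|u\|^2_{L^2(\partial\Omega)} \in [0,+\infty),
\end{equation*}
so the numerical range is real and non-negative, hence trivially contained in a sector with vertex at $0$ and semi-angle $0$.

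For (v): the map $\mathbb{C} \ni a \mapsto B_a(u,v) = 4\int_\Omega \partial_{\bar z}u \, \overline{\partial_{\bar z} v} + a\int_{\partial\Omega} u \, \overline v$ is affine, hence entire in $a$.

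The one step I would emphasize is (iv), closedness. Following Kato's characterization \cite[Theorem~VI.1.11]{Kato1995}, it suffices to verify that $E(\Omega)$ equipped with the form norm $\|u\|^2_{B_a} := B_a(u,u) + \|u\|^2_{L^2(\Omega)}$ is complete. For $a > 0$ the elementary sandwich estimate
\begin{equation*}
    \min\{1,4,a\} \, \|u\|^2_{E(\Omega)} \leq \|u\|^2_{B_a} \leq \max\{1,4,a\} \, \|u\|^2_{E(\Omega)}
\end{equation*}
shows that the form norm is equivalent to the native Hilbert norm on $E(\Omega)$. Completeness then follows from the fact that $(E(\Omega), \|\cdot\|_{E(\Omega)})$ is a Hilbert space, which is the nontrivial content already established in \Cref{lemma:AmbientHilbertSpace} and the main prerequisite absorbed into this proof.
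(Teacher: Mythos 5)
Your proof is correct and follows essentially the same route as the paper: items (i)--(iii) and holomorphy are verified by inspection, and closedness is reduced to the completeness of $E(\Omega)$ established in \Cref{lemma:AmbientHilbertSpace}. The only (harmless) difference is that you invoke Kato's characterization of closedness via completeness of the form domain in the form norm, together with the elementary equivalence of $\|\cdot\|_{B_a}$ and $\|\cdot\|_{E(\Omega)}$ for $a>0$, whereas the paper verifies the Cauchy-sequence definition of closedness directly --- the substance is identical.
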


\begin{proof}
    Since $\Dom(B_a) = E(\Omega) \supset \mathcal C^\infty_c(\Omega)$, it is clear that $B_a$ is densely defined in $L^2(\Omega)$ independently of $a$. It is also clear that $B_a(v,u) = \overline{B_a(u,v)}$ for every $u,v\in E(\Omega)$, hence that $B_a$ is symmetric. That $B_a$ is sectorial (in fact, accretive) is also clear from the fact that
    \begin{equation}
        B_a(u,u) = 4\int_\Omega |\partial_{\bar z} u|^2 + a\int_{\partial\Omega} |u|^2 \in [0, +\infty) \quad \text{for every } u\in E(\Omega).
    \end{equation}
    Finally, it is also clear that $B_a(u,v)$ is holomorphic in $a$ for each fixed $u,v\in E(\Omega)$, since $B_a(u,v)$ is linear in $a$.

    It only remains to show that $B_a$ is closed. To this end, let $\{u_k\}_k \subset E(\Omega)$ be such that $u_k \to u$ in $L^2(\Omega)$ as $k\to+\infty$, and such that $B_a(u_k-u_j, u_k-u_j) \to 0$ as $j,k\to +\infty$. We want to show that $u\in E(\Omega)$ and $B_a(u_k-u, u_k-u) \to 0$ as $k\to+\infty$.

    To see that $u\in E(\Omega)$, notice that $\{u_k\}_k$ is a Cauchy sequence in $L^2(\Omega)$, and since $B_a(u_k-u_j, u_k-u_j) \to 0$ as $j,k \to +\infty$, $\{\partial_{\bar z} u_k \}_k$  and $\{u_k\}_k$ are Cauchy sequences in $L^2(\Omega)$ and $L^2(\partial\Omega)$, respectively. Therefore, $\{u_k\}_k$ is a Cauchy sequence in the Hilbert space $E(\Omega)$, hence there exists $u_\star \in E(\Omega)$ such that $u_k$ converges to $u_\star$ in $E(\Omega)$. It is clear that $u=u_\star$ in $L^2(\Omega)$. Actually, proceeding similarly as in the proof of \Cref{lemma:AmbientHilbertSpace}, it holds that $\partial_{\bar z} u = \partial_{\bar z} u_\star$ in the distributional sense, and $t_{\partial\Omega} u = t_{\partial\Omega} u_\star$ in $H^{-1/2}(\partial\Omega)$. That is, $u$ is in $E(\Omega)$.
    
    Finally, since $u_k$ converges to $u$ in $E(\Omega)$, it is clear that
    \begin{equation}
        B_a(u_k-u, u_k -u) = 4\|\partial_{\bar z} u_k - \partial_{\bar z} u \|_{L^2(\Omega)}^2 + a \|u_k-u\|_{L^2(\partial\Omega)}^2 \to 0 \text{ as } k\to+\infty,
    \end{equation}
    which concludes the proof that $B_a$ is closed.
\end{proof}

\subsection{The associated self-adjoint operator} \label{sec:AssociatedOperator}

As a consequence of \Cref{lemma:PropertiesSesquilinearForm}, by Kato's first representation theorem \cite[Theorem 2.1 in Section VI]{Kato1995}, for every $a>0$ there exists a unique self-adjoint operator $R_a$ acting in $L^2(\Omega)$ associated to $B_a$. More specifically, the operator $R_a$ is uniquely determined by the condition that 
\begin{equation} \label{eq:DeterminationOfRodzin}
    \begin{split}
        & \Dom(R_a)\subseteq \Dom(B_a) = E(\Omega), \, \text{ and} \\
        & B_a(u,v) = \langle R_a u, v \rangle_{L^2(\Omega)}, \, \text{ for every } u\in \Dom(R_a) \text{ and } v\in E(\Omega).
    \end{split}
\end{equation}
Moreover, by \cite[Theorem 2.1 iii) in Section VI]{Kato1995}, $R_a$ has the property that
\begin{equation} \label{eq:CharacterizationOfRodzin}
    \begin{split}
        & \text{if } u\in E(\Omega), w\in L^2(\Omega) \text{ and } B_a(u,v) = \langle w, v \rangle_{L^2(\Omega)} \text{ holds for every } v\in E(\Omega), \\
        & \text{then } u\in \Dom(R_a) \text{ and } R_a u =w.
    \end{split}
\end{equation}
Notice that \eqref{eq:DeterminationOfRodzin} exactly corresponds to the weak form \eqref{eq:DivergenceToRodzin} as long as $R_a u = f$ in $L^2(\Omega)$. This is proven in the following result, which ensures that $R_a$ coincides with the $\overline\partial$-Robin Laplacian 
\begin{equation}
    \begin{split}
        \Dom(\Rodzin_a) & = \left\{u\in H^1(\Omega): \, \partial_{\bar z} u \in H^1(\Omega), \, 2\bar \nu \partial_{\bar z}u + au = 0 \text{ in } H^{1/2}(\partial \Omega) \right\}, \\
        \Rodzin_a u & = -\Delta u \quad \text{for all } u \in \Dom(\Rodzin_a),
    \end{split}
\end{equation}
introduced in \eqref{eq:RodzinLaplacian}.

\begin{proposition} \label{prop:RodzinLaplacian}
    For every $a>0$, there holds $R_a=\Rodzin_a$. That is, the operator $\Rodzin_a$ is the unique self-adjoint operator associated, through the condition \eqref{eq:DeterminationOfRodzin}, to the sesquilinear form $B_a$ defined in \eqref{eq:SesquilinearForm}. 
\end{proposition}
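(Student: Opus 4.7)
The plan is to prove the two set-theoretic inclusions $\Rodzin_a \subseteq R_a$ and $R_a \subseteq \Rodzin_a$ separately, exploiting the characterizations \eqref{eq:DeterminationOfRodzin} and \eqref{eq:CharacterizationOfRodzin} of the Kato operator $R_a$.

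For the forward inclusion $\Rodzin_a \subseteq R_a$, I fix $u \in \Dom(\Rodzin_a)$ and first observe that $u \in E(\Omega)$: since $u \in H^1(\Omega)$, both $\partial_{\bar z} u \in L^2(\Omega)$ and $t_{\partial\Omega} u \in L^2(\partial\Omega)$ by the trace theorem. For any test function $v \in \mathcal C^\infty(\overline \Omega)$, I apply the first Green formula of \Cref{lemma:propertiesWirtinger} $(iii)$ (valid because $\partial_{\bar z} u \in H^1(\Omega) \subseteq \Dom(\partial_{\mathrm{ah}})$ and $v \in H^1(\Omega)$) with the roles $u \to \partial_{\bar z} u$ and $w \to v$. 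After multiplying by $4$ and using $-4\partial_z\partial_{\bar z} = -\Delta$, this yields
\begin{equation*}
    4\langle \partial_{\bar z} u, \partial_{\bar z} v\rangle_{L^2(\Omega)} = \langle -\Delta u, v\rangle_{L^2(\Omega)} + 2\overline{\langle t_{\partial\Omega} \partial_{\bar z} u, \nu t_{\partial\Omega} v\rangle}_{H^{-1/2}(\partial\Omega),\, H^{1/2}(\partial\Omega)}.
\end{equation*}
Since $\partial_{\bar z} u \in H^1(\Omega)$, its trace lies in $H^{1/2}(\partial\Omega) \subseteq L^2(\partial\Omega)$, so I may convert the boundary pairing into an $L^2$-integral via \eqref{eq:Brezis} and then invoke the boundary condition $2\bar\nu\partial_{\bar z} u + au = 0$ in $H^{1/2}(\partial\Omega)$. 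This collapses the boundary term to $-a\langle u, v\rangle_{L^2(\partial\Omega)}$, giving $B_a(u, v) = \langle -\Delta u, v\rangle_{L^2(\Omega)}$ for every $v \in \mathcal C^\infty(\overline \Omega)$. Density of $\mathcal C^\infty(\overline \Omega)$ in $E(\Omega)$ (\Cref{lemma:AmbientHilbertSpace}) together with continuity of both sides in $v$ with respect to the $E(\Omega)$-norm extends the identity to all $v \in E(\Omega)$, and \eqref{eq:CharacterizationOfRodzin} with $w := -\Delta u \in L^2(\Omega)$ gives $u \in \Dom(R_a)$ with $R_a u = -\Delta u = \Rodzin_a u$.

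For the reverse inclusion $R_a \subseteq \Rodzin_a$, let $u \in \Dom(R_a) \subseteq E(\Omega)$. Testing the identity $B_a(u, v) = \langle R_a u, v\rangle_{L^2(\Omega)}$ first with $v \in \mathcal C^\infty_c(\Omega)$ eliminates the boundary term and produces $-4\partial_z\partial_{\bar z} u = R_a u$ in $\mathcal D'(\Omega)$; since $R_a u \in L^2(\Omega)$, this upgrades to $\partial_{\bar z} u \in \Dom(\partial_{\mathrm{ah}})$ with $-\Delta u = R_a u \in L^2(\Omega)$. Testing next with general $v \in \mathcal C^\infty(\overline \Omega)$ and applying the same Green formula as above, after subtracting the just-derived interior identity I obtain
\begin{equation*}
    2\overline{\langle t_{\partial\Omega}\partial_{\bar z} u,\, \nu v\rangle}_{H^{-1/2}(\partial\Omega),\, H^{1/2}(\partial\Omega)} + a\langle u, v\rangle_{L^2(\partial\Omega)} = 0 \quad \text{for every } v \in \mathcal C^\infty(\overline \Omega).
\end{equation*}
Using \eqref{eq:Brezis} to convert the $L^2$ inner product into a dual pairing, and exploiting that $\nu, \bar\nu \in \mathcal C^1(\partial\Omega)$ act as pointwise multipliers on $H^{1/2}(\partial\Omega)$ with $\nu\bar\nu = 1$ so that $\varphi := \nu v$ ranges over a dense subset of $H^{1/2}(\partial\Omega)$, I rearrange the identity to
\begin{equation*}
    \langle 2\bar\nu \partial_{\bar z} u + au,\, \varphi\rangle_{H^{-1/2}(\partial\Omega),\, H^{1/2}(\partial\Omega)} = 0 \quad \text{for all } \varphi \in H^{1/2}(\partial\Omega),
\end{equation*}
which means $2\bar\nu \partial_{\bar z} u + au = 0$ in $H^{-1/2}(\partial\Omega)$, equivalently $\partial_{\bar z} u = -\tfrac{a}{2}\nu u$ in $H^{-1/2}(\partial\Omega)$. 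Since $\vartheta \equiv -a/2$ is a constant $\mathcal C^1(\partial\Omega, \R)$ function that never vanishes (as $a > 0$), I now invoke \Cref{lemma:AntunesRegularity} to conclude that $u \in H^1(\Omega)$, $\partial_{\bar z} u \in H^1(\Omega)$, and that the boundary identity already holds in $H^{1/2}(\partial\Omega)$. Hence $u \in \Dom(\Rodzin_a)$ with $\Rodzin_a u = -\Delta u = R_a u$.

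The main obstacle I anticipate is the careful bookkeeping of complex conjugations and dual pairings needed to extract the $H^{-1/2}$ boundary identity $2\bar\nu \partial_{\bar z} u + au = 0$ from the weak formulation in the reverse inclusion, since the trace of $\partial_{\bar z} u$ is only a priori in $H^{-1/2}(\partial\Omega)$ before the regularity bootstrap. The decisive ingredient that turns this weak identity into membership in $\Dom(\Rodzin_a)$ is \Cref{lemma:AntunesRegularity}, which simultaneously upgrades $u$ and $\partial_{\bar z} u$ to $H^1(\Omega)$ and the boundary identity to $H^{1/2}(\partial\Omega)$.
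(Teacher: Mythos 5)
Your proof is correct and follows essentially the same route as the paper: both inclusions are established via the Green formulas of \Cref{lemma:propertiesWirtinger}, the pairing identity \eqref{eq:Brezis}, and, crucially, \Cref{lemma:AntunesRegularity} with $\vartheta\equiv -a/2$ to bootstrap $u$ and $\partial_{\bar z}u$ to $H^1(\Omega)$ in the inclusion $\Dom(R_a)\subseteq\Dom(\Rodzin_a)$. The only (harmless) deviation is that you test against $v\in\mathcal C^\infty(\overline\Omega)$ and conclude by density of smooth functions in $E(\Omega)$ and of their traces in $H^{1/2}(\partial\Omega)$, whereas the paper works directly with $v\in E(\Omega)$ and with $\varphi\in H^{1/2}(\partial\Omega)$ extended to $\overline\Omega$.
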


This proposition is a precise restatement of the first part of \Cref{thm:IntroRodzinLaplacian}. The second part of \Cref{thm:IntroRodzinLaplacian} is proven in \Cref{prop:BoundedResolvent} below.

\begin{proof}[Proof of \Cref{prop:RodzinLaplacian}]
    The goal is to show that $R_a = \Rodzin_a$. We start proving that $\Dom(R_a) \subseteq \Dom(\Rodzin_a)$ and $R_a u = -\Delta u$ for every $u\in \Dom(R_a)$. To this end, fix $u\in \Dom(R_a) \subset E(\Omega)$. First, for every $\varphi \in \mathcal C^\infty_c(\Omega)$, by \eqref{eq:DeterminationOfRodzin} 
    \begin{equation}
        \langle R_a u, \varphi \rangle_{L^2(\Omega)} = B_a(u, \varphi) = 4 \int_\Omega \partial_{\bar z} u \, \overline{\partial_{\bar z} \varphi},
    \end{equation}
    hence $\partial_z (4\partial_{\bar z} u) = -R_a u$ first in the distributional sense and, since $R_a u \in L^2(\Omega)$, then in $L^2(\Omega)$. In particular, $R_a u = -\Delta u \in L^2(\Omega)$. As a consequence, $\partial_{\bar z} u \in \mathrm{Dom}(\partial_{\mathrm{ah}})$. Knowing this, for every $\varphi\in H^{1/2}(\partial\Omega)$ with extension in $\overline \Omega$ denoted again by $\varphi$, by the Green's formulas \eqref{eq:GreenFormulas} and the property of the pairing \eqref{eq:Brezis}, we have
    \begin{equation}
        \begin{split}
            \langle R_a u, \varphi \rangle_{L^2(\Omega)} & = \langle -\Delta u, \varphi \rangle_{L^2(\Omega)} = 4\int_\Omega \partial_{\bar z} u \, \overline{\partial_{\bar z} \varphi} - 2 \overline{ \langle  \partial_{\bar z} u, \nu \varphi \rangle}_{H^{-1/2}(\partial\Omega), H^{1/2}(\partial\Omega)} \\
            & = B_a(u, \varphi) - a\int_{\partial\Omega} u\, \overline{\varphi} - \overline{ \langle 2\bar\nu \partial_{\bar z} u , \varphi \rangle}_{H^{-1/2}(\partial\Omega), H^{1/2}(\partial\Omega)} \\
            & = B_a(u, \varphi) - \overline{ \langle 2\bar\nu \partial_{\bar z} u + au, \varphi \rangle}_{H^{-1/2}(\partial\Omega), H^{1/2}(\partial\Omega)}.
        \end{split}
    \end{equation}
    Hence, using that $\langle R_a u, \varphi \rangle_{L^2(\Omega)} = B_a(u, \varphi)$ by \eqref{eq:DeterminationOfRodzin}, we get $2\bar \nu \partial_{\bar z} u + a u = 0$ in $H^{-1/2}(\partial\Omega)$. In summary, $u\in \Dom(R_a) \subseteq E(\Omega)$ is such that $u, \, \partial_{\bar z} u, \, \Delta u \in L^2(\Omega)$ and $2\bar \nu \partial_{\bar z} u + a u = 0$ in $H^{-1/2}(\partial\Omega)$, with $a>0$. Thus, by \Cref{lemma:AntunesRegularity} we have that $u\in H^1(\Omega)$, that $\partial_{\bar z} u\in H^1(\Omega)$, and that $2\bar \nu \partial_{\bar z} u + a u = 0$ holds in $H^{1/2}(\partial\Omega)$. In conclusion, $u\in \Dom(\Rodzin_a)$ and $R_a u = -\Delta u = \Rodzin_a u$.

    We finally show that $\Dom(\Rodzin_a) \subseteq \Dom(R_a)$. To this end, let $u\in \Dom(\Rodzin_a) \subset E(\Omega)$. Clearly $\Delta u = 4 \partial_z(\partial_{\bar z} u) \in L^2(\Omega)$. Then, for every $v\in E(\Omega)$, by the divergence theorem and using the boundary condition for $u$, it holds
    \begin{equation}
        B_a(u,v) = 4 \int_\Omega \partial_{\bar z} u \, \overline{\partial_{\bar z} v} + a \int_{\partial\Omega} u \, \overline v = \int_\Omega -\Delta u \, \overline v + 2\int_{\partial\Omega} \bar \nu \partial_{\bar z} u \, \overline v + a \int_{\partial\Omega} u \, \overline v = \langle -\Delta u, v\rangle_{L^2(\Omega)}.
    \end{equation}
    As a consequence of \eqref{eq:CharacterizationOfRodzin}, $u\in \Dom(R_a)$ and $R_a u = -\Delta u = \Rodzin_a u$.
\end{proof}

The functions in $\Dom(\Rodzin_a)$ are $H^1(\Omega)$ regular. As the following lemma shows, the $H^1(\Omega)-$ norm of a function $u\in \Dom(\Rodzin_a)$ is controlled by the norms $\|u\|_{L^2(\Omega)}$, $\|\partial_{\bar z} u\|_{L^2(\Omega)}$ and $\|\Delta u\|_{L^2(\Omega)}$, and similarly for the $H^1(\Omega)-$norm of $\partial_{\bar z} u$. The following quantitative estimates will be crucial for bounding the resolvent of $\Rodzin_a$, and hence proving discreteness of the spectrum of $\Rodzin_a$.

\begin{lemma} \label{lemma:RegularityEstimatesRodzin}
    Let $a>0$. For every $u\in \Dom(\Rodzin_a)$, 
    \begin{equation}
        \begin{split}
            \|\nabla u\|_{L^2(\Omega)} & \leq C_\Omega \left( \|u\|_{L^2(\Omega)} + \|\partial_{\bar z}u\|_{L^2(\Omega)} + \frac{1}{a} \|\Delta u\|_{L^2(\Omega)} \right), \quad  \text{and} \\[2pt]
            \|\nabla \partial_{\bar z} u\|_{L^2(\Omega)} & \leq C_\Omega \left( a \|u\|_{L^2(\Omega)} + a \|\partial_{\bar z}u\|_{L^2(\Omega)} + \|\Delta u\|_{L^2(\Omega)} \right),
        \end{split}
    \end{equation}
    for some constant $C_\Omega>0$ depending only on $\Omega$.
\end{lemma}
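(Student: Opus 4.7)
The plan is to establish a single \emph{combined} identity that couples $\|\nabla u\|_{L^2(\Omega)}$ and $\|\nabla \partial_{\bar z} u\|_{L^2(\Omega)}$ simultaneously to the three quantities on the right-hand sides, and then to extract both bounds from it. The starting point is the pointwise identity $|\nabla v|^2 = 2|\partial_z v|^2 + 2|\partial_{\bar z} v|^2$ combined with the Stokes-type identity
\begin{equation*}
    \int_\Omega \bigl(|\partial_z v|^2 - |\partial_{\bar z} v|^2\bigr) = \tfrac{i}{2}\int_{\partial\Omega} v\, \overline{\partial_\tau v},
\end{equation*}
which I would prove first for smooth $v$ by two parallel integrations by parts in $x_1$ and $x_2$ and then extend by density to $v \in H^1(\Omega)$, interpreting the boundary integral as the pairing $\langle \partial_\tau v, v\rangle_{H^{-1/2}(\partial\Omega), H^{1/2}(\partial\Omega)}$. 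Adding twice this identity to $2\|\partial_z v\|^2 + 2\|\partial_{\bar z} v\|^2 = \|\nabla v\|^2$ gives, for every $v\in H^1(\Omega)$, the twin identities
\begin{equation*}
    \|\nabla v\|_{L^2(\Omega)}^2 = 4\|\partial_{\bar z} v\|_{L^2(\Omega)}^2 + i\int_{\partial\Omega} v\, \overline{\partial_\tau v} = 4\|\partial_z v\|_{L^2(\Omega)}^2 - i\int_{\partial\Omega} v\, \overline{\partial_\tau v},
\end{equation*}
in which the boundary contributions are real-valued.

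I would then apply the first of these identities to $u$ and the second to $w := \partial_{\bar z} u \in H^1(\Omega)$. Since $\partial_z w = \partial_z\partial_{\bar z} u = \tfrac14 \Delta u \in L^2(\Omega)$, the latter becomes $\|\nabla w\|_{L^2}^2 = \tfrac14 \|\Delta u\|_{L^2}^2 - i\int_{\partial\Omega} w\,\overline{\partial_\tau w}$. From the boundary condition $w = -\tfrac{a\nu}{2}\, u$ on $\partial\Omega$ in $H^{1/2}(\partial\Omega)$, I would differentiate along the boundary using the Leibniz rule in the $H^{-1/2}$ sense together with the identification $\partial_\tau \nu = i\kappa \nu$, where $\kappa \in \mathcal C(\partial\Omega)$ is the signed curvature of $\partial\Omega$ (bounded since $\Omega$ is $\mathcal C^2$). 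A direct computation using $|\nu|^2 = 1$ then gives
\begin{equation*}
    \int_{\partial\Omega} w\,\overline{\partial_\tau w} = \tfrac{a^2}{4}\int_{\partial\Omega} u\,\overline{\partial_\tau u} - \tfrac{ia^2}{4}\int_{\partial\Omega}\kappa\,|u|^2.
\end{equation*}
Substituting and using the first identity to replace $i\int_{\partial\Omega} u\,\overline{\partial_\tau u}$ by $\|\nabla u\|^2 - 4\|\partial_{\bar z} u\|^2$ leads after rearrangement to the \emph{key combined identity}
\begin{equation*}
    \|\nabla \partial_{\bar z} u\|_{L^2(\Omega)}^2 + \tfrac{a^2}{4}\|\nabla u\|_{L^2(\Omega)}^2 = \tfrac14 \|\Delta u\|_{L^2(\Omega)}^2 + a^2 \|\partial_{\bar z} u\|_{L^2(\Omega)}^2 - \tfrac{a^2}{4}\int_{\partial\Omega}\kappa\,|u|^2.
\end{equation*}

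To finish, the curvature boundary term needs to be absorbed into the right-hand side of the claimed estimates. Since $B_a(u,u) = \langle \Rodzin_a u, u\rangle_{L^2(\Omega)} = -\langle \Delta u, u\rangle_{L^2} \leq \|\Delta u\|\|u\|$, the sesquilinear form identity yields $a\|u\|_{L^2(\partial\Omega)}^2 \leq \|\Delta u\|\|u\|$. Combining this with $|\int_{\partial\Omega} \kappa |u|^2| \leq \|\kappa\|_{L^\infty(\partial\Omega)} \|u\|_{L^2(\partial\Omega)}^2$ and one application of the weighted AM-GM inequality yields
\begin{equation*}
    \tfrac{a^2}{4}\Bigl|\int_{\partial\Omega}\kappa\,|u|^2\Bigr| \leq \tfrac{a\,\|\kappa\|_\infty}{4}\|\Delta u\|\|u\| \leq C_\Omega \bigl(\|\Delta u\|_{L^2}^2 + a^2 \|u\|_{L^2}^2\bigr).
\end{equation*}
Inserting this into the key identity gives $\|\nabla \partial_{\bar z} u\|_{L^2}^2 + \tfrac{a^2}{4}\|\nabla u\|_{L^2}^2 \leq C_\Omega\bigl(\|\Delta u\|_{L^2}^2 + a^2\|\partial_{\bar z} u\|_{L^2}^2 + a^2\|u\|_{L^2}^2\bigr)$, and discarding either of the two nonnegative summands on the left and taking square roots produces the two claimed estimates simultaneously.

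The main obstacle I foresee is the careful handling of the boundary computations: since for $u \in \Dom(\Rodzin_a)$ the traces $u|_{\partial\Omega}$ and $w|_{\partial\Omega}$ only lie in $H^{1/2}(\partial\Omega)$, the tangential derivatives $\partial_\tau u$ and $\partial_\tau w$ live in $H^{-1/2}(\partial\Omega)$, so the integrals $\int_{\partial\Omega} v\,\overline{\partial_\tau v}$ must be read throughout as $H^{-1/2}, H^{1/2}$ pairings, the Leibniz rule $\partial_\tau(\nu u) = (\partial_\tau \nu)\, u + \nu\, \partial_\tau u$ must be justified in that distributional sense using $\nu \in \mathcal C^1(\partial\Omega)$, and all algebraic identities must be verified first for smooth $v$ and then extended by density using the continuity of the relevant pairings on $H^1(\Omega)$.
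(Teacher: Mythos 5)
Your argument is correct, and its core coincides with the paper's: your ``twin identities'' are exactly \Cref{lemma:RelationGradientDz}, and your key combined identity is precisely the paper's identity \eqref{eq:NormsCurvature} multiplied by $a^2/4$ (with the opposite sign convention for $\kappa$, which is immaterial since only $\|\kappa\|_{L^\infty(\partial\Omega)}$ is used). The paper reaches it by tangentially differentiating the boundary condition to express $\partial_\tau u$ and substituting into the identity for $u$, whereas you substitute $w=\partial_{\bar z}u=-\tfrac{a\nu}{2}u$ directly into the boundary pairing of the identity for $w$; these are the same computation read in opposite directions, and your handling of the pairings (Leibniz rule with $\nu\in\mathcal C^1(\partial\Omega)$, $(\partial_\tau\nu)\bar\nu=\pm i\kappa$, $|\nu|^2=1$) mirrors the paper's. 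The one genuine divergence is the last step: the paper controls the curvature term via $\int_{\partial\Omega}\kappa|u|^2\leq C_{\partial\Omega}\|u\|_{L^2(\partial\Omega)}^2\leq C_{\partial\Omega}\bigl(\epsilon\|\nabla u\|_{L^2(\Omega)}^2+C_\epsilon\|u\|_{L^2(\Omega)}^2\bigr)$ (trace theorem plus Young) and absorbs the $\epsilon$-term into the left-hand side, while you instead use the equation itself, $a\|u\|_{L^2(\partial\Omega)}^2\leq B_a(u,u)=-\langle\Delta u,u\rangle_{L^2(\Omega)}\leq\|\Delta u\|_{L^2(\Omega)}\|u\|_{L^2(\Omega)}$, followed by AM--GM; both give an $a$-uniform bound, yours avoids the trace theorem and any absorption, at the cost of invoking the weak formulation (which is legitimate here, since $u\in\Dom(\Rodzin_a)$ has $u,\partial_{\bar z}u\in H^1(\Omega)$ so the integration by parts behind $B_a(u,u)=-\langle\Delta u,u\rangle_{L^2(\Omega)}$ is justified). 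The resulting constants have the same $a$-dependence as claimed, so your proof is complete once the distributional bookkeeping you flag at the end is written out, exactly as the paper does with the $H^{-1/2},H^{1/2}$ pairings.
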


In order to prove these quantitative estimates, we shall use the relations of the following lemma. In its statement, it appears the boundary term $\overline{\langle \partial_\tau v, v \rangle}_{H^{-1/2}(\partial\Omega), H^{1/2}(\partial\Omega)}$, which, in view of the property of the pairing \eqref{eq:Brezis}, is well defined for $v\in \mathcal C^\infty(\overline \Omega)$. Recalling from \Cref{sec:Introduction} that $\partial_\tau = \nu_1\partial_2 - \nu_2\partial_1$ is the tangential derivative in $\partial\Omega$, by the divergence theorem we have
    \begin{equation}
        \overline{\langle \partial_\tau v, v \rangle}_{H^{-1/2}(\partial\Omega), H^{1/2}(\partial\Omega)} = \int_\Omega \mathrm{div} \left( \left[ \begin{pmatrix}
                0 & 1 \\
                -1 & 0
            \end{pmatrix} \nabla v \right ] \, \overline{v} \right) = \int_\Omega \left[ \begin{pmatrix}
                0 & 1 \\
                -1 & 0
            \end{pmatrix} \nabla v \right ] \cdot \overline{\nabla v},
    \end{equation}
which gives meaning to the pairing for $v\in H^1(\Omega)$.

\begin{lemma} \label{lemma:RelationGradientDz}
    For every $v\in H^1(\Omega)$, there holds
    \begin{equation}
        \begin{split}
            \int_\Omega |\nabla v|^2  & = 4 \int_\Omega |\partial_z v|^2 + i \overline{ \langle \partial_\tau v, v\rangle}_{H^{-1/2}(\partial\Omega), H^{1/2}(\partial\Omega)}, \quad \text{and} \\[2pt]
            \int_\Omega |\nabla v|^2 & = 4 \int_\Omega |\partial_{\bar z} v|^2 - i \overline{ \langle \partial_\tau v, v\rangle}_{H^{-1/2}(\partial\Omega), H^{1/2}(\partial\Omega)}.
        \end{split}
    \end{equation}
\end{lemma}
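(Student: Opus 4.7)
The plan is to prove the two identities by a direct pointwise algebraic expansion of $|\partial_z v|^2$ and $|\partial_{\bar z} v|^2$ and matching the resulting ``cross term'' with the integral representation of the pairing $\overline{\langle \partial_\tau v, v\rangle}_{H^{-1/2}(\partial\Omega),H^{1/2}(\partial\Omega)}$ given in the paragraph preceding the lemma. No integration by parts will be needed, since the boundary-to-interior translation of the $\partial_\tau$ pairing has already been carried out in the statement itself.

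First, using $\partial_z = \tfrac12(\partial_1 - i\partial_2)$, I compute for $v\in H^1(\Omega)$
\begin{equation}
    4|\partial_z v|^2 = (\partial_1 v - i\partial_2 v)\overline{(\partial_1 v - i\partial_2 v)} = |\nabla v|^2 + i\bigl(\partial_1 v\,\overline{\partial_2 v} - \partial_2 v\,\overline{\partial_1 v}\bigr),
\end{equation}
so that
\begin{equation}
    |\nabla v|^2 - 4|\partial_z v|^2 = -i\bigl(\partial_1 v\,\overline{\partial_2 v} - \partial_2 v\,\overline{\partial_1 v}\bigr) = i\bigl(\partial_2 v\,\overline{\partial_1 v} - \partial_1 v\,\overline{\partial_2 v}\bigr).
\end{equation}
Integrating over $\Omega$, this becomes
\begin{equation}
    \int_\Omega |\nabla v|^2 - 4\int_\Omega |\partial_z v|^2 = i \int_\Omega \begin{pmatrix} 0 & 1 \\ -1 & 0 \end{pmatrix}\nabla v \cdot \overline{\nabla v},
\end{equation}
which by the defining identity
\begin{equation}
    \overline{\langle \partial_\tau v, v\rangle}_{H^{-1/2}(\partial\Omega),H^{1/2}(\partial\Omega)} = \int_\Omega \begin{pmatrix} 0 & 1 \\ -1 & 0 \end{pmatrix}\nabla v \cdot \overline{\nabla v}
\end{equation}
recalled just before the lemma yields the first equality. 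For the second equality I repeat the same computation with $\partial_{\bar z} = \tfrac12(\partial_1 + i\partial_2)$; the only change is that the cross term picks up the opposite sign, giving
\begin{equation}
    |\nabla v|^2 - 4|\partial_{\bar z} v|^2 = -i\bigl(\partial_2 v\,\overline{\partial_1 v} - \partial_1 v\,\overline{\partial_2 v}\bigr),
\end{equation}
and the same identification produces the minus sign in front of $i\overline{\langle \partial_\tau v, v\rangle}$.

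There is essentially no obstacle: the only subtle point is that $v\in H^1(\Omega)$ does not admit classical mixed partial derivatives, so one cannot write $\partial_\tau v$ as a pointwise object on $\partial\Omega$ and apply the divergence theorem directly. This subtlety, however, is already handled in the preamble of the lemma, where the pairing $\overline{\langle \partial_\tau v, v\rangle}$ is \emph{defined} for $v\in H^1(\Omega)$ by the interior integral above (the equivalence with the classical boundary expression being known for $v\in \mathcal C^\infty(\overline\Omega)$ by the divergence theorem). Hence the proof reduces to the pointwise algebraic identities displayed above, after which no further analytic work is required.
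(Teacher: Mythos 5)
Your computation is correct: the expansion of $4|\partial_z v|^2$ and $4|\partial_{\bar z} v|^2$ produces exactly the cross term $\pm i\bigl(\partial_2 v\,\overline{\partial_1 v}-\partial_1 v\,\overline{\partial_2 v}\bigr)$, which is precisely $\pm i\,[\begin{smallmatrix}0&1\\-1&0\end{smallmatrix}]\nabla v\cdot\overline{\nabla v}$, and the signs match the statement. The route is genuinely different from the paper's. The paper works first with $v\in\mathcal C^\infty(\overline\Omega)$, where the boundary pairing is the honest one: it integrates by parts twice via the Green formulas \eqref{eq:GreenFormulas}, uses the pointwise boundary relation $2\nu\partial_z v=\partial_\nu v-i\partial_\tau v$, and then extends to $H^1(\Omega)$ by density (both sides being continuous for the $H^1$ norm once the pairing is read through the interior integral). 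You instead take the interior integral in the lemma's preamble as the \emph{definition} of the pairing for $v\in H^1(\Omega)$, after which the identity is a pointwise algebraic fact and no density or integration by parts is needed. This is cleaner and shorter; what the paper's argument buys in exchange is the explicit consistency between the interior-integral reading and the genuine duality pairing $\langle\partial_\tau v,v\rangle_{H^{-1/2},H^{1/2}}$ on regular functions, which is what is actually invoked later (in the proof of \Cref{lemma:RegularityEstimatesRodzin}) when $\partial_\tau u$ and $\partial_\tau(\partial_{\bar z}u)$ are manipulated as elements of $H^{-1/2}(\partial\Omega)$ via the boundary condition. If you adopt your proof, that consistency still follows immediately by density from the smooth case, so nothing is lost, but it is worth stating the convention explicitly rather than leaving it implicit.
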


\begin{proof}
    We only prove the first identity, because the proof for the second identity is analogous. To this end, we first show the result for smooth functions. If $v \in \mathcal C^\infty(\overline \Omega)$, using that $2\nu \partial_z v = \partial_\nu v -i \partial_\tau v$ on $\partial\Omega$, by the Green's formulas \eqref{eq:GreenFormulas} and the property of the pairing \eqref{eq:Brezis}, we have
    \begin{equation} \label{eq:densityDivergenceThm}
        \begin{split}
            \int_\Omega |\nabla v|^2 & = -\int_\Omega \Delta v \, \overline v + \int_{\partial\Omega} \partial_\nu v \, \overline v = 4 \int_\Omega |\partial_z v|^2 - 2\int_{\partial\Omega} \nu \partial_z v \, \overline v + \int_{\partial\Omega} \partial_\nu v \, \overline v \\
            & = 4 \int_\Omega |\partial_z v|^2 + i\int_{\partial\Omega} \partial_\tau v \, \overline v = 4 \int_\Omega |\partial_z v|^2 + i \overline{\langle \partial_\tau v, v\rangle}_{H^{-1/2}(\partial\Omega), H^{1/2}(\partial\Omega)}.
        \end{split}
    \end{equation}
    The result holds for $v\in H^1(\Omega)$ by density.
\end{proof}

\begin{proof}[Proof of \Cref{lemma:RegularityEstimatesRodzin}]
    Let $u\in \Dom(\Rodzin_a)$. On the one hand, since $u \in H^1(\Omega)$, by \Cref{lemma:RelationGradientDz} we have
    \begin{equation} \label{eq:L2GradientTangential1}
        \int_\Omega |\nabla u|^2 = 4 \int_\Omega |\partial_{\bar z} u|^2 - i \overline{ \langle \partial_\tau u, u\rangle}_{H^{-1/2}(\partial\Omega), H^{1/2}(\partial\Omega)}.
    \end{equation}
    On the other hand, since $\partial_{\bar z} u \in H^1(\Omega)$, by \Cref{lemma:RelationGradientDz} we have 
    \begin{equation} \label{eq:L2GradientTangential2}
        \int_\Omega |\nabla \partial_{\bar z} u|^2 = \dfrac{1}{4} \int_\Omega |\Delta u|^2 +i \overline{ \langle \partial_\tau \partial_{\bar z} u, \partial_{\bar z} u \rangle}_{H^{-1/2}(\partial\Omega), H^{1/2}(\partial\Omega)}.
    \end{equation}
    Using that $2\bar\nu \partial_{\bar z} u +au=0$ in $H^{1/2}(\partial\Omega)$, differentiating tangentially we obtain
    \begin{equation}
        0 = 2(\partial_\tau \bar \nu) \partial_{\bar z}u + 2\bar\nu\partial_\tau (\partial_{\bar z}u) + a \partial_\tau u \quad \text{in } H^{-1/2}(\partial\Omega),
    \end{equation}
    and hence
    \begin{equation} \label{eq:TangentialDomRodzin}
        \partial_\tau u = \dfrac{-1}{a} \left( 2(\partial_\tau \bar \nu) \partial_{\bar z}u + 2\bar\nu\partial_\tau (\partial_{\bar z}u) \right) \quad \text{in } H^{-1/2}(\partial\Omega).
    \end{equation}
    Substituting this in \eqref{eq:L2GradientTangential1}, we get
    \begin{equation}
        \begin{split}
            \int_\Omega |\nabla u |^2 & = 4\int_\Omega |\partial_{\bar z} u|^2 + \dfrac{2i}{a} \overline{ \langle (\partial_\tau \bar \nu) \partial_{\bar z}u , u \rangle}_{H^{-1/2}(\partial\Omega), H^{1/2}(\partial\Omega)} + \dfrac{2i}{a} \overline{ \langle \bar\nu\partial_\tau (\partial_{\bar z}u) , u \rangle}_{H^{-1/2}(\partial\Omega), H^{1/2}(\partial\Omega)} \\
            & = 4\int_\Omega |\partial_{\bar z} u|^2 + \dfrac{2i}{a} \overline{ \langle (\partial_\tau \bar \nu) \partial_{\bar z}u , u \rangle}_{H^{-1/2}(\partial\Omega), H^{1/2}(\partial\Omega)} - \dfrac{4}{a^2}i \overline{ \langle \partial_\tau (\partial_{\bar z}u) , \partial_{\bar z} u \rangle}_{H^{-1/2}(\partial\Omega), H^{1/2}(\partial\Omega)},
        \end{split}
    \end{equation}
    where in the last equality we have used the boundary condition $u = -\frac{2}{a} \bar \nu \partial_{\bar z} u$ in $H^{1/2}(\partial\Omega)$. Replacing the last pairing by the expression obtained in \eqref{eq:L2GradientTangential2}, we obtain
    \begin{equation}
        \int_\Omega |\nabla u |^2 = 4\int_\Omega |\partial_{\bar z} u|^2 + \dfrac{1}{a^2} \int_\Omega |\Delta u|^2 + \dfrac{2i}{a} \overline{ \langle (\partial_\tau \bar \nu) \partial_{\bar z}u , u \rangle}_{H^{-1/2}(\partial\Omega), H^{1/2}(\partial\Omega)} - \dfrac{4}{a^2} \int_\Omega |\nabla \partial_{\bar z} u|^2.
    \end{equation}
    Using again the boundary condition $2\partial_{\bar z} u = -\nu a u$ in $H^{1/2}(\partial\Omega)$, we get
    \begin{equation}
        \int_\Omega |\nabla u |^2 + \dfrac{4}{a^2} \int_\Omega |\nabla \partial_{\bar z} u|^2 = 4\int_\Omega |\partial_{\bar z} u|^2 + \dfrac{1}{a^2} \int_\Omega |\Delta u|^2 - i \int_{\partial\Omega} (\partial_\tau \bar \nu) \nu |u|^2,
    \end{equation}
    where we have used the property of the pairing \eqref{eq:Brezis}. The boundary integral is related with the mean curvature $\kappa$ of $\partial\Omega$. Indeed, using that $\partial_\tau \nu = - \kappa \tau$, the identifications $\nu=(\nu_1,\nu_2)\equiv\nu_1+i\nu_2$ and $\tau=(-\nu_2,\nu_1)\equiv -\nu_2+i\nu_1=i\nu$, and that $0=\partial_\tau (\nu \overline \nu)$, one can straightforwardly verify that $(\partial_\tau \bar \nu) \nu = i\kappa$. Hence, we obtain that
    \begin{equation} \label{eq:NormsCurvature}
        \int_\Omega |\nabla u |^2 + \dfrac{4}{a^2} \int_\Omega |\nabla \partial_{\bar z} u|^2 = 4\int_\Omega |\partial_{\bar z} u|^2 + \dfrac{1}{a^2} \int_\Omega |\Delta u|^2 + \int_{\partial\Omega} \kappa |u|^2.
    \end{equation}

    Since $\Omega$ is $\mathcal C^2$ and bounded, there exists $C_{\partial\Omega}>0$ such that $\|\kappa \|_{L^\infty(\partial\Omega)} \leq C_{\partial\Omega}$. Using that $\Omega$ is a bounded domain with $\mathcal C^2$ boundary, and the trace theorem \cite[Theorem 5.5.1]{Evans2010} combined with Young's inequality with $\epsilon>0$, from \eqref{eq:NormsCurvature} we see that
    \begin{equation}
        \begin{split}
            \int_\Omega |\nabla u |^2 + \dfrac{4}{a^2} \int_\Omega |\nabla \partial_{\bar z} u|^2 & \leq 4\int_\Omega |\partial_{\bar z} u|^2 + \dfrac{1}{a^2} \int_\Omega |\Delta u|^2 + C_{\partial\Omega} \|u \|_{L^2(\partial\Omega)}^2 \\
            & \leq 4\int_\Omega |\partial_{\bar z} u|^2 + \dfrac{1}{a^2} \int_\Omega |\Delta u|^2 + C_{\partial\Omega} \left( \epsilon \| \nabla u \|_{L^2(\Omega)}^2 + C_\epsilon \| u\|_{L^2(\Omega)}^2 \right),
        \end{split}
    \end{equation}
    for some $C_\epsilon>0$ depending only on $\epsilon$ and $\Omega$. Taking $\epsilon>0$ such that $C_{\partial\Omega} \epsilon \leq 1/2$, we obtain
    \begin{equation}
        \dfrac{1}{2} \|\nabla u \|_{L^2(\Omega)}^2 + \dfrac{4}{a^2} \|\nabla \partial_{\bar z} u\|_{L^2(\Omega)}^2 \leq 4\|\partial_{\bar z} u\|_{L^2(\Omega)}^2 + \dfrac{1}{a^2} \|\Delta u\|_{L^2(\Omega)}^2 + C_\Omega' \|u\|_{L^2(\Omega)}^2,
    \end{equation}
    for some $C_\Omega'>0$ depending only on $\Omega$, and the lemma follows.
\end{proof}

\subsection{Spectral properties of $\Rodzin_a$} \label{sec:LocalSpectralProperties}

As aforementioned, the quantitative estimates of \Cref{lemma:RegularityEstimatesRodzin} lead to the discreteness of the spectrum of the $\overline\partial$-Robin Laplacian $\Rodzin_a$.

\begin{proposition} \label{prop:BoundedResolvent}
    For every $a>0$ and $\lambda \in \C\setminus\R$, there exists a constant $C_{\Omega,\lambda}>0$ depending only on $\Omega$ and $\lambda$ such that 
    \begin{equation}
        \| (\Rodzin_a-\lambda)^{-1} f \|_{H^1(\Omega)} \leq C_{\Omega,\lambda}(1+1/a) \| f \|_{L^2(\Omega)} \quad \text{for all } f\in L^2(\Omega).
    \end{equation}
    As a consequence, $(\Rodzin_a-\lambda)^{-1}$ is a compact operator from $L^2(\Omega)$ to $L^2(\Omega)$, and $\sigma(\Rodzin_a)$ is purely discrete.
\end{proposition}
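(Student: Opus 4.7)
The plan is to derive the $H^1$-bound on $u:=(\Rodzin_a-\lambda)^{-1}f$ by combining three ingredients: self-adjointness (which controls $\|u\|_{L^2(\Omega)}$), the resolvent equation (which controls $\|\Delta u\|_{L^2(\Omega)}$), the sesquilinear form identity \eqref{eq:DeterminationOfRodzin} (which controls $\|\partial_{\bar z}u\|_{L^2(\Omega)}$), and finally \Cref{lemma:RegularityEstimatesRodzin} (which converts the three previous bounds into a bound on $\|\nabla u\|_{L^2(\Omega)}$).

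More concretely, I would first fix $f\in L^2(\Omega)$ and set $u:=(\Rodzin_a-\lambda)^{-1}f\in \Dom(\Rodzin_a)$, so that $-\Delta u-\lambda u=f$ in $L^2(\Omega)$. Since $\Rodzin_a$ is self-adjoint (by \Cref{prop:RodzinLaplacian}), the standard resolvent estimate gives
\begin{equation}
\|u\|_{L^2(\Omega)}\leq \frac{1}{|\operatorname{Im}\lambda|}\|f\|_{L^2(\Omega)}.
\end{equation}
From the resolvent equation we then get $\|\Delta u\|_{L^2(\Omega)}\leq \|f\|_{L^2(\Omega)}+|\lambda|\,\|u\|_{L^2(\Omega)}\leq C_\lambda\|f\|_{L^2(\Omega)}$.

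Next, I would apply \eqref{eq:DeterminationOfRodzin} with $v=u$ to obtain
\begin{equation}
4\|\partial_{\bar z} u\|_{L^2(\Omega)}^2+a\|u\|_{L^2(\partial\Omega)}^2=B_a(u,u)=\langle \Rodzin_a u,u\rangle_{L^2(\Omega)}=\langle f,u\rangle_{L^2(\Omega)}+\lambda\|u\|_{L^2(\Omega)}^2.
\end{equation}
Taking absolute values and applying the Cauchy–Schwarz inequality together with the $L^2$-bound above yields $\|\partial_{\bar z}u\|_{L^2(\Omega)}\leq C_\lambda\|f\|_{L^2(\Omega)}$, where $C_\lambda$ depends only on $\lambda$. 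Plugging these three bounds into the first estimate of \Cref{lemma:RegularityEstimatesRodzin} gives
\begin{equation}
\|\nabla u\|_{L^2(\Omega)}\leq C_\Omega\!\left(\|u\|_{L^2(\Omega)}+\|\partial_{\bar z}u\|_{L^2(\Omega)}+\tfrac{1}{a}\|\Delta u\|_{L^2(\Omega)}\right)\leq C_{\Omega,\lambda}(1+1/a)\|f\|_{L^2(\Omega)},
\end{equation}
and the desired $H^1$-estimate follows after adding $\|u\|_{L^2(\Omega)}$.

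For the consequences, the compactness of $(\Rodzin_a-\lambda)^{-1}:L^2(\Omega)\to L^2(\Omega)$ is immediate from the boundedness $L^2(\Omega)\to H^1(\Omega)$ just proved, combined with the compact embedding $H^1(\Omega)\hookrightarrow L^2(\Omega)$ (which holds since $\Omega$ is bounded and $\mathcal C^2$, by Rellich–Kondrachov). Since $\Rodzin_a$ is self-adjoint and has a compact resolvent, a standard spectral-theoretic argument (see, e.g., \cite[Theorem 6.29]{Teschl2014}) implies that $\sigma(\Rodzin_a)$ is purely discrete, with a sequence of eigenvalues accumulating only at $+\infty$. No step here looks genuinely difficult: the only slightly delicate point is making sure the $1/a$ factor in \Cref{lemma:RegularityEstimatesRodzin} is exactly what produces the $(1+1/a)$ in the statement, which is indeed what happens since $\|u\|_{L^2(\Omega)}$ and $\|\partial_{\bar z}u\|_{L^2(\Omega)}$ are bounded independently of $a$.
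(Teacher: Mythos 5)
Your proposal is correct and follows essentially the same route as the paper: the $L^2$ resolvent bound from self-adjointness, the bound on $\|\Delta u\|_{L^2(\Omega)}$ from the resolvent equation, the bound on $\|\partial_{\bar z}u\|_{L^2(\Omega)}$ from testing the equation against $u$ (the paper integrates by parts and uses the boundary condition, which is exactly what your appeal to \eqref{eq:DeterminationOfRodzin} with $v=u$ encodes), and then \Cref{lemma:RegularityEstimatesRodzin} plus the compact embedding $H^1(\Omega)\hookrightarrow L^2(\Omega)$ for compactness and discreteness. The only cosmetic difference is the citation used for the final spectral-theoretic step, which does not affect correctness.
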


\begin{proof}
    Fix $f\in L^2(\Omega)$ and set $\varphi_a= (\Rodzin_a - \lambda)^{-1}f \in \Dom(\Rodzin_a)$, which is well defined because $\Rodzin_a$ is self-adjoint. In view of \Cref{lemma:RegularityEstimatesRodzin}, if we can bound the $L^2(\Omega)-$norms of $\varphi_a$, $\partial_{\bar z} \varphi_a$, and $\Delta \varphi_a$ in terms of the $L^2(\Omega)-$norm of $f$, we will be done.

    First, by self-adjointness of $\Rodzin_a$ ---see, for example, \cite[equation V.3.13]{Kato1995}---,
    \begin{equation} \label{eq:BoundedResolventL2L2}
        \|\varphi_a \|_{L^2(\Omega)} = \|(\Rodzin_a - \lambda)^{-1}f \|_{L^2(\Omega)} \leq \dfrac{1}{|\mathrm{Im}(\lambda)|} \|f\|_{L^2(\Omega)}.
    \end{equation}
    Next, notice that $\varphi_a$ solves the problem
    \begin{equation}
        \begin{cases}
            -\Delta \varphi_a - \lambda \varphi_a = f & \text{in } L^2(\Omega), \\
            2\bar \nu \partial_{\bar z} \varphi_a + a\varphi_a = 0 & \text{in } H^{1/2}(\partial \Omega).
        \end{cases}
    \end{equation}
    As a first consequence, by the triangle inequality and \eqref{eq:BoundedResolventL2L2},
    \begin{equation} \label{eq:L2boundLaplacian}
        \| \Delta \varphi_a\|_{L^2(\Omega)} = \| \lambda \varphi_a + f \|_{L^2(\Omega)} \leq |\lambda| \| \varphi_a\|_{L^2(\Omega)} + \| f\|_{L^2(\Omega)} \leq \left( 1 + \dfrac{|\lambda|}{|\mathrm{Im}(\lambda)|} \right) \|f\|_{L^2(\Omega)}.
    \end{equation}
    As a second consequence, multiplying the equality $-\Delta \varphi_a = f + \lambda \varphi_a$ by $\overline \varphi_a$ and integrating in $\Omega$, the divergence theorem and the boundary condition lead to
    \begin{equation}
        4 \int_\Omega |\partial_{\bar z} \varphi_a|^2 + a\int_{\partial\Omega} |\varphi_a|^2 = \int_\Omega f \, \overline \varphi_a + \lambda \int_\Omega |\varphi_a|^2.
    \end{equation}
    Then, since $a\geq 0$, by the triangle inequality and \eqref{eq:BoundedResolventL2L2} we can bound
    \begin{equation} \label{eq:L2boundDz}
        \| \partial_{\bar z} \varphi_a \|_{L^2(\Omega)}^2 \leq \left| \int_\Omega f \, \overline \varphi_a + \lambda \int_\Omega |\varphi_a|^2 \right| \leq \left(\dfrac{1}{|\mathrm{Im}(\lambda)|}+\dfrac{|\lambda|}{|\mathrm{Im}(\lambda)|^2}\right) \|f\|_{L^2(\Omega)}^2.
    \end{equation}
    Combining \Cref{lemma:RegularityEstimatesRodzin} with the estimates \eqref{eq:BoundedResolventL2L2}, \eqref{eq:L2boundLaplacian}, and \eqref{eq:L2boundDz}, we finally get
    \begin{equation}
        \|\varphi_a\|_{H^1(\Omega)} \leq C_\Omega(1+1/a) \left( \dfrac{1}{|\mathrm{Im}(\lambda)|} + \sqrt{\dfrac{1}{|\mathrm{Im}(\lambda)|}+\dfrac{|\lambda|}{|\mathrm{Im}(\lambda)|^2}} + 1 + \dfrac{|\lambda|}{|\mathrm{Im}(\lambda)|} \right) \|f\|_{L^2(\Omega)},
    \end{equation}
    proving the first part of the statement. To finish, since $H^1(\Omega)$ is compactly embedded in $L^2(\Omega)$ ---because $\Omega$ is bounded, see \cite[Theorem 9.16]{Brezis2011}---, from the previous inequality we deduce that $(\Rodzin_a -\lambda)^{-1}$ is a compact operator from $L^2(\Omega)$ to $L^2(\Omega)$. This, together with \cite[Proposition 8.8 in Appendix A, and the paragraph below it]{Taylor2011}, yields the discreteness of $\sigma(\Rodzin_a)$.
\end{proof}

\begin{proof}[Proof of \Cref{thm:IntroRodzinLaplacian}]
    The first part is the content of \Cref{prop:RodzinLaplacian}, and the second part is the content of \Cref{prop:BoundedResolvent}.
\end{proof}

We are now ready to prove the characterization of the ordered eigenvalues of the $\overline\partial$-Robin Laplacian in terms of min-max levels.

\begin{proof}[Proof of \Cref{thm:PropertiesMukOmega}]
    Let $a>0$ and $k\in\N$. By the min-max Theorem \cite[Theorem 4.14]{Teschl2014} and \Cref{prop:RodzinLaplacian}, the $k-$th eigenvalue of $\Rodzin_a$ is
    \begin{equation}
        \mu_k = \underset{\substack{ F\subset \mathrm{Dom}(\Rodzin_a) \\ \mathrm{dim}(F)=k }}{\inf} \, \, \underset{u\in F\setminus\{0\}}{\sup} \, \dfrac{4\int_\Omega |\partial_{\bar z} u|^2 + a \int_{\partial\Omega} |u|^2}{\int_\Omega |u|^2},
    \end{equation}
    where the infimum is taken among subspaces $F$ of $\mathrm{Dom}(\Rodzin_a)$, and for the sake of notation we omit the dependence on $a$. We want to show that $\mu_k$ coincides with the min-max level
    \begin{equation}
        M_k := \underset{\substack{ F\subset E(\Omega) \\ \mathrm{dim}(F)=k }}{\inf} \, \, \underset{u\in F\setminus\{0\}}{\sup} \, \dfrac{4\int_\Omega |\partial_{\bar z} u|^2 + a \int_{\partial\Omega} |u|^2}{\int_\Omega |u|^2},
    \end{equation}
    where the infimum is taken among subspaces $F$ of $E(\Omega) \supsetneq \mathrm{Dom}(\Rodzin_a)$. It is therefore clear that $0\leq M_k \leq \mu_k$. In order to show the reverse inequality, it shall be useful to rewrite $M_k$ as follows.
    
    By \Cref{lemma:PoincareTypeIneq}, for every $u\in E(\Omega)$ we have the Poincar\'e type inequality
    \begin{equation}
        \|u\|_{L^2(\Omega)}^2 \leq K_\Omega \left(\|\partial_{\bar z} u\|_{L^2(\Omega)}^2 + \|u\|_{L^2(\partial\Omega)}^2 \right).
    \end{equation}
    Hence, the norms $\|\cdot\|_{E(\Omega)}$ and $\|\cdot\|_a$ are equivalent on $E(\Omega)$, where we have set
    \begin{equation} \label{def:HilbertSpaceEa}
        \begin{split}
            \langle u, v \rangle_a & := B_a(u,v) = \displaystyle{ 4 \int_\Omega \partial_{\bar z} u \, \overline{\partial_{\bar z}v} + a \int_{\partial\Omega} u \, \overline v } \quad \text{and} \\
            \|u\|_a & := \sqrt{\langle u, u \rangle_a} = \left( 4\| \partial_{\bar z} u \|_{L^2(\Omega)}^2 + a \|u\|_{L^2(\partial\Omega)}^2 \right)^{1/2}.
        \end{split}
    \end{equation}
    The advantage of this new norm is that $M_k$ writes as
    \begin{equation} \label{eq:muOmegaEa}
        M_k = \underset{\substack{ F\subset E(\Omega) \\ \mathrm{dim}(F)=k }}{\inf} \, \, \underset{u\in F\setminus\{0\}}{\sup} \, Q_a(u), \quad \text{where} \quad  Q_a(u) :=  \dfrac{\|u\|_a^2}{\|u\|_{L^2(\Omega)}^2}.
    \end{equation}

    We want to show that $M_k$ is attained by some $u_k \in E(\Omega)$, which is actually an eigenfunction of $\Rodzin_a$ of eigenvalue $M_k$ orthogonal to $u_\ell$, for $\ell=1,\dots, k-1$ ---the previous eigenfunctions of $\Rodzin_a$ of eigenvalue $M_\ell$. In order to see this, we auxiliary show, by induction on $k\in\N$, the following claim. 

    \textbf{\underline{Claim:}} the min-max levels $M_k$ are positive and coincide with
    \begin{equation}
        M_k^\perp := \underset{u \in E(\Omega)\cap \mathrm{span}\{u_{1}, \dots, u_{k-1}\}^\perp\setminus\{0\}}{\inf} \ Q_a(u),
    \end{equation}
    and this infimum is attained by an eigenfunction $u_k\in\Dom(\Rodzin_a)$ of $\Rodzin_a$ of eigenvalue $M_k^\perp=M_k$. 

    \noindent \underline{Base case (BC):} We prove the result for $M_1^\perp$. Since $Q_a$ is homogeneous, it is clear that $M_1^\perp=M_1\geq 0$. In particular, $M_1^\perp$ is finite. It is also strictly positive: indeed, by way of contradiction, if $M_1$ was zero then there would exist a minimizing sequence $\{u^j\}_j \subset E(\Omega)$ with 
    \begin{equation}
        \|u^j\|_{L^2(\Omega)}=1, \quad \int_\Omega |\partial_{\bar z} u^j|^2 \rightarrow 0 \quad \text{and} \quad \int_{\partial\Omega} |u^j|^2 \rightarrow 0 \quad \text{as } j \to+\infty,
    \end{equation}
    contradicting the Poincar\'e inequality in \Cref{lemma:PoincareTypeIneq}.

    \underline{Step BC1.} First, we show that $M_1^\perp$ is attained by some $u_1 \in E(\Omega)$. To this end, let $\{u^j\}_j \subset E(\Omega)\setminus \{0\}$ be a minimizing sequence. Without loss of generality, we shall assume that $\|u^j\|_{L^2(\Omega)} = 1$ for all $j\in\N$, so that
    \begin{equation}
        M_1^\perp = \underset{j \to \infty}{\lim} \ \|u^j\|_a^2.
    \end{equation}
    Then, $\{u^j\}_j$ is uniformly bounded in $(E(\Omega), \langle\cdot,\cdot\rangle_a)$, because $M_1^\perp$ is finite. Since $\|\cdot\|_a$ and $\|\cdot\|_{E(\Omega)}$ are comparable on $E(\Omega)$, by \Cref{lemma:AmbientHilbertSpace}, there exists a strongly convergent subsequence in $L^2(\Omega)$, which we call again $\{u^j\}_j$. Also, since $E(\Omega)$ is a Hilbert space, by weak* compactness we can conclude that there exists $u_1 \in E(\Omega)$ such that
    \begin{equation}
        u^j \to u_1 \text{ strongly in } L^2(\Omega) \text{ and weakly in } (E(\Omega), \langle\cdot,\cdot\rangle_a), \text{ as } j \to +\infty.
    \end{equation}
    In particular, $\|u_1\|_{L^2(\Omega)}=1$ and $\|u_1\|_a\neq 0$. Using all the above,
    \begin{equation}
        \begin{split}
            M_1^\perp & = \underset{j \to \infty}{\lim} \ \|u^j\|_a^2 = \underset{j \to \infty}{\lim} \ \left( \underset{\|\varphi\|_a=1}{\sup} \ \left|\langle u^j, \varphi \rangle_a \right| \right)^2 \geq \underset{j \to \infty}{\lim} \ \left| \left\langle u^j, \dfrac{u_1}{\|u_1\|_a} \right\rangle_a \right|^2 \\
            & = \left|\left\langle u_1, \dfrac{u_1}{\|u_1\|_a} \right\rangle_a \right|^2 = \|u_1\|_a^2 \geq M_1^\perp.
        \end{split}
    \end{equation}
    As a consequence, all inequalities are equalities. Therefore, $M_1^\perp$ is attained by $u_1$. 
    
    \underline{Step BC2.} We now address the proof that $u_1$ is an eigenfunction of $\Rodzin_a$ of eigenvalue $M_1^\perp$. To prove this, it is enough to show that $u_1$ satisfies the assumption in \eqref{eq:CharacterizationOfRodzin} with $w= M_1^\perp u_1$. Indeed, if this is true, then by \eqref{eq:CharacterizationOfRodzin} and \Cref{prop:RodzinLaplacian} we will conclude that $u_1 \in \Dom(\Rodzin_a)$ and $\Rodzin_a u_1 = M_1^\perp u_1$. Hence, the proof reduces to showing that $u_1$ satisfies
    \begin{equation} \label{eq:goalIndk1}
        B_a(u_1,v) = \langle M_1^\perp u_1, v \rangle_{L^2(\Omega)} \quad \text{for all } v \in E(\Omega),
    \end{equation}
    which is nothing but the Euler-Lagrange equation for the minimizer $u_1\in E(\Omega)$. Given $v\in E(\Omega)$ and $t\in \R$ ---small enough so that $u+tv \neq 0$ in $E(\Omega)$---, set
    \begin{equation}
        f(t) := \dfrac{ \displaystyle{ 4 \int_\Omega |\partial_{\bar z}u_1|^2 + t^2|\partial_{\bar z}v|^2 + 2t \, \mathrm{Re} \left( \partial_{\bar z}u_1 \, \overline{\partial_{\bar z} v} \right)+ a\int_{\partial \Omega} |u_1|^2+t^2|v|^2 + 2t \, \mathrm{Re} (u_1 \, \overline v) } }{ \displaystyle{ \int_\Omega |u_1|^2+t^2|v|^2 + 2t\, \mathrm{Re} (u_1 \, \overline v) } }.
    \end{equation}
    Since $u_1$ is a minimizer of $M_1$, we deduce that $f(0) \leq f(t)$ for all $|t|$ small enough, and thus $\frac{d}{dt}f(0) = 0$, which yields
    \begin{equation}
        \mathrm{Re} \left( 4 \int_\Omega \partial_{\bar z}u_1 \, \overline{\partial_{\bar z} v} + a\int_{\partial \Omega} u_1 \, \overline v \right) = M_1^\perp \mathrm{Re} \left( \int_\Omega u_1 \, \overline v \right) \quad \text{for all } v\in E(\Omega).
    \end{equation}
    Notice that the same is true replacing $v$ by $-iv$. Hence, $u\in E(\Omega)$ solves
    \begin{equation}
        \displaystyle{4\int_{\Omega} \partial_{\bar z} u_1 \,  \overline{\partial_{\bar z} v} + a\int_{\partial\Omega} u_1 \, \overline v = M_1^\perp \int_\Omega u_1 \, \overline v} \quad \text{for all } v \in E(\Omega),
    \end{equation}
    which shows \eqref{eq:goalIndk1}.

    \noindent \underline{Inductive case (IC):} Assume, for $\ell = 1,\dots,k-1$, that $M_\ell^\perp$ are attained by (pairwise orthogonal) eigenfunctions $u_\ell$ of $\Dom(\Rodzin_a)$ of eigenvalue $M_\ell^\perp$, and that $M_\ell^\perp=M_\ell>0$. We want to show that $M_k^\perp$ is attained by an eigenfunction $u_k$ of $\Rodzin_a$ of eigenvalue $M_k^\perp$, and that $M_k^\perp=M_k>0$. Actually, by the induction hypothesis we already have $M_k\geq M_{k-1}>0$ and $M_k^\perp \geq M_{k-1}^\perp >0$.

    \underline{Step IC1.} We show that $M_k^\perp\leq M_k$ ---and as a consequence, $M_k^\perp$ is finite. Let $F$ be an arbitrary $k-$dimensional subspace of $E(\Omega)$, and take $w\in F\cap \mathrm{span}\{ u_1, \dots, u_{k-1}\}^\perp\setminus\{0\}$. We justify that such $w$ exists: since $F$ is a $k-$dimensional subspace of $E(\Omega)$, it is generated by $k$ linearly independent functions $w_1,\dots,w_k\in E(\Omega)$, and hence every $w\in F$ can be written as a linear combination of the form
    \begin{equation}
        w = \underset{j=1}{\overset{k}{\sum}} \, c_jw_j, \quad \text{for some } c_j\in \C;
    \end{equation}
    in particular, the existence of a non-trivial $w\in F$ orthogonal to $u_1, \dots, u_{k-1}$ is given by the existence of solutions $(c_1,\dots,c_k)\in\C^k\setminus\{0\}$ of the system of $k-1$ linear equations
    \begin{equation}
        0 = \langle w, u_\ell\rangle_{L^2(\Omega)} = \underset{j=1}{\overset{k}{\sum}} \, c_j \langle w_j, u_\ell \rangle_{L^2(\Omega)}, \quad \text{for } \ell = 1,\dots,k-1.
    \end{equation}
    Since $w$ is a concrete function in $F\cap\mathrm{span}\{ u_1, \dots, u_{k-1}\}^\perp\setminus\{0\}$, which is a subset of both $F\setminus\{0\}$ and $E(\Omega)\cap\mathrm{span}\{ u_1, \dots, u_{k-1}\}^\perp\setminus\{0\}$, it is clear that 
    \begin{equation}
        M_k^\perp = \underset{u \in E(\Omega)\cap \mathrm{span}\{u_{1}, \dots, u_{k-1}\}^\perp\setminus\{0\}}{\inf} \ Q_a(u) \leq Q_a(w) \leq \underset{u\in F\setminus\{0\}}{\sup} \, Q_a(u).
    \end{equation}
    Since this holds true for every $k-$dimensional subspace $F$ of $E(\Omega)$, we conclude that
    \begin{equation}
        M_k^\perp \leq \underset{\substack{ F\subset E(\Omega) \\ \mathrm{dim}(F)=k }}{\inf} \, \, \underset{u\in F\setminus\{0\}}{\sup} \, Q_a(u) = M_k.
    \end{equation}
    
    \underline{Step IC2.} We show that $M_k^\perp$ is attained by some $u_k\in E(\Omega)\cap \mathrm{span}\{u_{1}, \dots, u_{k-1}\}^\perp\setminus\{0\}$. The proof follows by the same compactness argument as in Step BC1 ---simply replacing $1$ by $k$, and taking the (normalized) minimizing sequence $\{u^j\}_j$ in $E(\Omega)\cap \mathrm{span}\{u_{1}, \dots, u_{k-1}\}^\perp\setminus\{0\}$. The only delicate step is the last inequality $\|u_k\|_a^2 \geq M_k^\perp$, which we justify now. Since $u^j$ are orthogonal in $L^2(\Omega)$ to $u_\ell$, for $\ell=1,\dots,k-1$, and $u^j$ converge to $u_k\in E(\Omega)$ strongly in $L^2(\Omega)$, also $u_k\in E(\Omega)\cap \mathrm{span}\{u_{1}, \dots, u_{k-1}\}^\perp\setminus\{0\}$. Therefore, $M_k^\perp$ is smaller or equal than $Q_a(u_k)=\|u_k\|_a^2$, by normalization.

    \underline{Step IC3.} We show that the minimizer $u_k\in E(\Omega)\cap \mathrm{span}\{u_{1}, \dots, u_{k-1}\}^\perp\setminus\{0\}$ obtained in Step~IC2 is actually an eigenfunction of $\Rodzin_a$ of eigenvalue $M_k^\perp$. The proof follows by an analogous variational formulation as in Step BC2. The difference now is that, for the Euler-Lagrange equation, one must take $v\in E(\Omega)\cap\mathrm{span}\{u_1, \dots, u_{k-1}\}^\perp\setminus\{0\}$ and consider $u_k + tv$ for $|t|$ small enough. This is needed to ensure that $u_k + tv$ is an admissible competitor, which guarantees that $f(0) \leq f(t)$ for $|t|$ small enough, where now 
    \begin{equation}
        f(t) := \dfrac{ \displaystyle{ 4 \int_\Omega |\partial_{\bar z}u_k|^2 + t^2|\partial_{\bar z}v|^2 + 2t \, \mathrm{Re} \left( \partial_{\bar z}u_k \, \overline{\partial_{\bar z} v} \right)+ a\int_{\partial \Omega} |u_k|^2+t^2|v|^2 + 2t \, \mathrm{Re} (u_k \, \overline v) } }{ \displaystyle{ \int_\Omega |u_k|^2+t^2|v|^2 + 2t\, \mathrm{Re} (u_k \, \overline v) } }.
    \end{equation}
    As a result, the same computation as in Step BC2 (simply replacing $1$ by $k$) shows that 
    \begin{equation}
        \displaystyle{4\int_{\Omega} \partial_{\bar z} u_k \,  \overline{\partial_{\bar z} v} + a\int_{\partial\Omega} u_k \, \overline v = M_k^\perp \int_\Omega u_k \, \overline v} \quad \text{for all } v \in E(\Omega)\cap\mathrm{span}\{u_1, \dots, u_{k-1}\}^\perp.
    \end{equation}
    In order to extend this to every $v\in E(\Omega)$, and thus get the analogous goal to \eqref{eq:goalIndk1}, notice that by inductive hypothesis $u_\ell$ are eigenfunctions of $\Rodzin_a$ of eigenvalue $M_\ell^\perp$ orthogonal to $u_k$, for $\ell=1,\dots,k-1$. Hence, by \Cref{prop:RodzinLaplacian} and \eqref{eq:DeterminationOfRodzin} they satisfy
    \begin{equation}
        \displaystyle{4\int_{\Omega} \partial_{\bar z} u_\ell \,  \overline{\partial_{\bar z} u_k} + a\int_{\partial\Omega} u_\ell \, \overline{u_k} = M_\ell^\perp \int_\Omega u_\ell \, \overline{u_k}} = 0 \quad \text{for } \ell = 1,\dots,k-1.
    \end{equation}
    In particular, there holds
    \begin{equation}
        \displaystyle{4\int_{\Omega} \partial_{\bar z} u_k \,  \overline{\partial_{\bar z} v} + a\int_{\partial\Omega} u_k \, \overline v = 0 = M_k^\perp \int_\Omega u_k \, \overline v} \quad \text{for all } v \in \mathrm{span}\{u_1, \dots, u_{k-1}\}.
    \end{equation}
    In summary, $u_k\in E(\Omega)$ satisfies $B_a(u_k,v) = \langle M_k^\perp u_k, v \rangle_{L^2(\Omega)}$ for all $v \in E(\Omega)$, and hence by \Cref{prop:RodzinLaplacian} and \eqref{eq:CharacterizationOfRodzin} it follows that $u_k\in \Dom(\Rodzin_a)$ with $\Rodzin_a u_k = M_k^\perp u_k$.

    \underline{Step IC4.} We show that $M_k^\perp\geq M_k$, which combined with Step IC1 leads to $M_k^\perp= M_k$. To this end, we first prove that 
    \begin{equation} \label{eq:Goal6}
        M_k^\perp = \underset{u\in \mathrm{span}\{ u_1, \dots, u_k\}\setminus\{0\}}{\sup} \, Q_a(u).
    \end{equation}
    Let $w\in \mathrm{span}\{ u_1, \dots, u_k\}\setminus\{0\}$ and write it as $w=c_1 u_1 +\cdots + c_k u_k$, for some $c_1,\dots,c_k\in \C$ ---not all zero. Since, by Step IC3 and the induction hypothesis, for all $\ell=1,\dots,k$, $u_\ell$ satisfies
    \begin{equation}
        \langle u_\ell, v\rangle_a = B_a(u_\ell, v) = M_\ell^\perp \langle u_\ell, v\rangle_{L^2(\Omega)} \quad \text{for every } v\in E(\Omega),
    \end{equation}
    the functions $u_\ell$ are also pairwise orthogonal in $\left( E(\Omega), \langle \cdot, \cdot \rangle_a \right)$. Using this, we can compute
    \begin{equation} 
        Q_a(w) = Q_a\left( \underset{\ell=1}{\overset{k}{\sum}} \, c_\ell u_\ell \right) = \dfrac{ \underset{\ell=1}{\overset{k}{\sum}} \, M_\ell^\perp |c_\ell|^2\|u_\ell\|_{L^2(\Omega)}^2 }{\underset{\ell=1}{\overset{k}{\sum}} \, |c_\ell|^2 \|u_\ell\|_{L^2(\Omega)}^2} \leq M_k^\perp = Q_a(u_k),
    \end{equation}
    where we have used that $M_\ell^\perp\leq M_k^\perp$ for every $\ell=1,\dots,k$, and the fact that $M_k^\perp$ is attained by $u_k$, by Step IC2. Since this holds for every $w\in \mathrm{span}\{ u_1, \dots, u_k\}\setminus\{0\}$, and there is equality when $w=u_k$, \eqref{eq:Goal6} follows. Since $F:=\mathrm{span}\{ u_1, \dots, u_k\}$ is a particular $k-$dimensional subspace of $E(\Omega)$, in view of \eqref{eq:Goal6} it is then clear that $M_k\leq M_k^\perp$. As a consequence, the claim is proved.

    \underline{The min-max levels $M_k$ are attained by pairwise orthogonal eigenfunctions.} Using that $M_k$ coincides with $M_k^\perp$, \eqref{eq:Goal6}, the min-max characterization of $\mu_k$, and the fact that $\mu_k\geq M_k$ ---because $\mathrm{Dom}(\Rodzin_a)\subset E(\Omega)$---, we have
    \begin{equation}
        M_k = M_k^\perp = \underset{u\in \mathrm{span}\{ u_1, \dots, u_k\}\setminus\{0\}}{\sup} \, Q_a(u) \geq \underset{\substack{ F\subset \mathrm{Dom}(\Rodzin_a) \\ \mathrm{dim}(F)=k }}{\inf} \, \, \underset{u\in F\setminus\{0\}}{\sup} \, Q_a(u) = \mu_k \geq M_k,
    \end{equation}
    where we have used that $F:=\mathrm{span}\{ u_1, \dots, u_k\} $ is a particular subset of $\mathrm{Dom}(\Rodzin_a)$, as proved in the claim. Therefore, all the inequalities are equalities. As a consequence, the attainers $u_k$ of $M_k^\perp$ are also attainers of both $M_k$ and $\mu_k$.

    \underline{Bounds of $\mu_k$.} We conclude the proof showing that $\mu_k\in(0,\Lambda_k)$, where $\Lambda_k$ is the $k-$th eigenvalue of the Dirichlet Laplacian in $\Omega$. Since $\mu_k=M_k$, the lower bound has already been proven in the claim. For the upper bound, using that $H^1_0(\Omega, \R) \subset E(\Omega)$, it follows that
    \begin{equation}
        \mu_k = M_k \leq \underset{\substack{ F\subset H^1_0(\Omega, \R) \\ \mathrm{dim}(F)=k }}{\inf} \, \, \underset{u\in F\setminus\{0\}}{\sup} \, \dfrac{4\int_\Omega |\partial_{\bar z} u|^2 + a \int_{\partial\Omega} |u|^2}{\int_\Omega |u|^2} = \underset{\substack{ F\subset H^1_0(\Omega, \R) \\ \mathrm{dim}(F)=k }}{\inf} \, \, \underset{u\in F\setminus\{0\}}{\sup} \, \dfrac{\int_\Omega |\nabla u|^2}{\int_\Omega |u|^2} =: \Lambda_{k}.
    \end{equation}
    The strict inequality is a consequence of the fact that attainers of $M_k$ are eigenfunctions of $\Rodzin_a$ of eigenvalue $M_k$. Indeed, if the equality $\mu_k = \Lambda_{k}$ held, then the $k-$th (real-valued) eigenfunction $U_k$ of the Dirichlet Laplacian would be an attainer of both $\Lambda_{k}$ and $M_k$; hence, by assumption $U_k$ would simultaneously solve 
    \begin{equation}
        \begin{cases}
            -\Delta U_k = \Lambda_{k} U_k & \text{in } \Omega, \\
            U_k = 0 & \text{on } \partial\Omega,
        \end{cases} \quad \text{and} \quad \begin{cases}
            -\Delta U_k = \Lambda_{k} U_k & \text{in } \Omega, \\
            2\bar\nu \partial_{\bar z} U_k + a U_k = 0 & \text{on } \partial\Omega.
        \end{cases}
    \end{equation}
    Using that $2\bar\nu \partial_{\bar z} U_k = \partial_\nu U_k + i \partial_\tau U_k$ and that $U_k$ is constantly zero on $\partial\Omega$, we would conclude that $U_k$ solves
    \begin{equation}
        \begin{cases}
            -\Delta U_k = \Lambda_{k} U_k & \text{in } \Omega, \\
            U_k = 0 & \text{on } \partial\Omega,\\
            \partial_\nu U_k = 0 & \text{on } \partial\Omega.
        \end{cases}
    \end{equation}
    But then, \cite[Lemma 1]{Gregoire1976} would yield that $U_k = 0$, reaching a contradiction.
\end{proof}

As a consequence of the proof of \Cref{thm:PropertiesMukOmega}, we have the following alternative characterization of the eigenvalues $\mu_{k}(a)$ of $\Rodzin_a$.

\begin{corollary}
    Let $a>0$ and $k\in \N$. Suppose that $u_{\ell}(a)$ are pairwise orthogonal eigenfunctions of $\Rodzin_a$ of eigenvalue $\mu_{\ell}(a)$, for $\ell=1,\dots,k-1$. Then
    \begin{equation}
        \mu_{k}(a) = \underset{u \in E(\Omega)\cap \mathrm{span}\{u_{1}(a), \dots, u_{k-1}(a)\}^\perp\setminus\{0\}}{\inf} \ \dfrac{4\int_\Omega |\partial_{\bar z} u|^2 + a \int_{\partial\Omega} |u|^2}{\int_\Omega |u|^2},
    \end{equation}
    and the infimum is attained by an eigenfunction $u_{k}(a)$ of $\Rodzin_a$ of eigenvalue $\mu_{k}(a)$.
\end{corollary}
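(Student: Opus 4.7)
The plan is to mimic the inductive step of the proof of \Cref{thm:PropertiesMukOmega} (Steps IC1--IC4), noting that the argument there uses only two properties of the functions $u_1,\dots,u_{k-1}$: their pairwise $L^2(\Omega)$--orthogonality and the fact that they are eigenfunctions of $\Rodzin_a$ with the prescribed ordered eigenvalues. Both properties are granted in the present hypotheses, so the same computations will apply to an arbitrary orthogonal system of eigenfunctions. Denote, for brevity, $V_{k-1}:=\mathrm{span}\{u_1(a),\dots,u_{k-1}(a)\}$ and
\begin{equation*}
    N_k := \underset{u \in E(\Omega)\cap V_{k-1}^\perp\setminus\{0\}}{\inf} \ Q_a(u), \qquad Q_a(u)=\dfrac{4\int_\Omega |\partial_{\bar z} u|^2 + a \int_{\partial\Omega} |u|^2}{\int_\Omega |u|^2}.
\end{equation*}

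First, I would show $N_k\leq \mu_k(a)$ by a linear-algebraic argument as in Step IC1: given any $k$-dimensional subspace $F\subset E(\Omega)$, a count of equations shows there exists a nonzero $w\in F\cap V_{k-1}^\perp$, whence $N_k\leq Q_a(w)\leq\sup_{u\in F\setminus\{0\}}Q_a(u)$; taking the infimum over $F$ and invoking \Cref{thm:PropertiesMukOmega} gives $N_k\leq \mu_k(a)$.

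Next, I would prove that $N_k$ is attained. The compactness argument of Step BC1/Step IC2 carries over verbatim: an $L^2(\Omega)$--normalized minimizing sequence $\{u^j\}_j\subset V_{k-1}^\perp\cap E(\Omega)$ is bounded in $(E(\Omega),\langle\cdot,\cdot\rangle_a)$ since $N_k$ is finite; by \Cref{lemma:AmbientHilbertSpace} a subsequence converges strongly in $L^2(\Omega)$ and weakly in $E(\Omega)$ to some $u_k(a)\in E(\Omega)$; $L^2$--orthogonality with each $u_\ell(a)$ passes to the limit, and the weak lower semicontinuity of $\|\cdot\|_a$ yields $\|u_k(a)\|_a^2\leq N_k$, so equality holds. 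The Euler--Lagrange computation from Step IC3, performed with test functions $v\in E(\Omega)\cap V_{k-1}^\perp$, then gives
\begin{equation*}
    B_a(u_k(a),v) = N_k\,\langle u_k(a),v\rangle_{L^2(\Omega)} \quad \text{for every } v\in E(\Omega)\cap V_{k-1}^\perp.
\end{equation*}
To extend this identity to all $v\in E(\Omega)$, I would use that each $u_\ell(a)$ is an eigenfunction: by \Cref{prop:RodzinLaplacian} and \eqref{eq:DeterminationOfRodzin},
\begin{equation*}
    B_a(u_k(a), u_\ell(a)) = \overline{B_a(u_\ell(a), u_k(a))} = \overline{\mu_\ell(a)\langle u_\ell(a),u_k(a)\rangle_{L^2(\Omega)}}=0,
\end{equation*}
and similarly the right-hand side vanishes by $L^2$-orthogonality. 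Splitting a general $v=v_{\parallel}+v_{\perp}\in V_{k-1}\oplus V_{k-1}^\perp$ shows that $B_a(u_k(a),v)=N_k\langle u_k(a),v\rangle_{L^2(\Omega)}$ for every $v\in E(\Omega)$; by \eqref{eq:CharacterizationOfRodzin} this gives $u_k(a)\in\Dom(\Rodzin_a)$ and $\Rodzin_a u_k(a)=N_k\,u_k(a)$.

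The step I expect to require the most care is the converse inequality $N_k\geq \mu_k(a)$. Suppose, for contradiction, that $N_k<\mu_k(a)$. Let $k_0$ be the number of indices $\ell\leq k-1$ with $\mu_\ell(a)<\mu_k(a)$; then $N_k\leq\mu_{k_0}(a)$, so $N_k$ lies in the list $\mu_1(a),\dots,\mu_{k_0}(a)$. Among $u_1(a),\dots,u_{k_0}(a)$, precisely the multiplicity of $N_k$ many are pairwise orthogonal eigenfunctions of eigenvalue $N_k$, so they form an orthogonal basis of the full eigenspace of $N_k$. Since $u_k(a)$ is orthogonal to all of them and is itself a nonzero eigenfunction of eigenvalue $N_k$, we reach the required contradiction. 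Combining both bounds, $N_k=\mu_k(a)$, which completes the proof.
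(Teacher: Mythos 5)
Your proposal is correct, and for most of its length it follows the same road as the paper: the corollary is stated there as a direct consequence of the proof of \Cref{thm:PropertiesMukOmega}, and your treatment of the inequality $N_k\le\mu_k(a)$, of the attainment of $N_k$, and of the Euler--Lagrange identity plus its extension to all of $E(\Omega)$ reproduces Steps IC1--IC3 of that proof, rerun for an arbitrary pairwise orthogonal family of eigenfunctions --- which is legitimate, since, as you observe, those steps only use $L^2(\Omega)$-orthogonality and the eigenfunction property of $u_1(a),\dots,u_{k-1}(a)$. Where you genuinely depart from the paper is the converse inequality $N_k\ge\mu_k(a)$: the paper's Step IC4 computes the supremum of the Rayleigh quotient $Q_a$ over $\mathrm{span}\{u_1,\dots,u_k\}$ and compares it with the min-max level, which works for the inductively constructed attainers because there $Q_a(u_\ell)=M_\ell^\perp\le M_k^\perp$; applied verbatim to an arbitrary orthogonal family it only yields $\mu_k(a)\le\max\{\mu_{k-1}(a),N_k\}$, which does not close the degenerate case $\mu_{k-1}(a)=\mu_k(a)$. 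Your replacement --- noting that $N_k$, being an attained eigenvalue strictly below $\mu_k(a)$, must equal some $\mu_j(a)$ with $j\le k_0$, that exactly multiplicity-many of the given $u_\ell(a)$ then form an orthogonal basis of the corresponding eigenspace, and that the minimizer $u_k(a)$ would be a nonzero element of that eigenspace orthogonal to this basis --- is exactly the extra input needed for the corollary's stated generality (arbitrary orthogonal eigenfunctions, not necessarily the attainers built in the theorem's proof), so your route is in fact slightly more robust than a literal rereading of the paper's argument. The only cosmetic omissions are that you should record that $N_k$ is finite (your first step already gives $N_k\le\mu_k(a)$, and nonemptiness of the constraint set follows from the same linear-algebra count) and that the limit of the normalized minimizing sequence is nonzero (immediate from strong $L^2(\Omega)$ convergence), both of which you implicitly use.
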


\section{The family of operators $\{\Rodzin_a\}_{a>0}$} \label{sec:Family}

In this section, we are interested in the study of the operators $\Rodzin_a$ as the boundary parameter $a$ moves in $(0,+\infty)$.

\subsection{Properties of the eigenvalue curves} \label{sec:EigenvalueCurves}

Since the sesquilinear forms $B_a$, for $a>0$, are densely defined independently of $a$, sectorial, closed, and $B_a(u,u)$ is holomorphic in $a$ for each fixed $u\in E(\Omega)$ ---by \Cref{lemma:PropertiesSesquilinearForm}---, by definition $\{B_a\}_{a>0}$ is a holomorphic family of forms of type (a) in the sense of \cite[pg. 395]{Kato1995}. Since $\{\Rodzin_a\}_{a>0}$ are the associated operators ---by \Cref{prop:RodzinLaplacian}---, it follows by \cite[Theorem VII.4.2]{Kato1995} that $\{\Rodzin_a\}_{a>0}$ is a holomorphic family of operators; more specifically, it is a holomorphic family of type (B) in the sense of \cite[pg. 395]{Kato1995}. 

Actually, by \Cref{prop:BoundedResolvent}, the family of operators $\{\Rodzin_a\}_{a>0}$ is a self-adjoint holomorphic family of type (B) with compact resolvents. A direct consequence of this is the following result, which is a restatement of \cite[Theorem VII.3.9 and Remark VII.4.22]{Kato1995}.

\begin{proposition} \label{prop:AnalyticityKato}
    There is a sequence of scalar valued functions $\mu^n(a)$ and a sequence of vector valued functions $u^n(a) \in L^2(\Omega)$, $n\in\N$, all analytic when seen as functions of $a\in (0,+\infty)$, such that the $\mu^n(a)$ represent all the eigenvalues of $\Rodzin_a$ (repeated according to their multiplicity), and the $u^n(a)$ form a complete orthonormal family of the associated eigenfunctions of $\Rodzin_a$. 
\end{proposition}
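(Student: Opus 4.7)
The plan is to apply Kato's analytic perturbation theory directly, since essentially all of its hypotheses have been verified in the paragraph preceding the proposition. My proof will proceed in three steps: (i) collect the structural facts showing that $\{\Rodzin_a\}_{a>0}$ fits Kato's framework; (ii) invoke the relevant theorems from \cite{Kato1995}; (iii) verify that the conclusions match the wording of the proposition.

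For step (i), I would observe that \Cref{lemma:PropertiesSesquilinearForm} establishes that $\{B_a\}_{a>0}$ is a holomorphic family of sesquilinear forms of type (a) in the sense of \cite[pg. 395]{Kato1995}: the forms are densely defined (in $L^2(\Omega)$, independently of $a$), symmetric, sectorial, closed, and $a \mapsto B_a(u,v)$ is entire for each fixed $u,v\in E(\Omega)$ (being linear in $a$). By \Cref{prop:RodzinLaplacian}, the operator associated to $B_a$ through Kato's first representation theorem is precisely $\Rodzin_a$, so \cite[Theorem VII.4.2]{Kato1995} yields that $\{\Rodzin_a\}_{a>0}$ is a self-adjoint holomorphic family of operators of type (B). Moreover, by \Cref{prop:BoundedResolvent}, each $\Rodzin_a$ has compact resolvent, so $\sigma(\Rodzin_a)$ is purely discrete and consists only of eigenvalues of finite multiplicity with no finite accumulation points.

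For step (ii), I would invoke \cite[Theorem VII.3.9]{Kato1995}, which asserts that for a self-adjoint holomorphic family of operators with compact resolvents depending on a real parameter in an open interval, one can choose a sequence of scalar functions $\mu^n(a)$ and a sequence of vector-valued functions $u^n(a)\in L^2(\Omega)$, all analytic on $(0,+\infty)$, such that the $\mu^n(a)$ enumerate all eigenvalues of the operator (with multiplicity) and the $u^n(a)$ form a complete orthonormal basis of corresponding eigenfunctions. \cite[Remark VII.4.22]{Kato1995} then confirms that type (B) self-adjoint holomorphic families are covered by this theorem (as they are treated on the same footing as type (A) families for the purpose of eigenvalue analyticity).

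Step (iii) is essentially bookkeeping: the conclusion of \cite[Theorem VII.3.9 and Remark VII.4.22]{Kato1995} is exactly the statement of the proposition. The main subtlety worth flagging is that the analytic enumeration $\{\mu^n(a)\}_{n\in\N}$ provided by Kato's theorem is not in general the same as the ordered enumeration $\{\mu_k(a)\}_{k\in\N}$ of \Cref{thm:PropertiesMukOmega}: distinct analytic branches $\mu^n(a)$ may cross each other at isolated values of $a$, whereas the ordered eigenvalues $\mu_k(a)$ only track the $k$-th smallest value at each $a$. The reconciliation between the two pictures, and the identification of the exceptional set $X_k$ of \Cref{thm:PropertiesMukOmegaGlobal}\,\ref{item:Analytic} with the crossing points, is precisely the analysis that the following subsections of \Cref{sec:Family} will carry out, so for the present proposition no further work is needed beyond the citation of Kato's theorems.
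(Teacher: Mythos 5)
Your proposal is correct and follows essentially the same route as the paper: it verifies via \Cref{lemma:PropertiesSesquilinearForm}, \Cref{prop:RodzinLaplacian}, and \Cref{prop:BoundedResolvent} that $\{\Rodzin_a\}_{a>0}$ is a self-adjoint holomorphic family of type (B) with compact resolvents, and then cites \cite[Theorem VII.3.9 and Remark VII.4.22]{Kato1995}, which is exactly how the paper obtains the proposition. Your closing remark distinguishing the analytic branches $\mu^n(a)$ from the ordered eigenvalues $\mu_k(a)$ matches the paper's own caveat following the statement.
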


Notice that \Cref{prop:AnalyticityKato} asserts that the eigenvalues of $\Rodzin_a$ can be parametrized by analytic functions in $a>0$, but that the eigenvalue $\mu^n(a)$ need not coincide with the $n$-th eigenvalue $\mu_{n}(a)$. Similarly, the theorem asserts that the eigenfunctions of $\Rodzin_a$ can be parametrized by analytic functions in $a>0$, but the eigenfunction $u^n(a)$ need not coincide with the eigenfunctions $u_{n}(a)$ of \Cref{thm:PropertiesMukOmega}. To distinguish the ordered eigenvalues from the eigenvalues of \Cref{prop:AnalyticityKato}, let us give the latter a name: the analytic functions $a\mapsto\mu^n(a)$ of \Cref{prop:AnalyticityKato} are called \textbf{eigenvalue curves}.

\begin{remark} \label{rmk:SenseOfAnalyticity}
    It is clear what analytic means for the scalar valued functions $\mu^n(a)$; for the vector valued functions $u^n(a)$, analyticity is in the sense of \cite[Section VII.1.1]{Kato1995}, namely, $u^n(a)$ is analytic in $a$ if and only if each $a>0$ has a neighborhood in which $\|u^n(a)\|_{L^2(\Omega)}$ is bounded, and the scalar valued function $a \mapsto \langle u^n(a), v \rangle_{L^2(\Omega)}$ is analytic for every $v\in L^2(\Omega)$. In view of \cite[Theorem III.1.37 and Remark III.1.38]{Kato1995}, if $u^n(a)$ is analytic in $a$, then it is strongly continuous in $a$, namely, $\|u^n(a)-u^n(b)\|_{L^2(\Omega)} \to 0$ as $b\to a$.
\end{remark}

A consequence of \Cref{prop:AnalyticityKato} is the piecewise analyticity of the ordered eigenvalues $\mu_{k}(a)$ and associated eigenfunctions $u_{k}(a)$ of $\Rodzin_a$, as the following remark summarizes.

\begin{remark} \label{rmk:AnalyticityKato}
    With the notation of \Cref{prop:AnalyticityKato}, we have that $\mu_{k}(a)$ piecewise coincides with $\mu^n(a)$, for some fixed $n\in\N$. As a consequence, $\mu_{k}(a)$ is piecewise analytic in $a>0$, and in particular it is continuous in $a>0$. The points where there might not be analyticity correspond to crossings of the eigenvalue curves (which occur a finite number of times in every compact set). Crossings of the eigenvalue curves are points where different eigenvalue curves intersect ---see \Cref{fig:EigenRodzinBall}---, leading to a discontinuity of the multiplicity of a certain $\mu_{k}(a)$. In addition, in the intervals $I\subseteq (0,+\infty)$ where $\mu_{k}(a)$ is analytic, by \Cref{prop:AnalyticityKato} we can choose an associated eigenfunction $u^k(a)$ analytic in $a$. Thus, we can choose an attainer $u_{k}(a)$ of $\mu_{k}(a)$ as in \Cref{thm:PropertiesMukOmega} to be piecewise analytic in $a\in(0,+\infty)$ ---which need not be continuous along the crossings of the eigenvalue curves. In the intervals $I\subseteq (0,+\infty)$ where $u_{k}(a)$ is analytic in $a$, in particular it is strongly continuous, and hence $\|u_{k}(a)-u_{k}(b)\|_{L^2(\Omega)} \to 0$ as $b\to a$. 
\end{remark}

The following lemma shows continuity with respect to $a$ in a finer topology than $L^2(\Omega)$ of the eigenfunction $u_{k}(a)$ associated to the eigenvalue $\mu_{k}(a)$ in the intervals $I\subseteq (0,+\infty)$ where it is analytic.

\begin{lemma} \label{lemma:continuityEigenRodzin}
    For every $a>0$ and $k\in \N$, let $I\subseteq (0,+\infty)$ be an open interval where $\mu_{k}(a)$ is analytic, and let $u_{k}(a)$ be an associated eigenfunction analytic in $a$, as in \Cref{rmk:AnalyticityKato}. Then, for every $a\in I$,
    \begin{equation}
        \|u_{k}(b) - u_{k}(a)\|_{E(\Omega)} \to 0 \quad \text{as } b\to a.
    \end{equation}
\end{lemma}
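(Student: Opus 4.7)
The plan is to upgrade the $L^2(\Omega)$-continuity furnished by \Cref{rmk:AnalyticityKato} to $E(\Omega)$-continuity by a Radon--Riesz type argument performed in the Hilbert space $(E(\Omega),\langle\cdot,\cdot\rangle_a)$ introduced in \eqref{def:HilbertSpaceEa}; recall that its norm $\|\cdot\|_a$ is equivalent to $\|\cdot\|_{E(\Omega)}$ by the Poincar\'e type inequality of \Cref{lemma:PoincareTypeIneq}. In a Hilbert space, weak convergence plus norm convergence yields strong convergence, so the goal reduces to proving both $u_{k}(b)\rightharpoonup u_{k}(a)$ weakly in $E(\Omega)$ and $\|u_{k}(b)\|_a\to \|u_{k}(a)\|_a$ as $b\to a$.

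First I would extract an energy identity from the eigenvalue equation. By \Cref{thm:PropertiesMukOmega} and \Cref{prop:RodzinLaplacian}, one has $B_c(u_{k}(c),v)=\mu_{k}(c)\langle u_{k}(c),v\rangle_{L^2(\Omega)}$ for every $v\in E(\Omega)$ and every $c\in I$. Testing against $v=u_{k}(c)$ produces
\begin{equation}
    \|u_{k}(c)\|_c^2 = \mu_{k}(c)\|u_{k}(c)\|_{L^2(\Omega)}^2 \quad \text{for every } c\in I.
\end{equation}
Since $\mu_{k}$ is continuous in $I$ (\Cref{rmk:AnalyticityKato}) and $u_{k}(b)\to u_{k}(a)$ strongly in $L^2(\Omega)$, this yields $\|u_{k}(b)\|_b^2\to\|u_{k}(a)\|_a^2$ as $b\to a$. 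Picking a compact subinterval $[a-\delta,a+\delta]\subset I$ so that $b\geq a-\delta>0$, one deduces uniform bounds on both $\|\partial_{\bar z}u_{k}(b)\|_{L^2(\Omega)}$ and $\|u_{k}(b)\|_{L^2(\partial\Omega)}$ for $b$ in a neighborhood of $a$, hence uniform boundedness of $\{u_{k}(b)\}$ in $E(\Omega)$.

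For the weak convergence, given any sequence $b_j\to a$, reflexivity of $E(\Omega)$ together with the uniform bound extracts a subsequence with $u_{k}(b_j)\rightharpoonup w$ weakly in $E(\Omega)$ for some $w\in E(\Omega)$. The strong $L^2(\Omega)$-convergence forces the $L^2$-component of $w$ to be $u_{k}(a)$; testing against $\mathcal C^\infty_c(\Omega)$ identifies the weak $L^2(\Omega)$-limit of $\partial_{\bar z}u_{k}(b_j)$ with $\partial_{\bar z}u_{k}(a)$; and \Cref{lemma:propertiesWirtinger}$(ii)$ together with the Green formulas \eqref{eq:GreenFormulas} ensures that the boundary trace passes to the limit in $H^{-1/2}(\partial\Omega)$, so $w$ agrees with $u_{k}(a)$ also on $\partial\Omega$. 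Uniqueness of the weak limit promotes the convergence to the whole family. Finally, the equality $\|u_{k}(b)\|_a^2-\|u_{k}(b)\|_b^2=(a-b)\|u_{k}(b)\|_{L^2(\partial\Omega)}^2$ combined with the uniform boundedness of the boundary norm yields $\|u_{k}(b)\|_a\to\|u_{k}(a)\|_a$, and Radon--Riesz closes the argument. The step I expect to require the most care is the identification of the weak limit, specifically checking that the $L^2(\partial\Omega)$-weak limit of the traces is indeed the trace of $u_{k}(a)$; this is where the $H^{-1/2}$ trace continuity from \Cref{lemma:propertiesWirtinger}$(ii)$, rather than only the $L^2(\Omega)\times L^2(\Omega)$ information, is essential.
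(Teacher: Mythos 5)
Your argument is correct, and it reaches the conclusion by a genuinely different mechanism than the paper. The paper's proof is a direct expansion: it writes $\|u_{k}(a)-u_{k}(b)\|_a^2=\|u_{k}(a)\|_a^2+\|u_{k}(b)\|_a^2-2\mathrm{Re}\,B_a(u_{k}(b),u_{k}(a))$, converts each squared norm via the eigen-identity $\|u_{k}(c)\|_c^2=\mu_{k}(c)\|u_{k}(c)\|_{L^2(\Omega)}^2$, rewrites the cross term as $B_b(u_{k}(b),u_{k}(a))+(a-b)\int_{\partial\Omega}u_{k}(b)\,\overline{u_{k}(a)}$, and passes to the limit using only the continuity of $\mu_{k}$, the strong $L^2(\Omega)$-continuity of $u_{k}$, and the bound $\|u_{k}(b)\|_{L^2(\partial\Omega)}^2\leq \mu_{k}(b)\|u_{k}(b)\|_{L^2(\Omega)}^2/b$; no compactness or weak convergence is invoked. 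You instead run a Radon--Riesz scheme: uniform boundedness in $E(\Omega)$, extraction of weak subsequential limits and their identification with $u_{k}(a)$ (your identification of the derivative and of the trace is exactly the argument used in the proof of \Cref{lemma:AmbientHilbertSpace}; it can also be phrased as weak-weak continuity of the bounded maps $u\mapsto u$, $u\mapsto\partial_{\bar z}u$, $u\mapsto t_{\partial\Omega}u$ from $E(\Omega)$, using \Cref{lemma:propertiesWirtinger}), together with convergence of $\|u_{k}(b)\|_a$ to $\|u_{k}(a)\|_a$. Both proofs rest on the same two ingredients --- the eigen-identity and the uniform boundary bound --- but yours additionally uses reflexivity of $E(\Omega)$ and a subsequence/uniqueness argument, which makes it slightly longer though perfectly sound and arguably more robust. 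Note that your weak-compactness step can be short-circuited: the cross term satisfies $\langle u_{k}(b),u_{k}(a)\rangle_a=\mu_{k}(b)\langle u_{k}(b),u_{k}(a)\rangle_{L^2(\Omega)}+(a-b)\langle u_{k}(b),u_{k}(a)\rangle_{L^2(\partial\Omega)}\to \mu_{k}(a)\|u_{k}(a)\|_{L^2(\Omega)}^2=\|u_{k}(a)\|_a^2$ directly from the strong $L^2(\Omega)$-continuity and the boundary bound, which is precisely the paper's shortcut and removes the need to identify weak limits at all.
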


\begin{proof}
    For the sake of notation, let us denote $\mu(a) \equiv \mu_{k}(a)$ and $u(a) \equiv u_{k}(a)$. Since by \Cref{lemma:PoincareTypeIneq} the norms $\|\cdot\|_{E(\Omega)}$ and $\|\cdot\|_a$ are equivalent on $E(\Omega)$, where 
    \begin{equation} 
        \langle u, v \rangle_a := B_a(u,v) = \displaystyle{ 4 \int_\Omega \partial_{\bar z} u \, \overline{\partial_{\bar z}v} + a \int_{\partial\Omega} u \, \overline v } \quad \text{and} \quad \|u\|_a^2 := B_a(u,u),
    \end{equation}
    in order to prove the lemma we will prove that $\|u(a) - u(b)\|_a \to 0$ as $b\to a$. Notice that by \Cref{rmk:AnalyticityKato}, both $\mu(a)$ and $\langle u(a), v \rangle_{L^2(\Omega)}$ are continuous in $a\in I$, for every $v\in L^2(\Omega)$. Hence, since by \Cref{prop:RodzinLaplacian} and \eqref{eq:DeterminationOfRodzin} there holds
    \begin{equation} \label{eq:auxContn}
        \mu(a) \langle u(a), v \rangle_{L^2(\Omega)} = B_a(u(a), v) \quad \text{for every } v\in E(\Omega),
    \end{equation}
    we conclude that $a\mapsto B_a(u(a), v)$ is continuous in $a\in I$ for every $v\in E(\Omega)$.
    
    To prove that $\|u(a) - u(b)\|_a \to 0$ as $b\to a$, using that $u(a)$ and $u(b)$ are eigenfunctions, we rewrite this norm as
    \begin{equation} \label{eq:AuxiliarContinuity1}
        \begin{split}
            \|u(a) - u(b)\|_a^2 & = \|u(a)\|_a^2 + \|u(b)\|_a^2 - 2\text{Re} \, B_a \left( u(b), u(a) \right) \\
            & = \mu(a) \|u(a)\|_{L^2(\Omega)}^2 + \mu(b) \|u(b)\|_{L^2(\Omega)}^2 \\
            & \quad + (a-b) \int_{\partial\Omega} |u(b)|^2 - 2\text{Re} \, B_a \left( u(b), u(a) \right) \\
            & = \mu(a) \|u(a)\|_{L^2(\Omega)}^2 + \mu(b) \|u(b)\|_{L^2(\Omega)}^2 - 2\text{Re} \, B_b \left( u(b), u(a) \right) \\
            & \quad + (a-b) \left( \int_{\partial\Omega} |u(b)|^2 - 2\text{Re} \int_{\partial\Omega} u(b) \, \overline{u(a)} \right),
        \end{split}
    \end{equation}
    and study the convergence of the last expression as $b\to a$.  
    
    On the one hand, by continuity of $b\mapsto \mu(b)$, $b\mapsto u(b)$ in $L^2(\Omega)$ ---recall \Cref{rmk:SenseOfAnalyticity}---, and of $b\mapsto B_b(u(b), v)$ for fixed $v\in E(\Omega)$, using \eqref{eq:auxContn} we have
    \begin{equation} \label{eq:AuxiliarContinuity2}
        \begin{split}
            & \mu(a) \|u(a)\|_{L^2(\Omega)}^2 + \mu(b) \|u(b)\|_{L^2(\Omega)}^2 - 2\text{Re} \left( B_b \left( u(b), u(a) \right) \right) \\
            & \quad \longrightarrow 2 \mu(a) \|u(a)\|_{L^2(\Omega)}^2 - 2\text{Re} \, B_a \left( u(a), u(a) \right) = 0 \quad \text{as } b \to a.
        \end{split}
    \end{equation}
    On the other hand, by continuity of $b\mapsto \mu(b)/b$ and $b\mapsto u(b)$ in $L^2(\Omega)$, using \eqref{eq:auxContn} we have
    \begin{equation} \label{eq:AuxiliarContinuity3}
        \begin{split}
            & \left| \int_{\partial\Omega} |u(b)|^2 - 2\text{Re} \int_{\partial\Omega} u(b) \, \overline{u(a)} \right| \leq \|u(b)\|_{L^2(\partial\Omega)}^2 + \int_{\partial\Omega} \left( |u(b)|^2 + |u(a)|^2 \right) \\
            & \quad \leq \|u(a)\|_{L^2(\partial\Omega)}^2 + 2 \|u(b)\|_{L^2(\partial\Omega)}^2 \leq  \frac{1}{a}B_a(u(a),u(a)) + \frac{2}{b} B_b(u(b),u(b))  \\
            & \quad = \dfrac{\mu(a)}{a} \|u(a)\|_{L^2(\Omega)}^2 + 2 \dfrac{\mu(b)}{b} \|u(b)\|_{L^2(\Omega)}^2  \to 3 \dfrac{\mu(a)}{a} \|u(a)\|_{L^2(\Omega)}^2 \quad \text{as } b \to a,
        \end{split}
    \end{equation}
    which is bounded. Taking the limit $b\to a$ in \eqref{eq:AuxiliarContinuity1}, by \eqref{eq:AuxiliarContinuity2} and \eqref{eq:AuxiliarContinuity3} we conclude that $\|u(a) - u(b)\|_a \to 0$ as $b\to a$, as desired.
\end{proof}

\subsection{Dependence of the eigenvalues of $\Rodzin_a$ on $a$} \label{sec:GlobalSpectralProperties}

In this section, we prove the properties that the eigenvalues described in \Cref{thm:PropertiesMukOmega,thm:PropertiesMukOmegaGlobal} satisfy, seen as functions of the boundary parameter $a$.

\begin{proof}[Proof of \Cref{thm:PropertiesMukOmegaGlobal}]
    \Cref{item:Analytic} is a direct combination of \Cref{thm:PropertiesMukOmega}, \Cref{rmk:AnalyticityKato} and Lemma \ref{lemma:continuityEigenRodzin}. Before showing \cref{item:ContIncBij}, let us address the proof of \cref{item:Derivative}. Since $u_{k}(a)$ is an attainer, by \Cref{thm:PropertiesMukOmega} it is an eigenfunction of $\Rodzin_a$ of eigenvalue $\mu_{k}(a)$. Then, by \Cref{prop:RodzinLaplacian} and \eqref{eq:DeterminationOfRodzin} we have
    \begin{equation}
        \mu_{k}(a) \int_\Omega u_{k}(a) \, \overline{u_{k}(b)} = 4\int_\Omega \partial_{\bar z} u_{k}(a) \, \overline{\partial_{\bar z} u_{k}(b)} + a \int_{\partial\Omega} u_{k}(a) \, \overline{u_{k}(b)}.
    \end{equation}
    Similarly, and since $\mu_k(b)\in \R$, for every $b\in (0,+\infty)\setminus X_k$ in the same connected component as $a$, we have
    \begin{equation}
        \mu_{k}(b) \int_\Omega u_{k}(a) \, \overline{u_{k}(b)} = 4\int_\Omega \partial_{\bar z} u_{k}(a) \, \overline{\partial_{\bar z} u_{k}(b)} + b \int_{\partial\Omega} u_{k}(a) \, \overline{u_{k}(b)}.
    \end{equation}
    Since $\langle u_{k}(a), u_{k}(b) \rangle_{L^2(\Omega)} \to \|u_{k}(a)\|_{L^2(\Omega)}^2 \neq 0$ as $b\to a$, for close enough $b$ to $a$ we conclude that $\langle u_{k}(a), u_{k}(b) \rangle_{L^2(\Omega)} \neq 0$. Hence, subtracting the previous equations and dividing by $\langle u_{k}(a), u_{k}(b) \rangle_{L^2(\Omega)}$, we obtain that
    \begin{equation}
        \mu_{k}(a) - \mu_{k}(b) = (a-b) \dfrac{\int_{\partial\Omega} u_{k}(a) \, \overline{u_{k}(b)}}{\int_\Omega u_{k}(a) \, \overline{u_{k}(b)}}.
    \end{equation}
    Dividing both sides by $a-b$ and taking the limit $b\to a$, by the continuity of $b\mapsto u_{k}(b)$ in the $E(\Omega)-$norm, given by \Cref{lemma:continuityEigenRodzin}, we conclude that 
    \begin{equation}
        \dfrac{d}{da} \mu_{k}(a) = \underset{b\to a}{\lim} \ \frac{\mu_{k}(a) - \mu_{k}(b)}{a-b} = \frac{\int_{\partial\Omega} |u_{k}(a)|^2}{\int_\Omega |u_{k}(a)|^2}.
    \end{equation}
    We conclude the proof of \cref{item:Derivative} justifying that the right-hand side is strictly positive for every $a\in(0,+\infty)$. Indeed, if it was zero for some $a_\star>0$, then $u_{k}(a_\star)$ would be an eigenfunction of $\Rodzin_{a_\star}$ of eigenvalue $\mu_{k}(a_\star)$ ---by \Cref{thm:PropertiesMukOmega}--- with zero trace in $L^2(\partial\Omega)$. Using that $2\bar\nu \partial_{\bar z} u_{k}(a_\star) = \partial_\nu u_{k}(a_\star) + i \partial_\tau u_{k}(a_\star)$ and that $u_{k}(a_\star)$ would be constantly zero on $\partial\Omega$, we would conclude (similarly as in the proof of \Cref{thm:PropertiesMukOmega}) that $u_{k}(a_\star)$ solves
    \begin{equation}
        \begin{cases}
            -\Delta u_{k}(a_\star) = \mu_{k}(a_\star) u_{k}(a_\star) & \text{in } \Omega, \\
            u_{k}(a_\star) = 0 & \text{on } \partial\Omega,\\
            \partial_\nu u_{k}(a_\star) = 0 & \text{on } \partial\Omega.
        \end{cases}
    \end{equation}
    But then, \cite[Lemma 1]{Gregoire1976} would yield that $u_{k}(a_\star) = 0$, reaching a contradiction.

    Once \cref{item:Derivative} is proved we can establish \cref{item:ContIncBij}. We first show that $\mu_{k}(a) \to 0$ as $a\to0^+$. Indeed, since by \cref{item:Analytic} we know that $\mu_{k}(a)$ is continuous ---and positive--- in $(0,+\infty)$, and $A^2(\Omega)\cap L^2(\partial\Omega) \subset E(\Omega)$ ---recall the Bergman space $A^2(\Omega)$ studied in \Cref{sec:BergmanSpace}---, we have
    \begin{equation}
        \begin{split}
            0 & \leq \underset{a\to 0^+}{\lim} \, \mu_{k}(a) \leq \underset{a\to 0^+}{\lim} \, \underset{\substack{ F\subset A^2(\Omega)\cap L^2(\partial\Omega) \\ \mathrm{dim}(F)=k }}{\inf} \, \, \underset{u\in F\setminus\{0\}}{\sup} \, \dfrac{4\int_\Omega |\partial_{\bar z} u|^2 + a \int_{\partial\Omega} |u|^2}{\int_\Omega |u|^2} \\
            & = \underset{a\to 0^+}{\lim} \, \underset{\substack{ F\subset A^2(\Omega)\cap L^2(\partial\Omega) \\ \mathrm{dim}(F)=k }}{\inf} \, \, \underset{u\in F\setminus\{0\}}{\sup} \, \dfrac{a \int_{\partial\Omega} |u|^2}{\int_\Omega |u|^2} = \underset{a\to 0^+}{\lim} \, a \left( \underset{\substack{ F\subset A^2(\Omega)\cap L^2(\partial\Omega) \\ \mathrm{dim}(F)=k }}{\inf} \, \, \underset{u\in F\setminus\{0\}}{\sup} \, \dfrac{ \int_{\partial\Omega} |u|^2}{\int_\Omega |u|^2} \right) = 0,
        \end{split}
    \end{equation}
    where in the last equality we have used the finiteness of the min-max level, given by \Cref{prop:MinMaxSplope}. The convergence $\mu_{k}(a) \to \Lambda_{k}$ as $a\to+\infty$ is shown in \Cref{thm:NRCDirichletLaplacian}, a proof of which shall be given in \Cref{sec:NRCDirichlet}. 

    Finally, from \cref{item:Derivative} we have that $a\mapsto \mu_{k}(a)$ is strictly increasing in $(0,+\infty)\setminus X_k$. Since $X_k$ is finite in every compact subset of $(0,+\infty)$, we conclude that $a\mapsto \mu_{k}(a)$ is a strictly increasing and continuous function from $(0,+\infty)$ to $(0,\Lambda_{k})$, with $\mu_{k}(a) \to 0$ as $a\to0^+$ and $\mu_{k}(a) \to \Lambda_{k}$ as $a\to+\infty$. As a consequence, \cref{item:ContIncBij} follows.

    It only remains to prove that for $k=1$ the function $a\mapsto \mu_{1}(a)$ is strictly concave in $(0,+\infty)$, namely, that for every $a\in(0,+\infty)$ the following inequality holds:
    \begin{equation}
        \mu_1(ta+(1-t)b) > t\mu_1(a) + (1-t)\mu_1(b) \quad \text{for every } b\in(0,+\infty)\setminus\{a\} \text{ and } t\in(0,1).
    \end{equation}
    Since the infimum of a sum is larger than the sum of infima, in view of \Cref{thm:PropertiesMukOmega} we have
    \begin{equation}
        \begin{split}
            \mu_1(ta+(1-t)b) & = \underset{u\in E(\Omega)\setminus\{0\}}{\inf} \, \frac{4\int_\Omega |\partial_{\bar z}u|^2 + (ta+(1-t)b) \int_{\partial\Omega} |u|^2}{\int_\Omega |u|^2} \\
            & = \underset{u\in E(\Omega)\setminus\{0\}}{\inf} \, \left( t\frac{4\int_\Omega |\partial_{\bar z}u|^2 + a \int_{\partial\Omega} |u|^2}{\int_\Omega |u|^2} + (1-t) \frac{4\int_\Omega |\partial_{\bar z}u|^2 + b \int_{\partial\Omega} |u|^2}{\int_\Omega |u|^2} \right) \\
            & \geq t\underset{u\in E(\Omega)\setminus\{0\}}{\inf} \, \frac{4\int_\Omega |\partial_{\bar z}u|^2 + a \int_{\partial\Omega} |u|^2}{\int_\Omega |u|^2} + (1-t) \underset{u\in E(\Omega)\setminus\{0\}}{\inf} \, \frac{4\int_\Omega |\partial_{\bar z}u|^2 + b \int_{\partial\Omega} |u|^2}{\int_\Omega |u|^2} \\
            & = t\mu_1(a) + (1-t)\mu_1(b).
        \end{split}
    \end{equation}
    We conclude the proof of \cref{item:Concavity} justifying that the inequality is strict. Since, by \Cref{thm:PropertiesMukOmega}, the infima in $\mu_1(ta+(1-t)b)$, $\mu_1(a)$ and $\mu_1(b)$ are attained, respectively, by $u_1(ta+(1-t)b)$, $u_1(a)$ and $u_1(b)$, if there was equality for some $a\in(0,+\infty)$, $b\in(0,+\infty)\setminus\{a\}$ and $t\in(0,1)$, then $u_1(ta+(1-t)b)$ would be a minimizer for both $\mu_1(a)$ and $\mu_1(b)$. Hence, by \Cref{thm:PropertiesMukOmega}, $u_1(ta+(1-t)b)$ would solve
    \begin{equation}
        \mu_1(a) u_1(ta+(1-t)b) = - \Delta u_1(ta+(1-t)b) = \mu_1(b) u_1(ta+(1-t)b) \quad \text{in } \Omega.
    \end{equation}
    Since $a\mapsto\mu_1(a)$ is strictly increasing and $b\neq a$, there holds $\mu_1(a)\neq \mu_1(b)$. Hence, the previous equation would lead to $u_1(ta+(1-t)b)=0$ in $\Omega$, which is a contradiction.
\end{proof}

As we said, to complete the proof of \Cref{thm:PropertiesMukOmegaGlobal} it only remains to show that $\mu_{k}(a) \to \Lambda_{k}$ as $a\to+\infty$. This could be proven by induction with a standard compactness argument. More directly, it is a consequence of \Cref{thm:NRCDirichletLaplacian}, which we prove in the following section.

\subsection{Convergence of $\Rodzin_a$} \label{sec:NRCDirichlet}

In this section, we study the convergence of $\Rodzin_a$ as $a$ moves in $(0,+\infty)$. We first show that $\Rodzin_a$ converges to $-\Delta_D$ ---recall the definition of $-\Delta_D$ in \eqref{eq:OpDirichletLaplacian}--- in the norm resolvent sense as $a\to+\infty$.

\begin{proof}[Proof of \Cref{thm:NRCDirichletLaplacian}]
    Since $-\Delta_D$ and $\Rodzin_a$ are self-adjoint, by \cite[Theorem VIII.19 (a)]{ReedSimon1980} it is enough to prove that the difference of resolvents at $\lambda=i$, namely
    \begin{equation}
        W_{a,D} := (\Rodzin_a-i)^{-1} - (-\Delta_D - i)^{-1},
    \end{equation}
    converges to zero in the operator norm as $a\to+\infty$. Given $f,g\in L^2(\Omega)$, set
    \begin{equation}
        \begin{split}
            \varphi_a & := (\Rodzin_a-i)^{-1} f \in \Dom(\Rodzin_a), \\
            \psi & := (-\Delta_D + i)^{-1} g \in \Dom(-\Delta_D).
        \end{split}
    \end{equation}
    Integration by parts and self-adjointness of $-\Delta_D$ lead to
    \begin{equation} \label{eq:auxNRCPairingInfty}
        \begin{split}
            \langle W_{a,D}f, g\rangle_{L^2(\Omega)} & = \langle (\Rodzin_a-i)^{-1}f - (-\Delta_D - i)^{-1}f, g\rangle_{L^2(\Omega)} \\
            & = \langle (\Rodzin_a-i)^{-1}f, g\rangle_{L^2(\Omega)} - \langle f, (-\Delta_D + i)^{-1}g \rangle_{L^2(\Omega)} \\
            & = \langle \varphi_a, (-\Delta_D + i)\psi \rangle_{L^2(\Omega)} - \langle (\Rodzin_a-i)\varphi_a, \psi \rangle_{L^2(\Omega)} \\
            & = \langle \varphi_a, -\Delta \psi \rangle_{L^2(\Omega)} - \langle -\Delta \varphi_a, \psi \rangle_{L^2(\Omega)} \\
            & = - \langle \varphi_a, 2 \overline \nu \partial_{\bar z} \psi \rangle_{L^2(\partial\Omega)} + \langle 2 \bar \nu \partial_{\bar z} \varphi_a, \psi \rangle_{L^2(\partial\Omega)} \\
            & = \dfrac{4}{a} \langle \partial_{\bar z} \varphi_a, \partial_{\bar z} \psi \rangle_{L^2(\partial\Omega)},
        \end{split}
    \end{equation}
    where in the last equality we have used the boundary conditions $2 \bar \nu \partial_{\bar z} \varphi_a + a \varphi_a=0$ and $\psi=0$ that $\varphi_a\in \Dom(\Rodzin_a)$ and $\psi\in \Dom(-\Delta_D)$ satisfy. As we shall see afterward, and in view of \eqref{eq:auxNRCPairingInfty}, in order to prove the norm resolvent convergence it is enough to bound the last pairing uniformly in $a$. This is what we do next. 
    
    Since both $\partial_{\bar z} \varphi_a$ and $\partial_{\bar z} \psi$ belong to $H^1(\Omega)$, by the property of the pairing \eqref{eq:Brezis}, the trace theorem from $H^1(\Omega)$ to $H^{1/2}(\partial\Omega)$, and \Cref{lemma:propertiesWirtinger} for $\partial_{\bar z} \varphi_a \in \mathrm{Dom}(\partial_{\mathrm{ah}})$, we deduce that
    \begin{equation} \label{eq:boundWaD}
        \left| \langle \partial_{\bar z} \varphi_a, \partial_{\bar z} \psi \rangle_{L^2(\partial\Omega)} \right| \leq \|\partial_{\bar z} \varphi_a\|_{H^{-1/2}(\partial\Omega)} \|\partial_{\bar z} \psi\|_{H^{1/2}(\partial\Omega)} \leq C \left( \|\partial_{\bar z} \varphi_a\|_{L^2(\Omega)} + \|\Delta\varphi_a \|_{L^2(\Omega)} \right) \|\psi\|_{H^2(\Omega)}
    \end{equation}
    for some constant $C>0$ depending only on $\Omega$. On the one hand, by the resolvent estimates \eqref{eq:L2boundLaplacian} and \eqref{eq:L2boundDz} for $\lambda=i$, we have that
    \begin{equation}
         \|\partial_{\bar z} \varphi_a\|_{L^2(\Omega)} + \|\Delta\varphi_a \|_{L^2(\Omega)} \leq (2+\sqrt{2}) \|f\|_{L^2(\Omega)}.
    \end{equation}
    On the other hand, since $\psi$ is the unique solution to the Laplacian problem
    \begin{equation}
        \begin{cases}
            -\Delta \psi + i\psi = g & \text{in } \Omega, \\
            \psi = 0 & \text{on } \partial\Omega,
        \end{cases}
    \end{equation}
    with $g \in L^2(\Omega)$, by standard regularity theory ---see, for example, \cite[Theorem 4 in Section 6.3.2 and Remark in pg. 335]{Evans2010}--- we have
    \begin{equation}
        \|\psi\|_{H^2(\Omega)} \leq C\|g\|_{L^2(\Omega)}.
    \end{equation}
    Combining these two estimates with \eqref{eq:boundWaD} and \eqref{eq:auxNRCPairingInfty}, we obtain
    \begin{equation}
        \left|\langle W_{a,D}f, g\rangle_{L^2(\Omega)}\right| \leq K/a \|f\|_{L^2(\Omega)} \|g\|_{L^2(\Omega)},
    \end{equation}
    for some constant $K>0$ depending only on $\Omega$. Dividing both sides of this last inequality by $\|f\|_{L^2(\Omega)} \|g\|_{L^2(\Omega)}$ and taking the supremum among all functions $f,g\in L^2(\Omega)\setminus\{0\}$, we conclude that
    \begin{equation}
        \|W_{a,D}\|_{L^2(\Omega)\to L^2(\Omega)} \leq K/a.
    \end{equation}
    Norm resolvent convergence follows taking $a\to +\infty$. Then, the convergence of the spectra follows by \cite[Theorems VIII.23 (a) and VIII.24 (a)]{ReedSimon1980}. As a consequence of \cite[Theorem 2.3.1]{Henrot2006} applied to the resolvents of $\Rodzin_a$ and $-\Delta_D$, the $k-$th eigenvalue of $\Rodzin_a$ converges to the $k-$th eigenvalue of $-\Delta_D$ as $a$ goes to $+\infty$. This concludes the proof of the theorem.
\end{proof}

The proof of \Cref{thm:NRCDirichletLaplacian} adapts without difficulties to show that $\Rodzin_a$ converges to $\Rodzin_{a_0}$ in the norm resolvent sense as $a\to a_0$, for every $a_0>0$.

\begin{proposition} \label{prop:NRCFiniteValues}
    For every $a_0>0$, the $\overline\partial$-Robin Laplacian $\Rodzin_a$ converges to $\Rodzin_{a_0}$ in the norm resolvent sense as $a\to a_0$. 
\end{proposition}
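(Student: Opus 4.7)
The plan is to mimic the proof of \Cref{thm:NRCDirichletLaplacian}, replacing the Dirichlet boundary condition for $\psi$ by the $\overline\partial$-Robin boundary condition at parameter $a_0$. Since $\Rodzin_a$ and $\Rodzin_{a_0}$ are both self-adjoint, by \cite[Theorem VIII.19 (a)]{ReedSimon1980} it suffices to prove that
\begin{equation}
    W_{a,a_0} := (\Rodzin_a - i)^{-1} - (\Rodzin_{a_0} - i)^{-1}
\end{equation}
tends to zero in operator norm as $a \to a_0$. Given $f, g \in L^2(\Omega)$, set $\varphi_a := (\Rodzin_a - i)^{-1} f \in \Dom(\Rodzin_a)$ and $\psi_{a_0} := (\Rodzin_{a_0} + i)^{-1} g \in \Dom(\Rodzin_{a_0})$, so that $(\Rodzin_{a_0} + i)\psi_{a_0} = g$ by self-adjointness.

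The first step is to repeat the Green's formula computation in \eqref{eq:auxNRCPairingInfty}. Writing $\langle W_{a,a_0} f, g\rangle_{L^2(\Omega)}$ as $\langle \varphi_a, -\Delta\psi_{a_0}\rangle_{L^2(\Omega)} - \langle -\Delta \varphi_a, \psi_{a_0}\rangle_{L^2(\Omega)}$ and applying integration by parts as in the proof of \Cref{thm:NRCDirichletLaplacian}, the interior terms cancel and the boundary pairing
\begin{equation}
    \langle 2\bar\nu \partial_{\bar z}\varphi_a, \psi_{a_0}\rangle_{L^2(\partial\Omega)} - \langle \varphi_a, 2\bar\nu \partial_{\bar z}\psi_{a_0}\rangle_{L^2(\partial\Omega)}
\end{equation}
survives. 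Using now the boundary conditions $2\bar\nu \partial_{\bar z}\varphi_a = -a\varphi_a$ and $2\bar\nu\partial_{\bar z}\psi_{a_0} = -a_0\psi_{a_0}$ in $H^{1/2}(\partial\Omega)$, this simplifies to
\begin{equation} \label{eq:PlanBoundaryIdentity}
    \langle W_{a,a_0} f, g\rangle_{L^2(\Omega)} = (a_0 - a) \, \langle \varphi_a, \psi_{a_0}\rangle_{L^2(\partial\Omega)}.
\end{equation}
This is the crucial identity: the factor $(a_0 - a)$ appears explicitly and will provide the needed smallness as $a \to a_0$.

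It then remains to bound $|\langle \varphi_a, \psi_{a_0}\rangle_{L^2(\partial\Omega)}|$ uniformly for $a$ close to $a_0$. By Cauchy-Schwarz and the trace theorem,
\begin{equation}
    |\langle \varphi_a, \psi_{a_0}\rangle_{L^2(\partial\Omega)}| \leq C_\Omega \|\varphi_a\|_{H^1(\Omega)} \|\psi_{a_0}\|_{H^1(\Omega)},
\end{equation}
and the resolvent bound from \Cref{prop:BoundedResolvent} at $\lambda = \pm i$ gives $\|\varphi_a\|_{H^1(\Omega)} \leq C(1+1/a)\|f\|_{L^2(\Omega)}$ and $\|\psi_{a_0}\|_{H^1(\Omega)} \leq C(1+1/a_0)\|g\|_{L^2(\Omega)}$. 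Restricting to $a$ in a fixed neighborhood of $a_0$ bounded away from zero, say $a \geq a_0/2$, the factor $(1+1/a)$ stays bounded. Combining with \eqref{eq:PlanBoundaryIdentity} yields
\begin{equation}
    \|W_{a,a_0}\|_{L^2(\Omega)\to L^2(\Omega)} \leq K \, |a_0 - a|
\end{equation}
for some $K>0$ depending only on $\Omega$ and $a_0$, which tends to $0$ as $a\to a_0$.

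I do not foresee a serious obstacle here: the bulk of the argument is a verbatim adaptation of \Cref{thm:NRCDirichletLaplacian}, the only genuine change being that one uses the $\overline\partial$-Robin boundary condition at $a_0$ in place of the Dirichlet condition $\psi = 0$, which leads to the cleaner identity \eqref{eq:PlanBoundaryIdentity} with its manifest prefactor $(a_0-a)$. The minor point to be careful about is that the resolvent estimate from \Cref{prop:BoundedResolvent} deteriorates as $a\to 0^+$, so the argument is genuinely local around $a_0>0$; this is exactly why the result is stated for $a_0>0$ and why norm resolvent convergence as $a\to 0^+$ fails, as explained before \Cref{thm:SRCRodzin0}.
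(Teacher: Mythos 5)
Your proposal is correct and follows essentially the same route as the paper: the same resolvent difference at $\lambda=\pm i$, the same Green's formula computation leading to the identity $\langle W_{a,a_0}f,g\rangle_{L^2(\Omega)}=(a_0-a)\langle\varphi_a,\psi_{a_0}\rangle_{L^2(\partial\Omega)}$, and then a trace estimate on the boundary pairing. The only (harmless) difference is in that last estimate: the paper bounds $\|\varphi_a\|_{H^{-1/2}(\partial\Omega)}$ via \Cref{lemma:propertiesWirtinger} using only $\|\varphi_a\|_{L^2(\Omega)}+\|\partial_{\bar z}\varphi_a\|_{L^2(\Omega)}$, which is uniform in $a$ without any restriction, whereas you invoke the full $H^1$ bound of \Cref{prop:BoundedResolvent} for $\varphi_a$ and compensate the $(1+1/a)$ factor by restricting to $a\geq a_0/2$ — perfectly legitimate for a limit at $a_0>0$.
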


\begin{proof}
    As before, we will show that
    \begin{equation}
        W_{a,a_0} := (\Rodzin_a-i)^{-1} - (\Rodzin_{a_0} - i)^{-1},
    \end{equation}
    converges to zero in norm as $a\to a_0$. Given $f,g\in L^2(\Omega)$, set
    \begin{equation}
        \varphi_a := (\Rodzin_a-i)^{-1} f \in \Dom(\Rodzin_a) \quad \text{and} \quad \varphi_{a_0} := (\Rodzin_{a_0} + i)^{-1} g \in \Dom(\Rodzin_{a_0}).
    \end{equation}
    Similarly as in \eqref{eq:auxNRCPairingInfty}, integration by parts and self-adjointness of $\Rodzin_{a_0}$ straightforwardly lead to
    \begin{equation}
        \langle W_{a,a_0}f, g\rangle_{L^2(\Omega)} = - \langle \varphi_a, 2 \overline \nu \partial_{\bar z} \varphi_{a_0} \rangle_{L^2(\partial\Omega)} + \langle 2 \bar \nu \partial_{\bar z} \varphi_a, \varphi_{a_0} \rangle_{L^2(\partial\Omega)}.
    \end{equation}
    Using the boundary conditions $2\overline \nu \partial_{\bar z} \varphi_\alpha + \alpha \varphi_\alpha=0$ for $\alpha\in\{a,a_0\}$, we get that
    \begin{equation} \label{eq:auxNRCPairing}
        \langle W_{a,a_0}f, g\rangle_{L^2(\Omega)} =(a_0-a) \langle \varphi_a, \varphi_{a_0} \rangle_{L^2(\partial\Omega)}.
    \end{equation}
    As we did for \Cref{thm:NRCDirichletLaplacian}, it is enough to bound the pairing in the right-hand side of \eqref{eq:auxNRCPairing} uniformly in $a$. This is what we do next.
    
    First, since $\varphi_a,\varphi_{a_0}\in H^1(\Omega)$, by the property of the pairing \eqref{eq:Brezis}, the trace theorem from $H^1(\Omega)$ to $H^{1/2}(\partial\Omega)$, and \Cref{lemma:propertiesWirtinger}, we deduce that
    \begin{equation} \label{eq:boundWaa0}
        \left| \langle \varphi_a, \varphi_{a_0} \rangle_{L^2(\partial\Omega)} \right | \leq \| \varphi_a \|_{H^{-1/2}(\partial\Omega)} \| \varphi_{a_0} \|_{H^{1/2}(\partial\Omega)} \leq C \left( \| \varphi_a\|_{L^2(\Omega)} + \|\partial_{\bar z} \varphi_a \|_{L^2(\Omega)} \right) \|\varphi_{a_0}\|_{H^1(\Omega)}
    \end{equation}
    for some constant $C>0$ depending only on $\Omega$. On the one hand, by the resolvent estimates \eqref{eq:BoundedResolventL2L2} and \eqref{eq:L2boundDz} for $\lambda=i$, we have that
    \begin{equation}
         \|\varphi_a\|_{L^2(\Omega)} + \| \partial_{\bar z} \varphi_a \|_{L^2(\Omega)} \leq (1+\sqrt{2}) \|f\|_{L^2(\Omega)}.
    \end{equation}
    On the other hand, by \Cref{prop:BoundedResolvent} with $\lambda=-i$ ---and $g$ instead of $f$---, we have that
    \begin{equation}
        \|\varphi_{a_0}\|_{H^1(\Omega)} \leq C(1+1/a_0) \|g\|_{L^2(\Omega)},
    \end{equation}
    for some constant $C>0$ depending only on $\Omega$. Combining these two estimates with \eqref{eq:boundWaa0} we obtained the desired uniform bound and conclude the proof.
\end{proof}

We conclude this section with the convergence of $\Rodzin_a$ to $\Rodzin_0$ as $a\to 0^+$. To this end, we first study the $\overline\partial$-Neumann Laplacian $\Rodzin_0$ defined in \eqref{eq:Rodzin0}.

\begin{proof}[Proof of \Cref{prop:SpectrumRodzin0}]
    We start showing that the $\overline\partial$-Neumann Laplacian is self-adjoint in $L^2(\Omega)$. To this end, by \cite[Proposition 8.5 in Appendix A]{Taylor2011} it is enough to show that it is symmetric with $\mathrm{Range}(\Rodzin_0\pm i) = L^2(\Omega)$.

    We first verify that $\Rodzin_0$ is symmetric. Indeed, for any $u,v\in \Dom(\Rodzin_0)$, since $\partial_{\bar z} u, \partial_{\bar z} v \in H^1_0(\Omega)$, by the divergence theorem
    \begin{equation}
        \langle \Rodzin_0 u, v \rangle_{L^2(\Omega)} = \int_\Omega -\Delta u \, \overline v = 4\int_\Omega \partial_{\bar z} u \, \overline{\partial_{\bar z} v} = \int_\Omega u \, \overline{-\Delta v} =\langle u, \Rodzin_0 v \rangle_{L^2(\Omega)}.
    \end{equation}

    We now verify that $\mathrm{Range}(\Rodzin_0\pm i) = L^2(\Omega)$. We want to show that for every $f\in L^2(\Omega)$ there exists $u_f^\pm \in \Dom(\Rodzin_0)$ such that $(\Rodzin_0\pm i) u_f^\pm = f$. Namely, we want to show that the problem
    \begin{equation} \label{eq:BvPRodzin0SA}
        \begin{cases}
            -\Delta u \pm i u = f & \text{in } \Omega, \\
            \partial_{\bar z} u = 0 & \text{on } \partial\Omega,
        \end{cases}
    \end{equation}
    has a solution $u_f^\pm\in \mathrm{Dom}(\Rodzin_0)$ for every $f\in L^2(\Omega)$. We show this via Lax-Milgram Theorem. To this end, notice that the weak formulation of \eqref{eq:BvPRodzin0SA} is
    \begin{equation} \label{eq:auxLaxMilgram}
        4\int_\Omega \overline{\partial_{\bar z} u} \, \partial_{\bar z} v \mp i \int_\Omega \overline u \, v = \int_\Omega \overline f \, v \quad \text{for all } v\in \Dom(\partial_{\mathrm{h}}),
    \end{equation}
    and recall from \Cref{lemma:propertiesWirtinger} $(i)$ that
    \begin{equation}
        \Dom(\partial_\mathrm{h}) = \left\{ u\in L^2(\Omega): \, \partial_{\bar z} u \in L^2(\Omega) \right\}
    \end{equation}
    is a Hilbert space with norm $\|u\|_\mathrm{h}^2 := \|u\|_{L^2(\Omega)}^2+\|\partial_{\bar z} u \|_{L^2(\Omega)}^2$. It is straightforward to verify that the right-hand side of \eqref{eq:auxLaxMilgram} defines a continuous linear functional in $\Dom(\partial_\mathrm{h})$, and that the left-hand side of \eqref{eq:auxLaxMilgram} defines a continuous sesquilinear form that is coercive with respect to $\|\cdot\|_\mathrm{h}^2$. As a consequence, by the Lax-Milgram Theorem \cite[Theorem 6.3.6]{Lax2002} there exists a unique $u_f^\pm \in \Dom(\partial_{\mathrm{h}})$ solving \eqref{eq:BvPRodzin0SA} in the weak sense. In particular, $-\Delta u_f^\pm = f \mp i u_f^\pm$ first in the distributional sense, and then in $L^2(\Omega)$. Then, $\partial_{\bar z} u_f^\pm \in \Dom(\partial_{\mathrm{ah}})$ with $t_{\partial\Omega} \partial_{\bar z} u_f^\pm = 0 \in H^{1/2}(\partial\Omega)$. By \Cref{lemma:propertiesWirtinger} $(iv)$, $\partial_{\bar z} u_f^\pm \in H^1_0(\Omega)$, and hence $u_f^\pm \in \Dom(\Rodzin_0)$ with $(\Rodzin_0\pm i)u_f^\pm = f$, as desired.  

    Once self-adjointness of $\Rodzin_0$ is ensured, we characterize its spectrum. First, we show that $\ker(\Rodzin_0) = A^2(\Omega)$. Clearly $0\in \sigma_{\mathrm{ess}}(\Rodzin_0)$ is an eigenvalue of infinite multiplicity, because $\Rodzin_0 u = 0$ for every $u\in A^2(\Omega)$. In particular, $A^2(\Omega) \subseteq \ker(\Rodzin_0)$. Conversely, if $u\in \ker(\Rodzin_0)$, then $\partial_z (\partial_{\bar z} u) = \frac{1}{4}\Delta u = 0$ in $\Omega$ and $\partial_{\bar z} u = 0$ on $\partial\Omega$, hence $\partial_{\bar z} u$ is an antiholomorphic function with zero trace. By unique continuation, this leads to $\partial_{\bar z} u = 0$, that is, $u \in A^2(\Omega)$.

    Next, we show that $\sigma_{\mathrm{p}}(\Rodzin_0) \setminus \{0\} = \sigma(-\Delta_D)$, and that the multiplicity of every $\lambda \in \sigma_{\mathrm{p}}(\Rodzin_0)\setminus\{0\}$ as an eigenvalue of $\Rodzin_0$, denoted $m_0$, coincides with its multiplicity as an eigenvalue of $-\Delta_D$, denoted $m_D$. Let $\lambda \in \sigma_{\mathrm{p}}(\Rodzin_0)\setminus\{0\}$. Then, there exists $u\in \Dom(\Rodzin_0)\setminus A^2(\Omega)$ ---in particular, $\partial_{\bar z} u \in H^1_0(\Omega)$--- such that
    \begin{equation} \label{eq:KernelRodzin0}
        \begin{cases}
            -\Delta u = \lambda u & \text{in } \Omega, \\
            \partial_{\bar z} u = 0 & \text{on } \partial\Omega.
        \end{cases}
    \end{equation}
    Differentiating the PDE by $\partial_{\bar z}$, we deduce that
    \begin{equation} \label{eq:KernelRodzin0Laplacian}
        \begin{cases}
            -\Delta (\partial_{\bar z} u) = \lambda (\partial_{\bar z} u) & \text{in } \Omega, \\
            \partial_{\bar z} u = 0 & \text{on } \partial\Omega.
        \end{cases}
    \end{equation}
    In conclusion, $v:= \partial_{\bar z} u$ is an eigenfunction of the Dirichlet Laplacian $-\Delta_D$ with eigenvalue $\lambda$, hence $\lambda \in \sigma(-\Delta_D)$. To see that $m_D\geq m_0$, let $\{u_j\}_{j=1}^{m_0}$ be linearly independent eigenfunctions of $\Rodzin_0$ of eigenvalue $\lambda$. By \eqref{eq:KernelRodzin0Laplacian}, $\{v_j:= \partial_{\bar z} u_j\}_{j=1}^{m_0}$ are eigenfunctions of $-\Delta_D$ of eigenvalue $\lambda$. By way of contradiction, assume that they are linearly dependent. Then, we can write
    \begin{equation}
        \underset{j=1}{\overset{m_0}{\sum}} \, c_j v_j = 0 \, \text{ in } \Omega, \quad \text{for some } c_j\in \C, \, j=1, \dots, m_0 \text{ not all identically } 0.
    \end{equation}
    But then, by linearity of $\Rodzin_0$, in $\Omega$ we have
    \begin{equation}
        \underset{j=1}{\overset{m_0}{\sum}} \, \lambda c_j u_j = \Rodzin_0 \left( \underset{j=1}{\overset{m_0}{\sum}} \, c_j u_j \right) = 4\partial_z \left( \underset{j=1}{\overset{m_0}{\sum}} \, c_j \partial_{\bar z} u_j \right) = 4\partial_z \left( \underset{j=1}{\overset{m_0}{\sum}} \, c_j v_j \right) = 0,
    \end{equation}
    contradicting the linear independence of $\{u_j\}_{j=1}^{m_0}$. Hence, $\{v_j:= \partial_{\bar z} u_j\}_{j=1}^{m_0}$ are linearly independent eigenfunctions of $-\Delta_D$ of eigenvalue $\lambda$. This ensures that $m_D \geq m_0$.

    Conversely, let $\lambda \in \sigma(-\Delta_D)$. Then, there exists $v\in H^2(\Omega)\cap H^1_0(\Omega)\setminus\{0\}$ such that 
    \begin{equation} \label{eq:LaplacianRodzin0}
        \begin{cases}
            -\Delta v = \lambda v & \text{in } \Omega, \\
            v = 0 & \text{on } \partial\Omega.
        \end{cases}
    \end{equation}
    Notice that it can not happen that $\partial_z v = 0$, since otherwise $\lambda$ would be $0$, which is not in $\sigma(-\Delta_D)$. Set $u:= \partial_z v$, and notice that $u$ satisfies, inside $\Omega$,
    \begin{equation} \label{eq:LaplacianKernelRodzin0}
        \begin{split}
            -\Delta u &= -\Delta (\partial_z v) = \partial_z (-\Delta v) = \partial_z (\lambda v) = \lambda \partial_z v = \lambda u, \, \text{ and }\\
            \partial_{\bar z} u & = \partial_{\bar z} (\partial_z v) = \frac{1}{4}\Delta v = -\frac{\lambda}{4} v.
        \end{split}
    \end{equation}
    Since $\partial_{\bar z} u = -\frac{\lambda}{4} v \in H^1_0(\Omega)$, we can take traces to deduce that $\partial_{\bar z} u = 0$ in $H^{1/2}(\partial\Omega)$. Hence, $u$ solves \eqref{eq:KernelRodzin0}, and therefore $\lambda \in \sigma(\Rodzin_0)$. To see that $m_0\geq m_D$, let $\{v_j\}_{j=1}^{m_D}$ be linearly independent eigenfunctions of $-\Delta_D$ of eigenvalue $\lambda$. By \eqref{eq:LaplacianKernelRodzin0}, $\{u_j:= \partial_z v_j\}_{j=1}^{m_D}$ are eigenfunctions of $\Rodzin_0$ of eigenvalue $\lambda$. By way of contradiction, assume that they are linearly dependent. Then, we can write
    \begin{equation}
        0 = \underset{j=1}{\overset{m_D}{\sum}} \, c_j u_j \, \text{ in } \Omega, \quad \text{for some } c_j\in \C, \, j=1, \dots, m_D \text{ not all identically } 0.
    \end{equation}
    That is, for some $c_j\in \C$, we have
    \begin{equation}
        \begin{cases}
            0 = \underset{j=1}{\overset{m_D}{\sum}} \, c_j u_j = \underset{j=1}{\overset{m_D}{\sum}} \, c_j \partial_z v_j = \partial_z \left( \underset{j=1}{\overset{m_D}{\sum}} \, c_j v_j \right) & \text{in } \Omega, \\
            \underset{j=1}{\overset{m_D}{\sum}} \, c_j v_j = 0 & \text{on } \partial\Omega,
        \end{cases}
    \end{equation}
    because $v_j\in H^1_0(\Omega)$ for every $j$. By the unique continuation principle for antiholomorphic functions, this leads to 
    \begin{equation}
        \underset{j=1}{\overset{m_D}{\sum}} \, c_j v_j = 0 \, \text{ in } \Omega, \quad \text{for some } c_j\in \C, \, j=1, \dots, m_D,
    \end{equation}
    contradicting the linear independence of $\{v_j\}_{j=1}^{m_D}$. Hence, $\{u_j:= \partial_z v_j\}_{j=1}^{m_D}$ are linearly independent eigenfunctions of $\Rodzin_0$ of eigenvalue $\lambda$. This ensures that $m_0 \geq m_D$, and hence $m_0 = m_D$.

    We conclude showing that $\sigma(\Rodzin_0) = \sigma_{\mathrm{p}}(\Rodzin_0)$. It suffices to show that $L^2(\Omega)$ has an orthonormal basis consisting of eigenfunctions of $\Rodzin_0$. Indeed, once we show this, then by \cite[Footnote 1 in Section 10.1.1]{Kato1995} we have that $\sigma(\Rodzin_0)$ coincides with the closure of $\sigma_{\mathrm{p}}(\Rodzin_0) = \{0\} \cup \sigma(-\Delta_D)$, but this set is already closed.
    
    For every $\lambda\in \sigma(-\Delta_D)$ with multiplicity $m_\lambda$, take $\{v_j^\lambda\}_{j=1}^{m_\lambda}$ orthogonal eigenfunctions of $-\Delta_D$ of eigenvalue $\lambda$, and take an orthonormal basis $\{h_k\}_{k\in \N}$ of $A^2(\Omega)$ ---for example, the one of \Cref{prop:MinMaxSplope}. By the previous facts, and by orthogonality in $L^2(\Omega)$ of any pair of eigenfunctions of $\Rodzin_0$ of different eigenvalue ---due to self-adjointness---, the set
    \begin{equation} \label{eq:OrthonormalSystemR0}
        \{h_k\}_{k\in \N} \cup \left\{ u_j^\lambda:= \partial_z v_j^\lambda \right\}_{\substack{ \lambda\in \sigma(-\Delta_D) \\ j=1,\dots, m_\lambda }}
    \end{equation}
    is an orthogonal system, which we can assume to be normalized ---normalizing each $u_j^\lambda$. We now verify that this orthonormal system is a basis of $L^2(\Omega)$. To this end, let $f\in L^2(\Omega)$ be orthogonal to every function in \eqref{eq:OrthonormalSystemR0}. First, we show that $f\in A^2(\Omega)$, that is, that $\partial_{\bar z} f =0$. For this, since the eigenfunctions of the Dirichlet Laplacian form an orthogonal basis of $L^2(\Omega)$, every $v\in \mathcal C^\infty_c(\Omega)$ can be expanded in the form
    \begin{equation} \label{eq:auxExpan}
        v = \underset{\substack{ \lambda\in \sigma(-\Delta_D) \\ j=1,\dots, m_\lambda }}{\sum} \, c_j^\lambda v_j^\lambda  \, \text{ in } \Omega, \quad \text{for some } c_j^\lambda\in \C.
    \end{equation}
    Since $v\in \mathcal C^\infty_c(\Omega)\subset H^2(\Omega)\cap H^1_0(\Omega)=\mathrm{Dom}(-\Delta_D)$ and the operator $-\Delta_D$ diagonalizes in the basis $\{v_j^\lambda\}_{\substack{ \lambda\in \sigma(-\Delta_D) \\ j=1,\dots, m_\lambda }}$, we have that
    \begin{equation}
        -\Delta v = \underset{\substack{ \lambda\in \sigma(-\Delta_D) \\ j=1,\dots, m_\lambda }}{\sum} \, c_j^\lambda (-\Delta v_j^\lambda) = \underset{\substack{ \lambda\in \sigma(-\Delta_D) \\ j=1,\dots, m_\lambda }}{\sum} \, \lambda c_j^\lambda v_j^\lambda \quad \text{in } L^2(\Omega),
    \end{equation}
    hence the coefficients $c_j^\lambda\in \C$ decay fast enough to ensure that \eqref{eq:auxExpan} holds in $H^1(\Omega)$. Thus, 
    \begin{equation}
        \langle f, \partial_z v\rangle_{L^2(\Omega)} = \underset{\substack{ \lambda\in \sigma(-\Delta_D) \\ j=1,\dots, m_\lambda }}{\sum} \, c_j^\lambda \langle f, \partial_z v_j^\lambda \rangle_{L^2(\Omega)} = \underset{\substack{ \lambda\in \sigma(-\Delta_D) \\ j=1,\dots, m_\lambda }}{\sum} \, c_j^\lambda \langle f, u_j^\lambda \rangle_{L^2(\Omega)} = 0 \quad \text{for every } v\in \mathcal C^\infty_c(\Omega),
    \end{equation}
    by orthogonality of $f$, showing that $f\in A^2(\Omega)$. Knowing this, since $0 = \langle f, h_k\rangle_{L^2(\Omega)}$ for every $k\in \N$, we conclude that $f\equiv 0$. This shows that \eqref{eq:OrthonormalSystemR0} is a basis of $L^2(\Omega)$. 

    To sum up, we have $\sigma(\Rodzin_0) = \sigma_{\mathrm{p}}(\Rodzin_0) = \{0\}\cup \sigma(-\Delta_D)$. Moreover, by the first part of the proof we have $\sigma_{\mathrm{ess}}(\Rodzin_0) = \{0\}$ and $\sigma_{\mathrm{d}}(\Rodzin_0) = \sigma(-\Delta_D)$. We also have seen that $0$ is an eigenvalue of infinite multiplicity, with associeated eigenspace given by the Bergamn space $A^2(\Omega)$, which is item $(i)$. Finally, we have seen that every eigenvalue in $\sigma_{\mathrm{p}}(\Rodzin_0) \setminus \{0\}$ has the same multiplicity as an eigenvalue of the Dirichlet Laplacian, and that the associated eigenfunctions are the $\partial_z$ of the eigenfunctions of the Dirichlet Laplacian, which is item $(ii)$.
\end{proof}

Now that the operator $\Rodzin_0$ is described, we address the convergence of $\Rodzin_a$ to $\Rodzin_0$ in the strong resolvent sense, as $a$ tends to $0^+$. To this end, we shall need to approximate functions in $\Dom(\Rodzin_0)$ by functions in $H^1(\Omega)$ in a suitable norm.

\begin{lemma} \label{lemma:densityDomR0}
    The space $\Dom(\Rodzin_0)$ is a Hilbert space when endowed with the scalar product 
    \begin{equation} 
        \langle u, v \rangle := \langle u, v \rangle_{L^2(\Omega)} + \langle \partial_{\bar z} u, \partial_{\bar z} v \rangle_{L^2(\Omega)} + \langle \Delta u, \Delta v \rangle_{L^2(\Omega)} \quad \text{for } u,v \in \Dom(\Rodzin_0)
    \end{equation}
    and the associated norm $\|u\| := \sqrt{\langle u, u \rangle}$ for $u\in \Dom(\Rodzin_0)$. In addition, the set $\Dom(\Rodzin_0) \cap H^1(\Omega)$ is dense in $\Dom(\Rodzin_0)$ with respect to the norm $\|\cdot\|$.
\end{lemma}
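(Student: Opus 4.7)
The first claim is essentially formal. The sesquilinear form defines an inner product because its $L^2(\Omega)$-component is positive definite, and to prove completeness I would take a $\|\cdot\|$-Cauchy sequence $\{u_k\}\subset\Dom(\Rodzin_0)$, extract $L^2$-limits $u,v,w$ of $u_k$, $\partial_{\bar z} u_k$, $\Delta u_k$, and identify $v=\partial_{\bar z} u$ and $w=\Delta u$ in the distributional sense by testing against $\mathcal C^\infty_c(\Omega)$. The only non-trivial point is to upgrade $v\in L^2(\Omega)$ to $v\in H^1_0(\Omega)$, which is what membership in $\Dom(\Rodzin_0)$ requires. Since $\partial_{\bar z} u_k\in H^1_0(\Omega)$, its trace vanishes on $\partial\Omega$ and so does its tangential derivative in $H^{-1/2}(\partial\Omega)$, so \Cref{lemma:RelationGradientDz} collapses to the Poincar\'e-type identity
\begin{equation}
    \|\nabla \partial_{\bar z} u_k\|_{L^2(\Omega)}^2=4\|\partial_z \partial_{\bar z} u_k\|_{L^2(\Omega)}^2=\|\Delta u_k\|_{L^2(\Omega)}^2.
\end{equation}
Applied to the differences $\partial_{\bar z}(u_k-u_j)\in H^1_0(\Omega)$, this shows that $\{\partial_{\bar z} u_k\}$ is Cauchy in $H^1_0(\Omega)$; its $H^1_0$-limit must then coincide with $v$ by uniqueness of $L^2$-limits, giving $v=\partial_{\bar z} u\in H^1_0(\Omega)$ and hence $u\in\Dom(\Rodzin_0)$ with $\|u_k-u\|\to 0$.

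For the density claim, the strategy is to reduce to a purely Bergman-space problem via the decomposition from \Cref{lemma:BergmanCharacterization}. Writing $u=u_\mathrm{h}+u_\perp$ with $u_\mathrm{h}\in A^2(\Omega)$ and $u_\perp=\partial_z w_u\in A^2(\Omega)^\perp\cap H^1(\Omega)$, the orthogonal part already satisfies $\partial_{\bar z} u_\perp=\partial_{\bar z} u\in H^1_0(\Omega)$ and $\Delta u_\perp=\Delta u\in L^2(\Omega)$, so $u_\perp\in\Dom(\Rodzin_0)\cap H^1(\Omega)$ and no approximation is needed for it. For the Bergman part, $\partial_{\bar z} u_\mathrm{h}=0$ and $\Delta u_\mathrm{h}=0$ imply that any $v\in A^2(\Omega)\cap H^1(\Omega)\subset\Dom(\Rodzin_0)\cap H^1(\Omega)$ satisfies $\|v-u_\mathrm{h}\|=\|v-u_\mathrm{h}\|_{L^2(\Omega)}$. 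The task thus reduces to the complex-analytic statement that $A^2(\Omega)\cap H^1(\Omega)$ is $L^2(\Omega)$-dense in $A^2(\Omega)$.

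I would establish this density by chaining two approximations built entirely from tools of \Cref{sec:PreliminariesComplex}. First, the orthonormal basis $\{w_k\}$ of $A^2(\Omega)$ from \Cref{prop:MinMaxSplope} lies in $A^2(\Omega)\cap L^2(\partial\Omega)$, so the truncations $u_\mathrm{h}^{(N)}:=\sum_{k=1}^N\langle u_\mathrm{h},w_k\rangle_{L^2(\Omega)}w_k$ converge to $u_\mathrm{h}$ in $L^2(\Omega)$ and each satisfies $u_\mathrm{h}^{(N)}\in A^2(\Omega)\cap L^2(\partial\Omega)$, hence by \Cref{rmk:CauchyPompeiu} $u_\mathrm{h}^{(N)}=\Phi_{\mathrm h}(u_\mathrm{h}^{(N)}|_{\partial\Omega})$. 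Second, approximating this $L^2(\partial\Omega)$ boundary value by $g_n^{(N)}\in\mathcal C^1(\partial\Omega)\subset H^{1/2}(\partial\Omega)$, the mapping property in \Cref{lemma:SingleLayer} with $s=1/2$ places $\Phi_{\mathrm h} g_n^{(N)}\in A^2(\Omega)\cap H^1(\Omega)$, while the same lemma with $s=0$, followed by the embedding $H^{1/2}(\Omega)\hookrightarrow L^2(\Omega)$, yields $\Phi_{\mathrm h} g_n^{(N)}\to u_\mathrm{h}^{(N)}$ in $L^2(\Omega)$. A standard diagonal extraction in $(N,n)$ then produces approximants of $u_\mathrm{h}$ lying in $A^2(\Omega)\cap H^1(\Omega)$, and adding them to $u_\perp$ gives the desired sequence in $\Dom(\Rodzin_0)\cap H^1(\Omega)$ converging in $\|\cdot\|$ to $u$. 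I expect the most delicate moment to be the identification $v=\partial_{\bar z} u\in H^1_0(\Omega)$ in the completeness part, since everything afterwards is an assembly of machinery already available.
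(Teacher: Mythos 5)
Your proof is correct, but it takes a genuinely different route from the paper's on both halves of the lemma. For completeness, the paper does not use \Cref{lemma:RelationGradientDz}: it shows via the Green formulas \eqref{eq:GreenFormulas} that the limit function satisfies $\langle \partial_{\bar z}u,\nu g\rangle_{H^{-1/2}(\partial\Omega),H^{1/2}(\partial\Omega)}=0$ for all $g$, and then invokes \Cref{lemma:propertiesWirtinger}~$(iv)$ to upgrade $\partial_{\bar z}u\in\Dom(\partial_{\mathrm{ah}})$ with vanishing $H^{-1/2}$-trace to $H^1_0(\Omega)$; you instead observe that for $H^1_0$ functions the boundary pairing in \Cref{lemma:RelationGradientDz} vanishes, so $\{\partial_{\bar z}u_k\}$ is Cauchy in $H^1_0(\Omega)$ and the limit stays there — a more direct, quantitative argument that avoids the trace-regularity lemma altogether (note only the harmless slip $4\|\partial_z\partial_{\bar z}u_k\|_{L^2(\Omega)}^2=\tfrac14\|\Delta u_k\|_{L^2(\Omega)}^2$, not $\|\Delta u_k\|_{L^2(\Omega)}^2$). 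For density, the paper argues abstractly: an element of $\Dom(\Rodzin_0)$ orthogonal (in $\langle\cdot,\cdot\rangle$) to $\Dom(\Rodzin_0)\cap H^1(\Omega)$ is in particular $L^2$-orthogonal to $\{\Phi_{\mathrm h}g: g\in\mathcal C^1(\partial\Omega)\}$, hence lies in $H^1(\Omega)$ by \Cref{rmk:enoughForGainRegularity} and is orthogonal to itself, so it vanishes. You instead give a constructive approximation: the Bergman decomposition of \Cref{lemma:BergmanCharacterization} isolates $u_\perp\in\Dom(\Rodzin_0)\cap H^1(\Omega)$ exactly (since $\partial_{\bar z}u_\mathrm{h}=\Delta u_\mathrm{h}=0$), reducing everything to $L^2$-density of $A^2(\Omega)\cap H^1(\Omega)$ in $A^2(\Omega)$, which you obtain by truncating along the basis of \Cref{prop:MinMaxSplope} (to land in $A^2(\Omega)\cap L^2(\partial\Omega)$, where \Cref{rmk:CauchyPompeiu} applies) and then smoothing the boundary datum through $\Phi_{\mathrm h}$ with the two mapping properties of \Cref{lemma:SingleLayer}. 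Both approaches rest on the same engine — the Cauchy integrals of smooth boundary densities as a rich family of holomorphic $H^1$ functions — but the paper's orthogonality argument is shorter and needs only \Cref{rmk:enoughForGainRegularity}, while yours exhibits explicit approximants and clarifies exactly where the obstruction sits (the Bergman component), at the cost of importing \Cref{prop:MinMaxSplope} (and the fact, from its proof, that the basis vectors have $L^2(\partial\Omega)$ traces).
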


\begin{proof}
    Using that $\Dom(\partial_{\mathrm{h}})$ is a Hilbert space when endowed with the scalar product 
    \begin{equation} 
        \langle u, v \rangle_{\mathrm{h}} = \langle u, v \rangle_{L^2(\Omega)} + \langle \partial_{\bar z} u, \partial_{\bar z} v \rangle_{L^2(\Omega)},
    \end{equation}
    given by \Cref{lemma:propertiesWirtinger} $(i)$, we verify that $\Dom(\Rodzin_0)\subset \Dom(\partial_{\mathrm{h}})$ is a Hilbert space when endowed with the scalar product $\langle \cdot, \cdot \rangle$. Let $\{u_j\}_j\subset \Dom(\Rodzin_0)$ be a Cauchy sequence with respect to $\|\cdot\|$. On the one hand, $\{u_j\}_j\subset \Dom(\partial_{\mathrm{h}})$ is a Cauchy sequence with respect to the norm endowed by $\langle\cdot,\cdot\rangle_{\mathrm{h}}$, and hence there exists $u\in \Dom(\partial_{\mathrm{h}})$ such that $u_j\to u$ and $\partial_{\bar z} u_j \to \partial_{\bar z} u$ strongly in $L^2(\Omega)$, as $j\to+\infty$. On the other hand, $\{\Delta u_j\}_j\subset L^2(\Omega)$ is a Cauchy sequence with respect to the norm $\|\cdot\|_{L^2(\Omega)}$, and hence there exists $u_\Delta\in L^2(\Omega)$ such that $\Delta u_j\to u_\Delta$ strongly in $L^2(\Omega)$, as $j\to+\infty$. Using this, one can straightforwardly verify that $\Delta u = u_\Delta$ first in the weak sense, and then in $L^2(\Omega)$. Since $\partial_{\bar z} u \in \mathrm{Dom}(\partial_{\mathrm{ah}})$, by \Cref{lemma:propertiesWirtinger} $(ii)$ we have that $\partial_{\bar z} u\in H^{-1/2}(\partial\Omega)$. We show that actually $\partial_{\bar z} u =0$ in $H^{-1/2}(\partial\Omega)$. For every $g\in H^{1/2}(\partial\Omega)$, denoting also by $g$ an extension in $\Omega$, by the Green's formulas \eqref{eq:GreenFormulas} and the property of the pairing \eqref{eq:Brezis}, we have
    \begin{equation}
        \begin{split}
            \dfrac{1}{2} \overline{\langle \partial_{\bar z} u, \nu g \rangle}_{H^{-1/2}(\partial\Omega),H^{1/2}(\partial\Omega)} & =  \frac{1}{4}\int_\Omega \Delta u \,\overline g + \int_\Omega \partial_{\bar z} u \,\overline{\partial_{\bar z} g} = \underset{j\to +\infty}{\lim} \, \frac{1}{4}\int_\Omega \Delta u_j \,\overline g + \int_\Omega \partial_{\bar z} u_j \,\overline{\partial_{\bar z} g} \\
            & = \underset{j\to +\infty}{\lim} \, \dfrac{1}{2} \overline{\langle \partial_{\bar z} u_j, \nu g \rangle}_{H^{-1/2}(\partial\Omega),H^{1/2}(\partial\Omega)} =0,
        \end{split}
    \end{equation}
    where in the last equality we have used that $\partial_{\bar z} u_j \in H^1_0(\Omega)$. Applying \Cref{lemma:propertiesWirtinger} $(iv)$ to $\partial_{\bar z} u\in \mathrm{Dom}(\partial_{\mathrm{ah}})$, we conclude that $\partial_{\bar z} u \in H^1_0(\Omega)$, and hence that $u\in \Dom(\Rodzin_0)$. In conclusion, we have shown that the Cauchy sequence $\{u_j\}_j$ converges to $u\in \Dom(\Rodzin_0)$ with respect to the norm $\|\cdot\|$, which concludes the proof that $(\Dom(\Rodzin_0), \langle\cdot,\cdot\rangle)$ is a Hilbert space.

    In order to show that $\Dom(\Rodzin_0) \cap H^1(\Omega)$ is dense in $\Dom(\Rodzin_0)$, let $f\in \Dom(\Rodzin_0)\subset\mathrm{Dom}(\partial_{\mathrm{h}})$ be such that 
    \begin{equation} \label{eq:auxiliarDensity}
        \langle f, u \rangle_{L^2(\Omega)} + \langle \partial_{\bar z} f, \partial_{\bar z} u \rangle_{L^2(\Omega)} + \langle \Delta f, \Delta u \rangle_{L^2(\Omega)} = 0 \quad \text{for every } u\in \Dom(\Rodzin_0) \cap H^1(\Omega).
    \end{equation}
    In particular, $\langle f, \Phi_{\mathrm{h}} g \rangle_{L^2(\Omega)}=0$ for every $g\in \mathcal C^1(\partial\Omega)$ ---recall that $\Phi_{\mathrm{h}}$ is the Cauchy integral operator of \Cref{lemma:SingleLayer}, and that $\Phi_{\mathrm{h}} g\in A^2(\Omega)\cap H^1(\Omega)$ if $g\in \mathcal C^1(\partial\Omega)$. By \Cref{rmk:enoughForGainRegularity}, we have that $f\in \Dom(\Rodzin_0)\cap H^1(\Omega)$, and hence \eqref{eq:auxiliarDensity} holds replacing $u$ by $f$, namely
    \begin{equation}
        \|f\|_{L^2(\Omega)}^2 + \|\partial_{\bar z} f\|_{L^2(\Omega)}^2 + \|\Delta f\|_{L^2(\Omega)}^2 = 0.
    \end{equation}
    In conclusion $f\equiv 0$, as desired.
\end{proof}

\begin{proof}[Proof of \Cref{thm:SRCRodzin0}]
    We start showing that $\Rodzin_a$ converges to $\Rodzin_0$, as $a\to0^+$, in the strong resolvent sense. Since $\Rodzin_a$ is self-adjoint for every $a\geq 0$, by \cite[Lemma 6.37]{Teschl2014} and \cite[Theorem VIII.19 (b)]{ReedSimon1980} it is enough to prove that the difference of resolvents at $\lambda=i$, namely,
    \begin{equation}
        W_{a,0} := (\Rodzin_a-i)^{-1} - (\Rodzin_0-i)^{-1},
    \end{equation}
    converges to zero weakly in $L^2(\Omega)$, as $a\to0^+$. Given $f,g\in L^2(\Omega)$, set
    \begin{equation}
        \begin{split}
            \varphi_a & := (\Rodzin_a-i)^{-1} f \in \Dom(\Rodzin_a), \\
            \psi & := (\Rodzin_0+i)^{-1}g \in \Dom(\Rodzin_0).
        \end{split}
    \end{equation}
    By density of $\Dom(\Rodzin_0) \cap H^1(\Omega)$ in $\Dom(\Rodzin_0)$ with respect to $\|\cdot\|_{L^2(\Omega)} + \|\partial_{\bar z} \cdot\|_{L^2(\Omega)} + \|\Delta\cdot\|_{L^2(\Omega)}$, given by \Cref{lemma:densityDomR0}, we can find $\{\psi_k\}_k \subset \Dom(\Rodzin_0) \cap H^1(\Omega)$ such that 
    \begin{equation}
        \|\psi-\psi_k\|_{L^2(\Omega)} +  \|\Delta\psi-\Delta\psi_k\|_{L^2(\Omega)} \rightarrow 0, \quad \text{as } k\to +\infty.
    \end{equation}
    For every $k\in \N$, set
    \begin{equation}
        g_k := (\Rodzin_0+i)\psi_k \in L^2(\Omega),
    \end{equation}
    and notice that, by the triangle inequality, as $k\to +\infty$
    \begin{equation} \label{eq:gkConvergesTog}
        \|g-g_k\|_{L^2(\Omega)} = \|(\Rodzin_0+i)\psi - (\Rodzin_0+i)\psi_k \|_{L^2(\Omega)} \leq \|\psi-\psi_k\|_{L^2(\Omega)} + \|\Delta\psi-\Delta\psi_k\|_{L^2(\Omega)} \rightarrow 0.
    \end{equation}
    Similarly as in \eqref{eq:auxNRCPairingInfty}, integration by parts and self-adjointness of $\Rodzin_0$ straightforwardly lead to
    \begin{equation}
        \langle W_{a,0}f, g_k \rangle_{L^2(\Omega)} = - \langle \varphi_a, 2 \overline \nu \partial_{\bar z} \psi_k \rangle_{L^2(\partial\Omega)} + \langle 2 \bar \nu \partial_{\bar z} \varphi_a, \psi_k \rangle_{L^2(\partial\Omega)}.
    \end{equation}
    Using the boundary conditions $2 \bar \nu \partial_{\bar z} \varphi_a + a \varphi_a=0$ and $\partial_{\bar z} \psi_k=0$ that $\varphi_a\in \Dom(\Rodzin_a)$ and $\psi_k\in \Dom(\Rodzin_0)$ satisfy, we get that
    \begin{equation} \label{eq:auxWRCPairing}
        \langle W_{a,0}f, g_k \rangle_{L^2(\Omega)} = -a \langle \varphi_a, \psi_k \rangle_{L^2(\partial\Omega)}.
    \end{equation}
    As we shall see afterward, in order to prove the weak resolvent convergence it is enough to bound the pairing in the right-hand side of \eqref{eq:auxWRCPairing} uniformly in $a$ ---but not necessarily in $k$. This is what we do next.
    
    Since both $\varphi_a$ and $\psi_k$ belong to $H^1(\Omega)$, by the property of the pairing \eqref{eq:Brezis}, the trace theorem from $H^1(\Omega)$ to $H^{1/2}(\partial\Omega)$, and \Cref{lemma:propertiesWirtinger}, we deduce that
    \begin{equation} \label{eq:boundWa0}
        \begin{split}
            \left| \langle \varphi_a, \psi_k \rangle_{L^2(\partial\Omega)} \right| \leq \|\varphi_a\|_{H^{-1/2}(\partial\Omega)} \|\psi_k\|_{H^{1/2}(\partial\Omega)} \leq C  \left( \|\varphi_a\|_{L^2(\Omega)} + \|\partial_{\bar z} \varphi_a \|_{L^2(\Omega)} \right) \|\psi_k\|_{H^1(\Omega)}
        \end{split}
    \end{equation}
    for some constant $C>0$ depending only on $\Omega$. By the resolvent estimates \eqref{eq:BoundedResolventL2L2} and \eqref{eq:L2boundDz} for $\lambda=i$, we have that
    \begin{equation}
        \|\varphi_a\|_{L^2(\Omega)} + \|\partial_{\bar z} \varphi_a \|_{L^2(\Omega)} \leq (1+\sqrt{2}) \|f\|_{L^2(\Omega)}.
    \end{equation}
    Combining this estimate with \eqref{eq:boundWa0} and \eqref{eq:auxWRCPairing}, we obtain
    \begin{equation} \label{eq:DensityPairing}
        \left|\langle W_{a,0}f, g_k \rangle_{L^2(\Omega)}\right| \leq K a \|f\|_{L^2(\Omega)} \|\psi_k\|_{H^1(\Omega)}, \quad \text{for every } k\in \N,
    \end{equation}
    for some constant $K>0$ depending only on $\Omega$. In summary, by the triangle inequality and this estimate, for every $k\in \N$ we have
    \begin{equation}
        \begin{split}
            \left|\langle W_{a,0}f, g \rangle_{L^2(\Omega)}\right| & \leq \left|\langle W_{a,0}f, g-g_k \rangle_{L^2(\Omega)}\right| + \left|\langle W_{a,0}f, g_k \rangle_{L^2(\Omega)}\right| \\
            & \leq \|W_{a,0}\|_{L^2(\Omega)\to L^2(\Omega)} \| f \|_{L^2(\Omega)} \|g-g_k\|_{L^2(\Omega)} + K a \|f\|_{L^2(\Omega)} \|\psi_k\|_{H^1(\Omega)} \\
            & \leq 2 \| f \|_{L^2(\Omega)} \|g-g_k\|_{L^2(\Omega)} + K a \|f\|_{L^2(\Omega)} \|\psi_k\|_{H^1(\Omega)},
        \end{split}
    \end{equation}
    where we have used that $\|W_{a,0}\|_{L^2(\Omega)\to L^2(\Omega)} \leq 2$, which follows from the triangle inequality and the resolvent estimate \eqref{eq:BoundedResolventL2L2} applied to $\lambda=i$.

    Choosing first $k$ big enough and then $a$ small enough ---depending on the choice of $k$---, we conclude that $W_{a,0}$ converges to zero weakly in $L^2(\Omega)$, as desired. That any $\Lambda' \in \{0\}\cup \sigma(-\Delta_D)=\sigma(\Rodzin_0)$ is the limit of elements of $\sigma(\Rodzin_a)$, as $a\to0^+$, then follows by \cite[Theorem VIII.24 (a)]{ReedSimon1980}.

    We conclude with the convergence of the resolvents of $\Rodzin_a$ to the resolvent of $\Rodzin_0$ in operator norm, as $a\to0^+$, once projected into the orthogonal of the Bergman space, $A^2(\Omega)^\perp$. Recall from \Cref{sec:BergmanSpace} that the orthogonal projection
    \begin{equation} 
        P^\perp: L^2(\Omega) \rightarrow A^2(\Omega)^\perp \subset L^2(\Omega)
    \end{equation}
    is a well defined, bounded, self-adjoint operator in $L^2(\Omega)$, with $\|P^\perp\|_{L^2(\Omega)\to L^2(\Omega)} = 1$. We want to show that, for every $\lambda\in \C\setminus\R$,
    \begin{equation}
        W_a^\lambda := P^\perp \left((\Rodzin_a-\lambda)^{-1} - (\Rodzin_0-\lambda)^{-1}\right)
    \end{equation}
    converges to zero, as $a\to0^+$, in the operator norm. Let $f,g \in L^2(\Omega)$, and set
    \begin{equation}
        \begin{split}
            \varphi_a & := (\Rodzin_a-\lambda)^{-1}f \in \Dom(\Rodzin_a), \\
            \psi & := (\Rodzin_0-\overline \lambda)^{-1} P^\perp g \in \Dom(\Rodzin_0).
        \end{split}
    \end{equation}
    Notice that $\psi$ satisfies $\langle \psi, h \rangle_{L^2(\Omega)} = 0$ for every $h\in A^2(\Omega)$. Indeed, since by definition $\Rodzin_0 \psi - \overline \lambda \psi = P^\perp g$ in $\Omega$, by self-adjointness of $\Rodzin_0$, the orthogonality of $P^\perp g$, and since $h\in A^2(\Omega)=\ker(\Rodzin_0)$, we have
    \begin{equation}
        \overline \lambda \langle \psi, h \rangle_{L^2(\Omega)} = \langle \Rodzin_0 \psi, h\rangle_{L^2(\Omega)} - \langle P^\perp g, h \rangle_{L^2(\Omega)} = \langle \psi , \Rodzin_0 h \rangle_{L^2(\Omega)} = 0.
    \end{equation}
    As a consequence, by \Cref{lemma:GainRegularityOrthogonal} $\psi \in \Dom(\Rodzin_0) \cap H^1(\Omega)$, with
    \begin{equation} \label{eq:BoundH1DomRodzin0}
        \|\psi\|_{H^1(\Omega)} \leq C  \|\partial_{\bar z} \psi\|_{L^2(\Omega)},
    \end{equation}
    for some constant $C>0$ depending only on $\Omega$. Similarly as in \eqref{eq:auxNRCPairingInfty}, integration by parts and self-adjointness of $P^\perp$ and of $\Rodzin_0$ straightforwardly lead to
    \begin{equation}
        \langle W_a^\lambda f, g \rangle_{L^2(\Omega)} = - \langle \varphi_a, 2 \overline \nu \partial_{\bar z} \psi \rangle_{L^2(\partial\Omega)} + \langle 2 \bar \nu \partial_{\bar z} \varphi_a, \psi \rangle_{L^2(\partial\Omega)}.
    \end{equation}
    Using the boundary conditions $2 \bar \nu \partial_{\bar z} \varphi_a + a \varphi_a=0$ and $\partial_{\bar z} \psi=0$ that $\varphi_a\in \Dom(\Rodzin_a)$ and $\psi\in \Dom(\Rodzin_0)$ satisfy, we get that
    \begin{equation} \label{eq:auxNRCProjPairing}
        \langle W_a^\lambda f, g \rangle_{L^2(\Omega)} = -a \langle \varphi_a, \psi \rangle_{L^2(\partial\Omega)}.
    \end{equation}
    As we shall see afterward, in order to prove the norm resolvent convergence it is enough to bound the pairing in the right-hand side of \eqref{eq:auxNRCProjPairing} uniformly in $a$. This is what we do next.
    
    Since both $\varphi_a$ and $\psi$ belong to $H^1(\Omega)$, by the property of the pairing \eqref{eq:Brezis}, the trace theorem from $H^1(\Omega)$ to $H^{1/2}(\partial\Omega)$, \Cref{lemma:propertiesWirtinger}, and \eqref{eq:BoundH1DomRodzin0}, we deduce that
    \begin{equation} \label{eq:boundWaLambda}
        \left| \langle \varphi_a, \psi \rangle_{L^2(\partial\Omega)} \right| \leq \|\varphi_a\|_{H^{-1/2}(\partial\Omega)} \|\psi\|_{H^{1/2}(\partial\Omega)} \leq C \left( \|\varphi_a\|_{L^2(\Omega)} + \|\partial_{\bar z} \varphi_a \|_{L^2(\Omega)} \right) \|\partial_{\bar z} \psi\|_{L^2(\Omega)}
    \end{equation}
    for some constant $C>0$ depending only on $\Omega$. First, by the resolvent estimates \eqref{eq:BoundedResolventL2L2} and \eqref{eq:L2boundDz}, we have that
    \begin{equation} \label{eq:auxiliarL2BoundPhia}
        \|\varphi_a\|_{L^2(\Omega)} + \|\partial_{\bar z} \varphi_a \|_{L^2(\Omega)} \leq C_\lambda \|f\|_{L^2(\Omega)},
    \end{equation}
    for some constant $C_\lambda>0$ depending only on $\lambda$. Next, by self-adjointness of $\Rodzin_0$ ---see, for example, \cite[equation V.3.13]{Kato1995}---, there holds
    \begin{equation} \label{eq:L2BoundOrthogonalA2}
        \|\psi\|_{L^2(\Omega)} = \|(\Rodzin_0-\overline \lambda)^{-1} P^\perp g \|_{L^2(\Omega)} \leq \dfrac{1}{|\mathrm{Im}(\overline \lambda)|} \|P^\perp g\|_{L^2(\Omega)} \leq \dfrac{1}{|\mathrm{Im}(\overline \lambda)|} \|g\|_{L^2(\Omega)}.
    \end{equation}
    Moreover, notice that $\psi$ solves the problem
    \begin{equation}
        \begin{cases}
            -\Delta \psi - \overline \lambda \psi = P^\perp g & \text{in } \Omega, \\
            \partial_{\bar z} \psi = 0 & \text{on } \partial\Omega.
        \end{cases}
    \end{equation}
    Multiplying the equality $-\Delta \psi - \overline \lambda \psi = P^\perp g$ by $\overline \psi$ and integrating in $\Omega$, the divergence theorem and the boundary condition lead to
    \begin{equation}
        \int_\Omega P^\perp g \, \overline \psi = 4\int_\Omega |\partial_{\bar z} \psi|^2 - \overline \lambda \int_\Omega |\psi|^2.
    \end{equation}
    Finally, by the triangle inequality and \eqref{eq:L2BoundOrthogonalA2}, we can bound
    \begin{equation} \label{eq:L2BoundDzOrthogonalA2}
        4\|\partial_{\bar z} \psi\|_{L^2(\Omega)}^2 \leq \left| \int_\Omega P^\perp g \, \overline \psi + \overline \lambda \int_\Omega |\psi|^2 \right| \leq \left(\dfrac{1}{|\mathrm{Im}(\overline \lambda)|} + \dfrac{|\lambda|}{|\mathrm{Im}(\overline \lambda)|^2} \right) \| g\|_{L^2(\Omega)}^2.
    \end{equation}
    Then, combining the estimates \eqref{eq:auxiliarL2BoundPhia}, \eqref{eq:L2BoundOrthogonalA2} and \eqref{eq:L2BoundDzOrthogonalA2} with \eqref{eq:boundWaLambda} and \eqref{eq:auxNRCProjPairing}, we obtain that
    \begin{equation}
        \left|\langle W_a^\lambda f, g \rangle_{L^2(\Omega)}\right| \leq K a \|f\|_{L^2(\Omega)} \|g\|_{L^2(\Omega)},
    \end{equation}
    for some constant $K>0$ depending only on $\Omega$ and $\lambda$. If we now divide both sides of this last inequality by $\|f\|_{L^2(\Omega)} \|g\|_{L^2(\Omega)}$ and take the supremum among all functions $f,g\in L^2(\Omega)\setminus\{0\}$, we conclude that
    \begin{equation}
        \| W_a^\lambda \|_{L^2(\Omega)\to L^2(\Omega)} \leq K a,
    \end{equation}
    for some $K>0$ depending only on $\Omega$ and $\lambda$. The convergence follows by taking $a\to 0^+$.
\end{proof}

\section*{Acknowledgments}

The author thanks Albert Mas and Tom\'as Sanz-Perela for enlightening discussions. The author also thanks the suggestions of the anonymous referee, which have enriched the paper.

\appendix

\section{Eigenvalues of the disk} \label{sec:EigenBall}

In this appendix, we explicitly derive the eigenvalues and eigenfunctions of $\Rodzin_a$ in a disk of radius $R>0$ centered at the origin, which will be denoted $D_R$. We recall from \Cref{thm:PropertiesMukOmega} that all the eigenvalues are strictly positive. Hence, we seek for the solutions to 
\begin{equation} \label{eq:EigenRodzin}
    \begin{cases}
        -\Delta u = \mu u & \text{in } D_R, \\
        2 \bar \nu \partial_{\bar z} u + au = 0 & \text{on } \partial D_R,
    \end{cases}
\end{equation}
with $\mu>0$. To this end, we introduce polar coordinates: write a point $x\in \R^2$ as $x=r e^{i\phi}\equiv(r,\phi)$ with $r=|x|\in[0,+\infty)$ and $\phi = \arg(x) \in \mathbb [0,2\pi)$. Since one can decompose the space $L^2(D_R)$ as
\begin{equation} \label{eq:L2Decomposition}
    L^2(D_R) = \underset{j\in \N\cup\{0\}}{\bigoplus} \ L_j^\pm,
\end{equation}
where
\begin{equation}
    L_j^\pm = \left\{ u \in L^2(D_R) : \ u(r,\phi) = \rho(r)e^{\pm ij\phi} \text{ with } \rho\in L^2 \left( (0,R), rdr \right) \right\},
\end{equation}
it is enough to look for eigenfunctions of the form of separated variables.

By the standard procedure of the method of separation of variables, the solutions $u$ to $-\Delta u = \mu u$ in $D_R$ of the form of separated variables are
\begin{equation} \label{eq:GeneralSolHelmholtz}
    u^{j, \pm} (r, \phi) = \left( c_1^{j, \pm} J_j(\sqrt \mu r) + c_2^{j, \pm} Y_j(\sqrt \mu r) \right) e^{\pm ij\phi}, \quad \text{with }  j\in \N\cup\{0\}, \ c_1^{j, \pm}, c_2^{j, \pm} \in \C,
\end{equation}
where $J_j$ and $Y_j$ are the Bessel functions of first and second kind of order $j$, respectively; see~\cite[Chapter 9]{Abramowitz1964}.

Recall that the Bessel function $Y_j$ is singular at the origin for every $j\in \N\cup \{0\}$. By elliptic interior regularity, since the functions $u^{j, \pm}$ given by \eqref{eq:GeneralSolHelmholtz} must solve $-\Delta u^{j, \pm} = \mu u^{j, \pm}$ across the origin, we deduce that $c_2^{j, \pm}=0$, and thus $u^{j, \pm}$ is of the form
\begin{equation} \label{eq:EigenBall}
    u^{j, \pm} (r, \phi) = c^{j, \pm} J_j(\sqrt \mu r) e^{\pm ij\phi}, \quad \text{with }  j\in \N\cup\{0\}, \ c^{j, \pm}\in \C,
\end{equation}
where $r\in[0,R)$ and $\phi \in [0,2\pi)$.

We now impose the boundary condition $2\overline{\nu} \partial_{\bar z} u^{j, \pm} + a u^{j, \pm} = 0$ on $\partial D_R$ to characterize the eigenvalues $\mu$, so that $u^{j, \pm}$ solves \eqref{eq:EigenRodzin} in $D_R$. Since the boundary condition can be written as $\partial_\nu u^{j, \pm} + i \partial_\tau u^{j, \pm} + a u^{j, \pm} = 0$, the outward unit normal vector at $\partial D_R$ is $\nu=e_r$, and the unit tangent vector ---so that $\{\nu,\tau\}$ is positively oriented--- is $\tau=e_\phi$, the boundary condition writes in polar coordinates as
\begin{equation}
    \left( \partial_r + \frac{i}{R} \partial_\phi + a \right) \Bigg |_{r=R} c^{j, \pm} J_j(\sqrt \mu r) e^{\pm ij\phi} = 0.
\end{equation}
Using the recurrence relation \cite[formula (9.1.27)]{Abramowitz1964}, which asserts that for any linear combination $\mathscr C_p(\zeta) = c_1 J_p(\zeta) + c_2 Y_p(\zeta)$, with $p\in \R$, $\zeta\in \C$, and $c_1, c_2 \in \C$, it holds
\begin{equation} \label{eq:DerivativeBessel}
    \frac{d}{d\zeta} \mathscr C_p(\zeta) = - \mathscr C_{p+1} (\zeta) + \frac{p}{\zeta} \mathscr C_p(\zeta),
\end{equation}
we get that
\begin{equation}
    0 = \left[-\sqrt \mu J_{j+1}(\sqrt \mu R) + \left(\frac{j}{R} \mp \frac{j}{R} + a \right) J_j(\sqrt \mu R) \right] e^{\pm ij\phi}.
\end{equation}
This gives rise to two scenarios.
\begin{enumerate}[label=$(\roman*)$]
        \item For $u^{j, +}$ with $j\in \N\cup\{0\}$, we must have
\begin{equation} \label{eq:EigenEquationBallPlus}
    a = \sqrt \mu \frac{J_{j+1}(\sqrt \mu R)}{J_j(\sqrt \mu R)}.
\end{equation}
        \item For $u^{j, -}$ with $j\in \N\cup\{0\}$, we must have
\begin{equation} \label{eq:EigenEquationBallMinus}
    a + \frac{2j}{R} = \sqrt \mu \frac{J_{j+1}(\sqrt \mu R)}{J_j(\sqrt \mu R)}.
\end{equation}
    \end{enumerate}

The following lemma was stated and proven in \cite[Lemma B.1]{Mas2022} to collect the results on the Bessel functions needed to describe the solutions 
to \eqref{eq:EigenEquationBallPlus} and \eqref{eq:EigenEquationBallMinus}.

\begin{lemma} \label{Lemma:Bessel}
    $($\cite[Lemma B.1]{Mas2022}$)$
    Let $J_p$ be the Bessel function of the first kind of order $p \geq 0$, and denote the $k-$th positive zero of this function by $z_{p, k}$. Then,
	\begin{enumerate}[label=$(\roman*)$]
		\item \label{itemIncrSeq} the positive zeros of $J_p$ are simple and form an infinite increasing sequence;
		\item \label{itemInterlaced} the positive zeros of two consecutive Bessel functions are interlaced, meaning that
		\begin{equation}
			0< z_{p, 1} < z_{p + 1, 1} < z_{p, 2} < z_{p + 1, 2} < \cdots;
		\end{equation}
		\item \label{itemQuotient} the quotient of two consecutive Bessel functions can be expressed as
		\begin{equation}
			\label{Eq:QuotientBessel}
			\dfrac{J_{p + 1}(x)}{J_{p}(x)} = \sum_{k \geq 1} \dfrac{2x}{z_{p, k}^2 - x^2} \quad \text{ for } x \in \R \setminus \{z_{p, k}\}_{k\in \N}.
		\end{equation}
		As a consequence, $J_{p + 1}/J_{p}$ is odd, strictly increasing in each interval contained in $\R \setminus \{z_{p, k}\}_{k\in \N}$, it is positive in the intervals $(0,z_{p, 1})$ and $(z_{p + 1, k},z_{p, k + 1})$ for $k\geq 1$, and negative in the intervals $(z_{p, k},z_{p + 1, k})$ for $k\geq 1$.
	\end{enumerate}
\end{lemma}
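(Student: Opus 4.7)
The lemma is attributed to \cite[Lemma B.1]{Mas2022}, so the primary option is simply to invoke the reference. Nevertheless, here is the plan I would follow if one wanted to give a self-contained argument, since all three items are classical facts about Bessel functions that fit on roughly a page.

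For \ref{itemIncrSeq}, I would argue via the Bessel ODE $x^2 J_p'' + x J_p' + (x^2-p^2) J_p = 0$. Simplicity of the zeros is immediate: if $J_p(z_0)=J_p'(z_0)=0$ with $z_0>0$, then the ODE forces $J_p''(z_0)=0$, and differentiating the ODE recursively yields $J_p^{(n)}(z_0)=0$ for every $n\geq 0$, contradicting analyticity of $J_p$ (which is non-trivial). That the positive zeros form an unbounded increasing sequence can be deduced from the Sturm comparison theorem applied to $J_p$ and the solutions of $y''+y=0$ on large intervals, or equivalently from the asymptotic expansion
\begin{equation}
    J_p(x) = \sqrt{\tfrac{2}{\pi x}} \cos\!\left(x - \tfrac{p\pi}{2} - \tfrac{\pi}{4}\right) + O(x^{-3/2}) \quad \text{as } x\to +\infty,
\end{equation}
which forces infinitely many sign changes spaced asymptotically like $\pi$.

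For \ref{itemInterlaced}, the key tool is the pair of identities
\begin{equation}
    \frac{d}{dx}\bigl[ x^{-p} J_p(x) \bigr] = -x^{-p} J_{p+1}(x), \qquad \frac{d}{dx}\bigl[ x^{p+1} J_{p+1}(x) \bigr] = x^{p+1} J_p(x),
\end{equation}
both of which are standard consequences of \eqref{eq:DerivativeBessel}. Applying Rolle's theorem to the first identity on $[z_{p,k}, z_{p,k+1}]$ produces a zero of $J_{p+1}$ in $(z_{p,k}, z_{p,k+1})$, and similarly the second identity places a zero of $J_p$ between any two consecutive positive zeros of $J_{p+1}$. Combined with \ref{itemIncrSeq} and the fact (checkable from the power series near $0$) that $J_p$ vanishes to order $p$ at $0$ while $J_{p+1}$ vanishes to higher order, induction on $k$ yields the interlacing pattern $0<z_{p,1}<z_{p+1,1}<z_{p,2}<\cdots$.

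For \ref{itemQuotient}, I would start from the Hadamard factorization
\begin{equation}
    J_p(x) = \frac{x^p}{2^p \Gamma(p+1)} \prod_{k\geq 1} \left(1 - \frac{x^2}{z_{p,k}^2}\right),
\end{equation}
whose convergence is ensured by the asymptotic $z_{p,k}\sim k\pi$ from \ref{itemIncrSeq}. Taking the logarithmic derivative and using \eqref{eq:DerivativeBessel} in the form $J_p'(x) = \tfrac{p}{x} J_p(x) - J_{p+1}(x)$ gives
\begin{equation}
    \frac{p}{x} - \frac{J_{p+1}(x)}{J_p(x)} = \frac{J_p'(x)}{J_p(x)} = \frac{p}{x} - \sum_{k\geq 1} \frac{2x}{z_{p,k}^2 - x^2},
\end{equation}
which is exactly \eqref{Eq:QuotientBessel}. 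The properties listed at the end of \ref{itemQuotient} then follow by direct inspection of the series: oddness is clear; on each connected component of $\R\setminus\{\pm z_{p,k}\}$ every summand is smooth and strictly increasing in $x$ (differentiate to get $2(z_{p,k}^2+x^2)/(z_{p,k}^2-x^2)^2 > 0$), so the sum is strictly increasing there; and the sign on each interval is read off from the signs of the individual terms together with the interlacing of \ref{itemInterlaced}.

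The only delicate point in this plan is justifying the Hadamard product representation, which requires either quoting the fact that $J_p(x)/x^p$ is an entire function of order $1$ with the listed zeros, or building it up from the product formula for $\sin$. If one prefers to avoid that, \ref{itemQuotient} can instead be derived by integrating the Bessel ODE against $1/(z_{p,k}^2 - x^2)$ and exploiting orthogonality of the Bessel eigenfunctions on $(0,1)$ with weight $x$; this is longer but completely elementary.
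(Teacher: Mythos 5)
Your primary route —simply invoking \cite[Lemma B.1]{Mas2022}— is exactly what the paper does: it states the lemma with that attribution and gives no proof of its own. Your optional self-contained sketch (simplicity and unboundedness of the zeros from the Bessel ODE, interlacing via Rolle applied to $x^{-p}J_p$ and $x^{p+1}J_{p+1}$, and the partial-fraction expansion \eqref{Eq:QuotientBessel} from the logarithmic derivative of the product representation of $J_p$) is the standard classical argument and is sound, provided one either quotes or verifies the infinite-product formula for $J_p$, as you yourself note.
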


With the help of \Cref{Lemma:Bessel}, we can now describe the eigenfunctions and eigenvalues of $\Rodzin_a$ in the disk $D_R$.

\begin{proposition} \label{prop:EigenDisk}
    Let $a>0$ and $R>0$ be given. For each $j \in \N\cup\{0\}$, the following holds:
    \begin{enumerate}[label=$(\roman*)$]
        \item \label{itemZeros1} Equation \eqref{eq:EigenEquationBallPlus} has infinitely many positive solutions, which can be enumerated as $\mu_k^{j,+}(a)$ with $k\in \N\cup\{0\}$. In addition,
        \begin{itemize}
            \item $\mu_0^{j,+}(a) \in \left( 0, (z_{j,1}/R)^2 \right)$;
            \item $\mu_k^{j,+}(a) \in \left( (z_{j+1,k}/R)^2, (z_{j,k+1}/R)^2 \right)$ for $k\in \N$.
        \end{itemize}
        \item \label{itemZeros2} Equation \eqref{eq:EigenEquationBallMinus} has infinitely many positive solutions, which can be enumerated as $\mu_k^{j,-}(a)$ with $k\in \N\cup\{0\}$. In addition,
        \begin{itemize}
            \item $\mu_0^{j,-}(a) = \mu_0^{j,+}(a)$;
            \item $\mu_0^{j,-}(a) > \mu_0^{j,+}(a)$ and $\mu_k^{j,-}(a) \in \left( (z_{j+1,k}/R)^2, (z_{j,k+1}/R)^2 \right)$ for $k\in \N$.
        \end{itemize}
    \end{enumerate}
    As a consequence, the following functions form an orthogonal basis of $L^2(D_R)$,
    \begin{equation} \label{eq:AllEigenfunctionsDisk}
        u_{D_R}^{j, k, \pm} (r, \phi) := J_j \left( \sqrt{\mu_k^{j, \pm}(a)} \, r \right) e^{\pm ij\phi} \quad \text{with } j,k\in\N\cup\{0\},
    \end{equation}
    and are eigenfunctions of $\Rodzin_a$ in $D_R$ with eigenvalue $\mu_k^{j, \pm}(a)$. In particular, the first eigenvalue is $\mu_{1,D_R}(a) = \mu_0^{0,+}(a)$, namely, the unique solution $\mu$ to
    \begin{equation}
        a = \sqrt{\mu} \frac{J_1 \left(\sqrt{\mu} R \right)}{J_0 \left(\sqrt{\mu} R \right)}
    \end{equation}
    in $(0, \Lambda_{D_R})$, where $\Lambda_{D_R} = (z_{0,1}/R)^2$. The associated eigenfunction is (up to a multiplicative constant)
    \begin{equation}
        u_{1,D_R}(a) (r,\phi) = J_0 \left( \sqrt{\mu_{1,D_R}(a)} \, r \right).
    \end{equation}
\end{proposition}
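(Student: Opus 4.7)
The plan is to analyze the two transcendental equations \eqref{eq:EigenEquationBallPlus} and \eqref{eq:EigenEquationBallMinus} via Lemma \ref{Lemma:Bessel}, and then to promote the separated-variable eigenfunctions produced by those equations to a full basis of $L^2(D_R)$ by invoking the spectral theory of $\Rodzin_a$ from Theorem \ref{thm:IntroRodzinLaplacian} together with the rotational decomposition \eqref{eq:L2Decomposition}.

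First I would set $x=\sqrt\mu R$ and study $G_j(x):=(x/R)J_{j+1}(x)/J_j(x)$, so that \eqref{eq:EigenEquationBallPlus} becomes $G_j(x)=a$ and \eqref{eq:EigenEquationBallMinus} becomes $G_j(x)=a+2j/R$. By Lemma \ref{Lemma:Bessel}, $J_{j+1}/J_j$ is strictly increasing and positive on $(0,z_{j,1})$ and on each $(z_{j+1,k},z_{j,k+1})$ for $k\geq1$, running continuously from $0$ to $+\infty$ on each such interval; multiplying by the positive factor $x/R$ preserves this. Hence for every target value in $(0,+\infty)$ there is exactly one solution in each of these intervals, producing the families $\{\mu_k^{j,+}(a)\}_k$ and $\{\mu_k^{j,-}(a)\}_k$ with the stated localizations. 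The identity $\mu_0^{0,-}(a)=\mu_0^{0,+}(a)$ is immediate because both equations coincide when $j=0$, and the inequality $\mu_0^{j,-}(a)>\mu_0^{j,+}(a)$ for $j\geq1$ follows from strict monotonicity and $a+2j/R>a$. On the intervals $(z_{j,k},z_{j+1,k})$ the quotient $J_{j+1}/J_j$ is negative, so no positive $a$ is hit and nothing is missed.

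Next I would check that each $u_{D_R}^{j,k,\pm}$ defined in \eqref{eq:AllEigenfunctionsDisk} is genuinely an eigenfunction: the radial ODE derivation leading to \eqref{eq:EigenBall} and the choice of $\mu_k^{j,\pm}(a)$ built into it together ensure $-\Delta u = \mu_k^{j,\pm}(a)\,u$ inside $D_R$ and $2\bar\nu\partial_{\bar z}u+au=0$ on $\partial D_R$. Smoothness up to the boundary places these functions in $\Dom(\Rodzin_a)$. Mutual orthogonality in $L^2(D_R)$ follows sector by sector: the angular factors $e^{\pm ij\phi}$ are orthogonal for different $(j,\pm)$ (except for the degeneracy $j=0$, where the two signs coincide), and within a fixed sector $L_j^\pm$ the different radial profiles are eigenfunctions of the symmetric operator obtained by restricting $\Rodzin_a$, associated to distinct eigenvalues.

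The key step, and the one I expect to be the main obstacle, is completeness: that the family $\{u_{D_R}^{j,k,\pm}\}$ already spans $L^2(D_R)$. Because $\Rodzin_a$ commutes with rotations (the boundary datum $\bar\nu\,\partial_{\bar z}$ transforms covariantly under the rotation group acting on $D_R$), it preserves each sector $L_j^\pm$ in the decomposition \eqref{eq:L2Decomposition}. By Theorem \ref{thm:IntroRodzinLaplacian} the restriction $\Rodzin_a|_{L_j^\pm}$ is self-adjoint with compact resolvent, so $L_j^\pm$ admits an orthonormal basis of eigenfunctions of this restriction; a separation-of-variables argument identical to the one carried out in the text shows that any such eigenfunction must be a scalar multiple of $J_j(\sqrt\mu\,r)e^{\pm ij\phi}$ for some $\mu$ solving the relevant equation in Step 1. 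Hence the enumeration in Step 1 exhausts the spectrum on each sector, and summing over $(j,\pm)$ gives a basis of $L^2(D_R)$.

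Finally, to identify the first eigenvalue as $\mu_0^{0,+}(a)$, I would compare $\mu_0^{j,\pm}(a)$ for different $j$ using the Mittag--Leffler expansion \eqref{Eq:QuotientBessel}. Since $z_{p,k}$ is strictly increasing in $p$, each summand $2x/(z_{p,k}^2-x^2)$ is strictly decreasing in $p$ for $x\in(0,z_{p,1})$; consequently $G_j(x)$ is strictly decreasing in $j$ on that range, so the inverse $\mu_0^{j,+}(a)$ is strictly increasing in $j$. Combined with $\mu_0^{j,-}(a)\geq\mu_0^{j,+}(a)$ and the localization $\mu_k^{j,\pm}(a)>z_{j+1,k}^2/R^2>z_{0,1}^2/R^2$ for $k\geq1$, the minimum is attained at $(j,k,\pm)=(0,0,+)$. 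The interval $(0,(z_{0,1}/R)^2)=(0,\Lambda_{D_R})$ is exactly the one prescribed by Theorem \ref{thm:PropertiesMukOmega}, and the asserted form of the first eigenfunction is direct from \eqref{eq:AllEigenfunctionsDisk}.
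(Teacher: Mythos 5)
Your proposal is correct and follows essentially the same route as the paper: items \ref{itemZeros1}--\ref{itemZeros2} from the monotonicity, sign, and range properties of $J_{j+1}/J_j$ in \Cref{Lemma:Bessel}, the basis property via the polar decomposition \eqref{eq:L2Decomposition}, and the identification of the first eigenvalue through the interlacing of zeros (equivalently the Mittag--Leffler expansion \eqref{Eq:QuotientBessel}), which makes $J_{j+1}/J_j$ decrease in $j$ on $(0,z_{0,1})$. The only difference is that you spell out the completeness step (rotation invariance of $\Rodzin_a$, self-adjointness with compact resolvent on each sector $L_j^\pm$, and the separated-variable classification of sector eigenfunctions), which the paper simply asserts as clear; this is a correct and welcome elaboration, not a different argument.
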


Solving numerically equations \eqref{eq:EigenEquationBallPlus} and \eqref{eq:EigenEquationBallMinus} for some choices of indexes, we obtain the plot of \Cref{fig:EigenRodzinBall} in \Cref{sec:QualitativeDisk}. In the same fashion, \Cref{fig:Eigenfunctions} shows the plot of the radial part of some eigenfunctions of the $\overline\partial$-Robin Laplacian, for some choices of $a>0$. Since the angular part of the eigenfunctions are the exponentials $e^{\pm i j \phi}$, we omit the plot of the argument. 
\vspace{-0.3cm}
\begin{figure}[h]
    \centering
    \includegraphics[width=0.95\linewidth]{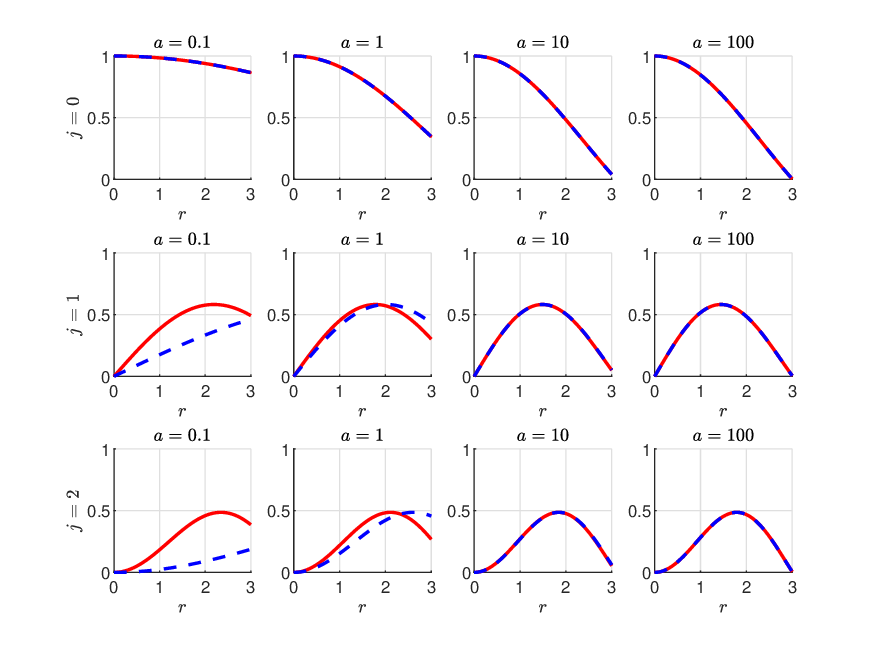}
    \vspace{-0.4cm}
    \caption{Plot of the radial part of the eigenfunctions \eqref{eq:AllEigenfunctionsDisk} for $k=0$, $a=0.1$ (first column), $a=1$ (second column), $a=10$ (third column), $a=100$ (fourth column), and $j=0$ (first row), $j=1$ (second row), $j=2$ (third row). The eigenfunctions with angular sign $-$ are plotted with continuous red curves, and the ones with angular sign $+$ are plotted with dashed blue curves.}
    \label{fig:Eigenfunctions}
\end{figure}

\begin{proof}[Proof of \Cref{prop:EigenDisk}]
    Statements \ref{itemZeros1} and \ref{itemZeros2} directly follow from \Cref{Lemma:Bessel}. As a consequence of this and the polar decomposition \eqref{eq:L2Decomposition}, it is clear that \eqref{eq:AllEigenfunctionsDisk} is an orthogonal basis of $L^2(\Omega)$ consisting of eigenfunctions of $\Rodzin_a$ in $D_R$ with eigenvalues $\mu_k^{j, \pm}(a)$.
    
    It only remains to show that the first eigenvalue is $\mu_{1,D_R}(a) = \mu_0^{0,+}(a)$, because then the rest of the statement follows by the previous and the well known fact that $\Lambda_{D_R} = (z_{0,1}/R)^2$ \cite[Proposition 1.2.14]{Henrot2006}. To this end, we want to show that $\mu_0^{0,+}(a)$ is the smallest positive solution to \eqref{eq:EigenEquationBallPlus} and \eqref{eq:EigenEquationBallMinus}.
    
    Since the zeros of the Bessel functions are interlaced ---\Cref{Lemma:Bessel} \ref{itemInterlaced}---, for every $j\in \N\cup\{0\}$ it follows that $z_{j,k} < z_{j+1,k}$ for all $k\in\N$, and therefore
    \begin{equation}
        \frac{2x}{z_{j,k}^2-x^2} > \frac{2x}{z_{j+1,k}^2-x^2} \quad \text{for every } x\in (0,z_{0,1}).
    \end{equation}
    Hence, by \Cref{Lemma:Bessel} \ref{itemQuotient} it follows that
    \begin{equation}
        \frac{J_{j+1}(x)}{J_j(x)} > \frac{J_{j+2}(x)}{J_{j+1}(x)} \quad \text{for every } x\in (0,z_{0,1}) \text{ and } j\in\N\cup\{0\}.
    \end{equation}
    As a consequence, the smallest positive $\mu$ solving \eqref{eq:EigenEquationBallPlus} is $\mu_0^{0,+}(a)$, and the smallest positive $\mu$ solving \eqref{eq:EigenEquationBallMinus} is $\mu_0^{0,-}(a) = \mu_0^{0,+}(a)$. Therefore, $\mu_{1,D_R}(a) = \mu_0^{0,+}(a)$, as desired.
\end{proof}

As a direct consequence of \Cref{prop:EigenDisk}, we deduce the simplicity of the first eigenvalue of $\Rodzin_a$ when the underlying domain is a disk.

\begin{corollary} \label{cor:MuDiskIsMuRobin}
    Let $D\subset \R^2$ be a disk. For every $a>0$, the first eigenvalue of the $\overline\partial$-Robin Laplacian $\Rodzin_a$ in $D$ is simple and coincides with the first eigenvalue of the Robin Laplacian $-\Delta_a$ in $D$. In addition, the associated eigenfunction for $\Rodzin_a$ is (up to a multiplicative constant) the same as the first eigenfunction for $-\Delta_a$.
\end{corollary}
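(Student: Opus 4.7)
The plan is to leverage the explicit description of all eigenvalues and eigenfunctions of $\Rodzin_a$ in a disk provided by \Cref{prop:EigenDisk}. The key observation driving the argument is that the first eigenfunction of $\Rodzin_a$ in a disk is radial, which forces the tangential derivative on $\partial D$ to vanish and thereby reduces the $\overline\partial$-Robin boundary condition to the standard Robin one.

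First I would extract from \Cref{prop:EigenDisk} that $\mu_{1,D}(a) = \mu_0^{0,+}(a) = \mu_0^{0,-}(a)$, with corresponding eigenfunction (up to a multiplicative constant) $u_1(r,\phi) = J_0\bigl(\sqrt{\mu_{1,D}(a)}\, r\bigr)$. Both signs $\pm$ produce the same function, since the exponential factor is trivial when $j=0$. Combined with the strict inequality $\mu_0^{0,+}(a) < \mu_k^{j,\pm}(a)$ for all $(j,k)\neq (0,0)$ ---which is exactly the comparison established in the proof of \Cref{prop:EigenDisk} using the interlacing of zeros of the Bessel functions from \Cref{Lemma:Bessel}--- this establishes that $\mu_{1,D}(a)$ is simple and that its eigenspace is spanned by the radial function $u_1$.

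Next, since $u_1$ depends only on $r$, one has $\partial_\tau u_1 \equiv 0$ on $\partial D$. Using the identity $2\bar\nu \partial_{\bar z} = \partial_\nu + i\partial_\tau$ recalled in \Cref{sec:Introduction}, the boundary condition $2\bar\nu\partial_{\bar z}u_1 + au_1 = 0$ reduces to $\partial_\nu u_1 + au_1 = 0$ on $\partial D$. Since moreover $u_1\in H^2(D)$ (by standard regularity or by direct inspection of $J_0$), it follows that $u_1\in\Dom(-\Delta_a)$ and that $u_1$ is an eigenfunction of the Robin Laplacian with eigenvalue $\mu_{1,D}(a)$. To identify this as the \emph{first} Robin eigenvalue ---and not a higher one--- I would observe that the bound $\sqrt{\mu_{1,D}(a)}\,R < z_{0,1}$ from \Cref{prop:EigenDisk} implies $u_1>0$ throughout $\overline{D}$, and the first eigenvalue of the Robin Laplacian is the unique one admitting a strictly positive eigenfunction (by the classical Krein--Rutman/Perron--Frobenius argument for self-adjoint elliptic operators with form-bounded from below resolvent).

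No serious obstacle appears here: the argument is essentially a comparison of explicit formulas together with one structural observation (radiality removes $\partial_\tau$). The only step that needs a brief justification beyond \Cref{prop:EigenDisk} is the final positivity-based identification of $\mu_{1,D}(a)$ with the first Robin eigenvalue, which could alternatively be done by performing the separation-of-variables computation for the Robin problem in $D$ and noting that it yields exactly the same transcendental equation $a = \sqrt{\mu}\, J_1(\sqrt{\mu}R)/J_0(\sqrt{\mu}R)$ on $(0,(z_{0,1}/R)^2)$, which by \Cref{Lemma:Bessel}~\ref{itemQuotient} has a unique solution in that interval.
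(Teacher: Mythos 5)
Your proposal is correct, and its first half follows the paper exactly: both arguments rest on \Cref{prop:EigenDisk}, which identifies $\mu_{1,D}(a)=\mu_0^{0,+}(a)$ with radial eigenfunction $J_0\bigl(\sqrt{\mu_{1,D}(a)}\,r\bigr)$, and both obtain simplicity from the strict Bessel-quotient comparison $J_{j+1}/J_j>J_{j+2}/J_{j+1}$ on $(0,z_{0,1})$ ---which, as you correctly note, lives in the \emph{proof} of \Cref{prop:EigenDisk} rather than in its statement, so spelling out that it forces $\mu_0^{0,+}(a)<\mu_k^{j,\pm}(a)$ for all other modes is a worthwhile addition; the paper leaves this implicit as well. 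Where you diverge is the Robin side: the paper simply matches the formulas of \Cref{prop:EigenDisk} against the known description of the first Robin eigenvalue and eigenfunction of the disk in Freitas--Laugesen \cite[pg.~982 and Figure~3]{Freitas2021}, whereas you give a self-contained verification: radiality of $u_1$ kills $\partial_\tau u_1$, so the $\overline\partial$-Robin boundary condition collapses to $\partial_\nu u_1+au_1=0$ (this is precisely the heuristic of \Cref{rmk:simplicity} made rigorous), smoothness of $J_0$ puts $u_1$ in $\Dom(-\Delta_a)$, and positivity of $J_0$ on $[0,\sqrt{\mu_{1,D}(a)}\,R]\subset[0,z_{0,1})$ combined with the standard fact that only the first Robin eigenvalue admits a sign-definite eigenfunction identifies $\mu_{1,D}(a)$ as $\mu_1^{\mathrm{Rob}}(a)$. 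Your route costs one extra classical ingredient (positivity/orthogonality of Robin eigenfunctions, or alternatively a repeat of the separation-of-variables computation for the Robin problem), but it buys independence from the external reference and makes transparent \emph{why} the two first eigenvalues coincide; the paper's route is shorter but purely by citation. Both are valid proofs of the corollary.
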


\begin{proof}
    The description of $\mu_{1,D_R}(a)$ and $u_{1, D_R}(a)$ in \Cref{prop:EigenDisk} coincide with the description of $\mu_{1,D_R}^{\mathrm{Rob}}(a)$ and of the first eigenfunction of the Robin Laplacian in \cite[pg. 982 and Figure~3]{Freitas2021}. 
\end{proof}


\begin{thebibliography}{AHMTT}

\bibitem{Abramowitz1964} M. Abramowitz and I. A. Stegun, {\em Handbook of mathematical functions with formulas graphs and mathematical tables}, US Government Printing Office, (1964).

\bibitem{Antunes2021} P. R. S. Antunes, R. D. Benguria, V. Lotoreichik, T. Ourmi\`eres-Bonafos, {\em A Variational Formulation for Dirac Operators in Bounded Domains. Applications to Spectral Geometric Inequalities}, Communications in Mathematical Physics, {\bf 386} (2021), 781–818.

\bibitem{Antunes2024} P. R. S. Antunes, F. Bento, D. Krej\v{c}i\v{r}\'ik, {\em Numerical optimisation of Dirac eigenvalues}, Journal of Physics A: Mathematical and Theoretical, {\bf 57} (2024), 475203.

\bibitem{Antunes2013} P. R. S. Antunes, P. Freitas, J. B. Kennedy, {\em Asymptotic behaviour and numerical approximation of optimal eigenvalues of the Robin laplacian}, ESAIM Control, Optimisation and Calculus of Variations, {\bf 19} (2013), 438--459.

\bibitem{Mas2022} N. Arrizabalaga, A. Mas, T. Sanz-Perela, L. Vega, {\em Eigenvalue curves for generalized MIT bag models}, Communications in Mathematical Physics, {\bf 397} (2022), 337--392.

\bibitem{Behrndt2010} J. Behrndt, M. Langer, I. Lobanov, V. Lotoreichik, I. Yu. Popov, {\em A remark on Schatten-von Neumann properties of resolvent differences of generalized Robin Laplacians on bounded domains}, Journal of Mathematical Analysis and Applications, {\bf 371}, (2010), 750--758.

\bibitem{Behrndt2023} J. Behrndt, M. Holzmann, G. Stenzel, {\em Schr\"odinger Operators with Oblique Transmission Conditions in $\R^2$}, Communications in Mathematical Physics, {\bf 401}, (2023), 3149--3167.

\bibitem{Behrndt2025} J. Behrndt, M. Holzmann, G. Stenzel, {\em Spectral theory of two-dimensional Laplacians with oblique Robin boundary conditions}, in preparation.

\bibitem{Belgacem2018} F. Belgacem, H. BelHadjAli, A. BenAmor, A. Thabet, {\em Robin Laplacian in the Large coupling limit: Convergence and spectral asymptotic}, Annali Scuola Normale Superiore - Casse Di Scienze, {\bf 18}, 2, (2018).

\bibitem{Benguria2017Self} R. D. Benguria, S. Fournais, E. Stockmeyer, H. Van Den Bosch, {\em Self-Adjointness of Two-Dimensional Dirac Operators on Domains}, Annales Henri Poincar\'e, {\bf 18}, (2017) 1371--1383.

\bibitem{Benhellal2025} B. Benhellal, M. Camarasa, K. Pankrashkin, {\em On Schr\"odinger operators with oblique transmission conditions on non-smooth curves}, Integral Equations and Operator Theory, {\bf 97}, 22 (2025).

\bibitem{BenhellalPank2025} B. Benhellal, K. Pankrashkin, M. Zreik {\em On the self-adjointness of two-dimensional relativistic shell interactions}, Journal of Operator Theory, {\bf 93}, 2 (2025) 569--591.

\bibitem{Bogli2022} S. B\"ogli, J. B. Kennedy, R. Lang, {\em On the eigenvalues of the Robin Laplacian with a complex parameter}, Analysis and Mathematical Physics, {\bf 12}, 39 (2022).

\bibitem{Brezis2011} H. Brezis, {\em Functional Analysis, Sobolev Spaces and Partial Differential Equations}, Springer, (2011). 

\bibitem{Briet2022} P. Briet, D. Krej\v{c}i\v{r}\'ik, {\em Spectral optimisation of Dirac rectangles}, Journal of Mathematical Physics, {\bf 63} (2022), 013502.

\bibitem{Bossel1986} M. H. Bossel, {\em Membranes {\'e}lastiquement li{\'e}es: extension du th{\'e}or{\`e}me de Rayleigh-Faber-Krahn et de l'in{\'e}galit{\'e} de Cheeger}, Comptes rendus de l'Acad\'emie des sciences. S\'erie 1, Math\'ematique {\bf 302}, 1, (1986) 47--50.

\bibitem{Henrot2017Chapter4} D. Bucur, P. Freitas, J. Kennedy, The Robin problem. In {\em Shape optimization and spectral theory}, De Gruyter Open, (2017), pp. 78--119.

\bibitem{Conway2007} J. B. Conway, {\em A Course in Functional Analysis}, Second Edition, Springer, (2007). 

\bibitem{Daners2006} D. Daners, {\em A Faber-Krahn inequality for Robin problems in any space dimension}, Mathematische Annalen, {\bf 335}, (2006) 767--785.

\bibitem{Daners2007} D. Daners, J. Kennedy {\em Uniqueness in the Faber–Krahn Inequality for Robin Problems}, SIAM Journal on Mathematical Analysis, {\bf 39}, 4, (2007) 1191--1207.

\bibitem{Davies1995} E. B. Davies, {\em Spectral Theory and Differential Operators}, Cambridge University Press, (1995).

\bibitem{DuranMas2024} J. Duran, A. Mas, {\em Convergence of generalized MIT bag models to Dirac operators with zigzag boundary conditions}, Analysis and Mathematical Physics, {\bf 14}, 85 (2024).

\bibitem{DuranMasSanzPerela2025} J. Duran, A. Mas, T. Sanz-Perela, {\em A connection between quantum dot Dirac operators and $\overline\partial$-Robin Laplacians in the context of shape optimization problems}, Journal of Functional Analysis, 290, 10, (2026), 111398.

\bibitem{Evans2010} L. C. Evans, {\em Partial Differential Equations}, American Mathematical Society, (2010).

\bibitem{Faber1923} C. Faber, {\em Beweiss, dass unter allen homogenen Membrane von gleicher Fl\"ache und gleicher Spannung die kreisf\"ormige die tiefsten Grundton gibt}, Sitzungsber.-Bayer. Akad. Wiss., Math.-Phys. Munich. (1923), 169--172.

\bibitem{Freitas2021} P. Freitas, R. S. Laugesen, {\em From Neumann to Steklov and beyond, via Robin: The Weinberger way}, American Journal of Mathematics, {\bf 143}, 3 (2021), 969--994.

\bibitem{Gilbarg-Trudinger} D. Gilbarg, N. S. Trudinger, {\em Elliptic Partial Differential Equations of Second Order}, Classics in Mathematics, Springer, (2001).

\bibitem{Gregoire1976} J.P. Gr\'egoire, J.C. N\'ed\'elec, J. Planchard, {\em A method of finding the eigenvalues and eigenfunctions of self-adjoint elliptic operators}, Computer Methods in Applied Mechanics and Engineering, {\bf 8}, (1976), 201--214.

\bibitem{Henrot2006} A. Henrot, {\em Extremum Problems for Eigenvalues of Elliptic Operators}, Birkh\"auser, (2006).

\bibitem{Heriban2025} L. Heriban, M. Holzmann, C. Stelzer-Landauer, G. Stenzel, M. Tu\v{s}ek, {\em Two-dimensional Schr\"odinger operators with non-local singular potentials}, Journal of Mathematical Analysis and Applications, {\bf 549}, 2 (2025).

\bibitem{Kato1995} T. Kato, {\em Perturbation Theory for Linear Operators}, Springer, (1995).

\bibitem{Krahn1925} E. Krahn, {\em Uber eine von Rayleigh formulierte Minmaleigenschaft des Kreises}, Mathematische Annalen, {\bf 94} (1925), 97--100.

\bibitem{Webpage2019} D. Krej\v{c}i\v{r}\'ik, S. Larson, V. Lotoreichik, eds. {\em Problem list of the AIM Workshop: Shape optimization with surface interactions}, San Jose, USA, 2019. Available at \url{http://aimpl.org/shapesurface/}.

\bibitem{Lax2002} P. D. Lax, {\em Functional Analysis}, John Wiley \& Sons, (2002).

\bibitem{Laugesen2019} R. S. Laugesen, {\em The Robin Laplacian --- Spectral conjectures, rectangular theorems}, Journal of Mathematical Physics, {\bf 60} (2019), 121507.

\bibitem{Lieb2001} E. H. Lieb, M. Loss, {\em Analysis}, American Mathematical Society, (2001).

\bibitem{McLean2000} W. McLean, {\em Strongly Elliptic Systems and Boundary Integral Equations}, Cambridge University Press, (2000).

\bibitem{Nedelec2001} J.-C. N\'ed\'elec, {\em Acoustic and electromagnetic equations}, vol. 144 of Applied Mathematical Sciences, Springer-Verlag, New York, (2001).

\bibitem{Nirenberg1966} L. Nirenberg, {\em An extended interpolation inequality}, Annali della Scuola Normale Superiore di Pisa - Classe di Scienze, {\bf 20} (1966), 733--737.

\bibitem{Palatucci2012} G. Palatucci, O. Savin, E. Valdinoci, {\em Local and global minimizers for a variational energy involving a fractional norm}, Annali di Matematica, {\bf 192} (2013), 673--718.

\bibitem{Pizzichillo2021} F. Pizzichillo, H. Van Den Bosch, {\em Self-adjointness of two-dimensional Dirac operators on corner domains}, Journal Of Spectral Theory, {\bf 11} (2021), 1043--1079.

\bibitem{ReedSimon1980} M. Reed, B. Simon, {\em Methods of modern mathematical physics. Volume 1: Functional analysis}, Academic Press, (1980).

\bibitem{Taylor2011} M. E. Taylor, {\em Partial Differential Equations I. Basic Theory}, Second edition, Springer, (2011).

\bibitem{Teschl2014} G. Teschl, {\em Mathematical Methods in Quantum Mechanics With Applications to Schr\"odinger Operators}, second edition, American Mathematical Society, (2014).

\bibitem{Weidmann2000} J. Weidmann, {\em Lineare Operatoren in Hilberträumen. Teil 1 Grundlagen}, Springer, (2000).


\end{thebibliography}
\end{document}